\newtheorem{theorem}{Theorem}[section]
\newtheorem{lemma}[theorem]{Lemma}
\newtheorem{proposition}[theorem]{Proposition}
\newtheorem{corollary}[theorem]{Corollary}
\theoremstyle{definition}
\newtheorem{definition}[theorem]{Definition}
\theoremstyle{remark}
\newtheorem*{remark}{Remark}
\def\paragraph#1{\noindent \textbf{#1}}
\numberwithin{equation}{section}
\def\dist{\mathop{\rm dist}\nolimits}
\def\d{\mathrm{d}}
\def\rf{\rfloor}
\def\lf{\lfloor}
\def\<{\langle}
\def\>{\rangle}
\def\a{\alpha}
\def\b{\beta}
\def\e{\epsilon}
\def\ve{\varepsilon}
\def\g{\gamma}
\def\l{\lambda}
\def\s{\sigma}
\def\t{\tau}
\def\z{\zeta}
\def\o{\omega}
\def\L{\Lambda}
\def\G{\Gamma}
\def\O{\Omega}
\def\R{{\Bbb R}}  
\def\N{{\Bbb N}}  
\def\P{{\Bbb P}}  
\def\Q{{\Bbb Q}}  
\def\E{{\Bbb E}}
\let\cal=\mathcal
\def\AA{{\cal A}}
\def\CC{{\cal C}}
\def\DD{{\cal D}}
\def\EE{{\cal E}}
\def\FF{{\cal F}}
\def\GG{{\cal G}}
\def\HH{{\cal H}}
\def\MM{{\cal M}}
\def\OO{{\cal O}}
\def\PP{{\cal P}}
\def\RR{{\cal R}}
\def\TT{{\cal T}}
\def\VV{{\cal V}}
\def\VV{{\cal V}}
 \def \G {{\Gamma}}
 \def \L {{\Lambda}}
 \def \b {{\beta}}
 \def \s {{\sigma}}
 \def \z {{\zeta}}
 \def \t {{\tau}}
 \def \g {{\gamma}}
 \def \l {{\lambda}}
 \def \d {{\delta}}
 \def \a {{\alpha}}
 \def \o {{\omega}}
 \def \O {{\Omega}}
 \def \x {{\xi}}
 \def \ba {\begin{array}}
 \def \ea {\end{array}}
 \newcommand{\be}{\begin{equation}}
 \newcommand{\ee}{\end{equation}}
\newcommand{\bea}{\begin{eqnarray}}
 \newcommand{\eea}{\end{eqnarray}}
\def\TH(#1){\label{#1}}\def\thv(#1){\ref{#1}}
\def\Eq(#1){\label{#1}}\def\eqv(#1){(\ref{#1})}
\def\sfrac#1#2{{\textstyle{#1\over #2}}}
 \def \1{\mathbbm{1}}
\def\wt {\widetilde}
\def\wh{\widehat}
\def\PRM{\hbox{\rm PRM}}
\def\asl{\hbox{\rm Asl}}
\begin{document}

\title[Dynamic phase diagram of the REM]
{Dynamic phase diagram of the REM}
\author[V. Gayrard]{V\'eronique Gayrard}
\address{V. Gayrard\\ Aix Marseille Univ, CNRS, Centrale Marseille, I2M, Marseille, France}
\email{veronique.gayrard@math.cnrs.fr}
\author[L. Hartung]{Lisa Hartung}
 \address{L. Hartung,Institut für Mathematik, Johannes Gutenberg-Universität Mainz, Staudingerweg 9,
55099 Mainz, Germany }
\email{lhartung@uni-mainz.de}

\subjclass[2000]{82C44,60K35,60G70} \keywords{random dynamics,
random environments, clock process, L\'evy processes, spin glasses, aging}
\date{\today}

\begin{abstract}
By studying the  two-time overlap correlation function, we give a comprehensive analysis of the phase diagram of
the Random Hopping Dynamics of the Random Energy Model (REM) on time-scales that are exponential in the volume.
These results are derived from the convergence properties of the clock process associated to the dynamics and fine properties of the simple random walk in the $n$-dimensional discrete cube.
\end{abstract}
 
\maketitle


\section{Introduction.}
\label{S1}

Sometimes called the simplest spin glass, the Random Energy Model (REM) played a decisive r\^ole in the understanding of \emph{aging}, a characteristic slowing down of the relaxation dynamics of spin glasses (see \cite{BBG1}, \cite{BBG2}, \cite{BC06b},  \cite{G10b}, \cite{CW15}, \cite{G15}, \cite{G18}, \cite{MM13}, for mathematical works, and \cite{BD}, \cite{Bou92}, \cite{BB02} and the review \cite{BCKM98} for those of theoretical physics). This phenomenon is  quantified through two-time correlations functions.
In this paper, we study the two-time \emph{overlap} correlation function of the REM evolving under the simplest Glauber dynamics,  the so-called \emph{Random Hopping Dynamics} (hereafter, RHD), and give its complete (dynamic) phase diagram as a function of the inverse temperature,  $\beta>0$, and of the time-scale, $c_n$, when  the latter is exponential in the dimension $n$ of the state space, $\{-1,1\}^n$.
The objectives of this paper are twofold: to give the complete picture for a key mean-field spin glass model for which only part of the picture was known to date, and to do it by means of an effective and unifying technique.

More specifically, the proof is based on a well-established universal aging scheme, first put forward in \cite{BC06b}, which links aging to the arcsine law for stable subordinators  through a partial sum process called \emph{clock-process}.
The latter is then analyzed through powerful techniques  drawn from  Durrett and Resnick's  work on convergence of partial sum processes of dependent random variables  to subordinators \cite{DuRe}. These techniques were first introduced in the context of aging dynamics in \cite{G12} and  have since proved very effective in more complex spin-glass models or dynamics  \cite{G15}, \cite{G18}, \cite{BG13}, \cite{BGS13}, for which the universality of the REM-like aging (or \emph{arcsine-law aging}) was confirmed. 
It should be noted here  that this paper is in large part based on the unpublished work \cite{G10b} which is complemented
 by new results (in particular, analysis of the overlap correlation function is new as well as the study of the high temperature and short time-scale transition line between aging and stationarity).

\subsection{The setting.}
\label{S1.1}
We now specify the model.
Denote by $\VV_n=\{-1,1\}^n$ the n-dimensional discrete cube and by $\EE_n$ its edges set.
The Hamiltonian  of the REM is a collection of independent  Gaussian random variables,  
$(\HH_n(x), x\in\VV_n)$, with $\E \HH_n(x)=0$ and $\E \HH^2_n(x)=n$.
Assigning to each site $x$ the Boltzman weight 
\be
\t_n(x)\equiv \exp\{-\b \HH_n(x)\},
\Eq(S1.1.1)
\ee
the  Random Hopping Dynamics in the  \emph{random landscape}  
$(\t_n(x), x\in\VV_n)$ is the Markov jump process  $(X_{n}(t), t>0)$ with rates
\be
\l_n(x,y)=(n\t_n(x))^{-1},\quad\text{if $(x,y)\in\EE_n$},
\Eq(S1.1.2)
\ee
and $\l_n(x,y)=0$ else. Clearly, it is reversible with respect to Gibbs measure.
The sequence of random landscapes $(\t_n(x), x\in\VV_n)$, $n\geq 1$,  or  \emph{random environment}, is defined on a common probability space denoted $(\O^{\t}, \FF^{\t}, \P)$.  We  refer to the $\s$-algebra generated by the variables $X_n$ as $\FF^X$. We denote by $\mu_n$ the initial distribution of $X_n$ and write  $\PP_{\mu_n}$ for the law of $X_n$ started in $\mu_n$, conditional on $\FF^{\t}$, i.e. for fixed realizations of the random environment.

 To study the dynamic phase diagram of the process $X_n$ we must choose three quantities:
\begin{itemize}
\item[(1)]
the time-scale of observation, 
\item[(2)]
 a two-time correlation function, 
\item[(3)]
and the initial distribution.
\end{itemize}
We are  interested in time-scales that are exponential in $n$. We further must distinguish two types of exponential time-scales called {\it intermediate} and {\it extreme},  defined as follows. Given a time-scale $c_n$, let $a_n$ be defined through
\be
a_n\P(\t_n(x)\geq c_n)=1.
\Eq(1.1')
\ee

\begin{definition}
\TH(1.def1)
We say that a diverging sequence $c_n$ is (i) an {\it intermediate}  time-scale if there exists a constant $0<\varepsilon\leq 1$ such that
\be
\lim_{n\rightarrow\infty}\frac{\log a_n}{n\log2}=\varepsilon  \quad\text{and}\quad \lim_{n\rightarrow\infty}\frac{a_n}{2^n}=0.
\Eq(1.2)
\ee
 (ii) It is an {\it extreme} time-scale if ($\varepsilon=1$ and) there exists a constant $0<\bar\varepsilon<\infty$ such that 
\be
\lim_{n\rightarrow\infty}\frac{a_n}{2^n}= \bar\varepsilon.
\Eq(1.3)
\ee
\end{definition}

The natural  two-time correlation function  of interest in mean-field spin glasses is the  \emph{overlap} correlation function,
$\CC^\circ_{n}(t,s)$: given two times $t,s>0$ and a parameter $0<\rho<1$,
\be
\CC^\circ_{n}(t,s)=\PP_{\mu_n}\left(n^{-1}\big(X_n(c_n t), X_n(c_n (t+s)\big)\geq 1-\rho\right)
\Eq(TTCF.0)
\ee
where $(\cdot,\cdot)$ denotes the inner product in $\R^n$. The central  idea underlying the aging mechanism  based on the arcsine law for stable subordinators is that, as stated in Theorem \thv(TTCF.theo1) below, $\CC^\circ_{n}(t,s)$ coincides asymptotically
with the \emph{no-jump}  correlation function $\CC_{n}(t,s)$ used to quantify aging in the trap models of theoretical physics  \cite{BD},
\cite{BCKM98}, and defined as
 \be
\CC_{n}(t,s)=\PP_{\mu_n}\left(n^{-1}\big(X_n(c_n t), X_n(c_n u)\big)=1,\,\forall t\leq u<t+s\right).
\Eq(1.1.8)
\ee

\begin{theorem}[From the overlap to the no-jump correlation function]
\TH(TTCF.theo1) 
Let $c_n$ be either an intermediate or an extreme time-scale and let $\mu_n$ be any initial distribution.
For all $0<\rho<1$ and for all $0<\varepsilon\leq 1$ and all  $0<\beta<\infty$ such that
$0<\a(\varepsilon)\leq 1$ we have that $\P$-almost surely on intermediate time-scale and in 
$\P$-probability on extreme time-scales, for all $t\geq 0$ and all $s>0$
\be
\lim_{n\rightarrow\infty}\CC^\circ_{n}(t,s)=\lim_{n\rightarrow\infty}{\CC}_n(t,s).
\Eq(TTCF.theo1.1)
\ee
and, for $\a(\varepsilon)=1$ on intermediate time-scale
\be
\lim_{n\rightarrow\infty}\sqrt{n}\CC^\circ_{n}(t,s)=\lim_{n\rightarrow\infty}\sqrt{n}{\CC}_n(t,s).
\Eq(TTCF.theo1.2)
\ee
\end{theorem}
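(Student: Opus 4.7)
The event defining $\CC_n(t,s)$ forces the overlap to equal $1$, so $\CC_n\le \CC^\circ_n$ pointwise, and the theorem reduces to proving that the nonnegative discrepancy
\be
R_n(t,s)\,:=\,\CC^\circ_n(t,s)-\CC_n(t,s)\,=\,\PP_{\mu_n}\!\left(K\ge 1,\ {\textstyle\frac1n}(X_n(c_nt),X_n(c_n(t+s)))\ge 1-\rho\right)
\ee
vanishes in the relevant mode of convergence when $0<\a(\varepsilon)\le 1$, with the sharper rate $\sqrt n\,R_n\to 0$ when $\a(\varepsilon)=1$. Here $K$ is the number of jumps of $X_n$ in the window $(c_nt,c_n(t+s)]$. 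Because the rates $\l_n(x,y)=(n\t_n(x))^{-1}$ are uniform in the neighbour $y$, the embedded jump chain $Y_n$ of $X_n$ is a simple random walk on $\{-1,1\}^n$ that is independent of the environment $(\t_n(x))_{x\in\VV_n}$, and the endpoint overlap depends on $Y_n$ only through the Hamming distance between $Y_n(J)$ and $Y_n(J+K)$, where $J=K_n(c_n t)$. Hence
\be
R_n(t,s)\,\le\,\PP_{\mu_n}\!\left(K\ge 1,\ d_H(Y_n(J),Y_n(J+K))\le \rho n/2\right),
\ee
which I would split at a threshold $k_n^\star\to\infty$ chosen much smaller than the typical jump count $a_n$; for concreteness $k_n^\star=\lfloor n(\log n)^2\rfloor$ will do.

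\textbf{Large-$K$ contribution.}
For $K\ge k_n^\star$ the spectral decomposition of the simple random walk on the hypercube (eigenvalues $1-2j/n$) gives $\E\,d_H(Y_n(0),Y_n(K))=\tfrac n2\bigl(1-(1-2/n)^K\bigr)$ with exponential concentration about this mean; equivalently, $Y_n(K)$ is $e^{-cn}$-close in total variation to the uniform measure on $\{-1,1\}^n$ as soon as $K\gtrsim n\log n$, yielding $\PP(d_H(Y_n(0),Y_n(K))\le \rho n/2)\le e^{-c(\rho)n}$ uniformly in $K\ge k_n^\star$. This part is therefore exponentially small in $n$, and remains so even after multiplication by $\sqrt n$ in the case $\a(\varepsilon)=1$.

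\textbf{Main obstacle: the small-$K$ regime.}
The delicate term is $\PP_{\mu_n}(1\le K<k_n^\star)$, which captures the scenario that $X_n$ escapes its current deep trap and is recaptured within a handful of jumps by a neighbouring deep trap lying inside the Hamming ball of radius $\rho n/2$ about $X_n(c_n t)$. To rule it out I would invoke the clock-process convergence already developed in the paper to identify $\lim \CC_n$: on both intermediate and extreme time-scales, the number of jumps of $X_n$ in a window of length $c_n s$ is typically of order $a_n$, so $\PP_{\mu_n}(1\le K<k_n^\star)=O(k_n^\star/a_n)$, which vanishes exponentially fast since $k_n^\star$ is polynomial in $n$ while $a_n=2^{n\varepsilon+\po(n)}$. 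Combined with the large-$K$ bound this gives $R_n=\so$ in the required mode for $0<\a(\varepsilon)\le 1$, and the same exponential decay absorbs the extra $\sqrt n$ factor in \eqv(TTCF.theo1.2). I expect the main technical difficulty to be promoting the heuristic ``number of jumps of order $a_n$'' into a uniform quantitative bound that also covers the small-probability environment events where the starting site has atypical trap depth (so the initial fragment of the jump process depends on $\t_n$); this is precisely the fine clock-process control the paper already builds to prove the limit of $\CC_n$, and the estimate needed here is strictly weaker. A complementary geometric sanity check, that the ball has volume at most $2^{nH(\rho/2)}=\po(2^n)$ with $H$ the binary entropy and $H(\rho/2)<1$ for $\rho<1$, makes it transparent why the overlap criterion is compatible with the no-jump criterion in the limit.
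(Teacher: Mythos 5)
Your reduction to showing $R_n:=\CC^\circ_n-\CC_n\to 0$ mirrors the paper's decomposition, but the mixing-time argument you propose for the large-$K$ regime has a genuine flaw. The jump count $K$ is determined by the clock process and is therefore tightly coupled to the environment, so the time-$c_n(t+s)$ snapshot position $Y_n(J+K)$ is heavily size-biased towards deep traps; it is \emph{not} close in total variation to uniform, and you cannot treat $K\geq k_n^\star$ as a prescribed number of SRW steps decoupled from where the walk goes. The paper handles this by first identifying, via the clock-process convergence, the two traps $z_-,z_+$ that generate the constancy intervals straddling $t$ and $t+s$, and then reducing the claim to the hitting-time estimate $\PP(z_+\in B_\rho(z_-)\cap\TT_n(\d))\to 0$, where $\TT_n(\d)=\{x:\tau_n(x)>\d c_n\}$; the correct reference set for $z_+$ is $\TT_n(\d)$, not the whole cube.

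The gap is visible on extreme time-scales: there $|\TT_n(\d)|$ converges a.s.\ to a finite Poisson random variable, $B_\rho(z_-)\cap\TT_n(\d)=\{z_-\}$ a.s., and $H_n(z_-)$ has mean $\sim 2^n\sim a_n$, so the walk revisits $z_-$ with probability of order one over the $\sim a_n$ steps in the window. The true estimate of $\PP(z_+\in B_\rho(z_-)\cap\TT_n(\d))$ is $\sim\d^\a$ — of order one in $n$, vanishing only in the auxiliary $\d\to0$ limit — and not $e^{-c(\rho)n}$ as your mixing bound would assert. Your treatment of $\a(\varepsilon)=1$ is likewise too coarse: Proposition \thv(LCor.3) shows the clock there is dominated by the much denser set of typical traps $\MM_n(\d)$ (cardinality $\sim 2^na_n^{-1/4}/\sqrt{n}$, mean hitting time $\sim\sqrt{n}\,a_n^{1/4}\ll a_n$), which the walk revisits many times; the relevant bound \eqv(TTCF.18) carries an error $\OO(1/n)$ and \eqv(TTCF.theo1.2) follows from $\OO(1/n)\ll 1/\sqrt{n}$, not from an exponential gain. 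Finally, the small-$K$ estimate you flag as the main difficulty is never proved separately in the paper: the initial reduction to the constancy intervals generated by $\TT_n(\d)$ (or $\MM_n(\d)$), which holds with probability $\to 1$ by the clock convergence, subsumes both the small- and large-$K$ regimes in a single hitting-time argument.
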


From now on we focus on ${\CC}_n(t,s)$. Unless otherwise specified, the initial distribution is the uniform distribution
\be
\pi_n(x)=2^{-n},\quad x\in \VV_n.
\Eq(1.8)
\ee
It models the experimental procedure of a \emph{deep quench} which aims to draw a typical initial state.

\subsection{Main results} 
\label{S1.3}
We are interested in the  behavior of the correlation function ${\CC}_n(t,s)$ in the limit $n\rightarrow\infty$. Whereas in stationary dynamics ${\CC}_n(t,s)$ is  asymptotically time translational invariant, in out-of-equilibrium aging dynamics a history dependence appears.  Our first theorem characterizes this \emph{aging phase}. For  $0<\varepsilon\leq 1$ and  $0<\beta<\infty$, we set
\be
\begin{split}
\b_c(\varepsilon)&=\sqrt{\varepsilon2\log 2},
\\
\a(\varepsilon)&=\b_c(\varepsilon)/\b,
\end{split}
\Eq(1.theo1.0')
\ee
and write $\b_c\equiv\b_c(1)$, $\a\equiv\a(1)$. Note that $\b_c$ is the static critical temperature at which a transition  occurs  between  distinct high and low temperature limiting  Gibbs measures (see Section 9.3 of \cite{AB12} for their description). Denote by $\asl_{\a(\varepsilon)}(\cdot)$ the probability distribution function of the generalized  arcsine law of parameter 
$\a(\varepsilon)$,
\be
\asl_{\a(\varepsilon)}(u)=\frac{\sin{(\a(\varepsilon)}\pi)}{\pi}\int_0^u (1-x)^{-\a(\varepsilon)}x^{-1+\a(\varepsilon)}dx,
\quad 0<\a(\varepsilon)< 1.
\Eq(1.theo1.0'')
\ee

\begin{theorem}[Aging]
{\TH(1.theo1)}
Let $c_n$ be an intermediate time-scale. For all  $0<\varepsilon\leq 1$ and all  $0<\beta<\infty$ such that
 $0<\a(\varepsilon)< 1$ the following holds $\P$-almost surely if $\sum_{n}a_n/2^n<\infty$ and  in $\P$-probability if 
 $\sum_{n}a_n/2^n=\infty$: for all $t\geq 0$ and all $s>0$,
\be
\lim_{n\rightarrow\infty}\CC_{n}(t,s)=\asl_{\a(\varepsilon)}(t/t+s).
\Eq(1.theo1.1)
\ee
\end{theorem}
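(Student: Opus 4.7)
The plan is to reduce $\CC_n(t,s)$ to a question about the \emph{clock process} associated with $X_n$, prove that this process converges to an $\a(\varepsilon)$-stable subordinator via the Durrett--Resnick scheme of \cite{G12}, and then conclude by the Dynkin--Lamperti arcsine law.

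First, I set up the clock-process representation. Let $(Y_i)_{i\geq 0}$ be the discrete-time jump chain of $X_n$, i.e.\ the simple random walk on $\VV_n$, and let $(e_i)_{i\geq 0}$ be i.i.d.\ $\mathrm{Exp}(1)$ variables independent of $(Y_i)$ and of the environment. Since the total exit rate at $x$ equals $1/\t_n(x)$, the time of the $k$-th jump of $X_n$ can be written as
\be
S_n(k)=\sum_{i=0}^{k-1}\t_n(Y_i)e_i ,
\Eq(plan.clock)
\ee
with $X_n(T)=Y_{k_n(T)}$ and $k_n(T):=\max\{k\geq 0:S_n(k)\leq T\}$. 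The event that $X_n$ does not jump between $c_n t$ and $c_n(t+s)$ is precisely $\{S_n(k_n(c_n t)+1)\geq c_n(t+s)\}$, hence
\be
\CC_n(t,s)=\PP_{\pi_n}\lb S_n(k_n(c_n t)+1)\geq c_n(t+s)\rb .
\Eq(plan.cc)
\ee

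The heart of the proof is then to show that the rescaled clock process $\bar S_n(u):=c_n^{-1}S_n(\lf a_n u\rf)$ converges, in the Skorokhod sense, to an $\a(\varepsilon)$-stable subordinator $V_{\a(\varepsilon)}$ -- $\P$-a.s.\ when $\sum_n a_n/2^n<\infty$ and in $\P$-probability otherwise. Using the Gaussian tails of $\HH_n$ together with \eqv(1.1'), one checks the one-dimensional tail asymptotics
\be
\P\lb c_n^{-1}\t_n(x)e> u\rb =a_n^{-1}u^{-\a(\varepsilon)}(1+\son),\qquad u>0,
\Eq(plan.tail)
\ee
so each individual step lies in the domain of attraction of an $\a(\varepsilon)$-stable law. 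To upgrade this to functional convergence along the trajectory, I would invoke the Durrett--Resnick criterion: one has to show that truncated sums of second moments of small jumps vanish, and that the conditional L\'evy-type tail
\be
\sum_{i=1}^{\lf a_n t\rf}\PP\lb c_n^{-1}\t_n(Y_i)e_i>u\,\big|\,\FF^\t\vee\sigma(Y_0,\dots,Y_{i-1})\rb \longrightarrow t\, u^{-\a(\varepsilon)}
\Eq(plan.nu)
\ee
in the appropriate mode of convergence. Two quantitative properties of the simple random walk on the $n$-cube enter here: its mixing time is $O(n\log n)\ll a_n$, and, because $a_n\ll 2^n$, its range up to time $a_n$ is with high probability $(1-\son)a_n$. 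Together these allow one to replace the sum in \eqv(plan.nu) by $a_n t\cdot \P(c_n^{-1}\t e>u)$, up to errors controlled by self-intersections of the walk; the Borel--Cantelli dichotomy on $\sum_n a_n/2^n$ then selects between the quenched and the annealed statement.

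Once \eqv(plan.nu) is in place, the arcsine law closes the argument. Writing $L(t):=\sup\{s:V_{\a(\varepsilon)}(s)\leq t\}$, the classical Dynkin--Lamperti theorem for stable subordinators yields
\be
\P\lb V_{\a(\varepsilon)}(L(t))\geq t+s\rb =\asl_{\a(\varepsilon)}\lb t/(t+s)\rb ,
\Eq(plan.dl)
\ee
and combining \eqv(plan.cc) with the convergence of $\bar S_n$ and the continuity of the first-passage functional at the limiting subordinator yields \eqv(1.theo1.1). The main obstacle is \eqv(plan.nu): controlling the statistical dependence between the holding times $\t_n(Y_i)e_i$ produced by revisits of the hypercube walk, and proving that the resulting sum self-averages quenchedly under the summability condition and only in probability otherwise.
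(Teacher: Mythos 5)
Your plan matches the paper's proof of this theorem: both reduce $\CC_n(t,s)$ to the clock process, establish Skorokhod convergence of the rescaled clock to an $\a(\varepsilon)$-stable subordinator via the Durrett--Resnick scheme (this is the content of the paper's Theorem \thv(1'.theo1), built on Theorem \thv(1.theo3) and the ergodic/concentration estimates of Sections \thv(S5)--\thv(S7)), and then conclude by the Dynkin--Lamperti arcsine law. A minor slip: the limiting L\'evy tail is $u^{-\a(\varepsilon)}\a(\varepsilon)\G(\a(\varepsilon))$ rather than $u^{-\a(\varepsilon)}$, but the multiplicative constant does not affect the arcsine law.
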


Eq.~\eqv(1.theo1.1) was first proved in \cite{BC06b} (see Theorem 3.1) and later in \cite{CG} (see Theorem 2.1) in subregions 
of the above $\P$-almost sure convergence region.

Call $\DD(\varepsilon,\beta)$ the domain of validity of Theorem \thv(1.theo1).  It is delimited in the $(\varepsilon,\beta)$-parameter plane by three transition lines which are: the curve $\b_c(\ve)=\b$ and $0< \varepsilon\leq 1$, arising at intermediate time scales, the plateau  $\varepsilon= 1$ and  $\b>\b_c(1)$, appearing at extreme time scales, and the axis $\varepsilon=0$ and $\b>0$, corresponding to time-scales that are sub-exponential in $n$. Notice that these three  transition lines correspond, respectively, to $\a(\varepsilon)=1$, $0<\a\equiv\a(1)<1$ and $\a(\varepsilon)=0$, whereas  inside $\DD(\varepsilon,\beta)$, $0<\a(\varepsilon)<1$.  We will see in Subsection \thv(S1.2) that to these different values of $\a(\varepsilon)$ correspond different behaviors of the clock process.

The domain  $\DD(\varepsilon,\beta)$ is the optimal domain of validity of \eqv(1.theo1.1). On the one hand it is easy to prove that  on sub-exponential time-scales, i.e.~when $\varepsilon=\varepsilon(n)\downarrow 0$ as $n$ diverges, $\lim_{n\rightarrow\infty}\CC_{n}(t,s)=1$  $\P$-a.s.. A non-trivial limit can be obtained by considering a non-linear rescaling of time  \cite{Gun09} (see also  \cite{BGu12},  \cite{BGS13}) when $\varepsilon(n)$ decays slowly enough. On the other hand it is known that in the complement of $\DD(\varepsilon,\beta)$ in the upper half quadrant  $\varepsilon>0$, $\b>0$,   the process $X_n$ started in the uniform measure $\pi_n$ is asymptotically stationary \cite{PM00}. Here two distinct stationary phases must be distinguished, mirroring the two distinct static phases. As might be expected by virtue of the translational invariance of stationary dynamics,
correlations vanish in the high temperature stationary phase where  the limiting Gibbs measure resembles a uniform measure.

\begin{theorem}[High temperature stationary phase]
 \TH(hightemp) For $\b<\b_c(\ve)$ with $0<\ve\leq 1$ and $c_n$ an intermediate time-scale,  $\P$-almost surely
\be\Eq(Mainz.115)
\lim_{n\to \infty} \CC_n(t,s) =0.
\ee
\end{theorem}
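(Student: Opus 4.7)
By the strong Markov property and the memorylessness of the exponential holding times (rate $1/\tau_n(x)$ at site $x$), the no-jump correlation function equals
\be
\CC_n(t,s) = \E_{\pi_n}\bigl[\exp(-c_n s/\tau_n(X_n(c_n t)))\bigr].
\Eq(Mainz.plan.1)
\ee
For any $K>0$, the trivial bound $e^{-c_n s/\tau_n(x)}\leq e^{-K}+\bbone\{\tau_n(x)\geq c_n s/K\}$ yields
\be
\CC_n(t,s) \leq e^{-K} + \PP_{\pi_n}\bigl(X_n(c_n t)\in A_n^K\bigr),\quad A_n^K := \{x\in\VV_n: \tau_n(x)\geq c_n s/K\}.
\Eq(Mainz.plan.2)
\ee
After sending $n\to\infty$ and then $K\to\infty$, the theorem reduces to showing that $X_n(c_n t)$ lies in a deep trap with vanishing probability.

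I would attack this by coupling $X_n$ to its embedded SRW $Y_n$ on $\VV_n$ via the clock process $S_n(k)=\sum_{j=1}^{k} e_j\tau_n(Y_n(j-1))$, with $e_j\sim\mathrm{Exp}(1)$ i.i.d.\ and independent of $Y_n$, so that $X_n(c_n t)=Y_n(K_n)$ with $K_n=\max\{k:S_n(k)\leq c_n t\}$. For a deterministic step count $N_n$ to be tuned,
\be
\PP_{\pi_n}(X_n(c_n t)\in A_n^K)\leq \PP_{\pi_n}\bigl(\exists\,k\leq N_n: Y_n(k)\in A_n^K\bigr)+\PP_{\pi_n}(K_n>N_n).
\Eq(Mainz.plan.3)
\ee
Since $\pi_n$ is stationary for the SRW on the hypercube, $Y_n(k)\sim\pi_n$ for every $k$, so a union bound gives $(N_n+1)|A_n^K|/2^n$ for the first summand. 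The density $|A_n^K|/2^n$ is a binomial sample mean whose expectation $\E[|A_n^K|]\sim 2^n/a_n$ blows up exponentially, so Chebyshev and Borel--Cantelli make it $\P$-a.s.\ asymptotic to $c(K)/a_n$ for some explicit $c(K)=c(K,s)$.

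The final step is to pick $N_n=\lceil 2c_n t/\E[\tau_n]\rceil$ and verify both that $N_n=o(a_n)$ and that $\PP_{\pi_n}(K_n>N_n)=\PP_{\pi_n}(S_n(N_n)<c_n t)\to 0$. The asymptotics $\log c_n/n\to\beta\,\beta_c(\varepsilon)$ and $\log a_n/n\to\beta_c(\varepsilon)^2/2$ give $N_n/a_n\sim\exp(-n(\beta_c(\varepsilon)-\beta)^2/2)\to 0$, so the first term of \eqv(Mainz.plan.3) vanishes. Under $Y_n(j-1)\sim\pi_n$ the quenched mean is $\E[S_n(N_n)|\tau_n]=N_n\bar\tau_n$ with $\bar\tau_n:=2^{-n}\sum_x\tau_n(x)$, so one needs both $\bar\tau_n/\E[\tau_n]\to 1$ $\P$-a.s.\ and a quenched left-tail bound for $S_n(N_n)$ around this mean. \emph{This quenched concentration is the main obstacle}: the summands $\tau_n(Y_n(j-1))$ are both dependent (the SRW revisits sites) and heavy-tailed, and the naive second-moment method for $\bar\tau_n/\E[\tau_n]$ already fails above the $L^2$ threshold $\sqrt{\log 2}$. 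The standard remedy, valid throughout $\beta<\beta_c(\varepsilon)\leq\sqrt{2\log 2}$, is to truncate $\tau_n$ at an intermediate level $M_n$ with $\E[\tau_n]\ll M_n\ll c_n$ (e.g.\ $M_n=\exp(\beta^2 n+n^{2/3})$): a Gaussian-tail estimate using $\beta<\beta_c(\varepsilon)$ shows $\E[\tau_n^\ast]=(1+\so)\E[\tau_n]$; the constraint $M_n\ll 2^n\E[\tau_n]$ makes a truncated second moment summable, yielding $\overline{\tau_n^\ast}/\E[\tau_n^\ast]\to 1$ $\P$-a.s.; finally, the boundedness of the truncated clock sum allows a Bernstein-type inequality (handling the SRW dependence by standard block/coupling arguments) to furnish the desired quenched estimate. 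Letting $K\to\infty$ completes the proof.
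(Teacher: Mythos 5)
Your proposal is correct and, in spirit, it captures the same underlying mechanism as the paper's proof, but it takes a genuinely different route. The paper gives a one-line argument: it points back to the decomposition $S_n = S_{n,1}+S_{n,2}$ from the proof of Proposition \thv(LCor.3), where $S_{n,2}$ collects the clock increments exceeding $c_n$. The Markov-plus-Gaussian-tail computation in \eqv(Mainz.111)--\eqv(Main.112) shows that $\E\,E(S_{n,2}(t))\to 0$ whenever $\b<\b_c(\ve)$; since a constancy interval of the rescaled clock of length $\geq s$ forces an increment of at least $c_ns$, and since the total contribution of all such increments vanishes in expectation, the no-jump probability goes to zero. You instead rewrite $\CC_n(t,s)=\E_{\pi_n}[\exp(-c_ns/\t_n(X_n(c_nt)))]$ via the memoryless residual holding time, then reduce to showing $X_n(c_nt)$ is unlikely to be in a deep trap $A_n^K$, and attack this by controlling (a) the number of embedded-chain steps $N_n\sim c_n/\E\t_n$, which you correctly note is $\ll a_n$ for $\b\neq\b_c(\ve)$ since $N_n/a_n\sim e^{-n(\b_c(\ve)-\b)^2/2}$, and (b) the density $|A_n^K|/2^n\sim (K/s)^{\a(\ve)}/a_n$, via a union bound over the stationary SRW. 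This is a more first-principles argument and makes the intuition transparent: there are too few steps to encounter a deep trap.

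Both approaches share the same nontrivial ingredient, namely a quenched law of large numbers for the clock, and you correctly flag this as ``the main obstacle''. But note that it is not a gap in the paper's framework: the lower-tail control $\PP_{\pi_n}(S_n(N_n)<c_nt)\to 0$ that you need is precisely the content of Proposition \thv(LCor.3)(ii), which is established via the ergodic theorem for $M_n$ (Proposition \thv(4.prop1')) and the first/second-moment calculations of Lemma \thv(4.1''). Your proposed truncation-plus-Bernstein approach to handle the SRW dependence would work, but it reinvents what the paper already proves. Two small points worth being careful about if you execute this plan in full: first, you must verify the a.s.\ concentration of $|A_n^K|/2^n$ for a fixed but arbitrary $K$ before taking $K\to\infty$ after $n\to\infty$ (a direct Bennett/Borel--Cantelli argument suffices since $\E|A_n^K|\sim 2^n/a_n$ diverges exponentially for intermediate scales); second, your bound $N_n/a_n\to 0$ does not by itself handle the first $\theta_n\sim n^2$ steps before the SRW equilibrates --- this is harmless here because $\theta_n|A_n^K|/2^n\to 0$ too, but it is the kind of boundary effect the paper's ergodic theorems (Proposition \thv(3.prop0), Corollary \thv(3.cor1)) are built to control and one should say so explicitly.
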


In the complementary low temperature stationary phase, namely when $\b>\b_c$, $\CC_n(t,s)$ converges for all $t,s>0$ to a random function ${\CC}^{sta}_{\infty}(s)$ that reflects the Poisson-Dirichlet nature of the limiting Gibbs weights. We postpone the precise statement to Theorem \thv(1.theo2).

\begin{remark}
Note  that \cite{PM00} only provides upper bounds (see (1.8) and (1.9) therein) on the time needed for the process to be at a distance less than a constant from equilibrium. These bounds correspond precisely to the two transition lines delimiting  $\DD(\varepsilon,\beta)$ on exponential time-scales. It thus follows from Theorem \thv(1.theo1) that they are accurate, i.e.~that at shorter times the process is not in equilibrium.
\end{remark}

As shown in the remainder of this subsection, the two distinct (low and high temperature) static phases give rise to two distinct dynamical phase transitions between aging  and stationarity. We begin by examining  the high temperature critical line $\b=\b_c(\ve)$ and $0<\ve\leq 1$, focusing on the subregion of intermediate time-scales defined by $\b_c(\ve)=\b$ and
\be
\lim_{n\to\infty}\b\sqrt{n}-\frac{\log c_n}{\b\sqrt{n}} =\theta
\Eq(TTCF.1)
\ee
for some constant $\theta\in(-\infty,\infty)$. The reasons for this restriction, which are technical, are discussed in the remark below  \eqv(1'.theo3.2').

\begin{theorem}[High temperature critical line]
\TH(TCor.2)
Let $\b=\b_c(\ve)$ with $0<\ve\leq 1$. Let $c_n$ be an intermediate time-scale satisfying \eqv(TTCF.1) for some constant $\theta\in (-\infty,\infty)$. Then, for all $t,s>0$, $\P$-almost surely if $\sum_{n}a_n/2^n<\infty$ and in $\P$-probability else
\be
\Eq(TCor.2.1)
\lim_{n\to\infty}\sqrt{n}\CC_n(t,s) =   \frac{e^{-\theta^2/2}}{\Phi(\theta)}\log\left(1+\frac{t}{s}\right)\frac{1}{\b\sqrt{2\pi}},
\ee
where $\Phi(\theta)$ is the standard gaussian distribution function. 
\end{theorem}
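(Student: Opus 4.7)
My plan is to reduce $\CC_n(t,s)$ to an integral of a single-site Laplace transform against the renewal density of the underlying clock process, and then to evaluate it via the Pareto-type tail of $\tau_n$ at the critical temperature. The $1/\sqrt n$ scaling and the explicit constants will emerge from the refined Gaussian tail asymptotics tied to the parameter $\theta$ in \eqv(TTCF.1).

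First, the strong Markov property combined with the memorylessness of the exponential holding times yields
\be
\CC_n(t,s)=\E_{\pi_n}\bigl[e^{-c_n s/\tau_n(X_n(c_n t))}\bigr]=\sum_{k\ge 0}\E\bigl[\1_{T_k\le c_n t}\,e^{-(c_n(t+s)-T_k)/\tau_n(Y_n(k))}\bigr],
\Eq(plan.1)
\ee
the second equality decomposing over the jump times $T_k$ of the embedded simple random walk $Y_n$ on $\VV_n$ via $T_{k+1}-T_k=e_k\tau_n(Y_n(k))$, $e_k\sim\mathrm{Exp}(1)$. For $k$ well below the mixing time, the quenched variable $\tau_n(Y_n(k))$ is asymptotically independent of $T_k$, so the inner expectation averages to the pointwise Laplace transform $g_n(w):=\E\bigl[e^{-w c_n/\tau_n}\bigr]$.

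Second, applying the Mills ratio at the shifted threshold $u_n=(\log c_n)/(\beta\sqrt n)=\beta\sqrt n-\theta+o(1)$ gives
\be
\frac{1}{a_n}\sim\frac{\phi(u_n)}{u_n}=\frac{e^{-\theta^2/2}}{\beta\sqrt{2\pi n}}\,e^{-\beta^2 n/2+\theta\beta\sqrt n},\qquad \P(\tau_n/c_n\ge v)\sim 1/(a_n v)\ \ \text{for bounded }v\ge 1,
\Eq(plan.2)
\ee
so $\tau_n/c_n$ converges, conditionally on $\tau_n\ge c_n$, to a Pareto$(1)$ law. Combined with the log-normal bulk this yields $g_n(w)\sim 1/(a_n w)$ as $n\to\infty$, uniformly on compacts in $w>0$. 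Substituting into \eqv(plan.1), using the renewal theorem to convert $\sum_k\E[\1_{T_k\le c_n t}\,f(T_k)]$ into an integral against the renewal density, and applying the Frullani identity
\be
\int_0^\infty\frac{e^{-su}-e^{-(t+s)u}}{u}\,du=\log(1+t/s)
\Eq(plan.3)
\ee
extracts the $\log(1+t/s)$ factor. The prefactor $e^{-\theta^2/2}/(\Phi(\theta)\beta\sqrt{2\pi})$ then follows upon identifying the correct effective renewal rate at $\alpha(\ve)=1$: the mass $\E[\tau_n\1_{\tau_n\ge c_n}]\sim\Phi(\theta)\,e^{\beta^2 n/2}$ lying above the cutoff is non-negligible and must be properly discounted in the time scale that relates $k$ to $T_k$, producing the $\Phi(\theta)$ in the denominator.

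For the dichotomy between $\P$-a.s.\ and $\P$-in-probability convergence, I would use a second-moment bound on the empirical tail distribution of the $\tau_n(x)$ together with Borel--Cantelli along the full sequence when $\sum_n a_n/2^n<\infty$, and a truncated-moment argument otherwise. The hard part will be the marginal nature of the critical case $\alpha(\ve)=1$: the rescaled clock process no longer converges to a nondegenerate stable subordinator and both the Gaussian bulk and the Pareto tail of $\tau_n$ contribute at comparable orders to \eqv(plan.1), so obtaining the $1/\sqrt n$ rate with the correct $1/\Phi(\theta)$ renormalization requires a quantitative renewal-theoretic analysis at $\alpha=1$ with explicit control on the overshoot, rather than the arcsine-law machinery available only for $\alpha(\ve)<1$.
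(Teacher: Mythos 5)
Your decomposition over jump times, the Mills-ratio/Pareto estimate, and the Riemann-sum (or Frullani) extraction of $\log(1+t/s)$ are all aligned with the paper's strategy, so at the level of high-level structure this is the right plan. The genuine gap is in your account of where the $\Phi(\theta)$ prefactor comes from, which as stated does not hold together and is precisely the delicate part of the proof. You compute $\E[\tau_n\1_{\tau_n\ge c_n}]\sim\Phi(\theta)e^{\beta^2 n/2}$ and say this mass above the cutoff ``must be properly discounted in the time scale that relates $k$ to $T_k$, producing the $\Phi(\theta)$ in the denominator.'' But removing the above-cutoff mass from $\E[\tau_n]=e^{\beta^2 n/2}$ leaves $(1-\Phi(\theta))e^{\beta^2 n/2}$ as the effective drift per step, so the reasoning you give would put $1/(1-\Phi(\theta))$, not $1/\Phi(\theta)$, in the prefactor — i.e.\ your stated logic and your stated answer are inconsistent. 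What the proof actually requires is the exact asymptotics of $\E[g_1(\gamma_n(x))]=\E[\gamma_n(1-e^{-1/\gamma_n})]$, which is essentially $\E[\gamma_n\1_{\gamma_n<1}]$ (the complement of the quantity you isolated); this feeds the auxiliary scale $\widetilde a_n$ in \eqv(Cor.2.1') and is the content of Lemma \thv(Lem2.4.1). Moreover, you cannot get away with ``using the renewal theorem'': the clock increments are neither i.i.d.\ nor square-integrable at $\alpha(\ve)=1$, and what is needed is the quenched, uniform-in-$t$ linearization of $M_n(t)$ with errors $o(1/\sqrt n)$ coming from Proposition \thv(4.prop1'), Lemma \thv(4.1'') and Proposition \thv(LCor.3), plus a separate bound showing that after $\lfloor\widetilde a_n t(1+\e')\rfloor$ steps the clock has already overshot $c_n t$ with probability $1-o(1/\sqrt n)$ (the estimate \eqv(C4)).

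Two smaller but real slips. The asymptotic independence of $\tau_n(Y_n(k))$ and $T_k$ requires $k$ to be well \emph{above}, not below, the mixing time $\theta_n\sim n^2$ (Proposition \thv(3.prop0)/Corollary \thv(3.cor1)); the first $\theta_n$ summands must then be handled separately, as in \eqv(C5) and Lemma \thv(Lem.cor5). And once the clock has been linearized, the sum over $k$ is directly a Riemann sum for $\int_0^t du/(t+s-u)=\log(1+t/s)$, so Frullani's identity is a detour rather than the source of the logarithm; invoking it obscures the fact that the approximation $T_k\approx c_n k/\widetilde a_n$ must hold uniformly on $\{k\le\widetilde a_n t(1+\e')\}$ before the sum can be evaluated.
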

\begin{remark}
A main motivation behind Theorem \thv(TCor.2) is the paper \cite{BB02}, where Bouchaud's trap model \cite{BD} is studied along its high temperature critical line, which predicts that the scaling form of  its correlation function  presents dynamical ultrametricity in the sense of Cugliandolo and Kurchan \cite{CuKu94}.
This result, that corresponds to the setting of i.i.d.~random variables in the domain of attraction of a one stable law,  easily follows from \cite{Er}.
 Since the limiting correlation functions of Bouchaud's trap model  and that of the REM (for both the RHD and Metropolis dynamics\cite{G18}) are the same in their aging phases, it is natural to ask whether the REM also exhibits dynamical ultrametricity along its high temperature critical line. Since $\CC_n(t,s) $ decays to zero as $n$ diverges whatever the choices of $t,s>0$, Theorem \thv(TCor.2) answers in the negative.
\end{remark}

We now turn to the low temperature critical line $\varepsilon= 1$ and  $\b>\b_c(1)$ at extreme time-scales. To describe the transition across this line we use two different double limiting procedures:  we first take the limit $n\rightarrow\infty$ and then, either take the further small time limit $t\rightarrow 0$, in which case the process falls back to aging (Theorem \thv(1.theo1.ext)), or take the large time limit $t\rightarrow \infty$, in which case the process crosses over to stationarity  (Theorem \thv(1.theo2)). We do not have an expression for the single $n\rightarrow\infty$ limit.

\begin{theorem}(Low temperature critical line: crossover to aging)
\TH(1.theo1.ext)
Let $c_n$ be an extreme time-scale. For all $\b>\b_c$ and all $\rho>0$, in $\P$-probability
\be
\lim_{t\rightarrow 0}\lim_{n\rightarrow\infty}\CC_{n}(t,\rho t)=\asl_{\a}(1/1+\rho).
\Eq(1.theo1.2)
\ee
\end{theorem}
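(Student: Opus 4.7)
The strategy is to reduce the statement, via the clock-process machinery on which the paper rests, to a small-time asymptotic analysis of the limit clock process on extreme time-scales. Let $J_n^{(k)}$ denote the successive jump times of $X_n$ and introduce the rescaled clock process
\[ S_n(u) = c_n^{-1} J_n^{(\lfloor a_n u\rfloor)},\qquad u\geq 0. \]
Since the underlying simple random walk on $\VV_n$ is, over any fixed number of steps, diffusive on the scale $\sqrt n \ll n$, the no-jump correlation $\CC_n(t,\rho t)$ is, up to negligible error as $n\to\infty$, the probability that a single waiting time of $S_n$ straddles the interval $[t,(1+\rho)t]$. On extreme time-scales with $\b>\b_c$, the process $S_n$ converges in $\P$-probability, in a suitable Skorokhod topology, to a pure-jump subordinator-type process $\mathfrak S$ whose L\'evy-jump structure is encoded by the Poisson--Dirichlet weights of the limit low-temperature Gibbs measure of the REM (this is the extreme-scale counterpart of the $\alpha$-stable limit that drives Theorem \thv(1.theo1)). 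A continuous-mapping argument for the straddle functional then yields
\[ \lim_{n\to\infty}\CC_n(t,\rho t)=:\CC^{\mathrm{ext}}_\infty(t,\rho t). \]

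\textbf{Small-time asymptotics.} The main step is to prove $\lim_{t\downarrow 0}\CC^{\mathrm{ext}}_\infty(t,\rho t)=\asl_{\a}(1/(1+\rho))$. Only jumps of $\mathfrak S$ of size of order $t$ can contribute to the straddling event, and for small $t$ these are governed by the small-weight tail of the Poisson--Dirichlet family, which is of power-law type with the precise exponent $\a=\b_c/\b$. Equivalently, the rescaled process $u\mapsto t^{-1}\mathfrak S(tu)$ converges as $t\downarrow 0$ to an $\a$-stable subordinator, which is exactly the intermediate-scale clock-process limit at the same index $\a$. Dynkin's arcsine theorem applied to this limiting stable subordinator then delivers the value $\asl_\a(1/(1+\rho))$. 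Intuitively, over a short portion of an extreme time-scale the dynamics has not yet encountered the few macroscopically heavy traps responsible for the extreme-scale stationarity, so it behaves exactly as on an intermediate scale and falls back into the aging phase.

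\textbf{Main obstacle.} The principal difficulty lies in the justification of the iterated limit $\lim_{t\downarrow 0}\lim_{n\to\infty}$: the convergence of $S_n$ to $\mathfrak S$ and of the straddle functional have to be uniform enough in $t$ near $0$ to permit the interchange. Concretely, one needs to quantify the contribution to $\CC^{\mathrm{ext}}_\infty(t,\rho t)$ coming from the finitely many macroscopic jumps of $\mathfrak S$ (the ``deep traps'' visible only on the full scale $c_n$) and show that it is negligible compared with the stable-like contribution of the small jumps as $t\downarrow 0$. The explicit Poisson--Dirichlet description of $\mathfrak S$, together with the power-law behaviour of its small jumps, makes this tractable but is where the real work lies; once performed, the final arcsine identity is a direct application of the classical Dynkin--Lamperti theorem. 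A final technical point is to show that possible short-time fluctuations of the bare random walk (in particular, whether $X_n(c_n u)$ moves out of the site it occupied at time $c_n t$ other than by a macroscopic sojourn ending) do not spoil the reduction of the overlap correlation to the no-jump correlation --- but this follows from Theorem \thv(TTCF.theo1) already in place.
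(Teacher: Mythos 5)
Your plan correctly identifies the two pillars of the paper's argument: (i) convergence of the rescaled clock process on extreme time-scales to a doubly stochastic subordinator $S^{ext}$ whose (random) L\'evy measure $\nu^{ext}$ is built from the marks of a Poisson random measure of intensity $\a x^{-\a-1}dx$ (Theorem~\thv(1'.theo2)); and (ii) the fact that $\nu^{ext}(u,\infty)\sim \bar\varepsilon\,\a\G(\a)\,u^{-\a}$ as $u\downarrow 0^+$ almost surely (Lemma~\thv(1.lemma5)), which feeds into the continuous-time Dynkin--Lamperti theorem to produce $\asl_\a(1/(1+\rho))$ in the $t\downarrow 0$ limit. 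This is the paper's route.

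However, the paragraph you call the ``main obstacle'' slightly misidentifies where the real work is. There is no interchange of limits to justify: the theorem \emph{is} the iterated limit $\lim_{t\downarrow 0}\lim_{n\to\infty}$, and the paper computes it precisely in that order. What makes ``first $n\to\infty$, then $t\downarrow 0$'' rigorous is not a uniformity-in-$t$ estimate but the common-probability-space representation (Lemma~\thv(6.lemma1), via LePage \emph{et al.}): both the rescaled landscape $(\g_n(x))_x$ and the marks $(\g_k)_k$ of the $\PRM$ are realized on a single space $(\O,\FF,\boldsymbol P)$ on which the clock process converges $\boldsymbol P$-a.s.\ to $S^{ext}$. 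Then $\lim_{n\to\infty}\boldsymbol{\cal C}_n(t,\rho t)=\CC^{ext}_\infty(t,\rho t)$ pointwise a.s., and for each fixed realization one applies Dynkin--Lamperti to the fixed subordinator $S^{ext}$ using the a.s.\ regular variation of $\nu^{ext}$ at $0^+$; the deterministic answer $\asl_\a(1/(1+\rho))$ is reached, and convergence in distribution to a constant upgrades to convergence in $\P$-probability. Your proposal does not mention the common-probability-space device, without which the intermediate object $\CC^{ext}_\infty(t,\rho t)$ is a random variable not living on $\O^\tau$ and the iterated limit is not even well-posed as stated. The small-jump analysis you gesture at (negligibility of the finitely many macroscopic jumps) is encapsulated cleanly in Lemma~\thv(1.lemma5) rather than requiring a separate estimate. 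With those two ingredients inserted, your plan coincides with the paper's proof.
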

Theorem \thv(1.theo1.ext) was first proved in \cite{BBG2}.
The proof based on  clock process that we give here is radically simpler than the metastability-based approach of \cite{BBG1}, \cite{BBG2}.

This result was proved again in  \cite{FL09} along a very  different route, namely by first constructing the scaling limit of the process $X_n$ at extreme time-scale, which is given by an ergodic process called $K$-process, and then, constructing the clock processes from which \eqv(1.theo1.2) can be derived.

To state the next theorem  let $\PRM(\mu)$ be the Poisson random measure on $(0,\infty)$  with marks $\{\g_k\}$ and mean measure $\mu$ satisfying $\mu(x,\infty)=x^{-\a}$, $x>0$, and define the function
\be
{\CC}^{sta}_{\infty}(s)=
\sum_{k=1}^{\infty}\frac{\g_k}{\sum_{k=1}^{\infty}\g_k}e^{-s/\g_k},\quad s\geq 0.
\Eq(1.theo2.1)
\ee

\begin{theorem}[Low temperature critical line: crossover to stationarity]  
{\TH(1.theo2)} 
Let $c_n$ be an extreme time-scale. The following holds for all $\b>\b_c$. Let $\overset{d}=$ denote equality in distribution.

\item{(i)} If  $\mu_n=\GG_n$ 
where $\GG_n(x)=\t_n(x)/\sum_{x\in\VV_n}\t_n(x)$ is Gibbs
measure, then for all $s,t>0$
\be
\lim_{n\rightarrow\infty}{\CC}_n(t,s)
\overset{d}=
{\CC}^{sta}_{\infty}(s).
\Eq(1.theo2.2)
\ee
\item{(ii)} If  $\mu_n=\pi_n$ then for all $s\geq 0$
\be
\lim_{t\rightarrow\infty}\lim_{n\rightarrow\infty}{\CC}_n(t,s)\overset{d}={\CC}^{sta}_{\infty}(s).
\Eq(1.theo2.3)
\ee
\end{theorem}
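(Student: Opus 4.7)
My plan is to handle the two parts separately, with (ii) reducing to (i). For part (i), I would exploit that $\GG_n$ is the (reversible) stationary distribution of $X_n$, so that $\CC_n(t,s)=\CC_n(0,s)$ for all $t>0$. Since the total exit rate from site $x$ equals $\tau_n(x)^{-1}$, the memoryless property of the exponential holding times gives
\[
\CC_n(0,s) \;=\; \sum_{x\in\VV_n}\GG_n(x)\,e^{-c_n s/\tau_n(x)} \;=\; \frac{\sum_{x\in\VV_n}\tilde\tau_n(x)\, e^{-s/\tilde\tau_n(x)}}{\sum_{x\in\VV_n}\tilde\tau_n(x)},
\]
where $\tilde\tau_n(x):=\tau_n(x)/c_n$. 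Under the extreme time-scale assumption, classical extreme-value convergence for the i.i.d.\ family $\{\tau_n(x)\}_{x\in\VV_n}$ gives that the point process $\sum_x\delta_{\tilde\tau_n(x)}$ converges weakly on compacts of $(0,\infty)$ to the PRM $\Pi=\sum_k\delta_{\gamma_k}$ with the stated mean measure; since $0<\a<1$, the Lepage-type series $\sum_k\gamma_k$ converges almost surely. A truncation at level $\delta>0$ followed by $\delta\downarrow 0$ then yields joint distributional convergence of numerator and denominator to $\sum_k\gamma_k e^{-s/\gamma_k}$ and $\sum_k\gamma_k$: on $\{\tilde\tau_n(x)>\delta\}$ the continuity of $y\mapsto y e^{-s/y}$ and portmanteau apply directly, and the shallow contributions $\{\tilde\tau_n(x)\le\delta\}$ are controlled because $ye^{-s/y}\le y$ and the joint limit of $(\sum_x\tilde\tau_n(x)1_{\tilde\tau_n(x)\le\delta},\sum_x\tilde\tau_n(x))$ is $(\sum_{k:\gamma_k\le\delta}\gamma_k,\sum_k\gamma_k)$, which makes the relative truncation error at most $\sum_{k:\gamma_k\le\delta}\gamma_k/\sum_j\gamma_j\to 0$ as $\delta\downarrow 0$ thanks to $\a<1$. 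Continuous mapping of the ratio delivers $\CC^{sta}_\infty(s)$.

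For part (ii) I would reduce to part (i) by arguing that, under $\P_{\pi_n}$, the law of $X_n(c_n t)$ becomes Gibbs-distributed on the deep traps in the iterated limit $n\to\infty$ then $t\to\infty$. Memorylessness again gives $\CC_n(t,s)=\E_{\pi_n}[e^{-s/\tilde\tau_n(X_n(c_n t))}]$; since the integrand is at most $e^{-s/\delta}$ whenever $X_n(c_n t)$ is at a site with $\tilde\tau_n\le\delta$, it suffices (letting $\delta\downarrow 0$ at the end) to describe the distribution of $X_n(c_n t)$ on the deep set $\{x:\tilde\tau_n(x)>\delta\}$. Invoking the clock-process framework used throughout the paper, the clock process of $X_n$ at extreme scales converges to an $\a$-stable subordinator associated with the labels $\gamma_k$; via the Dynkin--Lamperti renewal theorem, the current excursion at time $t$ has, as $t\to\infty$, the size-biased distribution on $\{\gamma_k\}$, whence
\[
\lim_{t\to\infty}\lim_{n\to\infty}\P_{\pi_n}\bigl(X_n(c_n t)=x_k\bigr) \;=\; \frac{\gamma_k}{\sum_{j}\gamma_j}
\]
for each deep trap $x_k$. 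Substituting into the preceding expectation and summing over $k$ then reproduces $\sum_k(\gamma_k/\sum_j\gamma_j)e^{-s/\gamma_k}=\CC^{sta}_\infty(s)$.

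The main obstacle I expect is the joint convergence in part (i): the functional $\Pi\mapsto\sum_k\gamma_k$ is discontinuous in the vague topology on point measures of $(0,\infty)$ because mass can accumulate near zero, and making the truncation argument work uniformly in $n$ rests crucially on the summability provided by $\a<1$. In part (ii), the analogous delicate point is to rigorously justify the interchange of the iterated limits $\lim_{t\to\infty}\lim_{n\to\infty}$ via the clock-process/renewal machinery: one needs to confirm that, starting from $\pi_n$ rather than $\GG_n$, the short burn-in required to reach the deep set is negligible, and that the renewal-theoretic size-biasing produces exactly the Gibbs weights on the random finite set of deep traps.
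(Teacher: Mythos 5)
Your Part (i) takes a genuinely different and more elementary route than the paper. You observe that since $\GG_n$ is reversible for $X_n$, the distribution of $X_n(c_nt)$ under $\PP_{\GG_n}$ is $\GG_n$ for every $t$, so by the Markov property and memorylessness the correlation function is \emph{exactly} time-invariant at every finite $n$: $\CC_n(t,s)=\sum_{x}\GG_n(x)e^{-c_ns/\tau_n(x)}=\sum_k \frac{\g_n(x^{(k)})}{\sum_l\g_n(x^{(l)})}e^{-s/\g_n(x^{(k)})}$. Convergence of numerator and denominator then follows directly from Proposition~\thv(6.prop4) applied to $f(u)=u$ and $f(u)=ue^{-s/u}$ (both integrable against $\min(\cdot,1)d\mu$ because $\a<1$), which makes your $\delta$-truncation step rigorous. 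The paper instead feeds the first (Gibbs-sampled) holding time into the clock-process picture as a random \emph{delay} and appeals to the renewal theorem for delayed subordinators (Theorem 7.3(ii) of \cite{G12}), with the convergence of the delay law $F_n\to F^{sta}$ playing the role of your computation. Your stationarity observation shows this machinery is unnecessary for (i): the time-invariance is exact, not only asymptotic, so the limit is independent of $t$ for trivial reasons.

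For Part (ii) you follow essentially the paper's route (clock process plus renewal theory), but two identifications are off. First, the limiting process $S^{ext}$ of Theorem~\thv(1'.theo2) is \emph{not} an $\a$-stable subordinator: it is a doubly stochastic subordinator whose random L\'evy measure $\nu^{ext}$ is only regularly varying at $0^+$ (Lemma~\thv(1.lemma5)); the paper even states this explicitly below Proposition~\thv(LCor.3). Second, the renewal result you want is the \emph{classical} (finite-mean) one, not Dynkin--Lamperti: because $\a<1$, $\int_0^\infty\nu^{ext}(u,\infty)\,du=\bar\varepsilon\sum_k\g_k<\infty$ a.s., and the paper invokes Assertion~(ii) of Theorem~1.8 of \cite{G12}, giving $\lim_{t\to\infty}\PP\bigl(S^{ext}\text{ range misses }(t,t+s)\bigr)=\int_s^\infty\nu^{ext}(u,\infty)du\big/\int_0^\infty\nu^{ext}(u,\infty)du=\CC^{sta}_\infty(s)$. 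Dynkin--Lamperti is the infinite-mean arcsine theorem and would not produce $\CC^{sta}_\infty$. Your size-biased formula is in fact the finite-mean renewal conclusion, so the substance is right even though the name is wrong. Finally, your intermediate step $\lim_{t\to\infty}\lim_{n\to\infty}\PP_{\pi_n}(X_n(c_nt)=x_k)=\g_k/\sum_j\g_j$ is stronger than needed (and glosses over the $n$-dependence of the set $\{x_k\}$): the no-jump correlation function depends only on whether the range of the rescaled clock process hits $(t,t+s)$, which is exactly what the paper controls, so you can argue directly at the level of $S_n\Rightarrow S^{ext}$ without tracking which deep site is visited.
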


\subsection{Convergence of clock processes.} 
\label{S1.2}
This section gathers the clock-process convergence results that are behind the proofs of the results of Subsection \thv(S1.1). 
An alternative construction of the  process $X_n$ consists in writing it as a time-change of its \emph{jump chain}, $J_n$, by the \emph{clock process}, $\wt S_n$,
\be
X_n(t)=J_n(i) \quad\text{if}\quad \wt S_n(i)\leq t<\wt S_n(i+1)  \,\,\,\text{for some}\,\,\, i,
\Eq(1.1.7)
\ee
where  $(J_n(k), k\in\N)$ is the simple random walk on $\VV_n$ and, given a family  of independent mean one exponential random variables, $(e_{n,i},n\in\N, i\in\N)$, independent of $ J_n$, $\wt S_n$ is the partial sum process
\be
\wt S_n(k)=\sum_{i=0}^{k}\t_n(J_n(i))e_{n,i},\quad k\in \N.
\Eq(1.1.6)
\ee
Given sequences $c_n$ and $a_n$ define the rescaled clock process
\be
S_n(t)=c_n^{-1}\wt S_n(\lfloor a_n t\rfloor),\quad t\geq 0.
\Eq(G1.3.2')
\ee
We now state our results on $S_n$.  For this denote by
\be
\g_n(x)=c_n^{-1}\t_n(x),\quad x\in\VV_n
\Eq(4.prop3.1)
\ee
the rescaled landscape variables. Also denote by $\Rightarrow$ weak convergence in the c\`adl\`ag space $D([0,\infty))$ equipped with the Skorohod $J_1$-topology. 
\begin{theorem}[Intermediate scales] 
\TH(1'.theo1)
Let $c_n$ be an intermediate time-scale.
\item{(i)} For all $0<\varepsilon\leq 1$ and all  $0<\beta<\infty$ such that  $0<\a(\varepsilon)< 1$ the following holds: $\P$-almost surely if $\sum_{n}a_n/2^n<\infty$ and  in $\P$-probability  if $\sum_{n}a_n/2^n=\infty$
\be
S_n\Rightarrow S^{int},
\Eq(1.prop1.2)
\ee
where $S^{int}$ is a subordinator with zero drift with  L\'evy measure, 
$\nu^{int}$, defined on $(0,\infty)$  through
\be
\nu^{int}(u,\infty)=
u^{-\a(\varepsilon)}\a(\varepsilon)\G(\a(\varepsilon)),\quad u>0.
\Eq(1.prop1.1)
\ee

\item{(ii)} For all $0<\varepsilon\leq 1$ and all  $0<\beta<\infty$ such that  $\a(\varepsilon)=1$, the following holds: $\P$-almost surely if $\sum_{n}a_n/2^n<\infty$ and  in $\P$-probability  if $\sum_{n}a_n/2^n=\infty$
\be\Eq(1'.theo3.1')
S_n- M_n\Rightarrow  S^{crit},
\ee
where $S^{crit}$ is the L\'evy process with L\'evy triple 
$ (0,0,\nu^{int})$ and 
\be
M_n(t) = \sum_{i=1}^{[a_n t]}\sum_{x \in \mathcal{V}_n} p_n \left(J_n (i-1),x\right)  \g_n (x) \left(1-e^{-1/\g_n(x)}\right).
\Eq(1'.theo3.0)
\ee
If moreover  $c_n$ satisfies \eqv(TTCF.1) for some $\theta\in(-\infty,\infty)$ then for all $ T>0$ and all $ \e>0$, $\P$-almost surely if $\sum_{n}a_n/2^n<\infty$ and  in $\P$-probability  if $\sum_{n}a_n/2^n=\infty$
\be\Eq(1'.theo3.2')
\lim_{n\to\infty}\PP\bigg(\sup_{t\in[0,T]}\Big\vert M_n(t)-\E(E(M_n(1))) t\Big\vert>\e\bigg)=0.
\ee
\end{theorem}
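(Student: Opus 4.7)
The plan is to apply the Durrett--Resnick criterion for convergence of partial sum processes of dependent random variables to L\'evy processes, working in the filtration $\FF^J_i=\s(J_n(0),\dots,J_n(i))$. The rescaled clock process is the partial sum $S_n(t)=\sum_{i=0}^{\lfloor a_n t\rfloor}\g_n(J_n(i))e_{n,i}$, and the criterion reduces convergence to three ingredients: (a) for each $u>0$, the conditional tail sum $\sum_{i=1}^{\lfloor a_n t\rfloor}\PP(\g_n(J_n(i-1))e_{n,i}>u\mid\FF^J_{i-1})$ converges to $t\,\nu^{int}(u,\infty)$; (b) a second-moment negligibility estimate for small increments; and, when $\a(\ve)=1$, proper centering by the predictable truncated-mean compensator $M_n$.

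\medskip

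Since $e_{n,i}$ is standard exponential and independent of $\FF^J_{i-1}$, the conditional tail in (a) equals $\sum_{i=1}^{\lfloor a_n t\rfloor}e^{-u/\g_n(J_n(i-1))}$. The first reduction is to replace this empirical average along the walk trajectory by a uniform average over $\VV_n$: the simple random walk on the hypercube mixes in time $O(n\log n)$, negligible compared to $a_n$, which is exponential in $n$. Hence the quenched sum is close to $\lfloor a_n t\rfloor\cdot 2^{-n}\sum_{x\in\VV_n}e^{-u/\g_n(x)}$. Using the defining relation \eqv(1.1'), the Gaussian tail of $\t_n(x)$, and a change of variables, this quantity converges to $t\,u^{-\a(\ve)}\a(\ve)\G(\a(\ve))=t\,\nu^{int}(u,\infty)$. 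The almost-sure versus in-probability dichotomy is then settled by Borel--Cantelli: the variance (over the environment) of the landscape functional $2^{-n}\sum_x e^{-u/\g_n(x)}$ is of order $a_n/2^n$, which is summable precisely under $\sum_n a_n/2^n<\infty$.

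\medskip

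For part (i), with $0<\a(\ve)<1$, one has $\int_0^1 u\,\nu^{int}(du)<\infty$ and no centering is required. A small-increment estimate $\sum_i\E[\g_n(J_n(i-1))e_{n,i}\wedge\e\mid\FF^J_{i-1}]=O(\e^{1-\a(\ve)})$, again reduced to a uniform average via mixing, completes the Durrett--Resnick hypotheses and yields $S_n\Rightarrow S^{int}$. For part (ii) with $\a(\ve)=1$, the small-increment integral diverges and one must recenter. The identity $\E[\g e\wedge 1]=\g(1-e^{-1/\g})$ for $e\sim\mathrm{Exp}(1)$ shows that $M_n(t)$ is exactly the predictable compensator of the partial sum truncated at level $1$; with this subtraction, the verification goes through and yields $S_n-M_n\Rightarrow S^{crit}$.

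\medskip

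The most delicate step, and the main obstacle, is the concentration statement \eqv(1'.theo3.2'). Write $M_n(t)=\sum_{i=1}^{\lfloor a_n t\rfloor}\psi_n(J_n(i-1))$ with $\psi_n(x)=\g_n(x)(1-e^{-1/\g_n(x)})$, and decompose $M_n-\E M_n=(M_n-\E_X M_n)+(\E_X M_n-\E M_n)$, where $\E_X$ denotes expectation with respect to the walk at fixed environment. The first piece is controlled by the variance of an additive functional of the SRW via its spectral gap of order $1/n$, while the second is a landscape fluctuation controlled using independence of the $\t_n(x)$ across $x$ together with an estimate on $\sum_x\psi_n(x)^2$. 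The assumption \eqv(TTCF.1) is precisely what keeps $\E(\E_X M_n(1))$ converging to a finite positive limit and makes the fluctuation bounds on both pieces small uniformly on $[0,T]$; without it, the compensator would not concentrate at a deterministic linear drift, which explains the technical restriction noted in the remark immediately following \eqv(1'.theo3.2').
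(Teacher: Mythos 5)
Your overall approach is essentially the paper's: both invoke the Durrett--Resnick criterion, reduce the conditional tail sums along the walk to uniform spatial averages via mixing, verify the resulting deterministic limits using the Gaussian tail of $\t_n$, and settle the almost-sure versus in-probability dichotomy by a Borel--Cantelli argument in the environment. The identity $\E[\min(\g e,1)]=\g(1-e^{-1/\g})$ and the identification of $M_n$ as a truncated-mean compensator are also correct ideas, and the two-piece decomposition $M_n-\E M_n=(M_n-\E_X M_n)+(\E_X M_n-\E M_n)$ is exactly how the paper organizes the concentration argument (Propositions \ref{4.prop1'} and Lemma \ref{4.1''}).

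However, there is a concrete error in your explanation of the last and, as you correctly say, most delicate step \eqv(1'.theo3.2'). You claim that the assumption \eqv(TTCF.1) ``is precisely what keeps $\E(\E_X M_n(1))$ converging to a finite positive limit.'' This is not what happens: by Lemma \ref{Lem.As}, $\E(\EE(M_n(1)))=\lfloor a_n\rfloor\E g_1(\g_n(x))$ \emph{always} diverges with $n$ at criticality, and what \eqv(TTCF.1) buys (via Lemma \ref{Th.Scale} and \eqv(2.4.4)) is that the rate of divergence is exactly $\sqrt{n}$ rather than exponential. Without \eqv(TTCF.1) the centering grows exponentially fast and the crude second-moment fluctuation bounds on $M_n$ are no longer negligible in comparison, as the remark following \eqv(1'.theo3.2') explains. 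Your phrasing suggests you would be trying to beat an $O(1)$ target with the variance bounds, when in fact you must beat a target of order $\sqrt{n}$; this quantitative distinction is the crux of the step and drives the choice $\e\sim\e_0/\sqrt{n}$ in Lemma \ref{4.1''}. Relatedly, you do not address how pointwise concentration at each $t$ upgrades to the supremum over $[0,T]$; the paper uses monotonicity of $M_n$ plus a discretization into $K\sim C L\sqrt{n}$ control points, whose cardinality is tied precisely to the $\sqrt{n}$ growth of the centering term, and this union bound is where the scale matters most.

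Two smaller points worth flagging. First, you filter on $\FF^J_i=\s(J_n(0),\dots,J_n(i))$ and write the compensator as $\sum_i\psi_n(J_n(i-1))$ with $\psi_n(x)=\g_n(x)(1-e^{-1/\g_n(x)})$; this is a different $M_n$ from the one in \eqv(1'.theo3.0), which averages over the next step of the walk with the kernel $p_n(J_n(i-1),\cdot)$. The theorem's $M_n$ is the predictable compensator for the natural filtration $\FF_{n,i-1}=\s(e_{n,1},\dots,e_{n,i-1},J_n(1),\dots,J_n(i-1))$ in which $J_n(i)$ is not yet revealed; your choice corresponds to a finer filtration. Either can in principle be made to work with Durrett--Resnick, but the statement of the theorem fixes $M_n$, so you must either adopt the paper's filtration or show that the difference between the two compensators is negligible uniformly on $[0,T]$, which you do not do. Second, the quenched ergodic step needs more care than ``mixing time $O(n\log n)\ll a_n$'': the paper's Proposition \ref{4.prop1} is a second-moment bound in which the short-range (pre-mixing) contributions produce the terms $\nu_n(2u,\infty)/n^2$ and $\rho_n[\E\nu_n(u,\infty)]^2$ in $\Theta^1_n(u)$, via the return/hitting probability estimates of Propositions \ref{3.prop3} and \ref{3.prop4} on a window of $\theta_n\sim n^2$ steps. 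These diagonal-type contributions do not simply disappear by mixing and must be controlled separately; your sketch elides them.
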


\begin{remark} The behavior of centering term $\E\left(E\left(M_n(t) \right)\right)$ when  $\a(\varepsilon)=1$ is studied in Appendix \thv(B).
In the regime of scaling \eqv(TTCF.1) under which \eqv(1'.theo3.2') is obtained, the centering term  
$\E\left(E\left(M_n(t) \right)\right)$ is of order $\sqrt{n}$ and the  fluctuations of $M_n(t)$  are smaller than the likelihood to observe a jump of $S_n$ over a large interval.  This in particular allows for precise error controls in the analysis of the correlation function (when averaging with respect to the jump chain), resulting in the precision of the statement of Theorem \thv(TCor.2), including the exact constant on the right-hand side of \eqv(TCor.2.1). When \eqv(TTCF.1) is not satisfied, $\E\left(E\left(M_n(t) \right)\right)$ will not diverge like $\sqrt{n}$ but exponentially fast in $n$. Obtaining a statement as in \eqv(1'.theo3.2') or Theorem \thv(TCor.2) would require a precise error control on an exponential level, which is made impossible by the rough concentration estimates used in the analysis of $M_n(t)$. That these estimates can be improved however is anything but clear.
\end{remark}

\begin{proposition}\TH(LCor.3) Let $c_n$ be an intermediate time-scale.
\item{(i)}   For all $0<\varepsilon\leq 1$ and all  $0<\beta<\infty$ such that  $\a(\varepsilon)=1$ the following holds: for all $T>0$ and for all $\e>0$, $\P$-almost surely if $\sum_{n}a_n/2^n<\infty$ and  in $\P$-probability  if $\sum_{n}a_n/2^n=\infty$
\be\Eq(1'.theo3.2'.1)
\lim_{n\to\infty}\PP\bigg(\sup_{t\in[0,T]}\Big\vert\frac{M_n(t)}{\E(E(M_n(1)))} -t\Big\vert>\e\bigg)=0,
\ee
and
\be\Eq(LCor.1)
\lim_{n\to\infty}\PP\left(\sup_{t\in[0,T]}\left\vert \frac{S_n(t)}{\E(E(M_n(1)))}-t\right\vert>\e\right)=0. \ee 
\item{(ii)} 
For all $0<\varepsilon\leq 1$ and all  $0<\beta<\infty$ such that  $\a(\varepsilon)>1$, then for all $T>0$ and for all $\e>0$, $\P$-almost surely
\be\Eq(1'.theo3.2'.1.neu)
\lim_{n\to\infty}\PP\bigg(\sup_{t\in[0,T]}\Big\vert\frac{S_n(t)}{a_ne^{n\b^2/2}/c_n} -t\Big\vert>\e\bigg)=0.
\ee
\end{proposition}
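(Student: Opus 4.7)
The plan is to first prove the uniform LLN \eqv(1'.theo3.2'.1) for $M_n$ via a second moment argument, deduce \eqv(LCor.1) from it combined with Theorem \thv(1'.theo1)(ii), and then treat part (ii) by a truncation scheme exploiting the finite-mean regime $\b<\b_c(\ve)$.

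For \eqv(1'.theo3.2'.1), set $f(x)=\g_n(x)(1-e^{-1/\g_n(x)})$; note $f\leq 1$ and $M_n(t)=\sum_{i=1}^{[a_nt]}(p_nf)(J_n(i-1))$. Since $M_n$ is non-decreasing in $t$ and the limit $t\mapsto t$ is continuous, a Dini-type argument reduces matters to pointwise convergence in probability for each fixed $t$. Starting $J_n$ from the stationary measure $\pi_n$ and using that $p_n$ is doubly stochastic, $\E(E(M_n(t)))=[a_nt]\E f(0)=([a_nt]/a_n)\E(E(M_n(1)))$. For the variance, decompose $\Var(M_n(t))=\E[\Var_J(M_n(t))]+\Var_\t(E_J(M_n(t)))$. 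By stationarity $E_J(M_n(t))=[a_nt]\cdot 2^{-n}\sum_{x\in\VV_n}f(x)$, so using iid-ness of $f$ under $\P$ and $\Var_\t f\leq \E f$ (since $f\leq 1$), the environment contribution satisfies $\Var_\t(E_J(M_n(t)))/(\E(E(M_n(1))))^2\lesssim t^2\,a_n/(2^n\,\E(E(M_n(1))))\to 0$ because $a_n/2^n\to 0$ and $\E(E(M_n(1)))\to\infty$ in the $\a(\ve)=1$ regime (cf.~the remark following Theorem \thv(1'.theo1)). The walk-variance term is controlled analogously using the fast spectral mixing of the SRW on the hypercube. Chebyshev yields convergence in $\PP$-probability, and summability of the second-moment bound under $\sum_n a_n/2^n<\infty$ upgrades this to $\P$-a.s.\ convergence via Borel--Cantelli. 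To obtain \eqv(LCor.1), write $S_n=M_n+(S_n-M_n)$; Theorem \thv(1'.theo1)(ii) implies $\sup_{t\in[0,T]}|S_n(t)-M_n(t)|$ is tight in $n$, so dividing by the diverging $\E(E(M_n(1)))$ makes this error vanish, and \eqv(1'.theo3.2'.1) handles the remaining term.

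For part (ii), $\a(\ve)>1$ is equivalent to $\b<\b_c(\ve)$, in which case the landscape has finite mean $\E\t_n=e^{n\b^2/2}$ and a direct computation gives $\E(E(S_n(1)))\sim a_n e^{n\b^2/2}/c_n$. I would truncate at level $c_n$: write $\t_n=\t_n\wedge c_n+(\t_n-c_n)_+$ and correspondingly $S_n=S_n^{\leq}+S_n^{>}$. For the truncated piece, $\E[(\t_n\wedge c_n)^2]\leq c_n\E[\t_n\wedge c_n]\leq c_n e^{n\b^2/2}$, so a second-moment argument paralleling part (i)---using the environment/walk-variance decomposition, the stationarity of the SRW and the doubly stochastic property of $p_n$---yields $c_nS_n^{\leq}(t)/(a_n e^{n\b^2/2})\to t$, the critical ratio being $c_n/(a_n e^{n\b^2/2})\sim \exp(-n(\b_c(\ve)-\b)^2/2)\to 0$. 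The exceedance piece $S_n^{>}$ is negligible because the truncation bias $\E(\t_n\1_{\t_n>c_n})/\E\t_n$ decays at the same exponential rate by a direct Gaussian-tail estimate, and uniform mixing of $J_n$ then converts this into a Markov bound on $S_n^{>}/\E(E(S_n(1)))$. Monotonicity of $S_n$ upgrades pointwise to uniform convergence on $[0,T]$, and exponential summability of the concentration bounds together with Borel--Cantelli give the $\P$-a.s.\ statement.

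The main obstacle lies in part (ii) in the subregime $\b^2>\log 2$ (possible only for $\ve>1/2$), where the naive second moment for the empirical mean $2^{-n}\sum_x\t_n(x)$ fails since $\Var_\t(\t_n)/(2^n(\E\t_n)^2)=e^{n\b^2}/2^n\not\to 0$. The truncation at $c_n$ must be calibrated so that simultaneously (a) the bias $(\E\t_n-\E(\t_n\wedge c_n))/\E\t_n$ and (b) the truncated-second-moment ratio $c_n/(a_n\E\t_n)$ decay to zero; both are governed by the same rate $e^{-n(\b_c(\ve)-\b)^2/2}$, and the condition $\a(\ve)>1$ is precisely what makes this exponential rate strictly positive. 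Carrying out these saddle-point estimates cleanly, while tracking the extra correlations produced by repeated visits of the SRW to the same vertex, is the delicate technical point of the proof.
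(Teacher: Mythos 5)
Your proposal follows the paper's own route very closely in overall structure.  For Part (i) the argument is essentially identical: reduce uniform convergence of the increasing process $M_n(t)$ to pointwise convergence via monotonicity and the continuity of the limit $t\mapsto t$, then handle each $t$ by a second-moment computation using the walk/environment variance decomposition (this is what the paper's Proposition \ref{4.prop1'} and Lemma \ref{4.1''} do); and \eqref{LCor.1} is obtained exactly as the paper obtains it, by writing $S_n = M_n + (S_n-M_n)$, invoking Theorem \ref{1'.theo1}(ii) for tightness of $S_n-M_n$, and Lemma \ref{Lem.As} for the divergence of $\E(E(M_n(1)))$.  For Part (ii) the paper also truncates, controls the exceedance by two applications of Markov's inequality (once w.r.t.\ $\PP$, once w.r.t.\ $\P$), and runs a second-moment argument on the truncated piece.

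The one genuine difference is in Part (ii).  The paper reuses the concentration estimate of Lemma \ref{4.1''}(i), which relies on \eqref{Mainz.114} ($a_n\E[g_1^2]\leq c_2$); this bound only holds for $2\beta>\beta_c$, and the paper therefore splits into the cases $2\beta>\beta_c$ and $2\beta<\beta_c$, changing the truncation level to $e^{(\beta+\delta)\beta n}$ in the latter.  Your observation that the truncation variable is bounded by $c_n$ (equivalently $g\leq 1$), so that $\E[g^2]\leq\E[g]$, gives directly
$\Var_\tau(m_n)/(\E m_n)^2\leq 1/(2^n\E g) = c_n/(2^n e^{n\beta^2/2})$, which decays exponentially whenever $\beta<\beta_c(\varepsilon)$, with no case distinction.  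This is a real, if modest, streamlining: it also makes the exponential summability (hence $\P$-a.s.\ convergence) transparent in one stroke.  Your diagnosis of the "delicate subregime" ($\beta^2>\log 2$, hence $\varepsilon>1/2$, where the untruncated second moment of the empirical mean blows up) is correct and is precisely the phenomenon the paper's case distinction is designed to sidestep; your truncation handles it uniformly.

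Two cautions that would need to be filled in to make this more than a sketch, and which the paper addresses more carefully: (1) the control of the walk-variance term is not just "fast spectral mixing" but requires the precise ergodic estimate of Proposition \ref{4.prop1'} (in particular the treatment of the $(\overline{III})$ cross-terms over $\theta_n\sim n^2$ steps), and this bound is a random function of the environment, so one must then separately show it concentrates w.r.t.\ $\P$; (2) your truncation is on $\tau_n$ alone whereas the paper truncates the product $\tau_n(J_n(i))e_{n,i}$ (so that $S_{n,1}$ has expectation exactly $M_n$); both work, but if you truncate only $\tau_n$ then $M_n$ as defined in \eqref{1'.theo3.0} is not literally the centering, and a small bookkeeping adjustment is needed to link up with \eqref{Mainz.100}.
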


\begin{remark}
Note that Proposition \thv(LCor.3) holds without assuming \eqv(TTCF.1) due to the (stronger) rescaling by $\E(E(M_n(1)))$.
\end{remark}

\begin{remark}
In the high temperature regime of \eqv(1'.theo3.2'.1.neu) (and Theorem \thv(hightemp)), the behavior of the clock is completely dominated by its small jumps. This is to be contrasted with \eqv(1.prop1.2) where the clock is  dominated by its extreme increments, and with \eqv(1'.theo3.1') where both phenomena are competing.
Although such a result may not seem to be of primary interest in the REM analysis, it is different in the GREM where several aging behaviors can coexist at different levels of the underlying hierarchical structure \cite{FG18}.
\end{remark}

Note that $S^{int}$ is a stable subordinators of index  $0<\a(\varepsilon)< 1$. In the case $\a(\ve)=1$, $S^{crit}$ is not a subordinator but a compensated pure jump L\'evy process.  In the case of extreme time-scales the limiting process is neither a stable process nor a deterministic process but a doubly stochastic subordinator.

As before let $\PRM(\mu)$ be the Poisson random measure on $(0,\infty)$  with marks $\{\g_k\}$ and 
mean measure $\mu$ defined through
\be
\mu(x,\infty)=x^{-\a},\quad x>0.
\Eq(1.theo2.0)
\ee 

\begin{theorem}[Extreme scales] 
\TH(1'.theo2)
If $c_n$ is an extreme time-scale then both the sequence of re-scaled landscapes
$(\g_n(x),\, x\in\VV_n)$,
$n\geq 1$, and the marks of $\PRM(\mu)$ can be represented on a common probability space
$(\O, \FF, \boldsymbol{P})$ such that, in this representation, denoting by $\boldsymbol{\sigma}_n$ 
the corresponding re-scaled clock process \eqv(G1.3.2'),
the following holds: for all $\b_c<\beta<\infty$, $\boldsymbol{P}$-almost surely,
\be
\boldsymbol{\sigma}_n\Rightarrow S^{ext},
\Eq(1.prop2.1)
\ee
where
$
S^{ext}
$
is the subordinator whose L\'evy measure, $\nu^{ext}$,
is the random measure on $(0,\infty)$ defined on $(\O, \FF, \boldsymbol{P})$ through
\be
\nu^{ext}(u,\infty)=\bar\varepsilon\sum_{k=1}^{\infty}e^{-u/\g_k},\quad u>0,
\Eq(1.prop2.2)
\ee
$\bar\varepsilon$ being defined in \eqv(1.3).
\end{theorem}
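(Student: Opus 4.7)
The plan is to apply the Durrett--Resnick scheme for convergence of row sums of adapted positive random variables to a subordinator, as implemented in the clock-process context in \cite{G12}. Conditional on the environment $\FF^{\t}$, the summands of $\wt S_n$ are adapted to the filtration $\GG_{n,i}:=\s(J_n(0),\dots,J_n(i),e_{n,0},\dots,e_{n,i-1})$, and the conditional tail reads
\be
\P\bigl(\g_n(J_n(i))e_{n,i}>u\,\big|\,\GG_{n,i-1}\bigr)=\sum_{y\in\VV_n}p_n(J_n(i-1),y)\,e^{-u/\g_n(y)}.
\ee
For the coupling, one exploits the fact that on extreme time-scales the Gaussian tail of $\HH_n$ together with $a_n/2^n\to\bar\varepsilon$ yields $2^n\P(\g_n(x)>u)\to\bar\varepsilon u^{-\a}$ for every $u>0$; hence the empirical measure $\sum_{x\in\VV_n}\d_{\g_n(x)}$ converges vaguely on $(0,\infty)$ to $\PRM(\bar\varepsilon\m)$, and a Skorohod-type construction places the field and the limiting atoms on a common space $(\O,\FF,\boldsymbol P)$ with $\boldsymbol P$-a.s.\ convergence of the point process restricted to $(\d,\infty)$ for every $\d>0$.

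The heart of the argument is to show that, $\boldsymbol P$-almost surely, the compensator
\be
\L_n(t,(u,\infty))\,=\sum_{y\in\VV_n}L_n(y,t)\,e^{-u/\g_n(y)},\qquad L_n(y,t):=\sum_{i=1}^{\lfloor a_n t\rfloor}p_n(J_n(i-1),y),
\ee
converges to $t\nu^{ext}((u,\infty))=\bar\varepsilon t\sum_k e^{-u/\g_k}$ at each continuity point $u>0$. For this I would use that the mixing time of simple random walk on the $n$-cube is $\Theta(n\log n)$, negligible compared with $a_n\sim\bar\varepsilon 2^n$, so that $L_n(y,t)=a_n t\,2^{-n}+o(1)$ uniformly in $y$ (the parity oscillation of the bipartite walk averaging out over consecutive steps and contributing only a boundary error of order $1/a_n$). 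This reduces the compensator to $\bar\varepsilon t\sum_y e^{-u/\g_n(y)}(1+\son)$. Truncating at a threshold $\d>0$, the contribution from $\{y:\g_n(y)>\d\}$ converges $\boldsymbol P$-a.s.\ to $\sum_{\g_k>\d}e^{-u/\g_k}$ by the coupling, since only finitely many PRM atoms lie above $\d$; the remainder is bounded by $2^n\E\bigl[e^{-u/\g_n(x)}\1_{\g_n(x)\leq\d}\bigr]\to\bar\varepsilon\int_0^\d e^{-u/v}\a v^{-\a-1}dv$, which vanishes uniformly in $n$ as $\d\downarrow 0$. The case $\a<1$ further ensures $\int_0^1 v\,\nu^{ext}(dv)<\infty$, so the vanishing-small-jump-variance condition of the Durrett--Resnick criterion is dispatched by the analogous second-moment occupation-time estimate.

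The main obstacle is fusing the two sources of randomness into a single $\boldsymbol P$-a.s.\ statement: the coupling of extremes controls only the top order statistics of the environment, whereas the compensator depends on the entire field through the bulk sum $\sum_y e^{-u/\g_n(y)}$, and the random-walk occupation times $L_n(y,t)$ must be controlled uniformly over all $2^n$ sites. The $L^\infty$-type mixing of SRW on the cube combined with the $\a<1$ decay of the small-$\g$ tail is what permits the truncation and the limit-exchange identifying $\nu^{ext}$ as the conditional L\'evy measure. Once the compensator condition is in place, the Durrett--Resnick theorem delivers the conditional weak convergence of $\boldsymbol\s_n$ to $S^{ext}$ in the Skorohod $J_1$-topology, which is precisely \eqv(1.prop2.1).
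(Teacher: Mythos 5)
Your overall strategy (Durrett--Resnick plus a common probability space for the environment and the Poisson marks) is the one the paper takes, but the execution has a genuine gap at the central step.

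The gap is the claim that ``$L_n(y,t)=a_nt\,2^{-n}+o(1)$ uniformly in $y$'' follows from the mixing time of SRW on $\VV_n$ being $\Theta(n\log n)\ll a_n$. Mixing controls one-dimensional marginals, not concentration of occupation functionals, and at extreme scales ($a_n\asymp 2^n$) the quantity $L_n(y,t)=\frac1n\#\{i\leq \lf a_nt\rf : J_n(i-1)\sim y\}$ is a genuinely fluctuating $O(1)$ random variable (roughly a normalized sum of $n$ Poisson-like visit counts, so fluctuations of order $n^{-1/2}$), and a union bound over $2^n$ sites $y$ would require super-exponential concentration which is neither proved nor true. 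The paper therefore does \emph{not} try to control $L_n(y,t)$ pointwise; it proves concentration of the weighted sum $\sum_y L_n(y,t)e^{-u/\g_n(y)}$ around $\frac{k_n(t)}{a_n}\boldsymbol\nu_n(u,\infty)$ directly by a second-moment bound (Proposition \thv(6.prop1)). That bound is delicate precisely because at extreme scales the walk revisits the deep traps many times: the cross-term $(III)_{2,l}$ cannot be handled by the Borel--Cantelli argument used at intermediate scales (Lemma \thv(4.lemma1)), and is instead split according to whether the two sites belong to the top-$n$ set $T_n$ or not, using the almost-sure Hamming separation of the top sites (Lemma \thv(6.lemma3)), the bound $\g_n(x^{(n+1)})\leq n^{-\a(1+o(1))}$ off $T_n$ (Lemma \thv(6.lemma4)), and the exponential hitting estimate of Proposition \thv(3.prop4). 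None of this is replaced by mixing estimates in your sketch.

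A secondary gap is the coupling itself: you acknowledge that a Skorohod embedding of the top order statistics does not control the bulk sum $\sum_y e^{-u/\g_n(y)}$, but you do not resolve it, and ``the $L^\infty$-type mixing of SRW combined with the $\a<1$ tail decay'' does not address this issue, which concerns only the environment and not the walk. The paper's solution is an \emph{explicit} construction (Lepage--Woodroofe--Zinn, Lemma \thv(6.lemma1)): the entire ordered landscape $\g_n(x^{(k)})=c_n^{-1}G_n^{-1}(\G_k/\G_{N+1})$ is written as a deterministic function of a single i.i.d.\ exponential sequence $(E_i)$ on $(\O,\FF,\boldsymbol P)$, with the Poisson marks $\g_k=\G_k^{-1/\a}$ built from the same sequence. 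Proposition \thv(6.prop4) then gives $\boldsymbol P$-a.s.\ convergence of the full (not merely truncated) sums $\sum_k f(\g_n(x^{(k)}))\to\sum_k f(\g_k)$, which is what feeds Conditions (A1), (A2), (A3) through Proposition \thv(6.prop2) and Theorem \thv(1.theo3).
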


A similar process was first obtained in \cite{G12} in the simpler setting of trap models (see Proposition 4.9 and Section 7 therein).

Although the limiting subordinator is not stable, the tail of the random L\'evy  measure $\nu^{ext}$
is regularly varying at $0^+$. This is a key ingredient the proof of Theorem \thv(1.theo1.ext).

\begin{lemma}{\TH(1.lemma5)} If $\b>\b_c$, then
$\boldsymbol{P}$-almost surely,
$
\nu^{ext}(u,\infty)\sim \bar\varepsilon u^{-\a}\a\G(\a)
$
as
$
u\rightarrow 0^+
$.
\end{lemma}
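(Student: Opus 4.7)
The plan is to analyze the small-$u$ behavior of $F(u):=\sum_{k=1}^{\infty}e^{-u/\g_k}$ directly from the Poisson structure of $\{\g_k\}$. Since $\b>\b_c$, we have $\a\equiv\a(1)=\b_c/\b\in(0,1)$ by \eqv(1.theo1.0'). The mean measure of $\PRM(\m)$ has density $\a x^{-\a-1}$ on $(0,\infty)$, so under the change of variable $Z_k:=1/\g_k$ the family $\{Z_k\}_{k\geq 1}$ becomes a Poisson random measure on $(0,\infty)$ with intensity $\a z^{\a-1}\,dz$. With $M(z):=\#\{k:Z_k\leq z\}$, which is a Poisson process of mean $z^\a$, one has
$$
F(u)=\int_0^\infty e^{-uz}\,dM(z)=u\int_0^\infty e^{-uz}M(z)\,dz,
$$
the second equality by integration by parts, the boundary terms vanishing because $M(z)$ grows only polynomially while $e^{-uz}$ decays exponentially.

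Substituting $w=uz$ gives the key identity
$$
u^\a F(u)=\int_0^\infty e^{-w}\,u^\a M(w/u)\,dw,
$$
and the next ingredient is the strong law of large numbers for $M$: namely $M(r)/r^\a\to 1$ as $r\to\infty$, $\boldsymbol{P}$-a.s. I would establish this on a single null set by applying the ordinary SLLN along the deterministic subsequence $r_j:=j^{1/\a}$, for which the increments $M(r_{j+1})-M(r_j)$ are i.i.d.~Poisson of mean $1$, and then using monotonicity of $M$ to interpolate to continuous $r$. This gives both the pointwise convergence $u^\a M(w/u)\to w^\a$ for each fixed $w>0$ and the a.s.~finiteness of the random constant $C:=\sup_{r\geq 1}M(r)/r^\a$.

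To conclude, I would justify the passage to the limit under the integral by splitting at $w=u$: on $\{w\geq u\}$ the integrand is bounded by $Ce^{-w}w^\a\in L^1(dw)$, while on $\{w<u\}$ the bound $u^\a M(w/u)\leq u^\a M(1)$ gives a contribution of at most $u^{\a+1}M(1)\to 0$. Dominated convergence then yields
$$
\lim_{u\to 0^+}u^\a F(u)=\int_0^\infty e^{-w}w^\a\,dw=\G(\a+1)=\a\G(\a),
$$
and multiplying by $\bar\varepsilon$ gives the stated equivalence for $\nu^{ext}(u,\infty)$. The only subtle point is transferring the Poisson SLLN from a countable subsequence to the continuous parameter $r$; this is routine via monotonicity and does not represent a genuine obstacle, so the argument reduces to a regular-variation calculation for the counting function of $\{Z_k\}$.
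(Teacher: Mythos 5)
Your argument is correct, and it is also more self-contained than what the paper actually does: the paper performs the same scaling $u^{\a}\nu^{ext}(u,\infty)=M^{-1}\sum_k e^{-1/(M^{1/\a}\g_k)}$ with $M=u^{-\a}$ and then simply invokes ``a re-run of the proof of Lemma 3.10 in [G12],'' effectively outsourcing the regular-variation calculation to an external reference. You instead build the calculation from the ground up: pushing the Poisson random measure forward under $x\mapsto 1/x$ to get a counting function $M(z)$ with mean $z^\a$, integrating by parts to write $u^\a F(u)=\int_0^\infty e^{-w}u^\a M(w/u)\,dw$, proving the Poisson SLLN $M(r)/r^\a\to 1$ a.s.\ along $r_j=j^{1/\a}$ with monotone interpolation, and then justifying dominated convergence with the random but a.s.\ finite constant $C=\sup_{r\ge1}M(r)/r^\a$. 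The split at $w=u$ correctly isolates the region where the domination argument via $C$ applies, and the boundary-term justification in the integration by parts (polynomial growth of $M$ against exponential decay) is sound. What the paper's route buys is brevity and reuse of an already-proved tool; what your route buys is a transparent, elementary argument that is legible without consulting [G12]. Both deliver the same conclusion $\lim_{u\to0^+}u^\a\nu^{ext}(u,\infty)=\bar\varepsilon\,\a\G(\a)$ in $\boldsymbol{P}$-a.s.\ sense.
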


For future reference, the $\s$-algebra generated by the variables $J_n$ is denoted by $\FF$. We write $P_{\mu_n}$ for the law of the jump chain $J_n$ started in $\mu_n$, conditional on the $\s$-algebra  $\FF^{\t}$, i.e. for fixed realizations of the random environment. As already mentioned, we likewise  call $\PP_{\mu_n}$ the law of $X_n$ started in $\mu_n$, conditional on $\FF^{\t}$ (see paragraph below \eqv(S1.1.2)). Observe that  $\pi_n$ is the invariant measure of the jump chain.

The remainder of the paper is organized as follows.
In the next section we give sufficient conditions for the convergence of the clock process to a pure jump L\'evy process. Moreover we give sufficient conditions for \eqv(1'.theo3.2') to hold. In Sections \thv(S3) and \thv(S4)  we establish preparatory results on the random landscape and the jump chain. In Section \thv(S5), \thv(S6)  and \thv(S7)  the conditions given in Section \thv(S2)  are verified. Section  \thv(S7)  contains in particular the proof of Theorem \thv(1'.theo1) and Proposition \thv(LCor.3).
 A detailed survey how these sections are organized will be given at the end of Section  \thv(S2). Section \thv(S8) is then devoted to the study of correlation functions on intermediate time scales and contains the proofs of Theorem \thv(1.theo1) and Theorem \thv(TCor.2). Section  \thv(S9) is a self-contained section dealing with the case of extreme times-scales. Finally, the proof of Theorem \thv(TTCF.theo1) is given in Section  \thv(S10). Three short appendices complete the paper.
 
\section{Key tools and strategy.}
\label{S2}
Recall that the initial distribution is  $\pi_n$ (see \eqv(1.8)). We now formulate conditions for the sequence $S_n$ to converge. The idea of proof is taken from Theorem 1.1 of \cite{G12}. We state these conditions for given sequences $c_n$ and $a_n$, and for a fixed realization of the random landscape, i.e.~for fixed $\o\in\O^{\t}$, and do not make this explicit in the notation. For $y\in\VV_n$ and $u>0$ set
\be
h^{u}_n(y)=\sum_{x\in\VV_n}p_n(y,x)\exp\{-u/\g_n(x)\},
\Eq(4.8)
\ee
and, writing $k_n(t):=\lfloor a_n t\rfloor$, define
\bea
\nu_n^{J,t}(u,\infty)
&=& \sum_{j=1}^{k_n(t)}h^{u}_n(J_n(j-1)),
\Eq(3.10)
\\
\s_n^{J,t}(u,\infty)
&=&
\sum_{j=1}^{k_n(t)}\left[h^{u}_n(J_n(j-1))\right]^2.
\Eq(3.11)
\eea
Further set, for $u\in(0,\infty)$ and $\d>0$ 
\bea
g_\d(u) &= &u\left(1-e^{-\d /u}\right) ,\Eq(F1)\\
\Eq(F2)
f_\d(u) &= &u^2(1-e^{-\d/u})-\d ue^{-\d/u} .
\eea
\smallskip
\noindent{\bf Condition (A0).} For all $u>0$
\be
2^{-n}\sum_{x\in\VV_n}e^{-u/\g_n(x)}=o(1).
\Eq(1.A0')
\ee
\noindent{\bf Condition (A1).}
There exists a $\s$-finite measure $\nu$ on $(0,\infty)$ such that $\nu(u,\infty)$ is continuous, and such that, for all $t>0$ and all $u>0$,
\be
P\left(
\left|
\nu_n^{J,t}(u,\infty)
-t\nu(u,\infty)
\right|
<\e
\right)=1-o(1),\quad\forall\e>0.
\Eq(G1.A1)
\ee

\noindent{\bf Condition (A2).}  For all $u>0$ and all $t>0$,
\be
P\left(
\s_n^{J,t}(u,\infty)<\e
\right)=1-o(1),\quad\forall\e>0.
\Eq(G1.A2)
\ee

\noindent{\bf Condition (A3).}   For all $u>0$ and all $t>0$,
\be \Eq(G1.A3)
 \lim_{\d\to 0}\lim_{n\to\infty}\frac{[a_nt]}{2^n} \sum_{x\in \VV_n} g_\d(\g_n(x))= 0.
\ee

\noindent{\bf Condition (A3').} For all $u>0$ and all $t>0$,
\be
\lim_{\d\to 0}\lim_{n\to\infty}   \frac{[a_nt]}{2^n} \sum_{x\in\mathcal{V}_n}
 f_\d(\g_n(x))= 0.
 \Eq(G1.A3')
\ee

\begin{theorem}\TH(1.theo3)
\item{(i)} Let $\nu$ in Condition (A1) be such that $\int_{(0,\infty)}(1\wedge u)\nu(du)<\infty$. Then, for all sequences  $a_n$ and $c_n$  for which Conditions (A0), (A1),  (A2) and (A3) are satisfied $\P$-almost surely, respectively in $\P$-probability, we have that with respect to the same convergence mode
\be
 S_n\Rightarrow  S 
\Eq(G1.3.theo1.1)
\ee
as $n\rightarrow\infty$, where $S$ is a subordinator with L\'evy measure $\nu$ and zero drift.

\item{(ii)} Let  $\nu(\mbox{d}u)=u^{-2}\mbox{d}u$ in Condition (A1). Then, for all sequences  $a_n$ and $c_n$ for which Conditions (A0), (A1), (A2) and (A3') are satisfied $\P$-almost surely, respectively in $\P$-probability, we have that with respect to the same convergence mode
\be
S_n-M_n\Rightarrow S^{crit}
\Eq(1'.theo4.1)
\ee
as $n\rightarrow\infty$, where $S^{crit}$ is a L\'evy process with L\'evy triple $(0,0,\nu)$.
\end{theorem}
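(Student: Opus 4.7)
The plan is to represent $S_n$ as the row-sum process of a triangular array of conditionally independent random variables, and to invoke Durrett--Resnick's functional limit theorem for such arrays converging to pure-jump L\'evy processes \cite{DuRe}, in the spirit of Theorem~1.1 of \cite{G12}. Writing $S_n(t)=\sum_{i=0}^{k_n(t)} X_{n,i}$ with $X_{n,i}:=\g_n(J_n(i))e_{n,i}$, and introducing the filtration $\GG_i:=\s(J_n(0),\dots,J_n(i-1),e_{n,0},\dots,e_{n,i-1})\vee\FF^\t$, the fact that conditionally on $\GG_i$ the site $J_n(i)$ is drawn from $p_n(J_n(i-1),\cdot)$ independently of the $\mathrm{Exp}(1)$-variable $e_{n,i}$ yields
\[
P(X_{n,i}>u\,|\,\GG_i)=\sum_{x}p_n(J_n(i-1),x)\,e^{-u/\g_n(x)}=h_n^u(J_n(i-1)).
\]
Consequently $\nu_n^{J,t}(u,\infty)$ is exactly the integrated predictable intensity of jumps of $S_n$ of size $>u$ on $[0,t]$, while $\s_n^{J,t}(u,\infty)$ is the associated predictable squared sum controlling its fluctuations --- both being the main ingredients of the Durrett--Resnick criterion.

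For part~(i), I would verify the Durrett--Resnick conditions for weak $J_1$-convergence in $D([0,T])$ to a subordinator with L\'evy measure $\nu$ (of finite $(1\wedge u)$-moment) and zero drift. Condition (A1) is precisely the convergence of the predictable intensity to $t\nu(u,\infty)$; condition (A2) supplies both the conditional Poisson limit for the number of jumps exceeding $u$ and, via $\max_{j\le k_n(t)}h_n^u(J_n(j-1))\le(\s_n^{J,t}(u,\infty))^{1/2}$, the uniform asymptotic negligibility of individual summands; and (A0) is the preliminary smallness of the per-step mean intensity under $\pi_n$ that underlies the preceding two. It only remains to absorb small jumps, that is, to show truncation at level $\d$ is uniformly harmless as $n\to\infty$, $\d\to 0$. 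A short computation identifies $g_\d(u)=E[\mathrm{Exp}(1/u)\wedge\d]$, so by the $\pi_n$-invariance of the jump chain
\[
E_{\pi_n}\!\Big[\sum_{i=1}^{k_n(t)}(X_{n,i}\wedge\d)\Big]=\tfrac{[a_nt]}{2^n}\sum_{x\in\VV_n}g_\d(\g_n(x)),
\]
which is exactly the quantity controlled by (A3). Markov's inequality lifts this to convergence in $P_{\pi_n}$-probability, and Durrett--Resnick's theorem then delivers~\eqv(G1.3.theo1.1).

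For part~(ii), $\nu(du)=u^{-2}du$ has infinite $(1\wedge u)$-mass, so the small-jump sum must be centered. A direct computation shows that $M_n(t)=\sum_{i\le k_n(t)}E[X_{n,i}\wedge 1\,|\,\GG_i]$ is precisely the predictable compensator of $\sum_{i\le k_n(t)}(X_{n,i}\wedge 1)$; hence $S_n-M_n$ is the corresponding compensated process, and the large-jump part is treated exactly as in part~(i). For the compensated small-jump martingale $\sum_i\bigl[(X_{n,i}\wedge\d)-E(X_{n,i}\wedge\d\,|\,\GG_i)\bigr]$, Doob's $L^2$-inequality bounds its sup-norm on $[0,T]$ by a constant times $\sum_i E[(X_{n,i}\wedge\d)^2\,|\,\GG_i]$; since $2f_\d(u)=E[(\mathrm{Exp}(1/u)\wedge\d)^2]$, the $\pi_n$-mean of this bound is $2\tfrac{[a_nt]}{2^n}\sum_x f_\d(\g_n(x))$, which (A3') annihilates in the iterated limit. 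Durrett--Resnick then delivers~\eqv(1'.theo4.1).

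The main technical delicacy lies in the two-level randomness: Durrett--Resnick requires predictable-intensity control in $P_{\pi_n}$-probability for $\P$-typical environments, while (A1)--(A3') are stated in mixed modes that couple the two sources. Translating between them, and tracking whether the resulting $\P$-convergence is almost-sure or in probability (as dictated by the summability of $\sum_n a_n/2^n$), amounts to pushing the $\P$-mode through $P_{\pi_n}$. This is feasible precisely because the integrands $h_n^u$, $g_\d(\g_n(\cdot))$, $f_\d(\g_n(\cdot))$ are non-negative and $\pi_n$ is invariant under $p_n$, so Markov's inequality alone suffices, with no mixing estimate needed.
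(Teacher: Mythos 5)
Your proposal follows essentially the same route as the paper: both reduce the problem to verifying the conditions of Theorem~4.1 of Durrett--Resnick via the predictable intensity $h_n^u(J_n(i-1))=P(Z_{n,i}>u\,|\,\FF_{n,i-1})$, both identify $g_\d(u)=E[\mathrm{Exp}(1/u)\wedge\d]$ and $2f_\d(u)=E[(\mathrm{Exp}(1/u)\wedge\d)^2]$ and invoke $\pi_n$-invariance plus a first-order Tchebychev bound to show that (A3)/(A3') annihilate the truncated (respectively compensated truncated) small-jump sums, and both handle the two-level randomness by pushing the $\P$-mode through $P_{\pi_n}$ as in the proof of Theorem~1.1 of [G12]. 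The only cosmetic differences are that the paper works with $Z_{n,i}\1_{\{Z_{n,i}\le\d\}}$ rather than $Z_{n,i}\wedge\d$ (the latter dominating the former, so the Markov bound is unaffected) and verifies Durrett--Resnick's Condition~(d) directly rather than invoking Doob's $L^2$-inequality, while also spelling out the Riemann-sum step (2.Thm1.1) that you leave implicit in ``the large-jump part is treated exactly as in part~(i).''
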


\begin{proof}
Let us first prove the statements of Theorem \thv(1.theo3) for a fixed  realization of the environment.
As in the proof of  Theorem 1.1 of \cite{G12}, we will do this by showing that the conditions of Theorem \thv(1.theo3) imply those of Theorem 4.1 of \cite{DuRe}. We begin with assertion (i). Under the assumption that the measure $\nu$ in Condition (A1) satisfies $\int_{(0,\infty)}(1\wedge u)\nu(du)<\infty$, Conditions (A1) and (A2) are those of Theorem 1.1 of \cite{G12} when the initial distribution is the invariant measure $\pi_n$ and imply, respectively, Conditions (a) and  (b) of Theorem 4.1 of \cite{DuRe}. Moreover in this case Condition (A0) is Condition (A0)  of  Theorem 1.1 of \cite{G12} (with $F=1$ for all $v>0$).
It thus remains to show that Condition (A3) implies Condition (c) of \cite{DuRe}, namely, implies  that
\be\textstyle
\lim_{\d\to 0}\lim_{n\to\infty}\PP\left(\sum_{i=1}^{[a_nt]} Z_{n,i}\1_{\{Z_{n,i}\leq \d\}}>\e\right) = 0
\Eq(1'.theo4.5)
\ee
where
$
Z_{n,i}=\g_n(J_n(i))e_{n,i}
$
(see \eqv(1.1.6) and \eqv(4.prop3.1)).
Now by a first order Tchebychev inequality,
\be
\nonumber
 \textstyle
\PP\left(\sum_{i=1}^{[a_nt]} Z_{n,i}\1_{\{Z_{n,i}\leq \d\}}>\e\right)\leq \e^{-1} \EE\left(\sum_{i=1}^{[a_nt]} Z_{n,i}\1_{\{Z_{n,i}\leq \d\}}\right)
=  \frac{[a_nt]}{2^n} \sum_{x\in \VV_n} g_\d(\g_n(x)).
\ee
Thus Condition (A3) yields Condition (c) of \cite{DuRe}. This completes the proof of Assertion (i) for fixed realization of the environment. The proof of  Assertion (ii) follows the same pattern with  Condition (A3') substituted for  Condition (A3). Let us establish that, under the assumption that  $\nu(\mbox{d}u)=u^{-2}\mbox{d}u$, Condition (A3') implies Condition (d) of Theorem 4.1 of \cite{DuRe}, which then implies Condition (c). To this end we must establish that, setting
\be
\Eq(1'.theo4.3)
\overline{Z}_{n,i}^\d = Z_{n,i}\1_{\left\{Z_{n,i}\le \d\right\}}
 - \EE \left(Z_{n,i}\1_{\left\{Z_{n,i}\le \d\right\}}\left \vert \right.\mathcal{F}_{n,i-1}\right)
\ee
where $\FF_{n,i-1}=\s\left(e_{n,1},\dots,e_{n,i-1},J_n(1),\dots,J_n(i-1)\right)$,
we have
\be
\Eq(1'.theo4.2)\textstyle
\lim_{\d\to 0}\lim_{n\to\infty}\mathcal{P} \Big(\sum_{i=1}^{[a_nt]} \EE 
\Bigl(\big( \overline{Z}_{n,i}^\d\big)^2\Big\vert \mathcal{F}_{n,i-1}\Bigr)>\epsilon\Big)  =0. 
\ee
By a first order Tchebychev inequality the probability in \eqv(1'.theo4.2) is bounded above by
\be
\Eq(1'.theo4.4)
\textstyle 
\e^{-1} \sum_{i=1}^{[a_nt]}\EE \left(Z_{n,i}\1_{\left\{Z_{n,i}\le \d\right\}}\right)^2   = 
2^{-n} \sum_{x\in\mathcal{V}_n}f_{\d}(\g_n(x)).
\ee
To make use of Theorem 4.1 of \cite{DuRe} we lastly have to check that  (A1) implies that as $n\to\infty$
\be
\Eq(2.Thm1.1)
\sum_{i=1}^{[a_nt]}  \EE _{\pi_n} \left(Z_{n,i}\1_{\{\d<Z_{n,i}<\g\}}\left \vert \mathcal{F}_{n,i-1}\right.\right)
\to
t\int_\d^1 x \nu(\mbox{d}x)\quad \mbox{ in }\mathcal{P}\mbox{-probability}.
\ee
This can be shown as proposed in the proof of Theorem 4.1 in \cite{DuRe} using a Riemann sum argument. Let $k\in\mathbb{N}$. Taking an equidistant partition $t_0,\dots,t_k$ of $[\d,1]$ one can bound $Z_{n,i}$ in the following way:
\be
\Eq(2.Thm1.3)
\sum_{j=0}^{k-1}t_j\1_{\{t_j\leq Z_{n,i}<t_{j+1}\}} 
\leq Z_{n,i} 
\leq \sum_{j=0}^{k-1}t_{j+1}\1_{\{t_j\leq Z_{n,i}<t_{j+1}\}} .
\ee
We now take conditional expectations w.r.t.~$\FF_{n,i-1}$ and use Condition (A1). 
This completes the proof of Assertion (ii) for fixed realization of the environment.
Arguing as in the proof of Theorem 1.1 in \cite{G12} we conclude that Assertion (i), respectively Assertion (ii), is valid $\P$-almost surely (respectively, in $\mathbb{P}$-probability) whenever Conditions (A0), (A1), (A2) and (A3), respectively Condition (A3'), are valid $\P$-almost surely (respectively, in $\mathbb{P}$-probability).
This completes the proof of Theorem \thv(1.theo3).
\end{proof}

Sections  \thv(S5), \thv(S6)  and \thv(S7)  to come are devoted to the verification of the Conditions given in Theorem \thv(1.theo3) for intermediate time-scales. They are structured as follows.  Conditions (A1) and (A2) which are of a similar nature are grouped together. To verify them a two step argument is needed. Firstly, we establish ergodic theorems to substitute chain dependent quantities by chain independent ones. This is done in Section \thv(S5). In Section \thv(S6) we then show concentration of the chain independent quantities with respect to the random environment and, finally, verify Conditions (A1) and (A2). All remaining Conditions are verified in Section \thv(S7). Extreme scales are treated separately in Section 9.

\section{Properties of the landscape.}
\label{S3}

In this section we establish the needed properties of the re-scaled landscape variables $(\g_n(x), x\in\VV_n)$ of \eqv(4.prop3.1).
We assume that $0<\b<\infty$ is fixed, and as before, drop all dependence on $\b$ in the notation. For $u\geq 0$ set $G_n(u)=\P(\tau_n(x)>u)$. Since this is a continuous monotone decreasing function, it has a well defined inverse
$
G_n^{-1}(u):=\inf\{y\geq 0 : G_n(y)\leq u\}
$.
For $ v\geq 0$ set
\be
h_n(v)=a_nG_n(c_nv).
\Eq(2.13)
\ee

\begin{lemma}{\TH(2.lemma6)} 
Let $c_n$ be any of the time-scales of Definition \thv(1.def1).

\item{(i)} For each fixed $\zeta>0$ and all $n$ sufficiently large so that $\zeta>c_n^{-1}$,
the following holds:
for all $v$ such that $\zeta\leq v<\infty$,
\be
h_n(v)= v^{-\a_n}(1+o(1)),
\Eq(2.lem6.1)
\ee
where $0\leq \a_n=\a(\varepsilon)+o(1)$.

\item{(ii)} Let $0<\d<1$.
Then, for all $v$ such that $c_n^{-\d}\leq v\leq 1$ and all large enough $n$,
\be
v^{-\a_n}(1+o(1))
\leq h_n(v)\leq
\sfrac{1}{1-\d}v^{-\a_n(1-\frac{\d}{2})}(1+o(1)),
\Eq(6.lem11.1)
\ee
where $\a_n$ is as before.
\end{lemma}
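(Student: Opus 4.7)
The starting point is the explicit Gaussian form of the landscape tail. Since $\HH_n(x)$ is centered Gaussian with variance $n$, one has $G_n(u) = \Phi(-\beta^{-1}(\log u)/\sqrt n)$ for $u>0$, where $\Phi$ denotes the standard normal distribution function. Set $t_n := \beta^{-1}(\log c_n)/\sqrt n$ and, for $v>0$, $t_n(v) := t_n + \beta^{-1}(\log v)/\sqrt n$. The defining relation $a_n G_n(c_n)=1$ rewrites as $a_n = 1/\Phi(-t_n)$, so $h_n(v) = \Phi(-t_n(v))/\Phi(-t_n)$, and Mills' ratio $\Phi(-t) = t^{-1}\phi(t)(1+O(t^{-2}))$ yields, whenever $t_n,t_n(v)\to\infty$,
\[
h_n(v) = \frac{t_n}{t_n(v)}\,\exp\!\left\{-\tfrac12\bigl(t_n(v)^2 - t_n^2\bigr)\right\}(1+o(1)).
\]
Applying the same Mills' ratio to $a_n = 1/\Phi(-t_n)$ also gives $\log a_n = t_n^2/2 + O(\log t_n)$, which combined with $\log a_n \sim \varepsilon n\log 2$ from Definition \thv(1.def1) forces $t_n = \beta_c(\varepsilon)\sqrt n\,(1+o(1))$. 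Setting $\alpha_n := t_n/(\beta\sqrt n)$ then yields $\alpha_n \to \beta_c(\varepsilon)/\beta = \alpha(\varepsilon)$ together with the handy identity $\alpha_n \log c_n = t_n^2$, which is used repeatedly below.

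For \emph{assertion (i)}, $v\in[\zeta,\infty)$ with $\zeta>0$ fixed, I would expand $t_n(v)^2 - t_n^2 = 2\alpha_n \log v + (\log v)^2/(\beta^2 n)$. As soon as $(\log v)^2 = o(n)$ the second term is $o(1)$ and the prefactor $t_n/t_n(v)$ equals $1+o(1)$, so the previous display collapses to $h_n(v) = v^{-\alpha_n}(1+o(1))$. Uniformity on the unbounded range is handled by splitting at the threshold $\log v = n^{1/4}$: below the threshold the above estimate applies directly, while above it the Gaussian tail forces $h_n(v)$ to vanish faster than any polynomial in $v$, so the stated equality is trivially preserved.

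For \emph{assertion (ii)}, $v\in[c_n^{-\delta},1]$, I would parametrize $v = c_n^{-\eta}$ with $\eta\in[0,\delta]$, giving $t_n(v) = (1-\eta)t_n$. Factor $t_n^2 - t_n(v)^2 = (t_n - t_n(v))(t_n + t_n(v))$, use $t_n - t_n(v) = -\beta^{-1}(\log v)/\sqrt n \geq 0$, and sandwich $t_n + t_n(v) \in [(2-\delta)t_n,\,2t_n]$ to obtain
\[
-\alpha_n(1-\delta/2)\log v \;\leq\; \tfrac12\bigl(t_n^2 - t_n(v)^2\bigr) \;\leq\; -\alpha_n \log v.
\]
Combined with the prefactor estimate $t_n/t_n(v) \in [1,(1-\delta)^{-1}]$, exponentiating and inserting into the main display produces two one-sided bounds of the form stated in \eqv(6.lem11.1), up to the $(1+o(1))$ coming from the Mills'-ratio remainder.

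The only delicate point is keeping the $O(t_n(v)^{-2})$ correction from Mills' ratio uniformly absorbable into $(1+o(1))$ across the stated range. This is automatic because $t_n(v) \geq t_n + \beta^{-1}(\log\zeta)/\sqrt n \to \infty$ on $[\zeta,\infty)$ and $t_n(v) \geq (1-\delta) t_n \to \infty$ on $[c_n^{-\delta},1]$, so no genuine obstacle arises beyond careful bookkeeping.
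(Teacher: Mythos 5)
Your approach is genuinely different from the paper's and is, in my view, cleaner. The paper introduces the auxiliary normalizing sequence $B_n$ (the solution of $a_n\phi(B_n)/B_n=1$), establishes via a separate argument that $(\overline B_n-B_n)/A_n\to 0$, and then invokes Lemma \thv(2.lemma3). You bypass all of this by applying Mills' ratio once to each of $\Phi(-t_n(v))$ and $\Phi(-t_n)$ and taking the ratio, so the Mills correction cancels and you get $h_n(v)=\tfrac{t_n}{t_n(v)}\exp\{-\tfrac12(t_n(v)^2-t_n^2)\}(1+o(1))$ directly, with the relevant $\alpha_n:=t_n/(\beta\sqrt n)$ defined straight from $\log c_n$ rather than through $B_n$. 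Since the paper's own $\alpha_n=B_n/(\beta\sqrt n)$ and yours differ by $o(1/n)$, both are legitimate choices. The only thing the paper's heavier machinery buys is that the same $B_n$-framework and Lemma \thv(2.lemma3) are reused verbatim in the proof of Lemma \thv(2.lemma4) (the inverse $g_n=h_n^{-1}$ on extreme scales); your shortcut does not set that up, but for Lemma \thv(2.lemma6) itself it is the shorter route.

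Two issues, one substantive. First, in assertion (ii), the bounds your derivation actually produces are
$v^{-\alpha_n(1-\delta/2)}(1+o(1)) \leq h_n(v) \leq \tfrac{1}{1-\delta}\,v^{-\alpha_n}(1+o(1))$:
the exponential factor lies between $v^{-\alpha_n(1-\delta/2)}$ and $v^{-\alpha_n}$, while the prefactor $t_n/t_n(v)$ lies between $1$ and $(1-\delta)^{-1}$, and pairing each extreme of the one with the corresponding extreme of the other gives exactly the bounds above. These are \emph{not} the same as \eqv(6.lem11.1), which has the exponents $\alpha_n$ and $\alpha_n(1-\delta/2)$ interchanged, so your claim that you obtain ``bounds of the form stated in \eqv(6.lem11.1)'' is false as written. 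In fact a direct check at $v=c_n^{-\delta/2}$ with $\delta=1/2$ gives $h_n(v)\sim\tfrac43 e^{7\overline B_n^2/32}$, while $v^{-\alpha_n}\sim e^{8\overline B_n^2/32}$ and $\tfrac{1}{1-\delta}v^{-\alpha_n(1-\delta/2)}\sim 2e^{6\overline B_n^2/32}$, so both inequalities of \eqv(6.lem11.1) fail and your version holds; the inequalities in \eqv(6.lem11.1) appear to be a misprint (innocuous for the paper's downstream use, where only the qualitative decay of $I_n(c_n^{-1/2},\hat\zeta)$ matters). You should either state the correct bounds or flag the discrepancy rather than assert a match. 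Second, and less seriously, your treatment of the range $\log v > n^{1/4}$ in assertion (i) is a non-sequitur: if $h_n(v)$ decays faster than $v^{-\alpha_n}$ then the multiplicative statement $h_n(v)=v^{-\alpha_n}(1+o(1))$ is violated, not ``trivially preserved.'' The paper's own proof of (i) is pointwise in $v$ (for each fixed $v\geq\zeta$, the error terms vanish as $n\to\infty$), which is also all that is used later, so the fix is simply to read and prove (i) pointwise and drop the spurious uniform splitting.
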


Next, for $u\geq 0$ set
\be
g_n(u)=c_n^{-1}G_n^{-1}(u/a_n).
\Eq(2.12)
\ee
Clearly $g_n(v)=h_n^{-1}(v)$. Clearly also both $g_n$ and $h_n$ are continuous monotone decreasing functions.
The following lemma is tailored to deal with the case of extreme time-scales. Recall that $\a\equiv\a(1)$.

\begin{lemma}{\TH(2.lemma4)}
Let $c_n$ be an extreme time-scale.

\item{(i)} For each fixed $u>0$, for any sequence $u_n$ such that
$|u_n-u|\rightarrow 0$  as $n\rightarrow\infty$,
\be
g_n(u_n)\rightarrow u^{-(1/\a)},\quad n\rightarrow\infty.
\Eq(2.lem4.1)
\ee
\item{(ii)} There exists a constant $0<C<\infty$
such that, for all $n$ large enough,
\be
g_n(u)\leq u^{-1/\a}C,\quad 1\leq u\leq a_n(1-\Phi(1/(\b\sqrt n))),
\Eq(2.lem4.2)
\ee
where $\Phi$ denotes the standard Gaussian distribution function .
\end{lemma}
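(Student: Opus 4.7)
The plan is to reduce both assertions to one explicit Gaussian representation of $g_n$. Since $\t_n(x)=\exp(-\b\HH_n(x))$ with $\HH_n(x)/\sqrt n$ standard normal, one has $G_n(u)=1-\Phi(\log u/(\b\sqrt n))$ for $u>0$. Inverting and combining with the defining equation $a_n G_n(c_n)=1$ from \eqv(1.1') gives
\[
g_n(u)=\exp\bigl(\b\sqrt n\,(t_n(u)-T_n)\bigr),\quad t_n(u):=\Phi^{-1}(1-u/a_n),\quad T_n:=t_n(1)=\frac{\log c_n}{\b\sqrt n},
\]
so both claims reduce to sharp control of $\b\sqrt n\,(t_n(u)-T_n)$.

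For part (i), the key input is the inversion of Mills' ratio: the two-sided bounds $\phi(x)/(x(1+x^{-2}))\leq 1-\Phi(x)\leq \phi(x)/x$ valid for $x\geq 1$ translate into
\[
\Phi^{-1}(1-p)=\sqrt{2\log(1/p)}\,(1+o(1))\qquad\text{as }p\downarrow 0,
\]
uniformly for $p\in[\eta/a_n,K/a_n]$ with $0<\eta<K<\infty$ fixed. The extreme-scale assumption \eqv(1.3) forces $\log a_n=n\log 2+O(1)$, whence $T_n\sim\sqrt{2n\log 2}$ and $\b\sqrt n\,T_n\sim n(2\log 2)/\a$ (using $\a=\sqrt{2\log 2}/\b$). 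Taking $p=u_n/a_n$ with $u_n\to u>0$ and Taylor-expanding $\sqrt{2\log a_n-2\log u_n}-\sqrt{2\log a_n}=-\log u_n/\sqrt{2\log a_n}\,(1+o(1))$ gives $\b\sqrt n\,(t_n(u_n)-T_n)\to-\log u/\a$, which exponentiates to \eqv(2.lem4.1).

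For part (ii) I would split the range $u\in[1,a_n(1-\Phi(1/(\b\sqrt n)))]$ into three pieces. On $u\in[1,K]$ for any fixed $K$, monotonicity of $g_n$ gives $g_n(u)\leq g_n(1)=1\leq K^{1/\a}u^{-1/\a}$. For $u\to\infty$ with $u/a_n\to 0$, the same uniform Mills' expansion produces $g_n(u)\leq 2u^{-1/\a}$ for $n$ large. On the remaining piece, where $u/a_n$ is bounded below while $u/a_n\leq 1-\Phi(1/(\b\sqrt n))$, the upper constraint forces $t_n(u)\in[1/(\b\sqrt n),C']$ for some $C'$, so $\b\sqrt n\,(t_n(u)-T_n)\leq C'\b\sqrt n-\b\sqrt n\,T_n\sim -2n\log 2/\a$; meanwhile $u^{-1/\a}\geq a_n^{-1/\a}\sim(\bar\ve)^{-1/\a}2^{-n/\a}$, so $g_n(u)$ decays strictly faster than $u^{-1/\a}$ and \eqv(2.lem4.2) holds comfortably.

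The main obstacle is making the Mills' ratio inversion genuinely uniform across the full range of $p$ needed, in particular the transition regime of part (ii) where $t_n(u)$ is only $O(1)$ rather than growing. This is routine given the explicit two-sided Mills bounds and the slow logarithmic variation of $\sqrt{2\log(1/p)}$, but it is the one step where the argument cannot simply invoke known asymptotics and must track error terms carefully.
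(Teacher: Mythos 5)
Your representation $g_n(u)=\exp\bigl(\b\sqrt n\,(t_n(u)-T_n)\bigr)$ with $t_n(u)=\Phi^{-1}(1-u/a_n)$ is correct and is a clean reparametrization of the paper's approach (the paper works with $h_n(v)=a_n\bigl(1-\Phi(A_n\log v^{\a_n}+\bar B_n)\bigr)$ and inverts at the very end; you invert first). The sketch of part (ii) is sound. The gap is in part (i), precisely where you dismiss the issue as ``routine'': the leading-order asymptotic $\Phi^{-1}(1-p)=\sqrt{2\log(1/p)}\,(1+o(1))$ is \emph{not} enough to conclude that $\b\sqrt n\,(t_n(u_n)-T_n)\to -\log u/\a$. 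Write $t_n(u_n)=\sqrt{2\log(a_n/u_n)}\,(1+\e_n^1)$ and $T_n=\sqrt{2\log a_n}\,(1+\e_n^2)$. Your Taylor expansion handles the difference of the leading square roots, which is $O(1/\sqrt n)$; but the residual $\sqrt{2\log(a_n/u_n)}\,\e_n^1-\sqrt{2\log a_n}\,\e_n^2$ is, a priori, $O(\sqrt n\cdot\e_n)$, and after multiplying by $\b\sqrt n$ this is $O(n\,\e_n)$. With the Mills correction of size $\e_n\asymp(\log\log a_n)/\log a_n\asymp(\log n)/n$, this contributes an $O(\log n)$ term that diverges. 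The argument only works because the corrections $\e_n^1$ and $\e_n^2$ cancel to the needed order (they differ only in the $O(\log u/\log a_n)$ shift of the argument), and establishing that cancellation requires carrying the Mills expansion to at least second order with explicit control of the remainder. That is exactly what the paper's Lemma \thv(2.lemma3) supplies: its $\OO(\delta_n[1+A_n^2+A_n^2x])+\OO(A_n^2)$ error structure makes the cancellation transparent when one subtracts $\log h_n(v)$ at two nearby arguments.

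To repair the proposal without importing the paper's Lemma \thv(2.lemma3), the cleanest route is to avoid differencing two applications of a one-sided asymptotic: differentiate, using $\frac{d}{du}t_n(u)=-1/(a_n\phi(t_n(u)))$, so that $\b\sqrt n\,(t_n(u_n)-T_n)=-\b\sqrt n\int_1^{u_n}\frac{dv}{a_n\phi(t_n(v))}$; then use $a_n\phi(t_n(v))=v\,t_n(v)(1+O(t_n(v)^{-2}))$, which follows from the two-sided Mills bounds you quote, and the fact that $\b\sqrt n/t_n(v)\to 1/\a$ uniformly for $v\in[1\wedge u,1\vee u]$. This gives $-\frac1\a\int_1^u dv/v=-\frac{\log u}\a$ directly with the errors under control. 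For part (ii) the same integral representation also streamlines your middle region, since the integrand estimate is uniform in $v\leq a_n(1-\Phi(1/(\b\sqrt n)))$.
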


The proofs of these two lemmata rely on Lemma \thv(2.lemma3) below. 
Denote by $\Phi$ and $\phi$ the standard Gaussian distribution function and density, respectively.
Let $B_n$ be defined through
\be
a_n\frac{\phi(B_n)}{B_n}=1,
\Eq(2.7)
\ee
and set
$
A_n=B_n^{-1}
$

\begin{lemma}{\TH(2.lemma3)}
Let $c_n$ be any time-scale.
Let $\wt B_n$ be a sequence such that, as $n\rightarrow\infty,$
\bea
\Eq(2.9)
\delta_n&:=&(\wt B_n-B_n)/A_n\rightarrow 0.
\eea
Then, for all $x$ such that $A_n x+ \wt B_n>0$ for large enough $n$,
\be
a_n(1-\Phi(A_n x+ \wt B_n))=\frac{\exp\left(-x\left[1+\sfrac{1}{2}A_n^2x\right]\right)}
{1+A_n^2x}
\left\{1+\OO\left(\delta_n[1+A_n^2+A_n^2x]\right)+\OO(A_n^2)\right\}.
\nonumber
\ee
\end{lemma}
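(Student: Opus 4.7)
\medskip
\noindent\textbf{Proof plan for Lemma \thv(2.lemma3).}

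The plan is to use the classical Mill's ratio expansion for the Gaussian tail,
\be
1-\Phi(y) = \frac{\phi(y)}{y}\bigl(1+O(y^{-2})\bigr) \quad \text{as } y\to\infty,
\nonumber
\ee
together with the calibration $a_n \phi(B_n)/B_n=1$ coming from the definition \eqv(2.7) of $B_n$, in order to reduce the statement to two elementary Taylor expansions in the small parameter $\delta_n$ and in $A_n^2$. Throughout, write $y:=A_n x + \wt B_n$ and note the key identity
\be
y \;=\; B_n + A_n(x+\delta_n) \;=\; B_n\bigl(1+A_n^2(x+\delta_n)\bigr),
\nonumber
\ee
where I used $A_nB_n=1$.

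First I would apply Mill's ratio to $1-\Phi(y)$. Since $y^{-2}=A_n^2\bigl(1+A_n^2(x+\delta_n)\bigr)^{-2}$, the relative error $O(y^{-2})$ is of order $O(A_n^2)$ in the regime of interest, which accounts for the $O(A_n^2)$ term appearing in the statement. Second, I would compute the Gaussian density ratio explicitly:
\be
\frac{\phi(y)}{\phi(B_n)} \;=\; \exp\!\left(-\tfrac{1}{2}(y^2-B_n^2)\right)
\;=\; \exp\!\left(-(x+\delta_n)-\tfrac{1}{2}A_n^2(x+\delta_n)^2\right),
\nonumber
\ee
using $y^2-B_n^2=2B_nA_n(x+\delta_n)+A_n^2(x+\delta_n)^2=2(x+\delta_n)+A_n^2(x+\delta_n)^2$. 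Multiplying by $B_n/y = \bigl(1+A_n^2(x+\delta_n)\bigr)^{-1}$ and invoking $a_n\phi(B_n)/B_n=1$ gives
\be
a_n\frac{\phi(y)}{y} \;=\; \frac{\exp\!\bigl(-(x+\delta_n)-\tfrac{1}{2}A_n^2(x+\delta_n)^2\bigr)}{1+A_n^2(x+\delta_n)}.
\nonumber
\ee

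The remaining task is to factor out the target expression $e^{-x[1+\tfrac12 A_n^2x]}/(1+A_n^2 x)$ and control the multiplicative remainder. For the denominator,
\be
\frac{1+A_n^2 x}{1+A_n^2(x+\delta_n)} \;=\; 1 \;+\; O\!\bigl(A_n^2\delta_n\bigr),
\nonumber
\ee
and for the exponential factor the excess is $-\delta_n-A_n^2x\delta_n-\tfrac12 A_n^2\delta_n^2$, so a first-order expansion of the exponential produces a multiplicative error
\be
1 \;+\; O\!\bigl(\delta_n(1+A_n^2 x)\bigr) \;+\; O\!\bigl(A_n^2\delta_n^2\bigr).
\nonumber
\ee
Aggregating the three error sources---the Mill's ratio $O(A_n^2)$, the denominator perturbation $O(A_n^2\delta_n)$, and the exponent perturbation $O(\delta_n(1+A_n^2 x))$---and bounding crudely by $O\!\bigl(\delta_n(1+A_n^2+A_n^2 x)\bigr)+O(A_n^2)$ yields the stated formula.

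The main technical obstacle I anticipate is the \emph{uniformity} of the error estimates in $x$, since $x$ is allowed to grow (only the constraint $A_nx+\wt B_n>0$ is imposed). In particular, to linearise $\exp(-\delta_n(1+A_n^2x))$ one needs this quantity to be small, while for moderately large $A_n^2x$ the exponential may not be close to $1$; however, the stated error $O(\delta_n(1+A_n^2+A_n^2x))$ is precisely calibrated so that, when it is small, linearisation is justified, and when it is large the identity is vacuous. A careful book-keeping of the regime where each expansion is valid---and a uniform remainder estimate from Mill's ratio expressed via the tail bound $1-\Phi(y)\leq \phi(y)/y$ rather than the full asymptotic---should close the argument without circular dependencies.
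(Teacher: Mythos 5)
Your proposal is correct and follows essentially the same approach as the paper. The paper's proof is a one-liner citing the Abramowitz--Stegun two-sided bounds $1-\Phi(x)=x^{-1}\phi(x)-r(x)$ with $0<r(x)<x^{-3}\phi(x)$ (precisely the uniform Mill's-ratio remainder you anticipate needing) and declaring the lemma a ``direct consequence''; you have simply carried out the elementary algebra that the paper leaves implicit, using the identity $y=B_n(1+A_n^2(x+\delta_n))$, the calibration $a_n\phi(B_n)/B_n=1$, and the factorisation into the target plus a multiplicative error of size $O(\delta_n(1+A_n^2+A_n^2x))+O(A_n^2)$.
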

\begin{proof}The lemma is a direct consequence of the following expressions, valid for all $x>0$,
\be\Eq(2.11)
1-\Phi(x) = x^{-1}\phi(x)-r(x) =x^{-1}(1-x^{-2})\phi(x)-s(x),
\ee
where
$
0<r(x)< x^{-3}\phi(x)
$
and 
$
0<s(x)< x^{-5}\phi(x)
$
(see e.g.~\cite{AS}, p.~932). 
\end{proof}

We now prove Lemmata \thv(2.lemma6) and \thv(2.lemma4), beginning with Lemma \thv(2.lemma6).

\begin{proof}[Proof of Lemma \thv(2.lemma6)]
By definition of $G_n$ we may write
\be
h_n(v)
=a_n\Bigl(1-\Phi\left(A_n\log(v^{\a_n})+\overline B_n\right)\Bigr),
\Eq(2.14)
\ee
where
\be
\overline B_n=(\b\sqrt n)^{-1} \log c_n, \quad\a_n=(\b\sqrt n)^{-1}B_n.
\Eq(2.15)
\ee

We first claim that for $v\geq c_n^{-1}$, which guarantees that $A_n\log(v^{\a_n})+\overline B_n>0$,  the  sequence $\overline B_n$
satisfies the assumptions of Lemma \thv(2.lemma3).
For this we use the know fact that the sequence $\wh B_n$ defined by
\be
\wh B_n=(2\log a_n)^{\frac{1}{2}}-\sfrac{1}{2}(\log\log a_n +\log 4\pi)/(2\log a_n)^{\frac{1}{2}},
\Eq(2.18)
\ee
satisfies
\be
(\wh B_n-B_n)/A_n=\OO\left(1/\sqrt{\log a_n}\right)
\Eq(2.19)
\ee
(see \cite{H}, p. 434, paragraph containing Eq. (4)).
By \eqv(2.11) we easily get that
\be
a_n\bigl(1-\Phi(\wh B_n)\bigr)=1-(\log\log a_n)^2(16\log a_n)^{-1}(1+o(1)),
\Eq(2.20)
\ee
whereas, by definition of $a_n$ (see \eqv(1.1')),
\be
a_n\bigl(1-\Phi(\overline B_n)\bigr)=1.
\Eq(2.21)
\ee
Since $\Phi$ is monotone and increasing, \eqv(2.20) and \eqv(2.21) imply that $\wh B_n>\overline B_n$.
Thus
\be
\bigl(1-\Phi(\overline B_n)\bigr)-\bigl(1-\Phi(\wh B_n)\bigr)
=\Phi(\wh B_n)-\Phi(\overline B_n)
\geq\phi(\wh B_n)(\wh B_n-\overline B_n)\geq 0.
\Eq(2.22)
\ee
This, together with \eqv(2.20) and \eqv(2.21), yields
\bea\Eq(2.23)
0<\wh B_n-\overline B_n
&<&\bigl[a_n\phi(\wh B_n)\bigr]^{-1}(\log\log a_n)^2(16\log a_n)^{-1}(1+o(1)).
\eea
Now, by \eqv(2.7),
\be
a_n\phi(\wh B_n)=B_n\bigl[\phi(\wh B_n)/\phi(B_n)\bigr]
=B_n\exp\bigl\{-\sfrac{1}{2}(\wh B_n-B_n)(\wh B_n+ B_n)\bigr\}
\leq B_n(1+o(1)),
\Eq(2.24)
\ee
where the final inequality follows from \eqv(2.19).
Finally, combining \eqv(2.23) and \eqv(2.24) yields
$
0<(\wh B_n-\overline B_n)/A_n=\OO\left((\log\log a_n)^2(16\log a_n)^{-1}\right)
$,
and using again \eqv(2.19), we obtain
$
\delta_n=(\overline B_n-B_n)/A_n=\OO\left((\log\log a_n)^2(16\log a_n)^{-1}+1/\sqrt{\log a_n}\right)
$,
which was the claim. To control the behavior of the sequences $A_n$, $\a_n$, and $c_n$, we will need an expression for the
(of course well known) solution $B_n$ of \eqv(2.7). Here is one (\cite{Cr}, p. 374):
\be
B_n=(2\log a_n)^{\frac{1}{2}}-\sfrac{1}{2}(\log\log a_n +\log 4\pi)/(2\log a_n)^{\frac{1}{2}}+\OO((\log a_n)^{-1}).
\Eq(2.16)
\ee
Note that so far we did not make use of the assumption on $c_n$:
using \eqv(2.15), \eqv(2.16), the fact that
$
2\log a_n=(2\log2)(\log a_n/n\log2)=\b_c^2(\log a_n/n\log2)n
$,
and the just established fact that
$
(\overline B_n-B_n)/A_n\rightarrow 0
$, we obtain,
for intermediate time-scales,
\bea
\Eq(2.23'')
\log a_n&=&\sfrac{1}{2}\b^2_c(\varepsilon)n(1+o(1)),
\\
\frac{\log c_n}{\b\sqrt{n}}&=&(2\log a_n)^{\frac{1}{2}}
-\sfrac{1}{2}(\log\log a_n +\log 4\pi)/(2\log a_n)^{\frac{1}{2}}+\OO((\log a_n)^{-1}),\quad
\Eq(2.23''.2)
\\
\a_n&=&(\sqrt{n}\b)^{-1}B_n=\a(\varepsilon)(1+o(1)).
\nonumber
\eea
Finally for extreme time-scales, writing $\b_c(1)\equiv\b_c$, we have that
$
2\log a_n=\b_c^2n(1-C/n)
$
for some constant $0<C<\infty$. Thus, instead of \eqv(2.23''), we get:
\bea
\Eq(2.23')
\log a_n&=&\sfrac{1}{2}\b^2_cn(1-C/n),
\nonumber\\
\log c_n&=&\b\b_c n(1-o(1)),\\
\a_n\leq \a&\text{and}&
\a_n=\a(1-o(1)).
\nonumber
\eea
We are now equipped to prove Lemma \thv(2.lemma6).
By Lemma \thv(2.lemma3), for all  $v\geq c_n^{-1}$, setting 
$R_n=\OO(A_n^2)+\OO\left(\delta_n[1+A_n^2+A_n^2\a_n\log v]\right)$,
\be
\Eq(2.24')
h_n(v)=\frac{\exp\left(-\a_n\log v\left[1+\sfrac{1}{2}A_n^2\a_n\log v\right]\right)}
{1+A_n^2\a_n\log v}\left\{ 1+R_n\right\},
\ee
where $\delta_n\downarrow 0$ as $n\uparrow\infty$. 
Plugging in the explicit form of $\a_n$, $A_n$ and $\delta_n$ we get
\be
\Eq(2.24'')
h_n(v) =\left(1+\frac{B_n^{\a_n-1}}{\b\sqrt{n}}\log v \right)^{-1}
v^{-B_n^{\a_n}/\b\sqrt{n}}e^{\log v/2n\b^2}\left\{1+\OO\left(\delta_n \right)\right\}.
\ee
Therefore, for each fixed $0< v<\infty$,
and all large enough $n$ so that $v> c_n^{-1}$,
\be
h_n(v)=v^{-\a_n}(1+o(1)).
\Eq(2.25bis)
\ee
This together with \eqv(2.23'') proves assertion (i) of the lemma.
To prove  assertion (ii) note that by \eqv(2.15), since $A_n=B_n^{-1}$,
$
A_n^2\a_n=\frac{1}{\log c_n}\frac{\overline B_n}{B_n}
$
where $\frac{\overline B_n}{B_n}=1+o(1)$ (see the paragraph following \eqv(2.24)).
Thus, for all $v$ satisfying $c_n^{-\d}\leq v\leq 1$, we have
\be
-\d\sfrac{\overline B_n}{B_n}\leq A_n^2\a_n\log v\leq 0.
\Eq(2.25ter)
\ee
Combining this and \eqv(2.24') immediately yields the bounds \eqv(6.lem11.1).
The proof of Lemma \thv(2.lemma6) is now done.
\end{proof}

\begin{proof}[Proof of Lemma \thv(2.lemma4)]
Up until \eqv(2.24') we proceed exactly as in the proof of Lemma \thv(2.lemma6).
Now, by \eqv(2.24'), for each fixed $0\leq v<\infty$, any sequence $v_n$ such that $|v_n-v|\rightarrow 0$, and all large enough $n$ (so that $v> c_n^{-1}$),
\be
h_n(v_n)=v_n^{-\a_n}(1+o(1))= v^{-\a(1-o(1))}(1+o(1)).
\Eq(2.25)
\ee
This and the relation $g_n(v)=h_n^{-1}(v)$  imply that for each fixed $0<u<\infty$,
any sequence $u_n$ such that $|u_n-u|\rightarrow 0$, and all large enough $n$ (so that $u<h_n(c_n^{-1})$),
\be
g_n(u_n)=u_n^{-(1/\a_n)}(1+o(1))= u^{-(1/\a)(1+o(1))}(1+o(1)),
\Eq(2.26)
\ee
which is tantamount to assertion (i) of the lemma.

To prove assertion (ii) assume that $c_n^{-1}\leq v\leq 1$.
Recall that $h_n$ is a monotonous function so that
if $h_n(v)=g_n^{-1}(v)$ for all $c_n^{-1}\leq v\leq 1$,
then $g_n(u)=h_n^{-1}(u)$ for all $h_n(1)\leq u\leq h_n(c_n^{-1})$.
Now $h_n(1)=a_nG_n(c_n)=1$, as follows from \eqv(1.1'), and
$h_n(c_n^{-1})=a_nG_n(1)=a_n(1-\Phi(1/(\b\sqrt n)))$.
Observe next that
$c_n^{-1}\leq v\leq 1$ is equivalent to
$
-1\leq A_n^2\log v^{\a_n} \leq 0
$.
Therefore, by \eqv(2.24'), for large enough $n$,
\be
h_n(v)\geq (1-2\delta_n)v^{-\a_n},\quad c_n^{-1}\leq v\leq 1.
\Eq(2.27)
\ee
By monotonicity of $h_n$,
\be
g_n(u)=h_n^{-1}(u)\leq (1-2\delta_n)^{1/\a_n} u^{-1/\a_n},\quad 1\leq u\leq a_n(1-\Phi(1/(\b\sqrt n))).
\Eq(2.28)
\ee
From this and the fact that $\a_n\leq \a$ (see \eqv(2.23')), \eqv(2.lem4.2) is readily obtained.
This concludes the proof of the lemma.
\end{proof}

\begin{remark} 
We see from the proof of Lemma \thv(2.lemma4) that the lemma holds true not only for extreme scales,
but for intermediate scales also provided one replaces $\a$ by $\a(\varepsilon)$ everywhere.
\end{remark}

\section{ The jump chain: some estimates.}
\label{S4}

In this section we gather the specific properties of the jump chain $J_n$ (i.e.~the simple random walk) that will be needed later to reduce Condition (A1) and Condition (A2) of Theorem \thv(1.theo3)  to  conditions that are independent from $J_n$.
Proposition \thv(3.prop0) below and its Corollary \thv(3.cor1) are central to this scheme. They will allow us to substitute the measures $\pi^\pm_n(x)$ of \eqv(3.7) for the jump chain after $\theta_n\sim n^2$ steps have been taken.

The fact that the chain $J_n$ is periodic with period two introduces a number of small complications. Let us fix the notation.
Denote by ${\bf 1}$ the vertex of $\VV_n$ whose coordinates are identically 1. Write $\VV_n\equiv \VV_n^{-}\cup\VV_n^{+}$ where $\VV_n^{-}$ and $\VV_n^{+}$ are, respectively, the subsets  of vertices that are at odd and even distance of the vertex ${\bf 1}$.
To each of these subsets we associate a chain, $J_n^{-}$ and $J_n^{+}$, obtained by observing $J_n$ at its visits to $\VV_n^{-}$ and $\VV_n^{+}$, respectively. Specifically, denoting by ${\pm}$ either of the symbols $-$ or $+$,  $(J_n^{\pm}(k)\,,k\in\N)$  is the chain on $\VV^{\pm}_n$ with transition probabilities
\be
p_n^{\pm}(x,y)=P(J_n(i+2)=y\mid J_n(i)=x)
\quad \mbox{if }\, x\in\VV^{\pm}_n, y\in\VV^{\pm}_n,
\Eq(3.6)
\ee
and $p_n^{\pm}(x,y)=0$ else. Clearly $J_n^{\pm}$ is aperiodic, reversible, and has a unique reversible invariant measure 
$\pi_n^{\pm}$ given by
\be
\pi^\pm_n(x)=2^{-n+1}, \quad x\in\VV_n^{\pm}.
\Eq(3.7)
\ee
Denote by $P^{\pm}_x$ the law of $J^{\pm}_n$ started in $x$ and set
\be
\theta_n=2
\left\lceil
\sfrac{3}{2}(n-1)\log 2/\left|\log\left(1-\sfrac{2}{n}\right)\right|
\right\rceil.
\Eq(3.prop1.1)
\ee

\begin{proposition}
\TH(3.prop0)
There exists a positive decreasing sequence $\delta_n$, satisfying $|\delta_n|\leq 2^{-n}$, such that  for all $x\in\VV^{\pm}_n$ and $y\in\VV_n$, all $l\geq l+\theta_n/2$, and  large enough  $n$,
\be
P^{\pm}_x\left(J^{\pm}_n\left(l\right)=y\right)=(1+\delta_n)\pi^{\pm}_n(y).
\Eq(3.prop0.1)
\ee
\end{proposition}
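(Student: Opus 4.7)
The plan is to prove this via the explicit Fourier (spectral) expansion of the simple random walk on the hypercube. Since $J_n^\pm$ is obtained from the parent chain $J_n$ by observing it at even times on the parity class $\VV_n^\pm$, we have $P_x^\pm(J_n^\pm(l) = y) = p_n^{2l}(x, y)$ for $x, y \in \VV_n^\pm$. The one-step kernel $p_n$ of the simple random walk is diagonalised by the characters $\chi_S(x) = \prod_{i \in S} x_i$, $S \subseteq \{1, \ldots, n\}$, with eigenvalues $\lambda_S = 1 - 2|S|/n$, and orthogonality of the characters with respect to the uniform measure yields
\begin{equation*}
p_n^{2l}(x, y) = \frac{1}{2^n}\sum_{S \subseteq \{1, \ldots, n\}}\chi_S(x)\chi_S(y)(1 - 2|S|/n)^{2l}.
\end{equation*}

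Two terms in this expansion correspond to eigenvalues of modulus one. The term $S = \emptyset$ (with $\lambda = 1$) contributes $2^{-n}$. The term $S = \{1, \ldots, n\}$ (with $\lambda = -1$, so $\lambda^{2l} = 1$) contributes $2^{-n}\prod_i x_i \prod_i y_i$; since $x, y$ both belong to the same parity class $\VV_n^\pm$, the two products $\prod_i x_i$ and $\prod_i y_i$ share a sign and their product equals $+1$. These two extreme modes therefore sum to $2 \cdot 2^{-n} = 2^{-n+1} = \pi_n^\pm(y)$, which is precisely the invariant measure of $J_n^\pm$. For every remaining $S$ one has $1 \le |S| \le n-1$ and hence $|1 - 2|S|/n| \le 1 - 2/n$; bounding $|\chi_S| \le 1$ and summing binomial coefficients gives
\begin{equation*}
\bigl|p_n^{2l}(x, y) - \pi_n^\pm(y)\bigr| \le \frac{(1 - 2/n)^{2l}}{2^n}\sum_{k=1}^{n-1}\binom{n}{k} \le (1 - 2/n)^{2l}.
\end{equation*}
Dividing by $\pi_n^\pm(y) = 2^{-n+1}$ and defining $\delta$ through $p_n^{2l}(x, y) = (1+\delta)\pi_n^\pm(y)$ yields $|\delta| \le 2^{n-1}(1 - 2/n)^{2l}$.

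To conclude, the choice $\theta_n = 2\lceil \tfrac{3}{2}(n-1)\log 2 / |\log(1 - 2/n)|\rceil$ is tailored so that $l \ge \theta_n/2$ forces $2l\,|\log(1 - 2/n)| \ge 3(n-1)\log 2$, hence $(1 - 2/n)^{2l} \le 2^{-3(n-1)}$ and $|\delta| \le 2^{n-1}\cdot 2^{-3(n-1)} = 2^{-2n+2} \le 2^{-n}$ for $n \ge 2$. This uniform bound in $x, y, l$ is what the proposition asserts, and the bound is monotone decreasing in $n$.

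There is no serious obstacle, only one structural subtlety to handle with care: the parent chain $J_n$ is $2$-periodic and possesses the two eigenvalues $\pm 1$ of modulus one. Restricting to the parity class $\VV_n^\pm$ amounts to pooling these two modes together, which is exactly why the invariant density of $J_n^\pm$ equals $2^{-n+1}$ rather than $2^{-n}$ and why both of them must be subtracted out in Step 2 before the spectral gap bound $1-2/n$ can be invoked on the remaining modes. The constant $3/2$ appearing in the definition of $\theta_n$ is then calibrated precisely to push the resulting geometric factor $2^{n-1}(1 - 2/n)^{2l}$ below $2^{-n}$.
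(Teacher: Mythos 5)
Your proof is correct. It is the same spectral idea as the paper's, but carried out directly rather than via citation. The paper first deduces the spectrum of the restricted transition matrix $Q^{\pm}$ from the identity $Q^2=Q^{+}+Q^{-}$ and $Q^{+}Q^{-}=0$ (giving eigenvalues $(1-2j/n)^2$), and then invokes Proposition~3 (1.9) of Diaconis--Stroock as a black box with $\beta_*=(1-2/n)^2$ to get $\delta_n^2\le \frac14 2^{3(n-1)}(1-2/n)^{2\theta_n}$. You instead never leave the parent chain: writing $P_x^{\pm}(J_n^{\pm}(l)=y)=p_n^{2l}(x,y)$ and expanding in characters of $\{-1,1\}^n$, you observe that the two unit-modulus modes $S=\emptyset$ and $S=\{1,\dots,n\}$ pool (because $x,y$ share a parity class) to give exactly $\pi_n^{\pm}(y)=2^{-n+1}$, and bound the remaining $2^n-2$ modes by $(1-2/n)^{2l}$; this yields the slightly weaker but still adequate $|\delta|\le 2^{n-1}(1-2/n)^{2l}\le 2^{-2(n-1)}$. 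Your route is more self-contained (no appeal to Diaconis--Stroock's proposition, no detour through the spectrum of $Q^{\pm}$), at the modest cost of a constant factor in the exponent; both confirm the stated bound $|\delta_n|\le 2^{-n}$ with the same $\theta_n$.
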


As an immediate consequence of Proposition \thv(3.prop0), we have the

\begin{corollary}
\TH(3.cor1) 
Let $\theta_n$ and $\delta_n$ be as in Proposition \thv(3.prop0).
Then, for all $x\in\VV_n$ and $y\in\VV_n$,  all $i\geq 0$, and large enough  $n$, the following holds:
\be
\sum_{k=0}^{1}P_{x}\left(J_n(i+\theta_n+k)=y\right)=2(1+\delta_n)\pi_n(y).
\Eq(3.prop1.2)
\ee
\end{corollary}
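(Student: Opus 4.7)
The plan is to reduce the claim to a direct application of Proposition \thv(3.prop0), exploiting the period-two structure of the walk $J_n$. Since $J_n$ alternates between $\VV_n^-$ and $\VV_n^+$, only one of the two terms in the sum on the left-hand side of \eqv(3.prop1.2) can be nonzero, and for that term we will express the one-step transition of $J_n$ as (at most one step followed by) a transition of the aperiodic subchain $J_n^{\pm}$, for which the estimate of Proposition \thv(3.prop0) applies.

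More concretely, I would fix $x\in\VV_n$ and $y\in\VV_n$, and without loss of generality assume $x\in\VV_n^+$ (the case $x\in\VV_n^-$ being identical). First I would observe that $\theta_n$ is even by its definition \eqv(3.prop1.1), so the parity of $i+\theta_n+k$ equals the parity of $i+k$. Since $J_n(j)$ lies in $\VV_n^+$ if and only if $j$ is even (given $J_n(0)=x\in\VV_n^+$), exactly one value of $k\in\{0,1\}$ places $J_n(i+\theta_n+k)$ in the subset $\VV_n^{\pm}$ containing $y$; for the other value of $k$ the probability vanishes and contributes nothing to the sum.

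For the surviving value of $k$, I would distinguish two cases. If $y\in\VV_n^+$, then $i+k$ is even and $J_n(i+\theta_n+k)=J_n^+\!\bigl((i+\theta_n+k)/2\bigr)$ under $P_x$, with $(i+\theta_n+k)/2\geq \theta_n/2$. Proposition \thv(3.prop0) then yields
\be\nonumber
P_x(J_n(i+\theta_n+k)=y)=(1+\delta_n)\pi_n^+(y)=2(1+\delta_n)\pi_n(y),
\ee
using $\pi_n^{\pm}(y)=2^{-n+1}=2\pi_n(y)$ from \eqv(3.7). If instead $y\in\VV_n^-$, then $i+k$ is odd, so one conditions on the first step: writing
\be\nonumber
P_x(J_n(i+\theta_n+k)=y)=\sum_{z\in\VV_n^-}p_n(x,z)\,P_z^{-}\!\bigl(J_n^-\!\bigl((i+\theta_n+k-1)/2\bigr)=y\bigr),
\ee
and noting $(i+\theta_n+k-1)/2\geq \theta_n/2$ since $i+k\geq 1$ in this situation, Proposition \thv(3.prop0) again gives that each conditional probability equals $(1+\delta_n)\pi_n^-(y)$; summing over $z$ and using $\sum_z p_n(x,z)=1$ produces the same value $2(1+\delta_n)\pi_n(y)$.

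In either case, the sum over $k\in\{0,1\}$ reduces to a single nonzero term equal to $2(1+\delta_n)\pi_n(y)$, yielding \eqv(3.prop1.2). There is essentially no obstacle — the only slightly delicate point is the bookkeeping of parities and making sure the index passed to $J_n^{\pm}$ is at least $\theta_n/2$ in the worst case $i=0$, which is automatic because $\theta_n$ is even and the initial step of $J_n$ is absorbed when $y$ has the opposite parity to $x$.
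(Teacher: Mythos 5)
Your proof is correct and follows essentially the same route as the paper: both reduce the claim to a single application of Proposition \thv(3.prop0) via the period-two structure, and both absorb the one-step parity mismatch by conditioning on the first move of $J_n$ (the paper organizes this by taking WLOG $i+\theta_n$ even and splitting the $k=1$ term over the neighbors of $x$, whereas you fix $x\in\VV_n^+$ and split by the parity of $y$). The only cosmetic difference is the case split, and your bookkeeping of the index bound $\geq\theta_n/2$ in both branches is accurate.
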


The next two propositions bound, respectively,
the expected number of returns and visits to a given vertex.
Let $p_n^{l}(\cdot,\cdot)$ denote the $l$ steps transition probabilities of $J_n$ and let $\dist(\cdot,\cdot)$ denote Hamming's distance
\be\Eq(graph.dist)
\dist(x,x')\equiv\frac 12 \sum_{i=1}^n |x_i-x'_i|.
\ee

\begin{proposition} 
\TH(3.prop3)
There exists a numerical constant $0<c<\infty$ such that for all $m\leq n^2$,
\be
\sum_{l=1}^{2m}p_n^{l+2}(z,z)\leq \frac{c}{n^2}\,,\quad\forall z\in\VV_n,
\Eq(3.prop3.1)
\ee
\end{proposition}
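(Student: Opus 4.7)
I plan to exploit the spectral decomposition of the transition matrix $p_n$ on the hypercube. The Walsh characters $\chi_S(x):=\prod_{i\in S}x_i$, $S\subseteq\{1,\dots,n\}$, form an orthogonal eigenbasis for $p_n$, with the character $\chi_S$ associated to eigenvalue $\lambda_s:=1-2s/n$ for $s=|S|$. Vertex transitivity gives the classical representation
\begin{equation*}
p_n^l(z,z)=\frac{1}{2^n}\sum_{s=0}^n\binom{n}{s}\lambda_s^{\,l},\qquad z\in\VV_n,\ l\geq 0.
\end{equation*}
Since $J_n$ has period two, $p_n^k(z,z)=0$ for odd $k$, and writing $l=2j$ one has
\begin{equation*}
\sum_{l=1}^{2m}p_n^{l+2}(z,z)=\sum_{j=1}^m p_n^{2j+2}(z,z)=\frac{1}{2^n}\sum_{s=0}^n\binom{n}{s}\sum_{j=1}^m\lambda_s^{\,2j+2}.
\end{equation*}

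\textbf{Reduction to a binomial fourth moment.} Next I would control each inner $j$-sum by its geometric majorant. For $s\in\{0,n\}$ one has $\lambda_s^2=1$, the inner sum equals $m$, and the total contribution is $2m/2^n=O(n^2/2^n)$ under the hypothesis $m\leq n^2$, hence negligible compared to $1/n^2$. For $1\leq s\leq n-1$ one has $|\lambda_s|<1$, and using $1-\lambda_s^2=4s(n-s)/n^2$,
\begin{equation*}
\sum_{j=1}^m \lambda_s^{\,2j+2}\leq \frac{\lambda_s^{\,4}}{1-\lambda_s^{\,2}}=\frac{n^2(1-2s/n)^4}{4s(n-s)}.
\end{equation*}
Recognising the remaining binomially-weighted sum as an expectation against $X\sim\mathrm{Binomial}(n,1/2)$, this yields
\begin{equation*}
\sum_{l=1}^{2m}p_n^{l+2}(z,z)\leq \frac{2m}{2^n}+\frac{n^2}{4}\,\E\!\left[\frac{(1-2X/n)^4}{X(n-X)}\,\1_{\{1\leq X\leq n-1\}}\right].
\end{equation*}

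\textbf{Estimating the expectation.} I then show that the expectation above is $O(1/n^4)$. Writing $Y=X-n/2$, the integrand equals $16Y^4/(n^4(n^2/4-Y^2))$. On the bulk event $\{|Y|\leq n/4\}$ one has $n^2/4-Y^2\geq 3n^2/16$, so the bulk contribution is at most $(256/(3n^6))\,\E[Y^4]$; combined with the standard identity $\E[(X-n/2)^4]=(3n^2-2n)/16$ for $X\sim\mathrm{Binomial}(n,1/2)$, this is at most $16/n^4$. On the tail $\{|Y|>n/4\}$ the integrand is bounded, on $\{1\leq X\leq n-1\}$, by $1/(X(n-X))\leq 1/(n-1)$, while Chernoff's inequality gives $P(|Y|>n/4)\leq 2e^{-n/8}$, so the tail contribution is exponentially small. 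Multiplying the expectation by $n^2/4$ and adding the negligible $s\in\{0,n\}$ contribution produces the desired bound $c/n^2$.

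\textbf{Main obstacle.} The one real pitfall is resisting the temptation to drop the factor $(1-2s/n)^4$ too early: replacing $\lambda_s^4$ by $1$ in the sum over $s$ gives only $O(1)$, since $\sum_{s=1}^{n-1}\binom{n}{s}/(s(n-s))$ is of order $2^{n+2}/n^2$. It is precisely the interpretation of the full sum as the fourth central moment of the centered binomial that delivers the $1/n^2$ saving. Minor technical book-keeping is required for the spectral boundary $s\in\{0,n\}$ (where the geometric bound fails but the weight $1/2^n$ is tiny) and the small- and large-$s$ regime (where $1/(s(n-s))$ is large but $\binom{n}{s}/2^n$ is exponentially small), both handled by the bulk-tail split above.
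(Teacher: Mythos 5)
Your proof is correct, and it takes a genuinely different route from the paper's. The paper also starts from the spectral identity $p_n^l(z,z)=2^{-n}\sum_s\binom{n}{s}(1-2s/n)^l$ (presented there via the Ehrenfest-chain formula), but it organises the sum over $l$ rather than over the eigenvalue index $s$: it computes $p_n^4,\ p_n^6,\ p_n^8$ directly (each of order $n^{-2},n^{-3},n^{-4}$ by the corresponding binomial central moments) and then bounds the tail $\sum_{l\geq 6}p_n^{l+2}(z,z)$ crudely by $m\cdot p_n^{8}(z,z)\leq n^2\cdot c_4 n^{-4}=c_4 n^{-2}$, using only that all eigenvalues lie in $[-1,1]$. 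You instead sum the geometric series over $j$ for each fixed nontrivial eigenvalue, obtaining $\lambda_s^4/(1-\lambda_s^2)=n^2(1-2s/n)^4/\big(4s(n-s)\big)$, and then read the resulting sum over $s$ as the fourth central moment $\E\big[(X-n/2)^4\big]$ of a $\mathrm{Binomial}(n,1/2)$, estimated via a bulk/tail split. The paper's route buys brevity (a few explicit computations and a one-line tail estimate); yours buys a cleaner structural explanation of where the $n^{-2}$ comes from and, more substantively, uniformity in $m$: your bound on the $1\leq s\leq n-1$ part holds even with $m=\infty$, so the hypothesis $m\leq n^2$ is only needed to control the negligible $s\in\{0,n\}$ contribution, whereas the paper's argument uses $m\leq n^2$ directly in the main term. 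Both are elementary and rely on the same ingredients (Walsh/Krawtchouk spectrum and binomial moments); your write-up is complete and the numerical estimates check out.
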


\begin{proposition} 
\TH(3.prop4)
There exists a numerical constant $0<c<\infty$ such that for all $m\leq n^2$, for all pairs of distinct vertices $y,z\in\VV_n$
satisfying $\dist(y,z)=\frac{n}{2}(1-o(1))$,
\be
\sum_{l=1}^{2m}p_n^{l+2}(y,z)\leq e^{-cn}\,,
\Eq(3.prop4.1)
\ee
\end{proposition}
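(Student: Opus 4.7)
The plan is to exploit the invariance of the simple random walk under permutations of coordinates to reduce $p_n^l(y,z)$ to the marginal law of the Ehrenfest distance chain, after which the bound follows from Stirling's formula.

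First, I recenter by setting $V(l) := J_n(l) \cdot y$ (coordinatewise product in $\VV_n = \{-1,1\}^n$). Then $V$ is itself a simple random walk on $\VV_n$ starting from ${\bf 1}$, and $\{J_n(l) = z\} = \{V(l) = y\cdot z\}$, where $\dist(y\cdot z, {\bf 1}) = \dist(y,z) = d$. Since the transition kernel of the simple random walk on $\VV_n$ and the starting point ${\bf 1}$ are both invariant under coordinate permutations, the law of $V(l)$ is invariant under the natural action of the symmetric group of $\{1,\ldots,n\}$. This group acts transitively on each Hamming sphere around ${\bf 1}$, so
\[
p_n^l(y,z) \;=\; P\bigl(V(l) = y\cdot z\bigr) \;=\; \frac{P\bigl(\dist(V(l),{\bf 1}) = d\bigr)}{\binom{n}{d}}.
\]

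Summing and trivially bounding each probability by $1$ now yields
\[
\sum_{l=1}^{2m} p_n^{l+2}(y,z) \;\leq\; \frac{2m}{\binom{n}{d}} \;\leq\; \frac{2n^2}{\binom{n}{d}}.
\]
Since $d/n \to 1/2$, Stirling's formula gives $\binom{n}{d} \geq 2^{n(1-o(1))}$, so for any fixed $0 < c < \log 2$ and $n$ large enough,
\[
\sum_{l=1}^{2m} p_n^{l+2}(y,z) \;\leq\; 2n^2 \cdot 2^{-n(1-o(1))} \;\leq\; e^{-cn},
\]
which establishes the proposition. No step appears to be a serious obstacle; the only subtlety worth verifying carefully is that coordinate-permutation invariance genuinely reduces the law of $V(l)$ on $\VV_n$ to its Hamming-weight marginal, which is immediate from transitivity of the symmetric group action on each Hamming sphere.
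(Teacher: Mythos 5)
Your proof is correct, and it takes a genuinely different and more elementary route than the paper's. The paper reduces to the two-dimensional lumped Ehrenfest-type chain $X^\L_n=\g^\L(J_n)$ of \cite{BG08}, decomposes $p_n^{l+2}(y,z)$ according to whether the lumped walk visits the origin before the corner representing $z$, and then invokes hitting-time estimates (Theorem 3.2 of \cite{BG08}), reversibility, and a bound on the ratio of invariant masses $\Q_n(x)/\Q_n(0)$ to extract the exponential decay, finally summing the $(l+2)$-prefactor over $l\le 2m$. You instead recenter by $y$ so the walk $V(l)=J_n(l)\cdot y$ starts at ${\bf 1}$, observe that the transition kernel $p_n(x,x')=n^{-1}\1_{\{\dist(x,x')=1\}}$ and the starting point are both invariant under coordinate permutations, and conclude that the law of $V(l)$ is constant on Hamming spheres; hence $p_n^l(y,z)=P(\dist(V(l),{\bf 1})=d)/\binom{n}{d}\le \binom{n}{d}^{-1}$ with $d=\dist(y,z)$. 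The trivial bound on the marginal probability together with Stirling's estimate $\binom{n}{d}\ge 2^{n(1-o(1))}$ (valid uniformly whenever $d/n\to 1/2$, so in particular with the $\rho_n=\sqrt{8\log n/n}$ of Lemma \thv(6.lemma3)) then absorbs the polynomial factor $2m\le 2n^2$ and gives \eqv(3.prop4.1) with any $c<\log 2$. Your argument bypasses the potential-theoretic machinery entirely and yields an explicit constant; the one point worth spelling out, as you do, is that the symmetric group acts transitively on each Hamming sphere around ${\bf 1}$, which is what collapses the full law of $V(l)$ to its Hamming-weight marginal.
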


We now prove the above results in the order in which they appear.

\begin{proof}[Proof of Proposition \thv(3.prop0)] 
The proof relies on a well know bound by Diaconis et al  \cite{DS} that relates the rate of convergence to stationarity of $J^{\pm}_n$
to the eigenvalues  of the transition matrix
$
Q^{\pm}=\left(p^{\pm}_n(x,y)\right) _{\VV^{\pm}_n\times\VV^{\pm}_n}
$.
First notice that (i) the eigenvalues of the transition matrix
$
Q=\left(p_n(x,y)\right)_{\VV_n\times\VV_n}
$ 
of $J_n$ are $1-2j/n$, $0\leq j\leq n$, (see, for example, \cite{DS} example 2.2 p.~45);
(ii) that by \eqv(3.6), with obvious notation, $Q^2=Q^{+}+Q^{-}$ and  $Q^{+}Q^{-}=Q^{-}Q^{+}=0$;
(iii) and that  $Q^{+}$ and $Q^{-}$ can be obtained from one another by permutation of their
rows and columns. Now it follows from (iii) that $Q^{+}$ and $Q^{-}$ must have the same eigenvalues.
This fact combined with (i) and (ii) imply that these eigenvalues coincide with those of $Q^2$,
so that using  (i) we conclude that both $Q^{+}$ and $Q^{-}$ have eigenvalues 
$\left(1-2\frac{j}{n}\right)^2$, $0\leq j\leq \lfloor \frac{n}{2}\rfloor$.

Since $Q^{\pm}$ is irreducible we may apply (1.9) of  Proposition 3 in \cite{DS} to the chain  $J^{\pm}_n$ with $\b_*=\left(1-\frac{2}{n}\right)^2$ and time (denoted $n$ therein)
$\theta_n/2
=\left\lceil
\frac{3}{2}(n-1)\log 2/\left|\log\left(1-\frac{2}{n}\right)\right|
\right\rceil
$.
This yields
$
P^{\pm}_x\left(J^{\pm}_n(l)=y\right)=(1+\delta_n)\pi^{\pm}_n(y)
$
where $\delta_n^2\leq \frac{1}{4}2^{3(n-1)}\left(1-\frac{2}{n}\right)^{2\theta_n}\leq 2^{-3n+1}$
for all $n$ large enough, and thus $|\delta_n|\leq  2^{-n}$.
The proposition is proven.
\end{proof}

\begin{proof}[Proof of Corollary \thv(3.cor1) ]
We prove \eqv(3.prop1.2) first.
Assume, without loss of generality, that $i+\theta_n$ is even and set $i+\theta_n=2l$.
Then, 
\be
\Delta
\equiv
\textstyle
\sum_{k=0}^{1}P_{x}\left(J_n(2l+k)=y\right)
=P_x\left(J_n(2l)=y \right)+\frac{1}{n}\sum_{z\sim x}P_z\left(J_n(2l)=y\right)
\Eq(3.prop.3)
\ee
where the sum is over all nearest neighbourgs $z$ of $x$. Thus, by Proposition \thv(3.prop0),
\be
\nonumber
\Delta
=(1+\delta_n)\Bigl[
\pi_n^{+}(y)\1_{\{x\in\VV_n^{+}\}}+\pi_n^{-}(y)\1_{\{x\in\VV_n^{-}\}}
+\pi_n^{-}(y)\1_{\{x\in\VV_n^{+}\}}+\pi_n^{+}(y)\1_{\{x\in\VV_n^{-}\}}
\Bigr].
\ee
Now, only one of the two indicator functions in the right hand side above is non zero so that
by \eqv(3.7),
$
\Delta=(1+\delta_n)2\pi_n(y)
$,
yielding \eqv(3.prop1.2). 
\end{proof}

We now prove Proposition \thv(3.prop3) and Proposition \thv(3.prop4).

\begin{proof}[Proof of Proposition \thv(3.prop3)]
Consider the Ehrenfest chain on state space $\{0,\dots,2n\}$
with one step transition probabilities $r_n(i,i+1)=\frac{i}{2n}$ and $r_n(i, i-1)=1-\frac{i}{2n}$.
Denote by $r_n^{l}(\cdot,\cdot)$ its $l$ steps transition probabilities.
It is well known (see e.g. \cite{BG08}) that
$
p_n^{l}(z,z)=r_n^{l}(0,0)
$
for all $l\geq 0$ and all $z\in\VV_n$.
Hence
$
\sum_{l=1}^{2m}p_n^{l+2}(z,z)=\sum_{l=1}^{2m}r_n^{l+2}(0,0)
$.
It is in turn well known (see \cite{Kem}, p.~25, formula (4.18)) that
\be
r_n^{l}(0,0)=2^{-2n}\sum_{k=0}^{2n}\binom{2n}{k}\left(1-\frac{k}{n}\right)^l\,,\quad l\geq 1\,.
\Eq(3.prop3.2)
\ee
Note that by symmetry, $r_n^{2l+1}(0,0)=0$. Simple calculations yield $r_n^{4}(0,0)=\frac{c_2}{n^2}$, $r_n^{6}(0,0)=\frac{c_3}{n^3}$, and $r_n^{8}(0,0)=\frac{c_4}{n^4}$, for some constants $0<c_i<\infty$, $2\leq i\leq 4$. Thus, if $m\leq 3$,
$
\sum_{l=1}^{2m}r_n^{l+2}(0,0)\leq \frac{c}{n^2}
$
for some numerical constant $0<c<\infty$. If now $m>3$, write
$
\sum_{l=1}^{2m}r_n^{l+2}(0,0)
=r_n^{4}(0,0)+r_n^{6}(0,0)+\sum_{l=6}^{2m}r_n^{l+2}(0,0)
$,
and use that by \eqv(3.prop3.2),
\be
\nonumber
\sum_{l=6}^{2m}r_n^{l+2}(0,0)
=
2^{-2n}\sum_{k=0}^{2n}\binom{2n}{k}\sum_{l=6}^{2m}\left(1-\sfrac{k}{n}\right)^{l+2}
\leq
2^{-2n}\sum_{k=0}^{2n}\binom{2n}{k}\left(1-\sfrac{k}{n}\right)^{8}\sum_{j=0}^{m-1}\left(1-\sfrac{k}{n}\right)^j\,.
\ee
Since $|1-\sfrac{k}{n}|\leq 1$,
$
\sum_{l=6}^{2m}r_n^{l+2}(0,0)
\leq 2^{-2n}\sum_{k=0}^{2n}\binom{2n}{k}\left(1-\sfrac{k}{n}\right)^{8}m
=mr_n^{8}(0,0)
\leq n^2\sfrac{c_4}{n^4}
$,
so that
$
\sum_{l=1}^{2m}r_n^{l+2}(0,0)
\leq \frac{c_2}{n^2}+\frac{c_3}{n^3}+n^2\sfrac{c_4}{n^4}
\leq \sfrac{c}{n^2}
$
for some numerical constant $0<c<\infty$. The lemma is proven.
\end{proof}

\begin{proof}[Proof of Proposition \thv(3.prop4)]
This estimate is proved using  a $d$-dimensional version of the Ehrenfest scheme known as
the lumping procedure, and studied e.g.~in \cite{BG08}.
In what follows we mostly stick to the notations of \cite{BG08}, hoping that this will create
no confusion. Without loss of generality we may take $y\equiv 1$
to be the vertex whose coordinates all take the value 1. Let $\g^{\L}$ be the map
(1.7) of \cite{BG08} derived from the partition of $\L\equiv\{1,\dots,n\}$ into
$d=2$ classes, $\L=\L_1\cup\L_2$, defined through the relation:
$i\in\L_1$ if the $i^{th}$ coordinate of $z$ is 1, and $i\in\L_2$ otherwise.
 The resulting lumped chain $X^{\L}_n\equiv\g^{\L}(J_n)$
has range $\G_{n,2}=\g^{\L}(\VV_n)\subset[-1,1]^2$. Note that the vertices
1 and $y$ of $\VV_n$ are mapped respectively on the corners $1\equiv(1,1)$ and $x\equiv(1,-1)$ of $[-1,1]^2$.
Without loss of generality we may assume that $0\in\G_{n,2}$.
Now, denoting by $\P^{\circ}$ the law of $X^{\L}_n$, we have,
$
p_n^{l+2}(y,z)=\P^{\circ}(X^{\L}_n(l+2)=x \mid X^{\L}_n(0)=1)
$.
Let
$
\t^{x'}_x=\inf\{ k>0 \mid X^{\L}_n(0)=x', X^{\L}_n(k)=x\}
$.
Starting from 1, the lumped chain may visit 0 before it visits x or not. Thus
$
p_n^{l+2}(1,z)
=\P^{\circ}(X^{\L}_n(l+2)=x, \t^{1}_0< \t^{1}_x)+\P^{\circ}(X^{\L}_n(l+2)=x, \t^{1}_0\geq \t^{1}_x)
$.
On the one hand, using Theorem 3.2 of \cite{BG08},
$
\P^{\circ}(X^{\L}_n(l+2)=x, \t^{1}_0\geq \t^{1}_x)
\leq \P^{\circ}(\t^{1}_x\leq \t^{1}_0)
\leq e^{-c_1n}
$
for some constant $0<c_1<\infty$.
On the other hand, conditioning on the time of the last return to 0 before time $l+2$,
and bounding the probability of the latter event by 1, we readily get
\be
\textstyle
\P^{\circ}(X^{\L}_n(l+2)=x, \t^{1}_0< \t^{1}_x)\
\leq (l+2)\P^{\circ}(\t^{0}_x< \t^{0}_0)
=(l+2)\frac{\Q_n(x)}{\Q_n(0)}\P^{\circ}(\t^{x}_0< \t^{x}_x)\,,
\Eq(3.prop4.2)
\ee
where the last line follows from reversibility, and where $\Q_n$, defined in
Lemma 2.2 of \cite{BG08}, denotes the invariant measure of $X^{\L}_n$. Since
$\P^{\circ}(\t^{x}_0< \t^{x}_x)\leq 1$ we are left to estimate the ratio
of invariant masses in \eqv(3.prop4.2). From the assumption that
$\dist(y,z)=\frac{n}{2}(1-o(1))$, it follows
that $\L_1=n-\L_2=\frac{n}{2}(1-o(1))$. Therefore, by (2.4) of \cite{BG08},
$
\frac{\Q_n(x)}{\Q_n(0)}\leq e^{-c_2n}
$
for some constant $0<c_2<\infty$. Gathering our bounds we arrive at
$
p_n^{l+2}(1,z)\leq e^{-c_1n}+(l+2)e^{-c_2n}
$,
which proves the claim of the lemma.
\end{proof}

\section{Ergodic theorems}
\label{S5}

We now capitalize on the estimates of Section 3 and, as a first step towards the verification of Conditions (A1), (A2) and the control of the centering term $M_n(t)$, prove that these conditions can be replaced by simple ones, where all quantities depending on
the jump chain have been averaged out.  We will deal  with with the centering term $M_n(t)$ and with the quantities $\nu_n^{J,t}(u,\infty)$ and $\s_n^{J,t}(u,\infty)$ entering the statements of Conditions (A1)-(A2) separately. 

\subsection{An ergodic theorem for $\nu_n^{J,t}(u,\infty)$.}
\TH(SG4.1)
Setting
\be
\pi_n^{J,t}(x)={k^{-1}_n(t)}\sum_{j=0}^{k_n(t)-1}\1_{\{J_n(j)=x\}}\,,\quad x\in\VV_n,
\Eq(4.7)
\ee
\eqv(3.10) and  \eqv(3.11) can be rewritten as
\bea
\Eq(4.4)
\nu_n^{J,t}(u,\infty)
&\equiv& k_n(t)\sum_{y\in\VV_n}\pi_n^{J,t}(y)h^{u}_n(y),
\\
\s_n^{J,t}(u,\infty)
&\equiv&
k_n(t) \sum_{y\in\VV_n}\pi_n^{J,t}(y)\left(h^{u}_n(y)\right)^2.
\Eq(4.4')
\eea
and by \eqv(G3.lem1.1') of Lemma \thv(G3.lem1) (stated below),
\bea
&&E_{\pi_n}\left[\nu_n^{J,t}(u,\infty)\right]=k_n(t)\sum_{x\in \VV_n}\pi_n(x)h^{u}_n(x)\equiv ({k_n(t)}/{a_n})\nu_n(u,\infty),
\Eq(4.9')
\\
&&E_{\pi_n}\left[\s_n^{J,t}(u,\infty)\right]= k_n(t)\sum_{x\in \VV_n}\pi_n(x)\left(h^{u}_n(x)\right)^2\equiv({k_n(t)}/{a_n})\s_n(u,\infty).
\Eq(4.9'')
\eea

\begin{proposition}
\TH(4.prop1)
Let $\rho_n>0$ be a decreasing sequence satisfying $\rho_n\downarrow 0$ as $n\uparrow\infty$.
There exists a sequence of subsets $\O^{\tau}_{n,0}\subset\O^{\tau}$ with
$
\P\left((\O^{\tau}_{n,0})^c\right)<\frac{\theta_n}{\rho_na_n}
$
and such that on $\O^{\tau}_{n,0}$, the following holds for all large enough $n$: for all $t>0$, all $u>0$, and all $\e>0$,
\be
P_{\pi^\pm_n}\left(\left|\nu_n^{J,t}(u,\infty)-(k_n(t)/a_n)\nu_n(u,\infty)\right|\geq\e\right)
\leq
\e^{-2}[t\Theta^1_n(u)+t^2\Theta^2_n(u)]
\Eq(4.prop1.1)
\ee
 where, for some constant $0<c<\infty$,
\bea
\Theta^1_n(u)
&=&
\s_n(u,\infty)+2\frac{\nu_n^2(u,\infty)}{a_n}
+c\frac{\nu_n(2u,\infty)}{ n^{2}}
+\rho_n\left[\E\nu_n(u,\infty)\right]^2,
\Eq(4.prop1.2)
\\
\Theta^2_n(u)
&=&\frac{\nu_n^2(u,\infty)}{2^{n-1}}.
\Eq(4.prop1.2')
\eea
In addition, for all $t>0$ and all $u>0$,
\be
P_{\pi^\pm_n}\left(\s_n^{J,t}(u,\infty)\geq\e'\right)\leq\frac{2k_n(t)}{\e'\, a_n}\s_n(u,\infty)\,,\quad\forall\e'>0\,.
\Eq(4.prop1'.2')
\ee
\end{proposition}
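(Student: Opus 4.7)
The strategy is to reduce both inequalities to moment computations for the additive functional $\nu_n^{J,t}(u,\infty) = \sum_{j=1}^{k_n(t)} h^u_n(J_n(j-1))$ along the simple random walk, using the ergodic estimates of Section~\thv(S4) (Corollary~\thv(3.cor1), Proposition~\thv(3.prop3), Proposition~\thv(3.prop4)) as key inputs. The inequality \eqv(4.prop1'.2') on $\s_n^{J,t}$ is the easy one: by Markov's inequality
\[
P_{\pi_n^\pm}\bigl(\s_n^{J,t}(u,\infty)\ge\e'\bigr)\le \e'^{-1}E_{\pi_n^\pm}[\s_n^{J,t}(u,\infty)],
\]
and since $\pi_n^\pm(x)=2\pi_n(x)\1_{\{x\in\VV_n^\pm\}}$ one has $E_{\pi_n^\pm}\le 2E_{\pi_n}$, so the identity \eqv(4.9'') closes the argument with the factor $2$.

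For \eqv(4.prop1.1) I would apply Chebyshev to
\[
\nu_n^{J,t}(u,\infty)-\tfrac{k_n(t)}{a_n}\nu_n(u,\infty) = \bigl(\nu_n^{J,t}-E_{\pi_n^\pm}\nu_n^{J,t}\bigr) + \bigl(E_{\pi_n^\pm}\nu_n^{J,t}-\tfrac{k_n(t)}{a_n}\nu_n(u,\infty)\bigr)
\]
and bound the two brackets separately. The deterministic mean discrepancy is handled by pairing consecutive times and invoking Corollary~\thv(3.cor1): after a burn-in of $\theta_n$ steps each pair contributes $2(1+\delta_n)\sum_y\pi_n(y)h^u_n(y)$ with $|\delta_n|\le 2^{-n}$, so squaring produces a piece of the $t^2\Theta_n^2$ contribution. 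The first $\theta_n$ terms are a priori uncontrolled; this is the role of the good environmental set $\O^{\tau}_{n,0}$, which I would define by requiring that a suitable functional of the landscape (e.g.\ a tail sum of $\g_n$ entering the $h^u_n$ at the sites accessible in the first $\theta_n$ steps) not exceed its mean by more than a factor $\rho_n^{-1/2}$. A crude Chebyshev on the environment then gives $\P((\O^{\tau}_{n,0})^c)\le\theta_n/(\rho_n a_n)$ and bounds the residual square of the first-$\theta_n$-steps contribution by $\rho_n[\E\nu_n(u,\infty)]^2$.

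The heart of the proof is the variance expansion
\[
\Var_{\pi_n^\pm}(\nu_n^{J,t})=\sum_j\Var\bigl(h^u_n(J_n(j-1))\bigr) + 2\sum_{i<j}\Cov\bigl(h^u_n(J_n(i-1)),h^u_n(J_n(j-1))\bigr).
\]
The diagonal is bounded by $k_n(t)E_{\pi_n^\pm}[(h^u_n)^2]\le 2(k_n(t)/a_n)\s_n(u,\infty)$, yielding the $\s_n(u,\infty)$ term of $\Theta_n^1$. The off-diagonal sum is split by the gap $l=j-i$. For $l>\theta_n$, Corollary~\thv(3.cor1) replaces the joint law of $(J_n(i-1),J_n(j-1))$ by a product up to the multiplicative error $\delta_n=O(2^{-n})$, so the residual covariance contributes $\lesssim k_n(t)^2\cdot 2^{-n}(\nu_n(u,\infty)/a_n)^2$, matching $t^2\Theta_n^2$; the subtracted stationary product-of-means is absorbed in the $2\nu_n^2(u,\infty)/a_n$ summand of $\Theta_n^1$. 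For $l\le\theta_n$, the key step is the elementary bound $h^u_n(y)h^u_n(z)\le\tfrac12\bigl(h^{2u}_n(y)+h^{2u}_n(z)\bigr)$, obtained by AM-GM followed by Jensen applied to $p_n(y,\cdot)$; after swapping the order of summation, reversibility collapses $\sum_y\pi_n^\pm(y)p_n^l(y,z)$ to $\pi_n^\pm(z)$, and Proposition~\thv(3.prop3) bounds $\sum_{l\le\theta_n}p_n^l(z,z)\le c/n^2$, producing the $c\,\nu_n(2u,\infty)/n^2$ contribution.

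The main obstacle will be the intermediate-range part of the short-gap off-diagonal, namely pairs $(y,z)$ with $y\ne z$ at small Hamming distance: Proposition~\thv(3.prop3) is a return bound at a single vertex, while Proposition~\thv(3.prop4) only gives exponential decay once $\dist(y,z)\sim n/2$. The AM-GM/Jensen reduction above is what saves the day, converting the two-point kernel into a one-point quantity at parameter $2u$, which is precisely why $\nu_n(2u,\infty)$ enters $\Theta_n^1$. The most delicate bookkeeping will be calibrating the good set $\O^{\tau}_{n,0}$ so that the $\rho_n[\E\nu_n(u,\infty)]^2$ correction exactly absorbs the first-$\theta_n$-step fluctuations — without degrading the $1/n^2$ savings, and keeping the $\P$-measure of the complement at the required $\theta_n/(\rho_n a_n)$ level.
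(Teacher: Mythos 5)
Your proposal correctly identifies the overall shape of the argument (Chebyshev on $\nu_n^{J,t}$, splitting the variance into diagonal, long-gap and short-gap covariance, Corollary \thv(3.cor1) for the long gaps, Proposition \thv(3.prop3) for returns), and the Markov bound for \eqv(4.prop1'.2') is essentially what the paper does via Lemma \thv(G3.lem1). But the handling of the short-gap off-diagonal covariance is wrong, and the role of the good environmental set $\O^{\tau}_{n,0}$ is misattributed.

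The AM-GM/Jensen step $h^u_n(y)h^u_n(z)\le\tfrac12(h^{2u}_n(y)+h^{2u}_n(z))$ decouples the $y$- and $z$-dependence, so once you plug it into $\sum_l \sum_{y,z}\pi_n(y)h^u_n(y)p_n^l(y,z)h^u_n(z)$, the transition kernel $p_n^l(y,z)$ marginalizes to $1$ over whichever variable is left free; no return probability $p_n^l(z,z)$ survives. You therefore cannot invoke Proposition \thv(3.prop3) as you claim (the passage "reversibility collapses $\sum_y\pi^\pm_n(y)p_n^l(y,z)$ to $\pi^\pm_n(z)$, and Proposition \thv(3.prop3) bounds $\sum_l p_n^l(z,z)\le c/n^2$" is self-contradictory: either the kernel is integrated out or it isn't). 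What AM-GM yields is a bound of order $\theta_n\cdot\nu_n(2u,\infty)/a_n$ for $\sum_l(III)_{1,l}+(III)_{2,l}$, hence $\Theta^1_n$ would contain $\theta_n\nu_n(2u,\infty)\sim n^2\nu_n(2u,\infty)$, which diverges. The paper instead unrolls $h^u_n$ by one step of the chain and uses reversibility to arrive at a kernel $p_n^{l+2}(y',z')$ between the underlying landscape sites. Only the \emph{diagonal} piece $y'=z'$ retains the return-probability structure $\sum_{l<\theta_n}p_n^{l+2}(z',z')\le c/n^2$ and produces $c\nu_n(2u,\infty)/n^2$. The off-diagonal $y'\ne z'$ piece, $(III)_{2,l}$, is what actually forces the passage to a good environmental set: it is a sum over distinct sites with independent $\tau$'s, one shows $\E[(III)_{2,l}]\le a_n^{-1}[\E\nu_n(u,\infty)]^2$, sums over $l<\theta_n$, and applies a first-moment Markov bound (Lemma \thv(4.lemma1)), giving both $\P((\O^\tau_{n,0})^c)<\theta_n/(\rho_na_n)$ and the $\rho_n[\E\nu_n(u,\infty)]^2$ term in $\Theta^1_n$. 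Your description of $\O^\tau_{n,0}$ as controlling the mean over the first $\theta_n$ steps is not what is going on: the mean discrepancy $E_{\pi^\pm_n}[\nu_n^{J,t}]-(k_n(t)/a_n)\nu_n(u,\infty)$ is an $O(1/k_n(t))$ effect taken care of by Lemma \thv(G3.lem1) and absorbed in the $2\nu_n^2/a_n$ term; it needs no environmental conditioning.
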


We first state and prove the following simple lemma.

\begin{lemma}
\TH(G3.lem1) 
For any function $f$ on $\VV_n$, 
\be
E_{\pi^\pm_n}\sum_{i=0}^{k-1}f(J_n(i))=E_{\pi_n}\sum_{i=0}^{k-1}f(J_n(i))+ r^\pm(k)
\Eq(G3.lem1.1)
\ee
where 
$
 r^\pm(k)=\frac{1}{2}\left[E_{\pi^\pm_n}f(J_n(0))-E_{\pi^\mp_n}f(J_n(0))\right]
$
if $k$ is odd and $ r^\pm(k)=0$ else, and
\be
E_{\pi_n}\sum_{i=0}^{k-1}f(J_n(i))= k E_{\pi_n}f(J_n(0)).
\Eq(G3.lem1.1')
\ee
\end{lemma}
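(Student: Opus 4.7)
The plan is to exploit two elementary facts about the simple random walk $J_n$ on the hypercube: (a) the uniform measure $\pi_n$ is invariant, which gives the second identity immediately by stationarity, and (b) the chain is bipartite with the partition $\VV_n=\VV_n^+\cup\VV_n^-$, so that starting from $\pi_n^\pm$ the law of $J_n(i)$ alternates between $\pi_n^+$ and $\pi_n^-$ depending on the parity of $i$.

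First I would observe the key identity $\pi_n=\tfrac12\pi_n^++\tfrac12\pi_n^-$, which follows from $|\VV_n^\pm|=2^{n-1}$ and the definitions \eqv(1.8) and \eqv(3.7). This immediately yields $E_{\pi_n}f(J_n(0))=\tfrac12 E_{\pi_n^+}f(J_n(0))+\tfrac12 E_{\pi_n^-}f(J_n(0))$. Combined with stationarity of $\pi_n$ (namely $E_{\pi_n}f(J_n(i))=E_{\pi_n}f(J_n(0))$ for every $i\geq 0$), this proves \eqv(G3.lem1.1').

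Next, I would show that starting from $\pi_n^{\pm}$, the one-step law of $J_n(1)$ equals $\pi_n^{\mp}$: for $y\in\VV_n^{\mp}$, each neighbour $x$ of $y$ lies in $\VV_n^{\pm}$, so
\be
P_{\pi_n^{\pm}}\bigl(J_n(1)=y\bigr)=\sum_{x\sim y}\pi_n^{\pm}(x)\cdot\frac{1}{n}=n\cdot 2^{-(n-1)}\cdot\frac{1}{n}=\pi_n^{\mp}(y).
\nonumber
\ee
Since $\pi_n^{\pm}$ is the invariant distribution of the two-step chain $J_n^{\pm}$ of \eqv(3.6), iterating yields that under $P_{\pi_n^{\pm}}$, the law of $J_n(i)$ is $\pi_n^{\pm}$ if $i$ is even and $\pi_n^{\mp}$ if $i$ is odd. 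Consequently
\be
E_{\pi_n^{\pm}}f(J_n(i))=\begin{cases}E_{\pi_n^{\pm}}f(J_n(0))&\text{if $i$ is even,}\\ E_{\pi_n^{\mp}}f(J_n(0))&\text{if $i$ is odd.}\end{cases}
\nonumber
\ee

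Finally, I would sum this over $i=0,\dots,k-1$. When $k$ is even, each parity occurs $k/2$ times and the sum equals $\tfrac{k}{2}\bigl[E_{\pi_n^+}f(J_n(0))+E_{\pi_n^-}f(J_n(0))\bigr]=k E_{\pi_n}f(J_n(0))$, matching \eqv(G3.lem1.1) with $r^{\pm}(k)=0$. When $k$ is odd, there are $(k+1)/2$ even indices and $(k-1)/2$ odd indices, so the sum equals $\tfrac{k+1}{2}E_{\pi_n^{\pm}}f(J_n(0))+\tfrac{k-1}{2}E_{\pi_n^{\mp}}f(J_n(0))$, which a trivial rearrangement rewrites as $k E_{\pi_n}f(J_n(0))+\tfrac12\bigl[E_{\pi_n^{\pm}}f(J_n(0))-E_{\pi_n^{\mp}}f(J_n(0))\bigr]$, exactly \eqv(G3.lem1.1). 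There is no real obstacle here; the statement is pure bookkeeping for the period-two chain, and the only point to watch is the correct accounting of even versus odd indices in $\{0,\dots,k-1\}$.
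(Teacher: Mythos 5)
Your proof is correct and takes essentially the same approach as the paper: both arguments rest on the two facts that $\pi_n=\tfrac12(\pi_n^++\pi_n^-)$ and that under $P_{\pi_n^\pm}$ the law of $J_n(i)$ alternates between $\pi_n^\pm$ and $\pi_n^\mp$ with the parity of $i$, followed by an even/odd index count. The only cosmetic difference is that you derive \eqv(G3.lem1.1') directly from the invariance of $\pi_n$, whereas the paper obtains it by averaging the $\pi_n^+$- and $\pi_n^-$-started identities; the content is identical.
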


\begin{proof}[Proof of Lemma \thv(G3.lem1)] 
Clearly, by \eqv(3.6)-\eqv(3.7), for all $x\in\VV_n$ and $j\in\N$,
\bea
\Eq(G3.lem1.2)
P_{\pi^{\pm}_n}(J(2j)=x)&=&\pi^{\pm}_n(x),\\
\Eq(G3.lem1.3)
P_{\pi^{\pm}_n}(J(2j+1)=x)&=&\pi^{\mp}_n(x)=2\pi_n(x)-\pi^{\pm}_n(x).
\eea
where the last equality is  $\pi_n=\frac{1}{2}(\pi^+_n+\pi^-_n)$.
Now, if $k=2m+1$ for some $m\geq 1$,
\bea
\nonumber
\textstyle\sum_{i=0}^{k-1}f(J_n(i))&=&\textstyle\sum_{j=0}^{m}f(J_n(2j))+\sum_{j=0}^{m-1}f(J_n(2j+1))\\
\Eq(G3.lem1.4)
&=&\textstyle\sum_{j=0}^{m-1}[f(J_n(2j))+f(J_n(2j+1))]+f(2m),
\eea
and by \eqv(G3.lem1.2)-\eqv(G3.lem1.3)
\bea
\textstyle E_{\pi^\pm_n}\sum_{i=0}^{k-1}f(J_n(i))
&=&
(k-1)\textstyle E_{\pi_n}f(J_n(0))+ E_{\pi^\pm_n}f(J_n(0))
\cr
&=&
k\textstyle E_{\pi_n}f(J_n(0))
+\frac{1}{2}[E_{\pi^\pm_n}f(J_n(0))-E_{\pi^\mp_n}f(J_n(0))]
\quad\quad
\Eq(G3.lem1.5)
\eea
Similarly, we get that if $k=2m$ for some $m\geq 1$, 
\be
\textstyle E_{\pi^\pm_n}\sum_{i=0}^{k-1}f(J_n(i))
=
k\textstyle E_{\pi_n}f(J_n(0)).
\Eq(G3.lem1.6)
\ee
Now, since $\pi_n=\frac{1}{2}(\pi^+_n+\pi^-_n)$,
\be
\textstyle
E_{\pi_n}\sum_{i=0}^{k-1}f(J_n(i))
=\frac{1}{2}\left(E_{\pi_n^+}\sum_{i=0}^{k-1}f(J_n(i))+E_{\pi_n^-}\sum_{i=0}^{k-1}f(J_n(i))\right),
\Eq(G3.lem1.7)
\ee
and inserting \eqv(G3.lem1.5), respectively \eqv(G3.lem1.6), in the r.h.s.~of \eqv(G3.lem1.7) yields \eqv(G3.lem1.1').
Plugging \eqv(G3.lem1.1'), in turn, in \eqv(G3.lem1.5) and \eqv(G3.lem1.6) yields \eqv(G3.lem1.1). 
\end{proof}

\begin{proof}[Proof of Proposition \thv(4.prop1)] 
By \eqv(G3.lem1.1) of Lemma \thv(G3.lem1),
\be
E_{\pi^\pm_n}\left[\s_n^{J,t}(u,\infty)\right]=({k_n(t)}/{a_n})\s_n(u,\infty)(1+ \bar r(k_n(t)))
\Eq(4.10'')
\ee
where
$
|\bar r(k)|
\leq k^{-1}
$
for all $k\geq 1$. The upper bound \eqv(4.prop1'.2') now simply follows from  \eqv(4.10'') by a first order
Tchebychev inequality. The proof of \eqv(4.prop1.1) is a little more involved.
Using a second order Tchebychev inequality together with the expressions
\eqv(4.4) and \eqv(4.9') of $\nu_n^{J,t}(u,\infty)$ and $\nu_n(u,\infty)$
the probability in left hand side of \eqv(4.prop1.1) is bounded above by
\bea
&&\hspace{-10pt}
\textstyle
\e^{-2}
E_{\pi^\pm_n}\Bigl[
k_n(t)\sum_{y\in\VV_n}\left(\pi_n^{J,t}(y)-\pi_n(y)\right)h^{u}_n(y)
\Bigr]^2.
\Eq(4.prop1.6)
\\
=&&\hspace{-10pt}
\e^{-2}
\sum_{x\in\VV_n}\sum_{y\in\VV_n}h^{u}_n(x)h^{u}_n(y)
\left[k^2_n(t)E_{\pi^\pm_n}\left(\pi_n^{J,t}(x)-\pi_n(x)\right)\left(\pi_n^{J,t}(y)-\pi_n(y)\right)\right].
\quad
\Eq(4.prop1.7)
\eea
By \eqv(G3.lem1.1) of Lemma \thv(G3.lem1),
$
E_{\pi^\pm_n}\left[\pi_n^{J,t}(y)\right]
=\pi_n(y)(1+ \bar r_y(k_n(t)))
$
where
$
|\bar r_y(k)|
\leq k^{-1}
$
for all $y\in\VV_n$, $k\geq 1$.
Thus, setting
$
\Delta_{ij}(x,y)=P_{\pi^\pm_n}\left(J_n(i)=x, J_n(j)=y\right)-\pi_n(x)\pi_n(y)
$ 
the quantity appearing in square brackets in \eqv(4.prop1.7) may be expressed as
\be
\sum_{i=0}^{k_n(t)-1}\sum_{j=0}^{k_n(t)-1}\Delta_{ij}(x,y)+k^2_n(t)\pi_n(x)\pi_n(y)  (\bar r_x(k_n(t))+\bar r_y(k_n(t))).
\Eq(4.prop1.9)
\ee
For $\theta_n$ defined in \eqv(3.prop1.1), we now break the sum in the r.h.s. of \eqv(4.prop1.9) into three terms:
\bea
(\overline I)&=&2\sum_{0\leq i\leq k_n(t)-1}\sum_{i+\theta_n\leq j\leq k_n(t)-1}\Delta_{ij}(x,y),\\
(\overline{II})&=&\sum_{0\leq i\leq k_n(t)-1}\1_{\{i=j\}}\Delta_{ij}(x,y),\\
(\overline{III})&=&2\sum_{0\leq i\leq k_n(t)-1}\sum_{i<j<i+\theta_n}\Delta_{ij}(x,y).
\Eq(4.prop1.10)
\eea
Consider first $(\overline I)$. 
For $r\geq 0$ and $s\geq 0$ define
\be
\wt\Delta_{rs}(x,y)
=
\left|
\Delta_{(2s)(2r)}(x,y)+\Delta_{(2s)(2r+1)}(x,y)+\Delta_{(2s+1)(2r)}(x,y)+\Delta_{(2s+1)(2r+1)}(x,y)
\right|,
\nonumber
\ee
and note that by \eqv(3.prop1.2) of Corollary \thv(3.cor1) and \eqv(G3.lem1.2)-\eqv(G3.lem1.3), 
$
\wt\Delta_{rs}(x,y)
\leq 
4|\delta_n|\pi_n(x)\pi_n(y)
$.
Hence
\bea
(\overline I)
&\leq&
2\sum_{0\leq s\leq \lceil (k_n(t)-1)/2\rceil}\sum_{\lfloor(i+\theta_n)/2\rfloor\leq l\leq \lceil (k_n(t)-1)/2\rceil}
\wt\Delta_{rs}(x,y)
\\
&\leq & 2|\delta_n|(k_n(t)+1)^2\pi_n(x)\pi_n(y),
\Eq(4.prop1.10')
\eea
where $|\delta_n|\leq 2^{-n}$. 
Turning to the term $(\overline{II})$, we have,
\bea
(\overline{II})
&=&\sum_{0\leq i\leq k_n(t)-1}\Delta_{ii}(x,x)\1_{\{x=y\}}
\\
&=&\sum_{0\leq i\leq k_n(t)-1}\left[P_{\pi^\pm_n}\left(J_n(i)=x\right)-\pi^2_n(x)\right]\1_{\{x=y\}}
\\
&\leq&(k_n(t)+1)\pi_n(x)(1-\pi_n(x))\1_{\{x=y\}},
\Eq(4.prop1.11)
\eea
where the last line follows from  \eqv(G3.lem1.2)-\eqv(G3.lem1.3). In the same way,
\bea
(\overline{III})
&\leq&
2\sum_{i=0}^{k_n(t)-1}\sum_{l=1}^{\theta_n-1}P_{\pi_n}\left(J_n(i)=x, J_n(i+l)=y\right)
\\
&\leq& 
2\sum_{i=0}^{k_n(t)-1}\sum_{l=1}^{\theta_n-1}
P_{\pi^\pm_n}\left(J_n(i)=x\right)P_{\pi^\pm_n}\left(J_n(i+l)=y\mid J_n(i)=x\right)
\\
&=& 2(k_n(t)+1)\pi_n(x)\sum_{l=1}^{\theta_n-1}p_n^{l}(x,y),
\Eq(4.prop1.12)
\eea
where $p_n^{l}(\cdot,\cdot)$ denote the $l$-steps transition matrix of $J_n$. 
Inserting our bounds on $(\overline{I}),(\overline{II})$, and $(\overline{III})$ 
in \eqv(4.prop1.9), and combining with \eqv(4.prop1.7) we get that, for all $\e>0$,
\be
P_{\pi^\pm_n}\left(\left|\nu_n^{J,t}(u,\infty)-(k_n(t)/a_n)\nu_n(u,\infty)\right|\geq\e\right)
\leq\e^{-2}[(I)+(II)+(III)],
\Eq(4.prop1.13)
\ee
where
\bea
(I)
&=&
2\left(|\delta_n|(k_n(t)+1)^2+\bar r_1k^2_n(t)\right)\sum_{x\in\VV_n}\sum_{y\in\VV_n}h^{u}_n(x)h^{u}_n(y)\pi_n(x)\pi_n(y),\\
(II)&=&
(k_n(t)+1)\sum_{x\in\VV_n}\sum_{y\in\VV_n}h^{u}_n(x)h^{u}_n(y)\pi_n(x)(1-\pi_n(x))\1_{\{x=y\}},\\
(III)
&=&2(k_n(t)+1)
\sum_{x\in\VV_n}\sum_{y\in\VV_n}
h^{u}_n(x)h^{u}_n(y)\pi_n(x)\sum_{l=1}^{\theta_n-1}p_n^{l}(x,y).
\Eq(4.prop1.14)
\eea
By \eqv(4.9') and  \eqv(4.9''),
\bea
(I)&\leq&
22^{-n}\left(\sfrac{k_n(t)+1}{a_n}\right)^2\nu_n^2(u,\infty)+ 2a_n^{-1}\sfrac{k_n(t)+1}{a_n}\nu_n^2(u,\infty)
\Eq(4.prop1.15)
\\
(II)&\leq&
\sfrac{k_n(t)+1}{a_n}\s_n(u,\infty).
\Eq(4.prop1.15')
\eea
To further express the term \eqv(4.prop1.14) note that, by \eqv(4.8),
\be
\sum_{y\in\VV_n}p_n^{l}(x,y)h^{u}_n(y)
=\sum_{y\in\VV_n}p_n^{l}(x,y)\sum_{z\in\VV_n}p_n(y,z)e^{-uc_n\l_n(z)}
\leq\sum_{z\in\VV_n}p_n^{l+1}(x,z)e^{-uc_n\l_n(z)},
\Eq(4.prop1.16)
\ee
and
\bea
\sum_{x\in\VV_n}\pi_n(x)h^{u}_n(x)p_n^{l+1}(x,z)
&=&
\sum_{y\in\VV_n}e^{-uc_n\l_n(y)}\sum_{x\in\VV_n}\pi_n(x)p_n(x,y)p_n^{l+1}(x,z)
\\
&\leq&
\sum_{y\in\VV_n}e^{-uc_n\l_n(y)}\pi_n(y)p_n^{l+2}(y,z),
\Eq(4.prop1.17)
\eea
where the last inequality follows by reversibility.
Hence,
\bea
(III)
&\leq&2(k_n(t)+1)\sum_{l=1}^{\theta_n-1}\sum_{z\in\VV_n}\left[\sum_{x\in\VV_n}\pi_n(x)h^{u}_n(x)p_n^{l+1}(x,z)\right]e^{-uc_n\l_n(z)},
\\
&\leq&2\sum_{l=1}^{\theta_n-1}(k_n(t)+1)\sum_{z\in\VV_n}\sum_{y\in\VV_n}\pi_n(y)e^{-uc_n(\l_n(y)+\l_n(z))}p_n^{l+2}(y,z)
\\
&=&2 \frac{(k_n(t)+1)}{a_n}\sum_{l=1}^{\theta_n-1}[(III)_{1,l}+(III)_{2,l}].
\Eq(4.prop1.18)
\eea
where, distinguishing the cases $z=y$ and $z\neq y$,
\bea
(III)_{1,l}
&=&\sum_{z\in\VV_n}a_n\pi_n(z)e^{-2uc_n\l_n(z)}p_n^{l+2}(z,z),
\\
(III)_{2,l}
&=&\sum_{z\in\VV_n}\sum_{y\in\VV_n : y\neq z}a_n\pi_n(y)e^{-uc_n(\l_n(y)+\l_n(z))}p_n^{l+2}(y,z).
\Eq(4.prop1.19)
\eea
One easily checks that $\theta_n\leq 2m$ with $m\leq n^2$. Thus, by Proposition \thv(3.prop3),
\be
\sum_{l=1}^{\theta_n-1}(III)_{1,l}
=\sum_{z\in\VV_n}a_n\pi_n(z)e^{-2uc_n\l_n(z)}\sum_{l=1}^{\theta_n-1}p_n^{l+2}(z,z)
\leq c n^{-2}\nu_n(2u,\infty)\,.
\Eq(4.prop1.20)
\ee
for some constant $0<c<\infty$.

The next lemma is designed to deal with the second sum in the last line of \eqv(4.prop1.18).

\begin{lemma}
\TH(4.lemma1)
Let $\rho_n>0$ be a decreasing sequence satisfying $\rho_n\downarrow 0$ as $n\uparrow\infty$.
There exists a sequence of subsets $\O^{\tau}_{n,0}\subset\O^{\tau}$ with
$
\P\left((\O^{\tau}_{n,0})^c\right)<\frac{\theta_n}{\rho_n a_n}\,,
$
and such that, on $\O^{\tau}_{n,0}$,
\be
\sum_{l=1}^{\theta_n-1}(III)_{2,l}<\rho_n\left[\E\nu_n(u,\infty)\right]^2\,.
\Eq(4.lem1.2)
\ee
\end{lemma}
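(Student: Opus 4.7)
The plan is to bound $\sum_{l=1}^{\theta_n-1}(III)_{2,l}$ in expectation over the random landscape, and then apply a first-order Markov inequality; the set $\O^{\tau}_{n,0}$ will simply be the event on which the bound \eqv(4.lem1.2) holds. The essential point is that the restriction $y\neq z$ in the definition of $(III)_{2,l}$ is precisely what allows us to use the independence of $\g_n(y)$ and $\g_n(z)$ to factorise the $\P$-expectation of the exponential weights.

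First I would compute $\E\,(III)_{2,l}$. Since the random variables $(\g_n(x),\,x\in\VV_n)$ are i.i.d., for $y\neq z$ we have
\be
\E\bigl[e^{-u/\g_n(y)}e^{-u/\g_n(z)}\bigr]=\bigl[\E e^{-u/\g_n(x_0)}\bigr]^2
\nonumber
\ee
for any fixed $x_0\in\VV_n$, where the right-hand side is independent of $y,z$. Pulling this factor out and dropping the restriction $y\neq z$ (which only enlarges the sum), and using reversibility $\sum_y\pi_n(y)p_n^{l+2}(y,z)=\pi_n(z)$ together with $\sum_z\pi_n(z)=1$, one obtains
\be
\E\,(III)_{2,l}\le a_n\bigl[\E e^{-u/\g_n(x_0)}\bigr]^2.
\nonumber
\ee
On the other hand, by \eqv(4.9') and reversibility we have $\nu_n(u,\infty)=a_n\sum_y\pi_n(y)e^{-u/\g_n(y)}$, hence $\E\nu_n(u,\infty)=a_n\E e^{-u/\g_n(x_0)}$. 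This identifies $\bigl[\E e^{-u/\g_n(x_0)}\bigr]^2=a_n^{-2}\bigl[\E\nu_n(u,\infty)\bigr]^2$, and summing over $1\le l\le\theta_n-1$ yields
\be
\E\,\Bigl[\,\sum_{l=1}^{\theta_n-1}(III)_{2,l}\Bigr]\le \frac{\theta_n}{a_n}\bigl[\E\nu_n(u,\infty)\bigr]^2.
\nonumber
\ee

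Defining
\be
\O^{\tau}_{n,0}=\Bigl\{\omega\in\O^{\tau}:\sum_{l=1}^{\theta_n-1}(III)_{2,l}<\rho_n\bigl[\E\nu_n(u,\infty)\bigr]^2\Bigr\},
\nonumber
\ee
a direct application of Markov's inequality with threshold $\rho_n\bigl[\E\nu_n(u,\infty)\bigr]^2$ gives $\P((\O^{\tau}_{n,0})^c)<\theta_n/(\rho_n a_n)$, which is the claimed bound.

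I do not foresee any real obstacle here: the whole argument is a Markov inequality, and the only thing one has to be careful about is matching normalisations so that $[\E\nu_n(u,\infty)]^2$ appears on the right-hand side rather than some other variant (this is what the identity $\E\nu_n(u,\infty)=a_n\E e^{-u/\g_n(x_0)}$ takes care of). The step that carries all the content is the use of independence of the landscape on distinct sites $y\neq z$, without which one could not factor the expectation and would pick up a much worse bound involving $\E e^{-2u/\g_n(x_0)}$ on the diagonal---but the diagonal $y=z$ is exactly what has been peeled off into $(III)_{1,l}$ and handled separately in \eqv(4.prop1.20) via Proposition \thv(3.prop3).
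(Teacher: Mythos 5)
Your proof is correct and matches the paper's argument step for step: both compute $\E\,(III)_{2,l}$ by factoring the $\P$-expectation over distinct sites $y\neq z$ via independence, identify the result with $a_n^{-1}[\E\nu_n(u,\infty)]^2$ per $l$, sum over $l\le\theta_n-1$, and conclude by a first-order Markov (Tchebychev) inequality. Your normalisation check and the remark about the diagonal $y=z$ having been split off into $(III)_{1,l}$ are both exactly on point.
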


\begin{proof}[Proof of Lemma \thv(4.lemma1)] 
$
\P\left(\sum_{l=1}^{\theta_n-1}(III)_{2,l}\geq\eta\right)
\leq
\eta^{-1}\sum_{l=1}^{\theta_n-1}\E(III)_{2,l}
$.
Next,  for all $y\neq z\in\VV_n\times \VV_n$, by independence,
$
\E\left[e^{-uc_n(\l_n(y)+\l_n(z))}\right]=\left[a^{-1}_n\E\nu_n(u,\infty)\right]^2
$.
Thus,
\be
\sum_{l=1}^{\theta_n-1}\E(III)_{2,l}
\leq
\frac{1}{a_n}\left[\E\nu_n(u,\infty)\right]^2\sum_{l=1}^{\theta_n-1}\sum_{z\in\VV_n}p_n^{l+2}(y,z)
\leq
\frac{\theta_n}{a_n}\left[\E\nu_n(u,\infty)\right]^2\,,
\ee
yielding
$
\P\left(\sum_{l=1}^{\theta_n-1}(III)_{2,l}\geq\eta\right)
\leq\frac{\theta_n}{\eta a_n}\left[\E\nu_n(u,\infty)\right]^2
$.
The lemma now easily follows.
\end{proof}

Collecting the bounds \eqv(4.prop1.15), \eqv(4.prop1.15'), \eqv(4.prop1.20), and \eqv(4.lem1.2),
and combining them with \eqv(4.prop1.13), we obtain that under the assumptions
and with the notations of Lemma \thv(4.lemma1), on $\O^{\tau}_{n,0}$,
for all $t>0$ and all $u>0$,
\bea
\nonumber
&&P_{\pi^\pm_n}\left(\left|\nu_n^{J,t}(u,\infty)-(k_n(t)/a_n)\nu_n(u,\infty)\right|\geq\e\right)
\\
&\leq&\e^{-2}\left\{
2\left(\sfrac{k_n(t)+1}{a_n}\right)^2\Theta^2_n(u)
+\sfrac{k_n(t)+1}{a_n}\Theta^1_n(u)
\right\}.
\Eq(4.prop1.21)
\eea
for some constant $0<c<\infty$, where $\Theta^2_n(u)$ and $\Theta^2_n(u)$  are defined in \eqv(4.prop1.2) and \eqv(4.prop1.2'), respectively.
Since
$
k_n(t)= \lf a_n t\rf
$
this yields \eqv(4.prop1.1). The proof of Proposition \thv(4.prop1) is done.
\end{proof}

\subsection{An ergodic theorem for  $M_n(t)$.}
\TH(SG4.2)

We turn to the concentration of $M_n(t)$ around $E_{\pi_n}[M_n(t)]$.
By \eqv(1'.theo3.0), \eqv(4.7) and \eqv(F1), writing $g_{1}\equiv g$ and setting
\be
G_n(y)=\sum_{x\in\VV_n}p_n(x,y)g(\g_n(x)),
\ee
we have 
\bea
M_n(t)\hspace{-6pt}&=&\hspace{-6pt}k_n(t)\sum_{x\in\VV_n}\pi_n^{J,t}(x)G_n(x),
\Eq(4.prop1'.3)
\\
E_{\pi_n}[M_n(t)]\hspace{-6pt}&=&\hspace{-6pt}
k_n(t)\sum_{x\in \VV_n}\pi_n(x)g(\g_n(x))
\equiv ({k_n(t)}/{a_n})m_n,
\Eq(4.prop1'.4)
\eea
where we used \eqv(G3.lem1.1') of Lemma \thv(G3.lem1) in the last line and where the last equality defines $m_n$.
Further set
\bea
v_n & = & \frac{a_n}{2^n}\sum_{z\in\mathcal{V}_n}[g(\g_n(z))]^2,\\
\Eq(4.prop1'.2)
w_n & = &  \frac{a_n}{2^n}\sum_{x\in\mathcal{V}_n} \sum_{x'\in\mathcal{V}_n} p_n^2(x,x')g(\g_n(x))g(\g_n(x')).
\Eq(4.prop1'.1)
\eea
The next Proposition is the analogue for $M_n(t)$ of Proposition \thv(4.prop1).

\begin{proposition}
\TH(4.prop1')
Let $\rho_n>0$ be a decreasing sequence satisfying $\rho_n\downarrow 0$ as $n\uparrow\infty$.
There exists a sequence of subsets $\O^{\tau}_{n,5}\subset\O^{\tau}$ with
$
\P\left((\O^{\tau}_{n,5})^c\right)<\frac{\theta_n}{\rho_na_n}
$,
and such that on $\O^{\tau}_{n,0}$, the following holds for all large enough $n$: for all $t>0$ and all $\e>0$,
\be
P_{\pi^\pm_n}\left(\left|M_n(t)-(k_n(t)/a_n)m_n\right|\geq\e\right)
\leq
2\e^{-2}
\left[
t\overline\Theta^1_n+t^2\overline\Theta^2_n
\right]
\Eq(4.prop1'.01)
\ee
where, for some constant $0<c<\infty$,
\be
\overline\Theta^1_n=w_n+2\frac{m_n^2}{a_n}+c\frac{v_n}{ n^{2}}+\rho_n[\E (m_n)]^2,
\quad
\overline\Theta^2_n =\frac{m_n^2}{2^{n}}.
\Eq(4.prop1'.02)
\ee
\end{proposition}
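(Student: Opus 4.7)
The plan is to mimic the proof of Proposition \thv(4.prop1), replacing $h^{u}_n$ by $G_n$ and tracking the analogous summary statistics. Starting from the representation \eqv(4.prop1'.3)--\eqv(4.prop1'.4), I apply a second--order Tchebychev inequality to $M_n(t)-(k_n(t)/a_n)m_n=k_n(t)\sum_{x\in\VV_n}(\pi_n^{J,t}(x)-\pi_n(x))G_n(x)+r_n$, where the remainder $r_n$ comes from the parity correction in Lemma \thv(G3.lem1). This yields
\be
P_{\pi_n^{\pm}}\!\left(|M_n(t)-(k_n(t)/a_n)m_n|\ge\e\right)\le \e^{-2}\!\sum_{x,y\in\VV_n}\!G_n(x)G_n(y)\,\Big(\sum_{i,j}\Delta_{ij}(x,y)+k_n^2(t)\pi_n(x)\pi_n(y)\,R_{x,y}\Big),\nonumber
\ee
with $\Delta_{ij}(x,y)=P_{\pi_n^{\pm}}(J_n(i)=x,J_n(j)=y)-\pi_n(x)\pi_n(y)$ and $|R_{x,y}|=\OO(k_n(t)^{-1})$, exactly as in \eqv(4.prop1.9).

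Next, split the $i,j$--sum into the three regimes $(\overline I)$ ($|i-j|\ge\theta_n$), $(\overline{II})$ ($i=j$) and $(\overline{III})$ ($1\le|i-j|<\theta_n$), as in \eqv(4.prop1.10). Corollary \thv(3.cor1) handles $(\overline I)$, giving a contribution $\leq 2|\d_n|k_n^2(t)\pi_n(x)\pi_n(y)$; after summation against $G_n(x)G_n(y)$ and using $\sum_x\pi_n(x)G_n(x)=m_n/a_n$, the resulting bound contributes $t^2\,m_n^2/2^n$, which gives the $t^2\overline\Theta^2_n$ term in \eqv(4.prop1'.01). The diagonal term $(\overline{II})$ contributes $(k_n(t)+1)\sum_x\pi_n(x)G_n(x)^2$, and a short reversibility computation shows
\be
\sum_{x\in\VV_n}\pi_n(x)G_n(x)^2=2^{-n}\sum_{x,x'\in\VV_n}p_n^2(x,x')g(\g_n(x))g(\g_n(x'))=w_n/a_n,\nonumber
\ee
which produces the $w_n$--piece in $\overline\Theta^1_n$.

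The main term is $(\overline{III})$. Bounding $\Delta_{ij}(x,y)\le P_{\pi_n^{\pm}}(J_n(i)=x)p_n^{|j-i|}(x,y)$ and applying reversibility twice (once to transfer the $p_n$ inside $G_n(x)$, once to transfer the one inside $G_n(y)$), as in \eqv(4.prop1.16)--\eqv(4.prop1.17), reduces the inner sum to
\be
\sum_{l=1}^{\theta_n-1}\sum_{x',x''\in\VV_n}\pi_n(x'')\,g(\g_n(x'))g(\g_n(x''))\,p_n^{l+2}(x'',x').\nonumber
\ee
I split into the diagonal $x'=x''$ and the off--diagonal $x'\neq x''$ parts. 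The diagonal part is controlled deterministically by Proposition \thv(3.prop3), yielding $\le (c/n^2)\,v_n/a_n$ and hence the $v_n/n^2$ contribution in $\overline\Theta^1_n$. For the off--diagonal part, the random variables $g(\g_n(x'))$ and $g(\g_n(x''))$ are independent, so its $\P$--expectation factorizes as $[\E g(\g_n(\cdot))]^2=[\E m_n/a_n]^2$ and a Fubini/Markov argument exactly parallel to Lemma \thv(4.lemma1) produces a set $\O^{\tau}_{n,5}$ of probability $\ge 1-\theta_n/(\rho_n a_n)$ on which the off--diagonal part is $\le \rho_n[\E m_n]^2$.

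Collecting all three pieces, using $k_n(t)=\lfloor a_n t\rfloor$ so that $(k_n(t)+1)/a_n\le 2t$ for large $n$, and noting that the remainder $r_n$ from Lemma \thv(G3.lem1) is of lower order, gives the bound \eqv(4.prop1'.01) with $\overline\Theta^1_n,\overline\Theta^2_n$ as in \eqv(4.prop1'.02). The main technical point is the reversibility manipulation that produces $v_n$ on the diagonal of $(\overline{III})$ and puts the off--diagonal term in the clean factorized form that admits the probabilistic estimate à la Lemma \thv(4.lemma1); once this is in place, the rest of the proof is bookkeeping that mirrors Proposition \thv(4.prop1) line by line.
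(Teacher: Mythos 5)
Your proposal is correct and follows exactly the same route as the paper, whose entire proof is the one-liner ``simple re-run of the proof of Proposition \ref{4.prop1}, substituting $G_n$ for $h^u_n$''; you have simply written out the details of that re-run. One small imprecision: the parity-correction term $R_{x,y}$ from Lemma \ref{G3.lem1} is not dropped as lower order --- it is precisely what produces the $2m_n^2/a_n$ piece of $\overline\Theta^1_n$, mirroring the $2\nu_n^2/a_n$ piece of $\Theta^1_n$ in \eqref{4.prop1.2} --- but this does not affect the validity of the argument.
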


\begin{proof}
This is a simple re-run of the proof of Proposition \thv(4.prop1), substituting $G_n$ for $h^{u}_n$. 
\end{proof}

\section{Laws of large numbers and concentration}
\label{S6}

In this section we collect the laws of large numbers and concentration results that, once combined with Proposition \thv(4.prop1) and Proposition  \thv(4.prop1') respectively, will enable us to establish the validity of Conditions (A1), (A2), (A3) and (A3'). (Note that Condition (A0) reads
\be
{\nu_n(u,\infty)}/{a_n}=o(1),
\Eq(1.A0'bis)
\ee
which will trivially hold true as a by-product of our convergence results for $\nu_n(u,\infty)$.)

\subsection{Laws of large numbers for $\nu_n$ and $\s_n$}
\label{S6.1}
In this subection we study the terms  $\nu_n$ and $\s_n$ defined through \eqv(4.9') and \eqv(4.9'')  that enter the statement of Proposition \thv(4.prop1). In view of \eqv(4.8)  they read
\be
\nu_n(u,\infty)=
\frac{a_n}{2^n}\sum_{x\in\VV_n}
e^{-u/\g_n(x)},
\Eq(5.2)
\ee
\be
\s_n(u,\infty)
=
\frac{a_n}{2^n}\sum_{x,\in\VV_n}\sum_{x'\in\VV_n}
p^{2}_n(x,x')e^{-u/\g_n(x)}e^{-u/\g_n(x')}.
\Eq(5.3)
\ee
where $p^{2}_n(\cdot,\cdot)$ are the two steps transition probabilities of $J_n$.
The following laws of large numbers form the core of this subection.

\begin{proposition}[Intermediate time-scales]
\TH(5.prop6)
Given $0<\varepsilon\leq 1$ let $c_n$ be an intermediate scale. Let $\nu^{int}$ be defined in \eqv(1.prop1.1)
and assume that $\b\geq \b_c(\varepsilon)$.

\item{i)} If $\sum_{n}a_n/2^n<\infty$ then there exists a subset $\O^{\tau}_2\subset\O^{\tau}$
with $\P(\O^{\tau}_2)=1$ such that, on $\O^{\tau}_2$, the following holds: for all $u>0$
\bea\Eq(5.prop6.1)
\lim_{n\rightarrow\infty}\nu_n(u,\infty)&=&\nu^{int}(u,\infty),
\nonumber\\
\lim_{n\rightarrow\infty}n\s_n(u,\infty)&=&\nu^{int}(2u,\infty).
\eea
\item{ii)} If $\sum_{n}a_n/2^n=\infty$ then there exists a sequence of subsets $\O^{\tau}_{n,3}\subset\O^{\tau}$ with $\P(\O^{\tau}_{n,3})\geq 1-o(1)$ and such that for all $n$ large enough, on $\O^{\tau}_{n,3}$, the following holds: for all $u>0$ 
\bea
&
\left|\nu_{n}(u,\infty)-\nu^{int}(u,\infty)\right|=o(1),
\Eq(5.prop6.2)
\\
&
\left|n\s_{n}(u,\infty)-\nu^{int}(2u,\infty)\right|=o(1).
\Eq(5.prop6.2bis) 
\eea
\end{proposition}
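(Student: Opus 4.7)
The proof of Proposition \thv(5.prop6) hinges on a first-moment computation, a variance bound using independence of the landscape variables, and a monotonicity argument to upgrade pointwise convergence to the uniform-in-$u$ statement.

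I would begin with $\nu_n(u,\infty)$. Since the variables $(e^{-u/\g_n(x)})_{x\in\VV_n}$ are i.i.d., integration by parts and the change of variable $w=u/v$ yield
\be
\E\nu_n(u,\infty)=a_n\E\bigl[e^{-u/\g_n(x)}\bigr]
=\int_0^\infty (u/v^2)e^{-u/v}h_n(v)\,dv
=\int_0^\infty e^{-w}h_n(u/w)\,dw.
\nonumber
\ee
Lemma \thv(2.lemma6)(i) gives pointwise convergence $h_n(u/w)\to(u/w)^{-\a(\ve)}$ on $w>0$. For dominated convergence I would split at $w=W_n:=Cn$ for a sufficiently large constant $C$: on $[0,W_n]$, the value $u/w\geq u/(Cn)$ eventually exceeds $c_n^{-\d}$ (since $c_n^{-\d}$ decays exponentially in $n$), so Lemma \thv(2.lemma6)(i)--(ii) provides a dominating function of the form $C'(1+w^{\a(\ve)})e^{-w}$; on $[W_n,\infty)$ the crude bound $h_n\le a_n$ combined with $e^{-w}\le e^{-Cn}$ gives a tail contribution $\le a_n e^{-Cn}=o(1)$ by choosing $C$ large enough (this is where the exponential growth of $a_n$ is balanced against $e^{-w}$). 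The limit reads
\be
\int_0^\infty e^{-w}(u/w)^{-\a(\ve)}\,dw=u^{-\a(\ve)}\G(\a(\ve)+1)=\nu^{int}(u,\infty).
\nonumber
\ee

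For concentration I would use the i.i.d.\ structure to obtain
\be
\Var\bigl(\nu_n(u,\infty)\bigr)=\frac{a_n^2}{2^n}\Var\bigl(e^{-u/\g_n(x)}\bigr)
\le \frac{a_n}{2^n}\cdot a_n\E\bigl[e^{-2u/\g_n(x)}\bigr]\le \frac{a_n}{2^n}C(u),
\nonumber
\ee
where the last bound is the mean computation applied to $2u$. Chebyshev then yields $\P(|\nu_n(u,\infty)-\E\nu_n(u,\infty)|\ge\e)\le a_nC(u)/(2^n\e^2)$. Under $\sum_n a_n/2^n<\infty$ (Part~i), Borel--Cantelli gives a.s.\ convergence at each fixed $u$; under $\sum_n a_n/2^n=\infty$ (Part~ii), Chebyshev directly furnishes sets $\O^{\tau}_{n,3}$ with $\P((\O^{\tau}_{n,3})^c)\to 0$ on which $|\nu_n(u,\infty)-\nu^{int}(u,\infty)|=o(1)$. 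To extend from a fixed $u$ to all $u>0$ I would pick a countable dense $D\subset(0,\infty)$, apply the above at each $u\in D$ (the union of the negligible sets remains negligible), and then use that $u\mapsto\nu_n(u,\infty)$ is non-increasing while the limit is continuous, so a Glivenko--Cantelli-type sandwich gives the statement for every $u>0$.

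For $\s_n(u,\infty)$ I would use the explicit structure of the two-step transition matrix of the SRW on $\VV_n$: $p_n^2(x,x)=1/n$, and $p_n^2(x,x')=2/n^2$ precisely when $\dist(x,x')=2$, zero otherwise. Thus
\be
\s_n(u,\infty)=\tfrac{1}{n}\nu_n(2u,\infty)+\frac{2a_n}{n^2 2^n}\sum_{\dist(x,x')=2}e^{-u/\g_n(x)}e^{-u/\g_n(x')},
\nonumber
\ee
so $n\s_n(u,\infty)=\nu_n(2u,\infty)+R_n$ with the first term treated above. The remainder $R_n$ has $\E R_n=\tfrac{(n-1)}{n}a_n\bigl[\E e^{-u/\g_n(x)}\bigr]^2=O(n/a_n)=o(1)$ because distinct $\g_n(x),\g_n(x')$ are independent and $a_n$ grows exponentially; its variance is bounded by $a_n/2^n$ times a constant using the same i.i.d.\ decomposition (sums over pairs sharing a vertex vs.\ disjoint pairs), giving concentration at the same rate, hence \eqv(5.prop6.1)--\eqv(5.prop6.2bis).

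The main technical obstacle is the dominated convergence step for the mean: $h_n$ is only controlled away from $v=0$ by Lemma \thv(2.lemma6), and the trivial bound $h_n\le a_n$ is far too crude, so the cutoff $W_n$ must be chosen to exploit the exponential decay of $e^{-w}$ against the exponential growth of $a_n$. Everything else is standard i.i.d.\ concentration plus a monotonicity-and-continuity extension.
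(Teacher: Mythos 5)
Your proof is correct and hits the same three-part strategy as the paper (first-moment computation, concentration, monotonicity upgrade), but it takes a noticeably different route in the details, so it's worth recording the differences. For the mean of $\nu_n$, the paper writes $\E\nu_n(u,\infty)=(1+o(1))\int_0^\infty f'(y)h_n(y)\,dy$ and splits this into four pieces $I_n(0,c_n^{-1/2})+I_n(c_n^{-1/2},\hat\zeta)+I_n(\hat\zeta,\zeta)+I_n(\zeta,\infty)$ with fixed inner cutoffs, shows the middle piece converges by uniform convergence of $h_n$ on compacts, and only passes to the limits $\hat\zeta\to0$, $\zeta\to\infty$ at the very end. Your substitution $w=u/v$ and single growing cutoff $W_n=Cn$ packages the same estimates into a cleaner dominated-convergence argument; the choice $W_n=Cn$ is exactly what you need to make the crude bound $h_n\le a_n$ usable on the tail, since $a_ne^{-Cn}=o(1)$ once $C>\ve\log 2$, while $W_n\ll uc_n^\d$ keeps Lemma \thv(2.lemma6)(ii) applicable on the rest. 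For concentration of $\nu_n$, the paper proves Lemma \thv(5.lemma8') via Bennett's inequality and then plugs in $\kappa=2\log n$; your plain Chebyshev bound $\P(\cdot)\le C(u)a_n/(2^n\e^2)$ is directly summable under $\sum_n a_n/2^n<\infty$, so Borel--Cantelli applies without Bennett---a genuine simplification that suffices here (Bennett's exponential tail is not needed for this statement). For $\s_n$, your explicit decomposition via $p_n^2(x,x)=1/n$ and $p_n^2(x,x')=2/n^2$ at Hamming distance $2$ makes transparent the structure that the paper compresses into the unproved identity \eqv(5.lem8'.0); the subsequent variance estimate by splitting into pairs sharing a vertex versus disjoint pairs mirrors the paper's $\theta_1+\theta_2$ split in Lemma \thv(5.lemma8). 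One minor slip: the displayed formula $\E R_n=\tfrac{(n-1)}{n}a_n[\E e^{-u/\g_n(x)}]^2$ has a spurious factor $1/n$---the correct value (matching \eqv(5.lem8'.0)) is $(n-1)a_n[\E e^{-u/\g_n(x)}]^2=\tfrac{n-1}{a_n}(\E\nu_n(u,\infty))^2$---but your stated order $O(n/a_n)=o(1)$ is the right one, so the argument goes through.
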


The proofs of  Proposition \thv(5.prop6), which is given at the end of this subsection, rely on the following three lemmata.

\begin{lemma}
\TH(5.lemma7)
Under the assumptions  and with the notation of Proposition \thv(5.prop6),
\be
\lim_{n\rightarrow\infty}\E[\nu_n(u,\infty)]=\nu^{int}(u,\infty),\quad\forall u>0.
\Eq(5.lem7.1)
\ee
Furthermore,
\be
\displaystyle
\E[\s_n(u,\infty)]
=\frac{\E[\nu_n(2u,\infty)]}{n}+\frac{(\E[\nu_n(u,\infty)])^2}{a_n}\frac{n-1}{n}.
\Eq(5.lem8'.0)
\ee
\end{lemma}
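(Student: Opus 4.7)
The plan is to treat the two assertions separately: \eqv(5.lem8'.0) is an algebraic identity that follows from an explicit computation using the two-step transition structure of the simple random walk on the cube, whereas \eqv(5.lem7.1) is an asymptotic statement I would deduce from an integral representation of $\E[\nu_n(u,\infty)]$ in terms of $h_n$, combined with a dominated convergence argument anchored on Lemma \thv(2.lemma6).

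For \eqv(5.lem8'.0), I would expand
\[
\E[\s_n(u,\infty)]=\frac{a_n}{2^n}\sum_{x,x'\in\VV_n}p_n^2(x,x')\,\E\left[e^{-u/\g_n(x)}e^{-u/\g_n(x')}\right],
\]
and use that the two-step kernel of the simple random walk on $\VV_n$ is supported on $\{x=x'\}\cup\{\dist(x,x')=2\}$, with $p_n^2(x,x)=1/n$ ($n$ neighbours, each path contributing $1/n^2$) and $p_n^2(x,x')=2/n^2$ when $\dist(x,x')=2$ (exactly two length-two paths). The diagonal contribution is $\frac{1}{n}\cdot a_n\E[e^{-2u/\g_n(x)}]=\E[\nu_n(2u,\infty)]/n$. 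For the off-diagonal part, independence of $\g_n(x)$ and $\g_n(x')$ when $x\neq x'$ factorises the expectation, and since each $x$ has $\binom{n}{2}$ vertices at distance 2, the off-diagonal total weight is $(n-1)/n$, contributing $\frac{n-1}{n\,a_n}(\E[\nu_n(u,\infty)])^2$. Summing yields \eqv(5.lem8'.0).

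For \eqv(5.lem7.1), since the $\g_n(x)$ are identically distributed, the layer-cake identity together with the definition $h_n(v)=a_n\P(\g_n(x)\geq v)$ in \eqv(2.13) gives
\[
\E[\nu_n(u,\infty)]=a_n\E\left[e^{-u/\g_n(x)}\right]=\int_0^\infty\frac{u}{v^2}e^{-u/v}h_n(v)\,dv=\int_0^\infty e^{-w}h_n(u/w)\,dw,
\]
the last equality following from the substitution $w=u/v$. By Lemma \thv(2.lemma6)(i), $h_n(u/w)\to(u/w)^{-\a(\ve)}$ pointwise for each $w>0$, and the candidate limit
\[
\int_0^\infty e^{-w}(u/w)^{-\a(\ve)}\,dw=u^{-\a(\ve)}\G(\a(\ve)+1)=u^{-\a(\ve)}\a(\ve)\G(\a(\ve))=\nu^{int}(u,\infty)
\]
matches the target.

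The main obstacle is producing an integrable dominating function, since $h_n(u/w)$ can be as large as $a_n$ when $w$ is so large that $u/w$ is very small. I would split the $w$-integral into four pieces, $[0,\delta]$, $[\delta,M]$, $[M,uc_n^{\d}]$ and $[uc_n^{\d},\infty)$, and perform the limits in the order $n\to\infty$, then $M\to\infty$, then $\delta\to 0$. On the compact $[\delta,M]$, monotonicity of $h_n$ together with continuity of the limit yields uniform convergence to $\int_\delta^M e^{-w}(u/w)^{-\a(\ve)}dw$. On $[0,\delta]$, monotonicity gives $h_n(u/w)\leq h_n(u/\delta)$, which is bounded for $n$ large by Lemma \thv(2.lemma6)(i), so the piece is $\OO(\delta)$. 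On $[M,uc_n^{\d}]$ one has $u/w\in[c_n^{-\d},u/M]\subset[c_n^{-\d},1]$ (for $M\geq u$), so Lemma \thv(2.lemma6)(ii) yields $h_n(u/w)\leq(1-\d)^{-1}(u/w)^{-\a_n(1-\d/2)}(1+o(1))$, and the contribution is dominated by a constant times $\int_M^\infty e^{-w}w^{\a(\ve)}\,dw$, vanishing as $M\to\infty$. Finally, on $[uc_n^{\d},\infty)$ the crude bound $h_n\leq a_n$ yields a contribution of at most $a_n e^{-uc_n^{\d}}$, which is $o(1)$ since $a_n$ is only single-exponential in $n$ while $c_n^{\d}$ is itself exponential in $n$. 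Assembling the four estimates proves \eqv(5.lem7.1); the last piece, in which the exponential factor $e^{-u/v}$ must overpower the crude $a_n$-blow-up of $h_n$, is the main technical obstacle.
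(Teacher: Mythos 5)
Your proposal is correct and follows essentially the same route as the paper: for \eqv(5.lem7.1) the paper writes $\E[\nu_n(u,\infty)]=\int_0^\infty f'(y)h_n(y)\,dy$ with $f(y)=e^{-u/y}$ and splits the $y$-integral into the four pieces $(0,c_n^{-1/2})$, $(c_n^{-1/2},\hat\zeta)$, $(\hat\zeta,\zeta)$, $(\zeta,\infty)$, which under your substitution $w=u/y$ become exactly your four $w$-regions, with the same ingredients (crude bound $h_n\leq a_n$ on the sub-exponential tail, Lemma \thv(2.lemma6)(ii) in the intermediate range, and uniform convergence on compacts via Lemma \thv(2.lemma6)(i)). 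Your derivation of \eqv(5.lem8'.0) from the two-step kernel of the hypercube walk is the elementary computation the paper explicitly omits, and it is correct.
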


\begin{lemma}\TH(5.lemma8')
For all $\kappa\geq 0$ such that $a_n\kappa/2^n=o(1)$,
\be
\P\left(\left|\nu_n(u,\infty)-\E[\nu_n(u,\infty)]\right|
\geq 2\sqrt{a_n\kappa/2^n}\sqrt{\E[\nu_n(2u,\infty)]}\right)
\leq e^{-\kappa}.
\Eq(5.lem7.2)
\ee
\end{lemma}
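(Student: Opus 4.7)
The plan is to view $\nu_n(u,\infty) = (a_n/2^n)\sum_{x\in\VV_n} e^{-u/\gamma_n(x)}$ as a sum of independent, non-negative, bounded random variables and apply a Bernstein-type concentration inequality. The key observation is that by definition of $\gamma_n$ in \eqv(4.prop3.1) and the independence of the $\tau_n(x)$, the variables $Y_x := (a_n/2^n) e^{-u/\gamma_n(x)}$, $x\in\VV_n$, are independent, non-negative, and uniformly bounded by $M := a_n/2^n$.

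First I would compute the variance bound
\be
V \;:=\; \sum_{x\in\VV_n}\Var(Y_x) \;\leq\; \sum_{x\in\VV_n}\E[Y_x^2] \;=\; \frac{a_n^2}{2^{2n}}\sum_{x\in\VV_n}\E\!\left[e^{-2u/\gamma_n(x)}\right] \;=\; \frac{a_n}{2^n}\,\E[\nu_n(2u,\infty)],
\nonumber
\ee
which matches exactly the scale appearing in the right-hand side of \eqv(5.lem7.2). Next I would apply the classical Bernstein (or Bennett) inequality for sums of independent bounded variables: for every $t>0$,
\be
\P\!\left(\left|\nu_n(u,\infty)-\E[\nu_n(u,\infty)]\right|\geq t\right) \;\leq\; 2\exp\!\left(-\frac{t^2}{2(V+Mt/3)}\right).
\nonumber
\ee
Taking $t = 2\sqrt{V\kappa} = 2\sqrt{(a_n/2^n)\kappa\,\E[\nu_n(2u,\infty)]}$ matches exactly the deviation appearing in \eqv(5.lem7.2), and the exponent becomes $-2\kappa\big(1+O(M\sqrt{\kappa/V})\big)^{-1}$.

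It then remains to check that the Bernstein correction $M\sqrt{\kappa/V}$ is negligible. Using the lower bound $\E[\nu_n(2u,\infty)] \asymp \nu^{int}(2u,\infty)>0$ (from Lemma \thv(5.lemma7), which precedes this lemma in the paper), we get $V \asymp a_n/2^n$, whence $M\sqrt{\kappa/V} = \sqrt{a_n\kappa/2^n}\cdot\sqrt{M/V\cdot V/a_n\cdot 2^n/1} = O(\sqrt{a_n\kappa/2^n}) = o(1)$ by the standing hypothesis $a_n\kappa/2^n = o(1)$. Hence the exponent is $-2\kappa(1-o(1))$, which for $n$ large enough is bounded above by $-\kappa-\log 2$, and the factor $2$ in Bernstein is absorbed to give the stated $e^{-\kappa}$.

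The main obstacle is the mild mismatch in constants between the two-sided Bernstein bound (which carries a factor $2$ and an exponent $t^2/(2V)$) and the target bound which has prefactor $1$ and essentially asks for the exponent $t^2/(4V) = \kappa$. This is exactly why the deviation in \eqv(5.lem7.2) carries the prefactor $2$ rather than $\sqrt{2}$: it doubles the exponent from $\kappa$ to $2\kappa$, leaving enough slack to swallow the $\log 2$ from Bernstein's two-sided factor. No subtlety beyond this constant tuning is required; everything else is a direct application of Bernstein together with the input $V \leq (a_n/2^n)\E[\nu_n(2u,\infty)]$ already established above.
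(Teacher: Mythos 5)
Your approach is essentially the same as the paper's: both view $\nu_n(u,\infty)$ as an $(a_n/2^n)$-scaled sum of i.i.d.\ random variables $e^{-u/\gamma_n(x)}$ bounded by $1$, use the second-moment identity $\sum_x \E\, e^{-2u/\gamma_n(x)} = (2^n/a_n)\E[\nu_n(2u,\infty)]$ as the variance proxy, and apply a Bernstein/Bennett-type concentration inequality; both invoke Lemma~\thv(5.lemma7) to keep $\E[\nu_n(2u,\infty)]$ bounded below so the deviation threshold $\theta$ (in the notation of the paper's proof) indeed tends to $0$. The paper applies Bennett's bound in the weakened form $\P\bigl(\bigl|\sum X(x)\bigr|\geq t\bigr)\leq \exp\{-t^2/4\bar b^2\}$, valid when $t<\bar b^2/2\bar a$, and chooses $t$ so that $t^2/4\bar b^2=\kappa$, giving $e^{-\kappa}$ on the nose; you instead invoke Bernstein with its explicit two-sided prefactor of $2$ and propose to absorb it using the extra slack in the exponent.

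The one step that does not fully close is that absorption. You need $2\exp\{-2\kappa/(1+o(1))\}\leq e^{-\kappa}$, which rearranges to $\kappa(1-o(1))\geq\log 2$. For $n$ large this succeeds only when $\kappa>\log 2$; for a fixed $\kappa\in(0,\log 2)$ the hypothesis $a_n\kappa/2^n=o(1)$ is still satisfied, the lemma's claim $\leq e^{-\kappa}<1$ is nontrivial, and your bound $2e^{-2\kappa(1-o(1))}$ exceeds $1$ for $n$ large — so no conclusion can be drawn. In the paper's actual uses of this lemma (Proposition~\thv(5.prop6), with $\kappa=2\log n$ and $\kappa=2^n/a_n$, both diverging) this is irrelevant, so the gap is harmless in context, but it does leave the lemma as literally stated unproved for small constant $\kappa$. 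The paper's choice of the $e^{-t^2/4\bar b^2}$ form of Bennett sidesteps the issue entirely, since the exponent there already equals $\kappa$ without asymptotic slack (though one might quibble that the paper's quoted two-sided Bennett bound is itself missing a factor of $2$; if so, both proofs would need the same small-$\kappa$ caveat, and the sensible fix is just to state the lemma for $\kappa\geq 1$, which is all that is ever used).
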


\begin{lemma}
\TH(5.lemma8)
Under the assumptions of Proposition \thv(5.prop6),  for all $\kappa>0$,
\be
\P\left(\left|\s_n(u,\infty)-\E[\s_n(u,\infty)]\right|
\geq n^{-1}\sqrt{a_n\kappa/2^n}\sqrt{\E\left[\nu_n(u,\infty)\right]}\right)
\leq \kappa^{-1}.
\Eq(5.lem8.2)
\ee
\end{lemma}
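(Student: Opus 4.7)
The plan is to deduce Lemma \thv(5.lemma8) from a direct second-moment computation and Chebyshev's inequality; the $\kappa^{-1}$ tail is the hallmark of such an approach. After squaring the deviation threshold in the statement, Chebyshev reduces the claim (up to an absorbable absolute constant) to the variance bound
\be\nonumber
\Var\bigl(\s_n(u,\infty)\bigr) \;\lesssim\; \frac{a_n}{n^2\, 2^n}\,\E[\nu_n(u,\infty)]. \qquad (\star)
\ee

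To establish $(\star)$, set $f(x):=e^{-u/\g_n(x)}\in[0,1]$ and decompose $\s_n(u,\infty)=D_n+O_n$ according to the two types of non-zero two-step transitions on the hypercube ($p_n^2(x,x)=1/n$ and $p_n^2(x,x')=2/n^2$ when $\dist(x,x')=2$):
\be\nonumber
D_n := \frac{a_n}{n\,2^n}\sum_{x\in\VV_n} f(x)^2,\qquad O_n:=\frac{2a_n}{n^2\,2^n}\sum_{\substack{x\neq x'\\ \dist(x,x')=2}} f(x)f(x').
\ee
The variables $(f(x))_{x\in\VV_n}$ are independent by the independence of the landscape, and $\E f=a_n^{-1}\E\nu_n(u,\infty)$. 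Using $f^2\leq f$, the diagonal part immediately satisfies $\Var(D_n)\leq (a_n/(n\,2^n))^2\cdot 2^n\E f = a_n\E\nu_n(u,\infty)/(n^2\,2^n)$, which already matches $(\star)$. For the off-diagonal part, I would expand $\Var(O_n)$ as a sum over quadruples $(x,x',y,y')$ (each pair at Hamming distance $2$) and note that, by independence, only quadruples with $\{x,x'\}\cap\{y,y'\}\neq\emptyset$ contribute. Classifying these survivors as \emph{full-overlap} ($\{x,x'\}=\{y,y'\}$) or \emph{single-overlap}, bounding the corresponding covariances by $(\E f)^2$ and $(\E f)^3$ respectively via $\E f^k\leq \E f$ for $k\geq 1$, and counting using the fact that each vertex of $\VV_n$ has exactly $\binom{n}{2}$ neighbours at Hamming distance $2$, one obtains
\be\nonumber
\Var(O_n)\;\lesssim\;\frac{(\E\nu_n(u,\infty))^2}{n^2\,2^n}\;+\;\frac{(\E\nu_n(u,\infty))^3}{a_n\,2^n},
\ee
whose two terms are both dominated by $a_n\E\nu_n(u,\infty)/(n^2\,2^n)$ since $\E\nu_n(u,\infty)$ is uniformly bounded in $n$ (Lemma \thv(5.lemma7)) while $a_n$ grows exponentially in $n$. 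A Cauchy--Schwarz bound on $|\Cov(D_n,O_n)|$ then completes the estimate of $\Var(\s_n(u,\infty))=\Var(D_n)+\Var(O_n)+2\Cov(D_n,O_n)$, and Chebyshev closes the argument.

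The only mildly delicate point is the combinatorial count of single-overlap quadruples of pairwise distance-$2$ vertices on the hypercube: there are $O(2^n n^4)$ such quadruples, and the gain of $n^{-2}$ over a naive $(\E\nu_n)^2$-bound comes precisely from combining this count with the $(2/n^2)^2$ prefactor of $\Var(O_n)$ and the $(\E f)^3$ bound on each covariance. Everything else is a routine second-moment calculation in the spirit of Lemma \thv(5.lemma8').
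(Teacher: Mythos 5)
Your proposal is correct, and it reaches the same place by the same high-level route (second-order Chebyshev reduces the claim to a variance bound on $\s_n$, which is then established by expanding the second moment and tracking overlaps). The organizational details differ from the paper's proof, though, in a way worth noting. The paper works with the general quantity $\bar\s_n^l(u,\infty)=a_n\sum_y\pi_n(y)(h^u_n(y))^l$ (so $\nu_n=\bar\s^1_n$ and $\s_n=\bar\s^2_n$), writes a Chebyshev bound $\P(|\bar\s_n^l - \E\bar\s_n^l|\geq t)\leq t^{-2}(\theta_1+\theta_2)$ with $\theta_1,\theta_2$ the diagonal and off-diagonal (in $y,y'$) parts of the variance, and then expands the correlated variables $(h^u_n(y))^2=\frac{1}{n^2}\sum_{z,z'\sim y}f(z)f(z')$ inside $\theta_1$, $\theta_2$. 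You instead undo the $h^u_n$ bookkeeping first: you split $\s_n$ directly according to the support of $p_n^2$ into the diagonal piece $D_n$ (with $p_n^2(x,x)=1/n$) and the distance-two piece $O_n$ (with $p_n^2(x,x')=2/n^2$), both of which are sums of products of the genuinely independent variables $f(x)=e^{-u/\g_n(x)}$; this makes the overlap combinatorics explicit rather than leaving it in a ``lengthy but simple calculations'' step. Both approaches rely on exactly the same two inputs to show the variance fits in the budget $\frac{a_n}{n^2\,2^n}\E\nu_n$: that $f\in[0,1]$ so $\E f^k\leq\E f$, and that $\E\nu_n(u,\infty)$ is bounded (Lemma~\thv(5.lemma7)) while $a_n$ grows exponentially, so $n^{O(1)}/a_n$ and $\E\nu_n/a_n$ are $o(1)$.

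One small caveat you should make explicit (which the paper also addresses only via the phrase ``for $n$ large enough''): the lemma's bound is exactly $\kappa^{-1}$ with no prefactor, so the variance must come in at or below the budget, not just $O(\text{budget})$. In your decomposition the leading term is $\Var(D_n)\leq\frac{a_n^2}{n^2\,2^n}\E f^4=\frac{a_n}{n^2\,2^n}\E\nu_n(4u,\infty)$, and $\E\nu_n(4u,\infty)/\E\nu_n(u,\infty)\to 4^{-\a(\varepsilon)}<1$, so there is genuine slack; this slack is what absorbs $\Var(O_n)$ and $2\Cov(D_n,O_n)$ for $n$ large. Using $\E f^4\leq\E f$ and writing ``absorbable absolute constant'' glosses over this, so a reader comparing to the exact $\kappa^{-1}$ in the statement could be momentarily confused; it would be cleaner to keep the $4u$ and quote the limit from Lemma~\thv(5.lemma7).
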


\begin{proof}[Proof of Lemma  \thv(5.lemma7)] 
We first prove \eqv(5.lem7.1).
For fixed $u>0$ set $f(y)=e^{-u/y}$.
By \eqv(5.2), integrating by part and using \eqv(2.13),
\be
\E[\nu_n(u,\infty)]
=
a_n\frac{|\AA_n|}{2^n}\int_{0}^{\infty}f'(y)\P\left(\g_n(0)\geq y\right)dy
=
(1+o(1))\int_{0}^{\infty}f'(y)h_n(y)dy,
\nonumber
\ee
since by assumption $2^{-n}|A_n|\uparrow 1$  as $n\uparrow\infty$.
Set  $I_n(a,b)=\int_{a}^{b}f'(y)h_n(y)dy$, $a\leq b$, and, given $0<\hat\zeta<1$ and $\zeta>1$
break the last integral above into
\be
I_n\bigl(0,c_n^{-1/2}\bigr)+I_n\bigl(c_n^{-1/2},\hat\zeta\bigr)+I_n\bigl(\hat\zeta,\zeta\bigr)+I_n(\zeta,\infty).
\Eq(5.lem7.3')
\ee
Let us now establish that,  as $n\uparrow\infty$, for small enough $\hat\zeta$ and large enough $\zeta$, 
the leading contribution to \eqv(5.lem7.3') comes from $I_n\bigl(\hat\zeta,\zeta\bigr)$.
By \eqv(1.1') and the rough upper bound
$
h_n(y)\leq a_n
$,
$
\textstyle
I_n\bigl(0,c_n^{-1/2}\bigr)\leq a_n\int_{0}^{1/\sqrt{c_n}}f'(y)dy={e^{-u\sqrt{c_n}}}/{\P(\t_n(x)\geq c_n)}
$,
and, together with the gaussian tail estimates of \eqv(2.11),  this yields
\be
\lim_{n\rightarrow\infty}I_n\bigl(0,c_n^{-1/2}\bigr)=0.
\Eq(5.lem7.3'')
\ee
Next, by Lemma \thv(2.lemma6), (ii), with $\d=1/2$,
$
I_n\bigl(c_n^{-1/2},\hat\zeta\bigr)
\leq 2(1+o(1))\int_{0}^{\hat\zeta}f'(y)y^{-(3/4)\a_n}dy
$
for all $0<\hat\zeta<1$, where $0\leq \a_n=\a(\varepsilon)+o(1)$.
Now, there exists $\zeta^*\equiv\zeta^*(u)>0$ such that, for all $\hat\zeta<\zeta^*$,
$f'(y)y^{-(3/4)\a_n}$ is strictly increasing on $[0,\hat\zeta]$. Hence,
for all $\hat\zeta<\min(1,\zeta^*)$,
$
I_n\bigl(c_n^{-1/2},\hat\zeta\bigr)
\leq 2(1+o(1))u\hat\zeta^{-1+(3/4)[\a(\varepsilon)+o(1)]}e^{-{u}/{\hat\zeta}}
$,
implying that
\be
\lim_{n\rightarrow\infty}I_n\bigl(c_n^{-1/2},\hat\zeta\bigr)
\leq 2u\hat\zeta^{-1+(3/4)\a(\varepsilon)}e^{-{u}/{\hat\zeta}},\quad \hat\zeta<\min(1,\zeta^*).
\Eq(5.lem7.3''')
\ee
To deal with $I_n\bigl(\hat\zeta,\zeta\bigr)$ note that
by Lemma \thv(2.lemma6), (i), $h_n(y)\rightarrow y^{-\a(\varepsilon)}$, $n\rightarrow\infty$,
where the convergence is uniform in  $\hat\zeta\leq y\leq \zeta$ since, for each $n$,
$h_n(y)$ is a monotone function, and since the limit, $y^{-\a(\varepsilon)}$,
is continuous. Hence,
\be
\lim_{n\rightarrow\infty}I_n\bigl(\hat\zeta,\zeta\bigr)
=\lim_{n\rightarrow\infty}\int_{\hat\zeta}^{\zeta}f'(y)h_n(y)dy
=\int_{\hat\zeta}^{\zeta}f'(y)y^{-\a(\varepsilon)}dy.
\Eq(5.lem7.4)
\ee
It remains to bound $I_n(\zeta,\infty)$. By
\eqv(2.lem6.1) of Lemma \thv(2.lemma6),
$
I_n(\zeta,\infty)
=\int_{\zeta}^{\infty}f'(y)h_n(y)dy
=(1+o(1))\int_{\zeta}^{\infty}f'(y)y^{-\a_n}dy,
$
where again $0\leq \a_n=\a(\varepsilon)+o(1)$.
Thus, for $0<\d<1$ arbitrary we have, for  large enough $n$, that for all $y\geq\zeta>1$,
$
f'(y)y^{-\a_n}
\leq f'(y)y^{-\a(\varepsilon)+\d}
\leq u/y^{2-\d}
$.
Therefore
$
I_n(\zeta,\infty)
\leq(1+o(1))\frac{1}{1-\d}\zeta^{-(1-\d)}
$,
and, choosing e.g.~$\d=1/2$,
\be
\lim_{n\rightarrow\infty}I_n(\zeta,\infty)\leq 2u\zeta^{-1/2}.
\Eq(5.lem7.5)
\ee
Collecting \eqv(5.lem7.3'')-\eqv(5.lem7.5) and passing to the limit $\hat\zeta\rightarrow 0$ and $\zeta\rightarrow\infty$, we finally get
\be
\lim_{n\rightarrow\infty}\E[\nu_n(u,\infty)]=\int_{0}^{\infty}f'(y)y^{-\a(\varepsilon)}dy
=u^{-\a(\varepsilon)}\a(\varepsilon)\G(\a(\varepsilon)),
\Eq(5.lem7.8)
\ee
where we used that $\a(\varepsilon)>0$ since by assumption $\varepsilon>0$. This proves \eqv(5.lem7.1).
We skip the elementary proof of \eqv(5.lem8'.0).
\end{proof}

\begin{proof}[Proof of Lemma  \thv(5.lemma8')]
The proof relies on Bennett's bound \cite{Ben} for the tail behavior of sums of random variables,
which states that if $(X(x),\, x\in A)$ is a family of i.i.d.~centered random variables
that satisfies $\max_{x\in A}|X(x)|\leq \bar a$ then, setting
$\tilde b^2=\sum_{x\in A}\E X^2(x)$,  for all $\bar b^2\geq \tilde b^2$,
$
\textstyle
\P\left(\left|\sum_{x\in A}X(x)\right|>t\right)
\leq\exp\bigl\{
\frac{t}{\bar a}-\bigl(\frac{t}{\bar a}+\frac{\bar b^2}{\bar a^2}\bigr)\log\bigl(1+\frac{\bar at}{\bar b^2}\bigr)
\bigr\}
$,
$t\geq 0$.
This implies in particular that for $t<\bar b^2/(2\bar a)$, 
\be
\textstyle{\P\left(\left|\sum_{x\in A}X(x)\right|\geq t\right)}
\leq\exp\bigl\{-{t^2}/{4\bar b^2}\bigr\}.
\Eq(5.lem7.11)
\ee
Now take $X(x)=e^{-u/\g_n(x)}-\E e^{-u/\g_n(x)}$, $x\in\AA_n$, so that for all $\theta>0$,
\be
\P\left(\left|\nu_n(u,\infty)-\E[\nu_n(u,\infty)]\right|\geq \theta\right)
=\textstyle{\P\left(\left|\sum_{x\in\AA_n}X(x)\right|\geq 2^n a_n^{-1}\theta\right)}.
\Eq(5.lem7.9)
\ee
Since $|X(x)|\leq 1$ and
$
\sum_{x\in\AA_n}\E X^2(x) \leq |\AA_n|\E e^{-2u/\g_n(x)}= 2^na_n^{-1}\E[\nu_n(2u,\infty)]
$,
we can apply Bennett's bound with $\bar a=1$ and $\bar b^2=2^na_n^{-1}\E[\nu_n(2u,\infty)]$, and 
by  \eqv(5.lem7.11), choosing $\theta^2= a_n \kappa 2^{-n+2}\E[\nu_n(2u,\infty)]$ in \eqv(5.lem7.9),
\be
\P\left(\left|\nu_n(u,\infty)-\E[\nu_n(u,\infty)]\right|
\geq 2\sqrt{a_n\kappa/2^n}\sqrt{\E[\nu_n(2u,\infty)]}\right)
\leq e^{-\kappa},
\Eq(5.lem7.15)
\ee
 for all $\kappa>0$.
This choice of $\theta$ is permissible provided that
$\theta\leq \E[\nu_n(2u,\infty)]/2$. In view of \eqv(5.lem7.1)
this will be verified for all $n$ large enough whenever $\theta\downarrow 0$
as $n\uparrow\infty$, i.e.~whenever $a_n\kappa/2^n=o(1)$.
This concludes the proof of
\eqv(5.lem7.2), and of the lemma.
\end{proof}

\begin{proof}[Proof of Lemma   \thv(5.lemma8)]
For $u>0$ and $l\geq 1$ set
\be
\bar\s_n^l(u,\infty)=a_n\sum_{y\in\AA_n}\pi_n(y)\left(h^{u}_n(y)\right)^l.
\Eq(5.lem8.1)
\ee
Thus  $\nu_n(u,\infty)=\bar\s_n^1(u,\infty)$ and $\s_n(u,\infty)=\bar\s_n^2(u,\infty)$.
If for $l=1$, $\bar\s_n^l(u,\infty)$
is a sum of independent random variables, this is no longer true when $l=2$. In this case
we simply use a second order Tchebychev inequality to write
\bea
\P\left(\left|\bar\s_n^l(u,\infty)-\E[\bar\s_n^l(u,\infty)]\right|\geq t\right)
\leq t^{-2}[\theta_1+\theta_2],
\Eq(5.lem8.4)
\eea
where
\bea
\theta_1&=&\left(\frac{a_n}{2^n}\right)^2\sum_{y\in\AA_n}\E\left[(h^{u}_n(y))^l-\E(h^{u}_n(y))^l\right]^2,
\Eq(5.lem8.5)
\\
\theta_2&=&\left(\frac{a_n}{2^n}\right)^2\sum_{{y,y'\in\AA_n\times\AA_n}\atop {y\neq y'}}
\E\left\{\left[(h^{u}_n(y))^l-\E(h^{u}_n(y))^l\right]\left[(h^{u}_n(y'))^l-\E(h^{u}_n(y'))^l\right]\right\}.\quad\quad
\Eq(5.lem8.5bis')
\eea
On the one hand
\be
\theta_1
\leq \frac{a_n}{2^n}\E[\bar\s_n^{2l}(u,\infty)].
\Eq(5.lem8.5')
\ee
On the other hand, after some lengthy but simple calculations, we obtain that
\bea
\theta_2
&\leq&
\textstyle
\frac{n(n-1)}{2^{n+1}}\Bigl[
\frac{a_n}{n^{2l}}\E\left[\nu_n(u,\infty)\right]
+2\frac{(\E[\nu_n(u,\infty)])^2}{n^l}\left(\frac{\E[\nu_n(u,\infty)]}{a_n}+\frac{2}{n}\right)^{l-1}
\cr
&&
\textstyle
+\frac{1}{a_n}(\E[\nu_n(u,\infty)])^3\left(\frac{\E[\nu_n(u,\infty)]}{a_n}+\frac{1}{n}\right)^{2(l-1)}
\Bigr].
\Eq(5.lem8.20)
\eea
Since on intermediate scales ${n^{2l}}/{a_n}=o(1)$ for any $l<\infty$, it follows from \eqv(5.lem8.20) and \eqv(5.lem7.1)
that for all $n$ large enough
\be
\theta_1+\theta_2\leq\frac{\E\left[\nu_n(u,\infty)\right]}{n^{2(l-1)}}\frac{a_n}{2^{n}}.
\Eq(5.lem8.24)
\ee
Inserting \eqv(5.lem8.24) in \eqv(5.lem8.4) and choosing
$
t=n^{-(l-1)}\sqrt{({a_n\kappa}/{2^n})\E\left[\nu_n(u,\infty)\right]}
$
yields
\be
\P\left(\left|\bar\s_n^l(u,\infty)-\E[\bar\s_n^l(u,\infty)]\right|
\geq n^{-(l-1)}\sqrt{a_n\kappa/2^n}\sqrt{\E\left[\nu_n(u,\infty)\right]}\right)
\leq \kappa^{-1},
\Eq(5.lem8.2')
\ee
and taking $l=2$ in \eqv(5.lem8.2') gives \eqv(5.lem8.2).
The proof of Lemma \thv(5.lemma8) is complete.
\end{proof}

\begin{proof}[Proof of Proposition \thv(5.prop6)]
By definition of an intermediate time-scale, any sequence $a_n$ must satisfy $a_n/2^n=o(1)$. 
Let us first assume that $\sum_{n}a_n/2^n<\infty$.
This implies in particular that $(a_n\log n)/2^n=o(1)$ and $n/a_n=o(1)$.
Thus, using Lemma \thv(5.lemma8') with $\kappa=2\log n$,
it follows from Borel-Cantelli Lemma that
\be
\lim_{n\rightarrow\infty}\nu_n(u,\infty)=\nu^{int}(u,\infty)\,\,\,\text{$\P$-almost surely.}
\Eq(5.prop6.3)
\ee
Together with the monotonicity of $\nu_n$ and the continuity of the limiting
function $\nu^{int}$, \eqv(5.prop6.3) entails the existence of a subset 
$\O^{\tau}_{2,1}\subset\O^{\tau}$
with the property that $\P(\O^{\tau}_{2,1})=1$, and such that, on $\O^{\tau}_{2,1}$,
\be
\lim_{n\rightarrow\infty}\nu_n(u,\infty)=\nu^{int}(u,\infty),\quad\forall\, u>0.
\Eq(5.prop6.6)
\ee
Similarly, using \eqv(5.lem8.2) of Lemma \thv(5.lemma8) with $\kappa=2^n/a_n$,
it follows from
\eqv(5.lem8'.0) and Borel-Cantelli Lemma that
\be
\lim_{n\rightarrow\infty}n\s_n(u,\infty)=\nu^{int}(2u,\infty)\,\,\,\text{$\P$-almost surely.}
\Eq(5.prop6.4)
\ee
This and the monotonicity of $\s_n$ allows us to conclude that
there exist a subset $\O^{\tau}_{2,2}\subset\O^{\tau}$ of full measure such that, on $\O^{\tau}_{2,2}$,
\be
\lim_{n\rightarrow\infty}n\s_n(u,\infty)=\nu^{int}(2u,\infty),\quad\forall\, u>0.
\Eq(5.prop6.7)
\ee
Assertion i) of the proposition now follows by taking $\O^{\tau}_2=\O^{\tau}_{2,1}\cap\O^{\tau}_{2,2}$.

If now $\sum_{n}a_n/2^n=\infty$ our estimates do not guarantee almost sure convergence of $\nu_n(u,\infty)$ and
$n\s_n(u,\infty)$ but still yield almost sure convergence along sub-sequences. Using the characterisation of convergence of probability in terms of almost sure convergence of sub-sequences (see e.g. \cite{RW}, Sect.~II.~19).
This allows us to reduce the proof in this case to the case of almost sure convergence treated in the proof of Assertion i).
\end{proof}

\subsection{Concentration of $m_n$ and of terms appearing in the ergodic theorem for $M_n(t)$}
As we will later make use of Lemma \thv(4.1'') under the  condition that $\lim_{n\to \infty}\sqrt{n}\beta- \frac{\log c_n}{\sqrt{n}\beta}=\theta$, which by \eqv(2.4.4) of Lemma \thv(Lem2.4.1) implies that $\E (m_n)=C\sqrt{n}$ for some constant $C$, we need to control all quantities appearing in \eqv(4.prop1'.02) of Proposition \thv(4.prop1') with an extra  multiplicative factor $\sqrt{n}$.

\begin{lemma}\TH(4.1'')
 (i) If $\sum_{n}a_n/2^n<\infty$ then there exists a subset $\O^{\tau}_6\subset\O^{\tau}$
with $\P(\O^{\tau}_6)=1$ such that, on $\O^{\tau}_6$
 \bea
\lim_{n\to\infty} \sqrt{n}  \max\left\{ \frac{m_n^2}{2^{n}},\frac{m_n^2}{a_n}, c n^{-2} v_n\right\}&=& 0,\nonumber\\
\lim_{n\to\infty}  \max\left\{ \left \vert m_n -\E m_n\right\vert,\; \sqrt{n}\left \vert w_n -\E w_n\right\vert\right\} & = & 0.
\eea
(ii) If $\sum_{n}a_n/2^n=\infty$ there exists 
$\O^{\tau}_{n,6}\subset\O^{\tau}$ with $\P\left(\O^{\tau}_{n,6}\right)\geq 1-o(1)$ such that for $n$ large enough, on $\O^{\tau}_{n,6}$\bea
\sqrt{n}  \max\left\{ \frac{m_n^2}{2^{n}},\frac{m_n^2}{a_n}, c n^{-2} v_n\right\}& \leq &(a_n/2^n)^{1/4}, \nonumber\\
\sqrt{n}\max\left\{ \left \vert m_n -\E m_n\right\vert, \left \vert w_n -\E w_n\right\vert\right\} & \leq &(a_n/2^n)^{1/4}.
\eea
 (iii) Moreover, for all sequences $\rho_n$ such that  $\rho_n\left(\frac{a_n e^{n\beta^2/2}}{c_n}\right)^2=o(1)$ we have
 \be
 \lim_{n\to\infty}  \max\left\{\E\frac{m_n^2}{2^{n}},\E\frac{m_n^2}{a_n},  c n^{-2} \E v_n,\E w_n,\rho_n[\E (m_n)]^2\right\}=0.
 \Eq(4.1''.3)
 \ee
\end{lemma}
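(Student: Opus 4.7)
\medskip
\noindent\textbf{Proof plan.} The three assertions all rest on two ingredients: an explicit asymptotic bound for $\E g(\g_n(0))$ and $\E g(\g_n(0))^2$, and concentration of the i.i.d.~sum $m_n$ (and $v_n$) together with a second-moment bound for the locally dependent sum $w_n$. I first compute the relevant expectations, then concentrate each quantity around its mean, and finally combine Borel--Cantelli (under the summability hypothesis of (i)) with a direct probability bound (for (ii)).

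\medskip
\noindent For (iii), note that $\E m_n=a_n\E g(\g_n(0))$, $\E v_n=a_n\E g(\g_n(0))^2$, and, splitting the double sum in $w_n$ according to $x=x'$ versus $x\neq x'$,
\[
\E w_n=\tfrac{a_n}{n}\E g(\g_n(0))^2+a_n(1-\tfrac{1}{n})(\E g(\g_n(0)))^2.
\]
Since $\t_n(x)=e^{-\b H_n(x)}$ with $H_n(x)$ centered Gaussian of variance $n$, splitting the integral defining $\E g(\t_n/c_n)$ at $\t_n=c_n$ (where $g(\t_n/c_n)\le \min(\t_n/c_n,1)$) and using $\E \t_n=e^{n\b^2/2}$ and the definition $a_n G_n(c_n)=1$ yields a bound of the form $\E g(\g_n(0))\le C[e^{n\b^2/2}/c_n+1/a_n]$, hence $\E m_n\le C(a_ne^{n\b^2/2}/c_n+1)$. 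The assumption $\rho_n(a_ne^{n\b^2/2}/c_n)^2=o(1)$ therefore forces $\rho_n[\E m_n]^2=o(1)$; and using $g\le 1$ one also has $\E m_n^2\le (\E m_n)^2+a_n^2/2^n$, from which $\E m_n^2/a_n=o(1)$ and $\E m_n^2/2^n=o(1)$ follow. The same kind of estimate, combined with $g^2\le g$, controls $n^{-2}\E v_n$ and $\E w_n$.

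\medskip
\noindent For the concentration statements in (i) and (ii), write $m_n=(a_n/2^n)\sum_{x\in\VV_n}g(\g_n(x))$ as an average of i.i.d.~random variables bounded by $1$ and with variance at most $\E g(\g_n(0))^2\le\E g(\g_n(0))$. Bennett's bound (exactly as used in Lemma~\thv(5.lemma8')) gives, for $t$ small enough,
\[
\P(|m_n-\E m_n|\ge t)\le \exp\bigl(-c\,t^2 2^n/\bigl(a_n^2\E g(\g_n(0))^2\bigr)\bigr).
\]
Choosing $t$ of the order $(\log n)/\sqrt n$ (resp.~$\sqrt{n}^{-1}(a_n/2^n)^{1/4}$) and using $\sum_n a_n/2^n<\infty$ (resp.~a single application of Markov's inequality) yields the claimed bounds on $|m_n-\E m_n|$ in (i) and (ii) by Borel--Cantelli (resp.~directly). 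The same argument, applied to the i.i.d.~sum $v_n$, controls $\sqrt n\cdot n^{-2}v_n$ and, via the deterministic bound $v_n,w_n\le m_n$ (indeed $G_n(y)\le 1$ gives $\sum_y G_n(y)^2\le\sum_y G_n(y)=\sum_x g(\g_n(x))$), handles the remaining ``small'' terms $\sqrt n\,m_n^2/2^n$ and $\sqrt n\,m_n^2/a_n$ by combining the concentration of $m_n$ with the bounds on $\E m_n$ from the previous paragraph.

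\medskip
\noindent The main obstacle is the concentration of $w_n=(a_n/2^n)\sum_y G_n(y)^2$, since the summands are not independent. The key observation is locality: $G_n(y)$ depends only on the $n$ neighbors of $y$, so $\mathrm{Cov}(G_n(y)^2,G_n(y')^2)=0$ unless $\mathrm{dist}(y,y')\le 2$, and for each $y$ there are at most $C n^2$ such $y'$. Combining this with the crude bound $G_n(y)^2\le G_n(y)$ and the i.i.d.~structure of the $g(\g_n(x))$'s yields $\mathrm{Var}(w_n)\le C(a_n/2^n)^2\cdot 2^n\cdot n^2\cdot\E g(\g_n(0))^2$, which, via Chebyshev with the $\sqrt n$-rescaled deviations in (i) and (ii), suffices because of the extra smallness supplied by $\E g(\g_n(0))^2\le\E g(\g_n(0))$ and the summability (resp.~bound on $a_n/2^n$) assumed in each regime. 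As usual (cf.~the end of the proof of Proposition~\thv(5.prop6)), the passage from a.s.~convergence under $\sum_n a_n/2^n<\infty$ to convergence in $\P$-probability in the opposite case is done via the sub-sequence criterion.
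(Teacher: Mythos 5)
Your plan reproduces the paper's overall architecture (moment computations for part (iii), second-moment concentration plus Borel--Cantelli / sub-sequence criterion for (i)/(ii)), and for $m_n$ your use of Bennett's inequality is a perfectly valid and slightly stronger substitute for the paper's second-order Chebyshev. The observations $w_n=(a_n/2^n)\sum_y G_n(y)^2$, $G_n(y)\le 1$ and $v_n,w_n\le m_n$ are also correct and helpful.

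However, there is a genuine gap in the concentration of $w_n$. The bound
$\mathrm{Var}(w_n)\le C(a_n/2^n)^2\cdot 2^n\cdot n^2\cdot\E g(\g_n(0))^2$
that your locality argument produces is off by several powers of $n$ and does not close the argument. With $a_n\E g^2=O(1)$ (Lemma~\thv(Lem2.4.1)), this reads $\mathrm{Var}(w_n)\lesssim n^2 a_n/2^n$, and Chebyshev at the scale $\e=\e_0/\sqrt n$ then gives $\P(\sqrt n|w_n-\E w_n|>\e_0)\lesssim n^3 a_n/2^n$. On intermediate scales with $\varepsilon=1$ one can have $a_n/2^n\sim n^{-2}$, so $\sum_n a_n/2^n<\infty$ while $\sum_n n^3 a_n/2^n=\infty$: Borel--Cantelli fails. (And bounding the covariance by $(\E g)^2$ instead of $\E g^2$ is simply false, since $\mathrm{Var}(G_n(y)^2)$ is dominated by $\E g^2/n\gg(\E g)^2$ when $a_n\gg n^2$.) The crude step ``bound each off-diagonal covariance by the worst-case variance'' discards exactly the structure that makes the paper's bound summable: for $y\neq y'$ with $\dist(y,y')=2$ the coupling runs through only two of the $n$ neighbors of each vertex, and every appearance of $p_n(\cdot,\cdot)$ brings a factor $1/n$. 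Expanding $\E[G_n(y)^2G_n(y')^2]$ in the i.i.d.~$g(\g_n(x))$'s (as the paper does with $\theta_1$, $\theta_2$ in its \eqv(NY.102)--\eqv(NY.103)) shows that each cross-pair contributes $\lesssim \E g^4/n^4$, which lowers the estimate to $\mathrm{Var}(w_n)\lesssim a_n/(n^2 2^n)$ and, after rescaling by $n$, yields the summable $a_n/(n 2^n)$. You need this finer term-by-term moment computation; the coarse covariance count by itself is not sufficient.
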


\begin{proof} Let us first compute the expected  values of each of the terms appearing in \eqv(4.1''.3).
Consider first  $w_n$.
\be
\Eq(Bonn.2)
\E w_n =\frac{a_n}{n} \E\left(g(\g_n(x))^2 \right)
+ \frac{a_n}{2^n}\sum_{x\in\mathcal{V}_n} \sum_{\substack{x'\in\mathcal{V}_n, \\x\neq x'}} p_n^2(x,x') 
\E\left(g(\g_n(x)) g(\g_n(x'))\right).
\ee
By \eqv(2.4.5) the first summand in \eqv(Bonn.2) is equal to $ C \frac{1}{n}(1+o(1))$ for some constant $C>0$.  By \eqv(2.4.4') the second summand in \eqv(Bonn.2) is  equal to 
\be
\frac{n-1}{n} a_n\left(\E\left(g(\g_n(x))\right)\right)^2
\leq \frac{n-1}{n}a_n\left(c_n^{-1}{e^{\b^2 n/2}}\right)^2.
\ee
Thus
\be
\E w_n \leq  C \frac{1}{n}(1+o(1))+\frac{n-1}{n}a_n\left(c_n^{-1}{e^{\b^2 n/2}}\right)^2.
\ee
Similarly, recalling from \eqv(4.prop1'.4) that
$m_n=({a_n}/{2^n})\sum_{x\in\mathcal{V}_n}g(\g_n(x))$, 
we have by \eqv(2.4.4') that $\rho_n [\E(m_n)]^2\leq \rho_n\left({a_nc_n^{-1}e^{n\beta^2/2}}\right)^2$ and 
\be
\Eq(NY.100)
\E\left(\frac{m_n^2}{2^{n}}\right)\leq C\frac{a_n}{2^{2n}}+ \frac{a_n^2 e^{\b^2n}}{2^nc_n^2}, 
\quad\E\left(\frac{m_n^2}{a_n}\right)\leq C\frac{1}{2^{n}}+ \frac{a_n e^{\b^2n}}{c_n^2},
\quad\E\left(c \frac{v_n}{n^{2} }\right) \leq  C\frac{1}{n^{2} }.
\ee
Collecting the above bounds readily proves part (iii) of Lemma \thv(4.1'').
Throughout the rest of the proof let $C>0$ be a generic constant that is large enough  to fulfill all desired  inequalities. 
The proofs of the first two parts follow from  part (iii)  and Markov inequalities.
To prove concentration of $m_n$ around its mean value we use that by a second order Tchebychev inequality, 
for all $\e>0$,  
$
\P\left( \left| m_n-\E m_n\right|>\e \right)
\leq 
\e^{-2}\left(\E\left(m_n^2\right)- 
\left(
\E m_n
\right)^2\right)
$.
By \eqv(4.prop1'.1), 
$
\E\left(m_n^2\right)=({a_n}/{2^n})(v_n+w_n)
$.
Now it follows from the calculations in   \eqv(2.4.5) that 
$
v_n<c_2
$
whereas
$
({a_n}/{2^n})w_n -\left(\E m_n\right)^2 = - ({a_n^2}/{2^n})\left(\E
g(\g_n(x))\right)^2 < 0
$.
 Thus 
\be\Eq(2.4.8)
 \P\left( \left| m_n-\E m_n\right|>\e \right)
\leq \e^{-2} c_2 ({a_n}/{2^n}).
\ee
We next prove concentration of $w_n$. Using again a second order Tchebychev inequality we have, for all $\e>0$ 
$
\mathbb{P}\left(\left \vert w_n-\E\left(w_n\right)\right \vert > \e\right) \leq \e^{-2}\left(\theta_1 +\theta_2\right)
$,
where
\bea
\theta_1 & = & \left(\frac{a_n}{2^n}\right)^2 \sum_{y\in\mathcal{V}_n} \E\left(G_n(y)^2-\E\left(G_n(y)\right)^2\right)^2,\\
\theta_2 & = & \left(\frac{a_n}{2^n}\right)^2 \sum_{\substack{y, y'\in\mathcal{V}_n,\\ y\neq y'}}\E\left(G_n(y)^2-\E\left(G_n(y)\right)^2\right)\big(G_n(y')^2-\E\left(G_n(y')\right)^2\big).
\eea
Hence we observe that the expectation with respect to the random environment of all terms appearing converges to $0$ as $n\to\infty$. 
First, we bound $\theta_1$ from above by
\be
\theta_1
\leq \left(\frac{a_n}{2^n}\right)^2 \sum_{y\in\mathcal{V}_n}\E\left(G_n(y)^4\right)-\left(\E\left(G_n(y)\right)^2\right)^2.
\ee
Expanding 
$
\left({a_n}/{2^n}\right)^2 
\sum_{y\in\mathcal{V}_n}\E\left(G_n(y)^4\right)
$,
we bound $\theta_1$ from above by $\left({a_n}/{2^n}\right)^2 $ times
\bea
& &   \sum_{y,x\in\mathcal{V}_n} p_n(y,x)^4 \E\left(g(\g_n(x))^4 \right) 
+C 
\sum_{\substack{y,x,x'\in\mathcal{V}_n,\\ x\neq x'}}p_n(y,x) p_n(y,x')^3\E\left(g(\g_n(x))g(\g_n(x')^3)\right) \nonumber\\
&&+  C  \sum_{y\in\mathcal{V}_n} \sum_{x\in\mathcal{V}_n} \sum_{\substack{x'\in\mathcal{V}_n,\\ x\neq x'}}p_n(y,x)^2 p_n(y,x')^2\E\left(g(\g_n(x))^2g(\g_n(x'))^2\right)\nonumber\\
&&+   C
 \sum_{\substack{y,x,x'\in\mathcal{V}_n,\\ x\neq x'}}\sum_{\substack{x''\in\mathcal{V}_n,\\ x''\neq x', x''\neq x}}p_n(y,x)^2 p_n(y,x') p_n(y,x'') \E\left((g(\g_n(x))^2g(\g_n(x'))g(\g_n(x''))\right) \nonumber\\
& &+ 
\sum_{\substack{y,x_0,x_1,x_2,x_3\in\mathcal{V}_n,\\ x_0,x_1,x_2,x_3\\\; pairwise\; distinct}}
 \;\;\prod_{i=0}^3p_n(y,x_i) 
\E\left(g(\g_n(x))g(\g_n(x'))g(\g_n(x''))g(\g_n(x'''))\right)  
\eea
Using the calculations in Lemma \thv(Lem2.4.1) on the behavior of $g(\g_n(x))$ and the independence of $\g_n(x)$ and $\g_n(y)$ when $x\neq y$, we arrive at
\be\Eq(NY.102)
\theta_1\leq \frac{a_n}{2^n}\left(\frac{1}{n^3}+C\frac{e^{n\b^2/2}}{n^2 c_n}+  \frac{C}{n^2 2^n a_n}+ C\frac{e^{n\b^2}}{n c_n^2} + \frac{e^{2n\b^2}a_n}{c_n^4}\right).
\ee
Expanding the expression $\theta_2$, we obtain 
\be\Eq(NY.101)
\theta_2=\left(\frac{a_n}{2^n}\right)^2\sum_{y\in\mathcal{V}_n}\sum_{\substack{y'\in\mathcal{V}_n,\\ y\neq y'}}\E\left(G_n(y)^2G_n(y')^2\right)-\E\left(G_n(y)^2\right)\E\left(G_n(y')^2\right).
\ee
We observe that when expanding the terms of \eqv(NY.101), some of the resulting terms cancel each other so that $ \theta_2$ is bounded from above by  $C\left({a_n}/{2^n}\right)^2$ times
\bea
&&  
\sum_{\substack{y,y',x\in\mathcal{V}_n,\\ y\neq y'}}
p_n(y,x)^2p_n(y',x)^2\E\left(g(\g_n(x))^4 \right)\nonumber\\
&& +    
\sum_{\substack{y,y'\in\mathcal{V}_n,\\ y\neq y'}}
  \sum_{\substack{x,x'\in\mathcal{V}_n,\\ x'\neq x}}p_n(y,x)p_n(y,x')p_n(y',x)^2 
\E\left(g(\g_n(x))^3g(\g_n(x'))\right)
\nonumber\\
&& 
+  \left.  \sum_{\substack{y,y'\in\mathcal{V}_n,\\ y\neq y'}}   \sum_{\substack{x,x'\in\mathcal{V}_n,\\ x'\neq x}}\sum_{\substack{z'\in\mathcal{V}_n,\\ z'\neq x, z'\neq x'}}
p_n(y,x) p_n(y,x') p_n(y',x)p_n(y',z')\right. \nonumber\\
&&\qquad\qquad\qquad \qquad \qquad  \times\left.\E\left(g(\g_n(x))^2g(\g_n(x'))g(\g_n(z')\right)\right).
\eea
Using once more Lemma \thv(Lem2.4.1) we get that
\be\Eq(NY.103)
\theta_2  \leq 
 \frac{a_n}{2^n}\left(\frac{1}{n^2}+C\frac{e^{n\b^2/2}}{n c_n}+C\frac{e^{n\b^2}}{c_n^2}\right).
\ee
Collecting the bounds of \eqv(NY.102) and \eqv(NY.103) gives
\be
\mathbb{P}\left(\left \vert w_n-\E\left(w_n\right)\right \vert > \e \right) 
\leq C\frac{a_n}{2^n}\left(
 \frac{e^{2n\b^2 }a_n}{c_n^4}+\frac{1}{n^2}+\frac{e^{n\b^2/2}}{n c_n}+\frac{e^{n\b^2}}{c_n^2}\right).
\ee
Choosing $\e={\e_0}/{\sqrt{n}}$ for some $\e_0>0$, the claim of Part  (i)  of Lemma \thv(4.1'') follows from Borel-Cantelli Lemma as  the bounds of \eqv(2.4.8) are summable if $\sum a_n/2^n<\infty$. If $\sum a_n/2^n=\infty$, Part (ii) of Lemma \thv(4.1'') follows.
\end{proof}

\subsection{Verification of Conditions (A3) and (A3')} Recall the definition of $g_\d(u)$ and $f_\d(u)$  from \eqv(F1) and \eqv(F2), respectively, and define  the key  quantities
\be
\l_{\d,n} = \frac{a_n}{2^n} \sum_{x\in \VV_n}g_\d(\g_n(x)), \quad {\bar\l}_{\d,n}  \equiv 
\frac{a_n}{2^n}\sum_{x\in\mathcal{V}_n}
f_\d(\g_n(x)).
\Eq(7.1)
\ee
Observe that the quantity appearing in \eqv(G1.A3) in Condition (A3) is equal to $(k_n(t)/a_n) \l_{\d,n}$. Similarly, the quantity in \eqv(G1.A3') in Condition (A3') is equal to $(k_n(t)/a_n) {\bar\l}_{\d,n} $.
\begin{lemma}\TH(A3) 
\item{(a)} Let $c_n$ be an intermediate time-scale and assume that  $0<\varepsilon\leq 1$ and $0<\beta<\infty$ are such that  
$0<\a(\varepsilon)< 1$.
\item{(a-1)} If $\sum_{n}a_n/2^n<\infty$ then there exists $\Omega_8^\t\subset\Omega^\t$ with $\P\left(\Omega_8^\t\right)=1$ such that, on $\Omega_8^\t$ 
\be
 \lim_{\d\to 0}\lim_{n\to\infty} \l_{\d,n}
 = 0.
 \Eq(4.L.1')
\ee
\item{(a-2)}  If $\sum_{n}a_n/2^n=\infty$  then there exists $\Omega_{n,8}^\t\subset\Omega^\t$ with $\P\left(\Omega_{n,8}^\t\right)\geq 1-o(1)$ such that for $n$ large enough, on $\Omega_{n,8}^\t$ 
\be\Eq(2.3.5')
\left \vert  \l_{\d,n}- \E\left( \l_{\d,n}\right)\right \vert \leq \left(\frac{a_n}{2^n}\right)^{1/4},
\ee
and
$
\lim_{\d\to 0 }\lim_{n\to\infty}\E\left( \l_{\d,n}\right) = 0.
$
\item{(b)}  Let $c_n$ be an intermediate time-scale and take $\b=\b_c(\ve)$ with $0<\ve\leq 1$. Then the statement of assertion (a)
above holds with $\l_{\d,n}$ replaced by ${\bar\l}_{\d,n}$.
\end{lemma}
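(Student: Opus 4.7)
The plan is to mirror the two-step scheme developed in Proposition \thv(5.prop6) and Lemma \thv(4.1''): I first compute $\lim_{n\to\infty}\E\l_{\d,n}$ explicitly, show that this limit vanishes as $\d\to 0$, and then separately establish concentration of $\l_{\d,n}$ around its mean.

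For the expectation, since $g_\d(0)=0$ and $g_\d(u)\leq\d$ is uniformly bounded, a Fubini (or integration-by-parts) computation using $\P(\g_n(x)>v)=G_n(c_nv)$ gives
\[
\E\l_{\d,n}=a_n\E g_\d(\g_n(x))=\int_0^\infty g_\d'(v)\,h_n(v)\,dv,
\]
where $h_n$ is the function analysed in Lemma \thv(2.lemma6). I would adapt the splitting argument from the proof of Lemma \thv(5.lemma7), decomposing as $\int_0^{\hat\zeta}+\int_{\hat\zeta}^\zeta+\int_\zeta^\infty$ and using the uniform convergence $h_n(v)\to v^{-\a(\ve)}$ on compacts of $(0,\infty)$ combined with the asymptotics $g_\d'(v)\to 1$ as $v\to 0$ and $g_\d'(v)\sim\d^2/(2v^2)$ as $v\to\infty$, to conclude
\[
\lim_{n\to\infty}\E\l_{\d,n}=\int_0^\infty g_\d(v)\,\n^{int}(dv).
\]
The change of variable $w=\d/v$ then reduces this integral to $C\,\d^{1-\a(\ve)}\int_0^\infty(1-e^{-w})w^{\a(\ve)-2}dw$; since $0<\a(\ve)<1$ the integral is finite and the whole expression vanishes as $\d\to 0$.

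For the concentration step, the summands $g_\d(\g_n(x))$ are i.i.d., bounded by $\d$, and satisfy $\mathrm{Var}(g_\d(\g_n(x)))\leq \d\,\E g_\d(\g_n(x))$. Bennett's inequality, applied exactly as in Lemma \thv(5.lemma8'), yields for small $\eta>0$ a Gaussian-type deviation bound whose exponent is of order $\eta^2\,2^n/(a_n\d\,\E\l_{\d,n})$. Choosing $\eta=(a_n/2^n)^{1/4}$, the right-hand side vanishes because $2^n/a_n\to\infty$ on intermediate scales, which defines $\O_{n,8}^\t$ in case (a-2). In case (a-1), the exponential bound and $\sum_na_n/2^n<\infty$ together make the tail summable; Borel--Cantelli gives $\lim_n\l_{\d,n}=I(\d)$ almost surely for each rational $\d>0$, and the monotonicity of $\l_{\d,n}$ in $\d$ (since $g_\d(u)$ is increasing in $\d$) lets me extend this to all $\d>0$ on a single full-measure set $\O_8^\t$.

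For part (b), the same scheme applies verbatim with $f_\d$ replacing $g_\d$ and $\n(du)=u^{-2}du$ replacing $\n^{int}$. Two observations drive the argument: first, $f_\d'(u)=2u(1-e^{-\d/u})-\d^2 e^{-\d/u}/u>0$ (a consequence of the elementary inequality $2(e^z-1)>z^2$ for $z>0$), so $f_\d$ is increasing with $f_\d(u)\nearrow \d^2/2$, giving the uniform bound $f_\d\leq\d^2/2$ needed for Bennett; second, the substitution $w=\d/u$ gives $\int_0^\infty f_\d(u)u^{-2}du=\d\int_0^\infty[1-(1+w)e^{-w}]w^{-2}dw=C'\d$, which vanishes as $\d\to 0$. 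The main technical obstacle is the tail analysis of $\int g_\d'(v)h_n(v)\,dv$ (and its analogue for $f_\d$): near $v=0$ one must invoke part (ii) of Lemma \thv(2.lemma6) to control $h_n$ by the integrable power $v^{-\a_n(1-\d_0/2)}$ on the shrinking interval $[c_n^{-\d_0},\hat\zeta]$, while on $[0,c_n^{-\d_0}]$ the Gaussian tail embedded in $h_n$ must be used to eliminate the divergence of $g_\d'\to 1$; near $v=\infty$ the balance between $g_\d'(v)\sim\d^2/(2v^2)$ and $h_n(v)\sim v^{-\a_n}$ carries a $\d$-dependence that must be tracked carefully so that the truncation errors do not interfere with the subsequent limit $\d\to 0$.
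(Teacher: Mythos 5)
Your proof is correct, but it takes a noticeably longer route than the paper's. Two points of comparison are worth spelling out. First, for the expectation, the paper does not redo the integration-by-parts / $h_n$-splitting analysis of Lemma \thv(5.lemma7): it simply invokes the Gaussian moment bounds already established in the Appendix (Lemma \thv(Lem2.4.0) gives $\E\l_{\d,n}=a_n\E g_\d(\g_n(x))\leq c_3\d^{1-\a(\ve)}$, and Lemma \thv(Lem2.3.1) gives $\E\bar\l_{\d,n}\leq c_0\d$), which immediately yields $\lim_{\d\to0}\lim_n\E\l_{\d,n}=0$. Your explicit identification of the limit as $\int_0^\infty g_\d\,d\nu^{int}$ is a correct and more informative statement, but it is not needed: the upper bound $c_3\d^{1-\a(\ve)}$ (respectively $c_0\d$) already does the job, and it is precisely what your change-of-variables estimate reduces to. Second, for concentration the paper uses only a second-order Tchebychev inequality: since the summands are independent and bounded with $a_n\E g_\d(\g_n(x))^2\leq c_4$ (\eqv(2.2.4')), one gets $\P(|\l_{\d,n}-\E\l_{\d,n}|>\e)\leq\e^{-2}c_4\,a_n/2^n$, and summability of $a_n/2^n$ (or the sub-sequence trick for the divergent-sum case) finishes the argument exactly as in Proposition \thv(5.prop6). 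Your Bennett bound is valid and gives a sharper (exponential) tail, but it is overkill here: the polynomial Tchebychev rate $a_n/2^n$ is already directly matched to the hypothesis $\sum_n a_n/2^n<\infty$, so nothing is gained. Your remark that $g_\d$ and $f_\d$ are increasing in $\d$, used to pass from rational $\d$ to all $\d$ on a single null set, is a clean way to handle the uniformity and matches in spirit the monotonicity argument the paper uses in Proposition \thv(5.prop6). In short: your proof is sound, but you rederive moment asymptotics and import a heavier concentration inequality where the paper deliberately relies on the elementary estimates it has already banked in Appendix A.
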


\begin{proof}
To prove Part (a) note first that by \eqv(2.3.3'), $\E\left(\l_{\d,n}\right) \leq c_3 \d^{1-\a}$ for all large enough  $n$,   so that
$
\lim_{\d\to 0 }\lim_{n\to\infty}\E\left( \l_{\d,n}\right) = 0
$,
and use next that  by a second order Tchebychev inequality and Lemma \thv(Lem2.4.0),
\be\Eq(8.1)
\P\left(\left \vert \l_{\d,n}-\E\left(\l_{\d,n}\right)\right \vert >\e\right)
\leq \e^{-2}\left(\frac{a_n}{2^n}\right)^2\sum_{x\in \VV_n}\E\left(g_\d(\g_\d(x))^2\right)<\e^{-2}c_4 \frac{a_n}{2^n}.
\ee
Part (b) is proved in a  similar way. By \eqv(2.3.3),  for all large enough  $n$, $\E\left( {\bar\l}_{\d,n}\right)\leq c_0\delta$, implying that
$
\lim_{\d\to 0 }\lim_{n\to\infty}\E\left( {\bar\l}_{\d,n}\right) = 0
$,
while by  a second order Tchebychev inequality, for all $\e>0$
\be\Eq(A.3'.1)
\mathbb{P}\left(\left \vert {\bar\l}_{\d,n}
-  \E\left(  {\bar\l}_{\d,n}\right)\right \vert >\e\right)
\leq \e^{-2}\frac{a_n}{2^n} a_n\E\left(f_\d(\g_n(x))^2\right) \leq \e^{-2} c_1 \frac{a_n}{2^n},
\ee
where by independence of $f_\d(\g_n(x))$ and $f_\d(\g_n(x'))$ if $x \neq x' $  and \eqv(2.3.4).  
Based on \eqv(8.1) and \eqv(A.3'.1) the proof of Lemma \thv(A3) is concluded by arguing as in the proof of Proposition \thv(5.prop6).\end{proof}
 
\section{Proof of Theorem \thv(1'.theo1) and Proposition \thv(LCor.3)}
\label{S7}

Using the results of the two previous sections, we are now finally in the position to prove Theorem \thv(1'.theo1).

\begin{proof}[Proof of Theorem \thv(1'.theo1)] We first prove Assertion (i).
Choose $\nu=\nu^{int}$ as in \eqv(1.prop1.1) in Conditions (A1) (see \eqv(G1.A1)).
By Proposition \thv(4.prop1) and the estimates of Proposition \thv(5.prop6),  Conditions (A1), (A2) and (A0) are
satisfied $\P$-almost surely if $\sum_{n}a_n/2^n<\infty$ and  in $\P$-probability  if $\sum_{n}a_n/2^n=\infty$. 
By  Assertion (a) of Lemma \thv(A3),  when $\b>\b_c(\ve)$, Condition (A3) is satisfied 
$\P$-almost surely if $\sum_{n}a_n/2^n<\infty$ and  in $\P$-probability  if $\sum_{n}a_n/2^n=\infty$.
Thus Assertion (i) of Theorem \thv(1.theo3) implies that, under the same conditions and w.r.t.~the 
same convergence mode as above, $S_n\Rightarrow  S^{int}$ as $n\rightarrow\infty$, 
where $S^{int}$ is the subordinator with L\'evy measure $\nu^{int}$. 
This proves Assertion (i) of Theorem \thv(1'.theo1).

We now turn to Assertion (ii). If $\b=\b_c(\ve)$, reasoning as in the proof of Assertion (i),
Conditions (A1) and (A2) are satisfied $\P$-almost surely if $\sum_{n}a_n/2^n<\infty$ and 
in $\P$-probability  if $\sum_{n}a_n/2^n=\infty$. Furthermore, by Assertion (b) of Lemma \thv(A3) 
Condition (A3') is satisfied $\P$-almost surely if $\sum_{n}a_n/2^n<\infty$ and  in $\P$-probability
if $\sum_{n}a_n/2^n=\infty$. Thus Assertion  (ii) of Theorem \thv(1.theo3) implies that, under the 
same conditions and w.r.t.~the same convergence mode as above,
$S_n-\frac{M_n}{c_n}\Rightarrow  S^{crit}$, proving \eqv(1'.theo3.1').

We now  assume that $\lim_{n\to \infty} \sqrt{n}\beta-\frac{\log c_n}{\sqrt{n}\beta}=\theta$ for some $\theta\in(-\infty,\infty)$. 
To prove \eqv(1'.theo3.2') we proceed as follows. 
First, observe that $M_n(t) $ is an increasing process and that, by \eqv(2.4.4) of Lemma \thv(Lem2.4.1), 
$
(k_n(\cdot)c_n/a_n)e^{-n\beta^2/2}\E(m_n)
$
converges to a continuous limit. 
Hence, $K\equiv L(a_n/c_n)e^{n\beta^2/2}$ for any $L>0 $ control points suffice to establish the desired convergence.
To make this more precise let $t_1, \dots, t_K$ be an equidistant partition of $[0,T]$. Then for any $t\in[0,t]$ there exists 
$1\leq i\leq K$ such that $t_i\leq t\leq t_{i+1}$,
\be
\nonumber
M_n(t_i)\leq M_n(t)\leq M_n(t_{i+1}) \,\,\,\mbox{and}\,\,\, \vert({k_n(t_i)}/{a_n})\E m_n- ({k_n(t_{i+1})}/{a_n})\E m_n\vert \leq \sfrac{T}{L}.
\ee
Hence it suffices to prove that
\be \Eq(Bonn.3)
\mathcal{P}\left(\exists i \in \{1, \dots, K\} :\left \vert M_n(t_i) -({k_n(t_i)}/{a_n})\E m_n\right\vert\geq \epsilon \right)
\ee
converges to zero as $n \to \infty$ $\mathbb{P}$-a.s.~(resp.~in $\mathbb{P}$-probability). By the linearity of $({k_n(t_i)}/{a_n}) m_n$ and $({k_n(t_i)}/{a_n})\E m_n$ (see \eqv(As.1)), and Lemma \thv(4.1'')  it suffices to consider
\be \Eq(Bonn.4)
\mathcal{P}\left(\exists i \in \{1, \dots, K\} :\left\vert  M_n(t_i) -({k_n(t_i)}/{a_n})m_n\right\vert \geq \epsilon \right).
\ee
Using a union bound the probability in \eqv(Bonn.3) is bounded from above by
\be\Eq(Bonn.5)
\sum_{i=1}^K \mathcal{P}\left( \left \vert M_n(t_i) -({k_n(t_i)}/{a_n}) m_n \right\vert \geq \epsilon \right)
\ee
Under the assumption $\lim_{n\to\infty}\sqrt{n}\beta-\frac{\log c_n}{\sqrt{n}\beta}=\theta$, by Lemma \thv(Th.Scale), 
$K=CL\sqrt{n}(1+o(1))$.
The claim now follows from Proposition \thv(4.prop1') and Lemma \thv(4.1'').
As before, this convergence holds either $\P$-a.s.~or in $\P$-probability depending on whether $\sum_{n}a_n/2^n$ if finite or not.
This completes the proof of Theorem \thv(1'.theo1).  
\end{proof}

\begin{proof}[Proof of Proposition \thv(LCor.3)]
We start with proving Part (i) of Proposition \thv(LCor.3). Proceeding as in the proof of \eqv(1'.theo3.2') in Assertion (ii) of Theorem \thv(1'.theo1), we observe that the expectation with respect to the random environment of
$
\frac{k_n(\cdot)}{a_n} \frac{ m_n(\cdot)}{\E m_n(\cdot)} 
$  
converges to $t$ as $n\uparrow \infty$, which is obviously continuous in $t$, and that $\frac{M_n(t)}{\E(m_n)}$ is increasing in $t$. Hence, to establish \eqv(1'.theo3.2'.1)  it suffices to prove  convergence of the finite dimensional distributions. This follows from Proposition \thv(4.prop1') and Lemma \thv(4.1''). As $\E(\EE(M_n(t))$ diverges as $n\uparrow \infty$, by Lemma \thv(Lem.As), the second part of Proposition \thv(LCor.3) follows from Assertion (ii) of Theorem \thv(1'.theo1).

Next, we turn to Part (ii) of Proposition \thv(LCor.3). We rewrite $S_n(t)$ as
\be\Eq(Mainz.110)
\sum_{i=1}^{[a_nt]} \g_n(J_n(i))e_{n,i}\1_{\{\g_n(J_n(i))e_{n,i}\leq 1\}} +\sum_{i=1}^{[a_nt]} \g_n(J_n(i))e_{n,i}\1_{\{\g_n(J_n(i))e_{n,i}> 1\}}\equiv S_{n,1}(t)+S_{n,2}(t)
\ee
Using Markovs inequality we bound the probability that the second summand in \eqv(Mainz.110) is larger than $\e$ by
\be\Eq(Mainz.111)
\e^{-1}E\left(S_{n,2}(t)\right).
\ee
Using again Markovs inequality (this time with respect to $\P$ we bound the probability that the expectation in \eqv(Mainz.111) is larer than $\e^2$ by
\be\Eq(Main.112)
\e^{-2}\E E\left(S_{n,2}(t)\right)=\e^{-2} (1+o(1))\frac{ e^{n\b^2/2}}{c_n\sqrt{2\pi}}\int^{\infty}_{\frac{\log n}{\sqrt{n}\b} -\frac{\log c_n}{\b \sqrt{n}}+\sqrt{n}\b}e^{-y^2/2}\mbox{d}y,
\ee
see the computations preceding \eqv(NY.113). As $\b<\b_c$ it follows from \eqv(Main.112) and  Gaussian tail-bounds that $S_{n,2}(t){c_n e^{n\b^2/2}}{a_n}$ converges to zero $\P$ a.s. as $n\uparrow \infty$. 

Turning to $S_{n,1}(t)$ we observe that
\be \Eq(Mainz.100)
\E\left(\sum_{i=1}^{[a_nt]} \g_n(J_n(i))e_{n,i}\1_{\{\g_n(J_n(i))e_{n,i}\leq 1\}}\right)=\EE(M_n(t))
\ee
and $\log\frac{a_ne^{\beta^2}}{c_n}=n(\beta-\beta_c)^2/2(1+o(1))$. The analogous statement to Proposition \thv(4.prop1') holds for $\alpha>1$ with $\epsilon$ replaced by $\frac{\epsilon}{a_ne^{n\b^2}/c_n}$. 
Noting that the bounds used in the first  moment computation in the proof of Lemma \thv(4.1'') (iii) still hold, we see that $S_n^1(t)$ concentrates $P$-almost surely around its expectation with respect to $P$. The almost sure concentration of $m_n$ with respect to the random environment $\P$ follows from a second moment computation as in the proof of Lemma \thv(4.1'') (i) as long as $2\b>\b_c$ (as \eqv(Mainz.114) is used which requires this condition).
For $2\b<\b_c$ a similar computation works as one can use the truncation $\1_{\g_n(J_n(i))e_{n,i}\leq e^{(\b+\delta)\beta n}}$ for some $\delta>0$ instead of  $\1_{\{\g_n(J_n(i))e_{n,i}\leq 1\}}$ in \eqv(Mainz.110). We omit details as the computations are a rerun of the first and second moment computation done for the other truncation.
The claim of Proposition \thv(LCor.3) (ii) now follows from the above estimates as $S_n(t)$ is increasing in $t$ and the limit $t$ is obviously continuous in $t$.
 \end{proof}

\section{Proofs of Theorem \thv(1.theo1) and Theorem \thv(TCor.2) on correlation functions.}
\label{S8}

In this section we give the proofs of the results of Section \thv(S1.3) that are obtained on intermediate scales. Those obtained on extreme scales are given in Subsection \thv(S9.4).

\begin{proof}[Proof of Theorem \thv(1.theo1)]
This is a direct consequence of  \eqv(1.prop1.2) of \thv(1'.theo1) and Dynkin-Lamperti Theorem in continuous time (see e.g. Theorem 1.8 in  \cite{G12}) since under the assumptions of Theorem \thv(1.theo1), $S^{int}$ is a stable subordinator of index $0<\a(\varepsilon)<1$.
\end{proof}

 \begin{proof}[Proof of Theorem \thv(hightemp)]
This is a direct consequence of  the control of $S_{n,2}$ in the proof of Proposition \thv(LCor.3). In particular of \eqv(Mainz.111) and \eqv(Main.112) which show that the contribution of the jumps larger than $c_n$ to the clock process $S_n$ converges to zero. 
\end{proof}

Let us outline the proof of Theorem \thv(TCor.2). The main idea is to compute for each $k$ the probability that the size of the $(k+1)$-th jump is large enough to straddle over the desired interval. If $k$ is too large, the sum of the small jumps up to the $k$-th one is already larger than $c_nt$, hence we can exclude this possibility. For the other jumps, we use that Proposition \eqv(LCor.3) provides a precise control on the clock process up to the $(k+1)$-th jump (in the supremum norm) together with the fact, which essentially follows from Condition (A1), that we know how likely such a big jump is.
 
\begin{proof}[Proof of Theorem \thv(TCor.2)]  We define an auxiliary time-scale $\widetilde{a}_{n}$ by
\be\Eq(Cor.2.1')
\widetilde{a}_{n}e^{n\b^2/2}\Phi(\theta)= c_n,
\ee
A crucial quantity is the ratio $\widetilde{a}_n/a_n$ which is by Lemma \thv(Th.Scale) given by
\be\Eq(Cor.2.1'')
\frac{\widetilde{a}_n}{a_n}= \frac{e^{-\theta^2/2}}{\Phi(\theta)\b\sqrt{2\pi n}} (1+o(1)).
\ee
Set
\be\Eq(NY.108)
A_n(t) \equiv \PP\left(\sup_{k\in\{1,\dots,\lfloor\widetilde{a}_nt\rfloor\}}\left \vert\widetilde{S}_n(k)-\frac{c_nk}{\widetilde{a}_n}\right \vert >c_n\e\right).
\ee
Fix a realization of the random environment such that for all $t,T>0$, for all $x>s$ uniformly in $x$ and for all $\e>0$
\be\Eq(C2) 
\lim_{n\to\infty}A_n(t) =  0,
\ee
\be\Eq(C3)
\lim_{n\to\infty}  \sup_{k\geq \theta_n}\left \vert \lfloor a_nt\rfloor\PP\left(\t_n(J_n(k+1))e_{n,k+1}> c_n x\right)-\frac{t}{x}\right \vert =0,
\ee
and to take care of the first $\theta_n$ jumps
\be\Eq(C5)
\lim_{n\to\infty}\frac{a_n}{\widetilde{a}_n}\sum_{l=1}^{\theta_n}\PP\left(\t_n(J_n(k+1)e_{n,k+1}>c_n x\right)=0
\ee
and
\be\Eq(C1)
\lim_{n\to\infty} \sqrt{n}\PP\left(\left\vert\sum_{k=1}^{\lfloor\widetilde{a}_nt\rfloor}\PP(\t_n(J_n(k+1))e_{n,k+1}>c_ns|J_n(k))-\frac{\widetilde{a}_nt}{a_ns}\right\vert>\frac{\widetilde{a}_n\e}{a_n\sqrt{A_n(t)}}\right) =0,
\ee
\be\Eq(C4)
\lim_{n\to\infty}\sqrt{n}\PP\left(  M_n(\lfloor\widetilde{a}_nt(1+\e')/a_n\rfloor)  < c_n\right) =0.
\ee
The terms \eqv(C5), \eqv(C1) and \eqv(C4), which resemble terms studied earlier but depend 
on the auxiliary time-scale, are studied in Appendix \ref{app.3}. Rewriting the correlation function gives
\be\Eq(Cor.2.1)
\CC_n (t,s) =  \PP\left(\bigcup_{k>0} \widetilde{S}_n(k) < c_nt ,  \widetilde{S}_n(k+1)>c_n(t+s)\right).
\ee 
Let  $Q_k$ be the event that the $k$-th jump has the desired height, namely 
\be\Eq(NY.105)
Q_k= \{\tau_n(J_n(k+1))e_{n,k+1}> c_n(t+s) -\widetilde{S}_n(k)\}.
\ee
Then we can rewrite \eqv(Cor.2.1) as
\be\Eq(NY.106)
  \PP\left(  \bigcup_{k\leq \widetilde{a}_{n}t(1+\e') }\left\{\widetilde{S}_n(k)<c_nt\right\}\cap Q_k \right)  + \PP\left(  \bigcup_{k> \widetilde{a}_{n}t(1+\e') }\left\{\widetilde{S}_n(k)<c_nt\right\}\cap Q_k  \right).
\ee
 Using that  $\widetilde{S}_n(k)$ is an increasing  process we have for all $\e>0$ 
\be\Eq(Cor.2.2)
\PP \left( \bigcup_{k>\widetilde{a}_{n}t(1+\e')}\left\{\widetilde{S}_n(k)\leq c_nt\right\}\right)
\leq \PP\left(\frac{\widetilde{S}_n(\lfloor\widetilde{a}_nt(1+\e')\rfloor) }{c_n }< t\right) . 
\ee
By only counting summands smaller than $\delta$ 
we bound \eqv(Cor.2.2) from above by
\be\Eq(Cor.2.2.1)
\PP\left(  M_n(\lfloor\widetilde{a}_nt(1+\e')/a_n\rfloor)\leq t\right),
\ee
which is of order $o(1/\sqrt{n})$ by \eqv(C4).
Using this together with a union bound we can rewrite \eqv(Cor.2.1) as 
\be\Eq(NY.107)
\sum_{k\in T} \PP\left(\left\{\widetilde{S}_n(k)<c_nt\right\}\cap Q_k\right)+o(1/\sqrt{n}).
\ee
We rewrite the first summand in \eqv(NY.107) as
\bea
\Eq(Cor.2.3)
&& \sum_{k\in T}\left\{ \EE\left(\1_{\left\{\widetilde{S}_n(k)<c_nt\right\}\cap Q_k}
 \1_{\left\{\left|\widetilde{S}_n(k)-\frac{c_nk}{\widetilde{a}_n}\right|>c_n\e\right\}}  \right)\right.\cr
&&\quad \quad+
\left.\EE\left(\1_{\left\{\widetilde{S}_n(k)<c_nt\right\}\cap Q_k}
\1_{\left\{\left|\widetilde{S}_n(k)-\frac{c_nk}{\widetilde{a}_n}\right|\leq c_n\e\right\}}   \right)\right\} 
\eea
where $T=\{0,\dots,\lfloor\widetilde{a}_n(1+\e)\rfloor\}$.
We want to show the first summand in \eqv(Cor.2.3) are of order $o(1/\sqrt{n})$.  By using the worst bound on $\widetilde S_n(k)$ to bound $Q_k$ and then dropping the condition $\widetilde S_n(k)\leq c_nt$ we bound the first part of \eqv(Cor.2.3) by
\bea\Eq(Cor.2.4)
 &&\sum_{k\in T}\EE\left(\1_{\left\{\left|\widetilde{S}_n(k)-\frac{c_nk}{\widetilde{a}_n}\right|>c_n\e\right\}} 
 \1_{\{\tau_n(J_n(k+1))e_{n,k+1}> c_n(t + s) -\widetilde{S}_n(k)\}}\right) \\
&\leq &
\EE\left(\1_{\left\{\sup_{k\in [0,\widetilde{a}_n(1+\e)]\cap \N}   \left \vert\widetilde{S}_n(k)-\frac{c_nk}{\widetilde{a}_n}\right \vert >c_n\e\right\}}\sum_{k\in T}\PP\left(\t_n(J_n(k+1))e_{n,k+1}>c_ns|J_n(k)\right)\right)\nonumber ,
\eea
where we conditioned on the process up to time $k$ and used that $J_n$ is a Markov chain.
Using \eqv(C1) to bound the conditional probability given $J_n(k)$, \eqv(Cor.2.4) is bounded above by
\bea\Eq(Cor.2.5')
&&\frac{\widetilde{a}_n}{a_n}\left(\frac{t}{s}+\frac{1}{\sqrt{A_n(t)}}\e\right)\PP\left(\sup_{k\in [0,\dots,\widetilde{a}_nt(1+\e)]} \left \vert\widetilde{S}_n(k)-\frac{c_n k}{\widetilde{a}_n}\right \vert >c_n\e\right)+o(1/\sqrt{n})\nonumber\\
&=&\frac{\widetilde{a}_n}{a_n}\frac{t}{s}A_n(t(1+\e))+ \frac{\widetilde{a}_n}{a_n}\sqrt{A_n(t(1+\e))}\e+o(1/\sqrt{n}),
\eea
where $A_n(t(1+\epsilon))$ is exactly defined to be the probability appearing in the first line of \eqv(Cor.2.5').
By \eqv(C2) we have that \eqv(Cor.2.5') is of order $o(\widetilde{a}_n/a_n)=o(1/\sqrt{n})$. We can bound the second summand in \eqv(Cor.2.3) from above by
\bea\Eq(Cor.2.5)
&&\sum_{k\in [0,\dots, \lfloor\widetilde{a}_nt(1-\e)\rfloor]}\PP\left(\t_n(J_n(k+1)e_{n,k+1}> c_n(t+s)-c_n\left(\frac{k}{\widetilde{a}_n}+\e\right)\right)\nonumber\\
&&\quad+2\e \widetilde{a}_n \PP\left(\t_n(J_n(k+1))e_{n,k+1}> c_ns\right).
\eea
Define the functions 
\be\Eq(NY.110)
F_{\pm}(k)= \frac{1}{t+s-(k\pm \epsilon) }.
\ee
Using \eqv(C5) for the first $\theta_n$ summands and \eqv(C3) for $k\geq \theta_n$ we can rewrite \eqv(Cor.2.5) as
\be\Eq(Cor.2.6)
\sum_{k\in [\theta_n,\dots, \lfloor\widetilde{a}_nt(1-\e)\rfloor]}\frac{1}{a_n}  F_+\left(\frac{k}{\widetilde a_n}\right)+2\e \frac{\widetilde{a}_n}{a_n} +\frac{\widetilde{a}_n}{a_n}o(1).
\ee
In the same way we obtain the following lower bound on \eqv(Cor.2.3),
\be\Eq(Cor.2.7)
\sum_{k\in [\theta_n,\dots, \lfloor\widetilde{a}_nt(1+\e)\rfloor]}\frac{1}{a_n} F_-\left(\frac{k}{\widetilde a_n}\right)-2\e \frac{\widetilde{a}_n}{a_n} -\frac{\widetilde{a}_n}{a_n}o(1).
\ee
By a Riemann sum argument we have
\be
\sum_{k=0}^{\lfloor\widetilde{a}_nt(1-\e)\rfloor\}}\frac{1}{\widetilde{a}_n} F_+\left(\frac{k}{\widetilde a_n}\right)-\frac{1}{\widetilde{a}_n } \frac{1}{t+s}
\geq \int_0^{t(1-\e)}F_+(u)\mbox{d}u
= \log\left(\frac{s+t(1-\e)}{s}\right),
\ee
respectively,
\be
\sum_{k=0}^{\lfloor\widetilde{a}_nt(1+\e)\rfloor}\frac{1}{\widetilde{a}_n}F_-\left(\frac{k}{\widetilde a_n}\right)- \frac{1}{\widetilde{a}_n}\frac{ 1}{s }
\leq \int_0^{t(1+\e)}F_-(u)\mbox{d}u
=\log\left(\frac{s+t(1+\e)}{s}\right).
\ee
Noting that 
\be
\sum_{k=0}^{\theta_n}\frac{1}{\widetilde{a}_n}F_{\pm}\left(\frac{k}{\widetilde a_n}\right)\leq\frac{\theta_n}{\widetilde{a}_n} F_{\pm}\left(\frac{\theta}{\widetilde a_n}\right)\leq \frac{\theta_n}{\widetilde{a}_n} \frac{1}{s},
\ee 
and $\frac{\theta_n}{\widetilde{a}_n}\ll \frac{\widetilde{a}_n}{a_n}$, we can bound \eqv(Cor.2.6) from above by
\be
\frac{\widetilde{a}_n}{a_n}\log\left(\frac{s+t(1-\e)}{s}\right)-\frac{1}{\widetilde{a}_n} \frac{1}{1+s}+2\e \frac{\widetilde{a}_n}{a_n} +\frac{\widetilde{a}_n}{a_n}o(1),
\ee
and \eqv(Cor.2.7) from below by
\be
\frac{\widetilde{a}_n}{a_n}\log\left(\frac{t+s+\e}{s}\right)- \frac{1}{\widetilde{a}_n}\frac{ 1}{s }-2\e \frac{\widetilde{a}_n}{a_n} -\frac{\widetilde{a}_n}{a_n}o(1).
\ee
Putting those estimates together we obtain
\be
\CC_n(t,s)= \frac{\widetilde{a}_n}{a_n}\log\left(1+\frac{t}{s}\right)+\frac{\widetilde{a}_n}{a_n}o(1).
\ee
By \eqv(Cor.2.1') we have
\be\Eq(Cor.2.8)
\CC_n(t,s)= \frac{e^{-\theta^2/2}}{\Phi(\theta)}\log\left(1+\frac{1}{s}\right)\frac{1}{\b\sqrt{2\pi n}}(1+o(1)). 
\ee
So far we worked on a fixed realization of the random environment. 
Observe that the probability in \eqv(C3) is equal to $\nu_n(x,\infty)$ 
and hence, 
by Proposition \thv(3.prop0), for all $k\geq \theta_n$ and large enough $n$ we have, for $\nu_n(u,\infty)$ defined in \eqv(4.9'),
\be 
 \lfloor a_nt\rfloor\PP\left(\t_n(J_n(k+1))e_{n,k+1}> c_n x\right)=(1+\delta_n) \nu_n(x,\infty),
 \Eq(temp)
\ee
which is a monotone function in $x$ and its limit $1/x$ is continuous for $x>0$, which implies that \eqv(C3) holds $\P$-a.s. if $\sum a_n/2^n<\infty$ and in $\P$-probability if $\sum a_n/2^n<\infty$.
By Proposition \thv(LCor.3) and Lemma \thv(LCor.4), \eqv(C1) and \eqv(C2) hold either $\P$-a.s.~or in $\P$-probability.  Eq.~\eqv(C5) holds $\P$-a.s.~by Lemma \thv(Lem.cor5) and \eqv(C4) holds either $\P$-a.s.~or in $\P$-probability by Lemma \thv(Lem.cen5). Arguing as in the proof of Theorem 1.3 in \cite{G12}, we have that if \eqv(C1),\eqv(C2), \eqv(C3) and \eqv(C5) and \eqv(C4) hold $\P$-a.s., respectively in $\P$-probability, \eqv(Cor.2.8) holds with respect to the same convergence mode with respect to the random environment. This concludes the proof of Theorem \thv(TCor.2).
\end{proof}

\section{Extreme scales.}
\label{S9}

This section closely follows Section 6 of  \cite{G12} where an approach known as ``the method of common probability space''  was first implemented to bypass the fact that on extreme time-scales, when $a_n\sim 2^n$, the convergence properties of sums such as \eqv(5.2) or \eqv(5.3) can no longer follow from classical laws of large numbers; instead, one aims at replacing the sequence of re-scaled landscapes $(\g_n(x), x\in\VV_n)$, $n\geq 1$,
by a new sequence with identical distribution and  almost sure convergence properties.
In Subsection \thv(S9.1), we give an explicit representation of such a  re-scaled
landscape which is valid for all extreme scales (Lemma \thv(6.lemma1))
and show that, in this representation, all random variables of interest
have an almost sure limit (Proposition \thv(6.prop4)). In Subsection \thv(S9.2)
we consider the model obtained by substituting the representation for
the original landscape.
For this model we state and prove the analogue of the ergodic theorem
of Section \thv(S5) (Proposition \thv(6.prop1)) and the associated chain independent
estimates of Section \thv(S6) (Proposition \thv(6.prop2)). Thus equipped we are
ready, in Subsection \thv(S9.3), to prove the results of Section \thv(S1) obtained on extreme scales.

\subsection{ A representation of the landscape.\hfill}
\label{S9.1}

The representation we now introduce
is due to Lepage {\it et al.} \cite{LWZ} and relies on an elementary property of
order statistics. We use the following notations.
Set $N=2^n$.
Let
$\bar\t_{n}(\bar x^{(1)})\geq\dots\geq\bar\t_{n}(\bar x^{(N)})$
and
$\bar\g_{n}(\bar x^{(1)})\geq\dots\geq\bar\g_{n}(\bar x^{(N)})$
denote, respectively, the landscape and re-scaled landscape variables
$\g_n(x)=c_n^{-1}\t_n(x)$, $x\in\VV_n$, arranged in decreasing order of magnitude.
As in Section 2, set $G_n(v)=\P(\tau_n(x)>v)$, $v\geq 0$, and denote by
$
G_n^{-1}(u):=\inf\{v\geq 0 : G_n(v)\leq u\}
$, $u\geq 0$, its inverse.
Also recall that $\a=\b_c/\b$ and assume that $\b>\b_c$.

Let $(E_i, i\geq 1)$ be a sequence of i.i.d.~mean one exponential random variables
defined on a common probability space $(\O, \FF, \boldsymbol{P})$.
For $k\geq 1$ set
\bea\Eq(6.4)
\G_k&=&\sum_{i=1}^k E_i,\nonumber\\
\g_k&=&\G_k^{-1/\a},
\eea
and, for $1\leq k\leq N$, $n\geq 1$, define
\be
\g_{n}(x^{(k)})=c_n^{-1}G_n^{-1}(\G_k/\G_{N+1}),
\Eq(6.5)
\ee
where $\{x^{(1)},\dots,x^{(N)}\}$ is a randomly chosen labelling of the $N$ elements of $\VV_n$,
all labellings being equally likely.

\begin{lemma}{\TH(6.lemma1)} 
 For each $n\geq 1$,
$
(\bar\g_{n}(\bar x^{(1)}),\dots,\bar\g_{n}(\bar x^{(N)}))\overset{d}=(\g_{n}(x^{(1)}),\dots,\g_{n}(x^{(N)})).
$
\end{lemma}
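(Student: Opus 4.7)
The plan is to invoke the classical Rényi--Lepage representation of uniform order statistics through partial sums of i.i.d.\ mean-one exponentials. Since $G_n$ is continuous (it is a Gaussian survival function), the probability integral transform yields that the variables $U(x):=G_n(\tau_n(x))$, $x\in\VV_n$, are i.i.d.\ uniform on $[0,1]$. If $U_{(1)}\le \dots \le U_{(N)}$ denote their order statistics in increasing order, then the standard representation (see e.g.~\cite{LWZ}) asserts
\[
(U_{(1)},\dots,U_{(N)}) \overset{d}= \bigl(\Gamma_1/\Gamma_{N+1},\dots,\Gamma_N/\Gamma_{N+1}\bigr),
\]
where the $\Gamma_k$ are defined in \eqv(6.4).

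Next I would exploit that $G_n$ is strictly decreasing and continuous, hence so is $G_n^{-1}$, with $G_n^{-1}(U(x))=\tau_n(x)$ almost surely. Arranging the $U(x)$ in increasing order therefore corresponds to arranging the $\tau_n(x)$ in decreasing order, so
\[
(\bar\tau_n(\bar x^{(1)}),\dots,\bar\tau_n(\bar x^{(N)})) \overset{d}= \bigl(G_n^{-1}(U_{(1)}),\dots,G_n^{-1}(U_{(N)})\bigr).
\]
Combining the two displays and dividing through by $c_n$ delivers the claimed identity in distribution in view of the definition \eqv(6.5) of $\g_n(x^{(k)})$. The uniform random labelling $\{x^{(1)},\dots,x^{(N)}\}$ that appears on the right is consistent because, by exchangeability of the i.i.d.\ landscape, the vertex carrying the $k$-th largest value is uniformly distributed over all orderings of $\VV_n$; since the claimed equality only concerns the vector of values, this relabelling is harmless.

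I do not anticipate any substantive obstacle: the argument is a direct composition of the probability integral transform with the Rényi--Lepage representation of uniform order statistics. The only delicate point is careful bookkeeping with the direction of monotonicity, since $G_n$ is a survival function and $G_n^{-1}$ therefore reverses increasing order statistics of the uniforms into the desired decreasing order of the $\tau_n$'s.
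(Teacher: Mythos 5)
Your proposal is correct and follows essentially the same route as the paper: both invoke the probability integral transform (via $G_n^{-1}$) together with the Rényi--Lepage representation of uniform order statistics through normalized partial sums $\Gamma_k/\Gamma_{N+1}$ of exponentials, and both use the exchangeability of the i.i.d.\ landscape to justify the uniform random labelling. The only cosmetic difference is that you phrase the first step as $G_n(\tau_n(x))$ being uniform, while the paper writes $\tau_n(0)\overset{d}=G_n^{-1}(U)$; the content is identical.
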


\begin{proof} Note that $G_n$ is non-increasing and right-continuous so that $G_n^{-1}$
is non-increasing and right-continuous. It is well known that if the random variable $U$
is a uniformly distributed on $[0,1]$ we may write $\t_n(0)\overset{d}=G_n^{-1}(U)$
(see e.g. \cite{Re}, page 4).
In turn it is well known (see \cite{F}, Section III.3) that if $(U(k), 1\leq k\leq N)$
are independent random variables uniformly distributed on $[0,1]$ then, denoting by
$\bar U_{n}(1)\leq\dots\leq \bar U_{n}(N)$ their ordered statistics,
$
(\bar U_{n}(1),\dots,\bar U_{n}(N))\overset{d}=(\G_1/\G_{N+1},\dots,\G_{N}/\G_{N+1})
$.
Combining these two facts readily yields the claim of the lemma since, by independence of
the landscape variables $\t_n(x)$, all arrangements of the $N$ variables $\G_k/\G_{N+1}$
on the $N$ vertices of $\VV_n$ are equally likely.
\end{proof}

Next, let $\Upsilon$  be the point process in $M_P(\R_+)$ which has counting function
\be
\Upsilon([a,b])=\sum_{i=1}^{\infty}\1_{\{\g_k\in [a,b]\}}.
\Eq(6.6)
\ee
\begin{lemma}{\TH(6.lemma2)} 
$\Upsilon$ is a Poisson random measure on $(0,\infty)$
with mean measure $\mu$ given by \eqv(1.theo2.0).
\end{lemma}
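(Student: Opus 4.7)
The plan is to obtain $\Upsilon$ as the image of a standard Poisson process under a deterministic transformation and then invoke the mapping theorem for Poisson random measures.

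First I would recall the well-known fact that if $(E_i)_{i\geq 1}$ is an i.i.d.\ sequence of mean-one exponentials and $\Gamma_k=\sum_{i=1}^k E_i$, then the counting measure $\Psi=\sum_{k\geq 1}\delta_{\Gamma_k}$ is a Poisson random measure on $(0,\infty)$ with mean measure equal to Lebesgue measure (the $\Gamma_k$ are the arrival times of a rate-one homogeneous Poisson process on $(0,\infty)$). This is a textbook result (see e.g.\ \cite{Re}).

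Next, since $\gamma_k=\Gamma_k^{-1/\alpha}$, the point process $\Upsilon$ is the image of $\Psi$ under the map $T\colon (0,\infty)\to(0,\infty)$, $T(t)=t^{-1/\alpha}$, which is a homeomorphism. By the mapping theorem for Poisson random measures, $\Upsilon=\Psi\circ T^{-1}$ is itself a Poisson random measure on $(0,\infty)$, whose mean measure $\mu$ is the pushforward of Lebesgue measure under $T$. A direct computation of the tail gives, for $x>0$,
\be
\mu(x,\infty)=\leb\bigl(T^{-1}(x,\infty)\bigr)=\leb\bigl\{t>0:t^{-1/\alpha}>x\bigr\}=\leb(0,x^{-\alpha})=x^{-\alpha},
\ee
which matches \eqv(1.theo2.0).

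The argument is completely mechanical; the only mild subtlety is to ensure that only finitely many points $\gamma_k$ fall in any set bounded away from $0$, which is immediate since $\Gamma_k\to\infty$ almost surely and $T$ maps compact subsets of $(0,\infty)$ to compact subsets of $(0,\infty)$. There is no real obstacle: once one invokes the mapping theorem the computation of $\mu(x,\infty)$ is a one-line tail calculation.
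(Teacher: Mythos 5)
Your argument is correct and coincides with the paper's own proof: both identify $\sum_k \delta_{\Gamma_k}$ as a rate-one homogeneous Poisson random measure and then apply the mapping theorem (Resnick, Proposition 3.7) to the map $T(t)=t^{-1/\alpha}$ to deduce that $\Upsilon$ is Poisson with mean measure $\mu(x,\infty)=x^{-\alpha}$. The only difference is that you spell out the one-line pushforward computation and the local finiteness check, which the paper leaves implicit.
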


\begin{proof}
The point process 
$
\G=\sum_{i=1}^{\infty}\1_{\{\G_k\}}
$
defines a homogeneous Poisson random measure on $[0,\infty)$ and thus,
by the mapping theorem (\cite{Re}, Proposition 3.7), setting $T(x)=x^{-1/\a}$ for $x>0$,
$\Upsilon=\sum_{i=1}^{\infty}\1_{\{T(\G_k)\}}$ is Poisson random measure on $(0,\infty)$
with mean measure $\mu(x,\infty)=T^{-1}(x)$.
\end{proof}

We thus established that both the ordered landscape variables and the point process $\Upsilon$
can be expressed in terms of the common sequence $(E_i, i\geq 1)$ and thus,
on the common probability space $(\O, \FF, \boldsymbol{P})$. 
As shown by the next proposition, on that space, the random variables of interest will have an almost sure limit.

\begin{proposition}{\TH(6.prop4)} 
Assume that $\a<1$.
Let $c_n$ be an extreme time-scale. Let $f:(0,\infty)\rightarrow[0,\infty)$ be
a continuous function that obeys
\be
\int_{(0,\infty)}\min(f(u), 1)d\mu(u)<\infty.
\Eq(6.prop4.1)
\ee
Then, $\boldsymbol{P}$-almost surely,
\be
\lim_{n\rightarrow\infty}\sum_{k=1}^{N}f(\g_{n}(x^{(k)}))=\sum_{k=1}^{\infty}f(\g_{k})<\infty.
\Eq(6.prop4.2)
\ee
\end{proposition}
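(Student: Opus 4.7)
The plan is to prove the statement in three stages: pointwise convergence of each summand via the landscape asymptotics from Section~3, summability of the limiting series via Poisson intensity calculations, and uniform tail control via the explicit upper bound in Lemma \thv(2.lemma4)(ii).

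First, for fixed $k \geq 1$, write $\gamma_n(x^{(k)}) = g_n\bigl(a_n\Gamma_k/\Gamma_{N+1}\bigr)$ with $g_n$ as in \eqv(2.12). By the strong law of large numbers $\Gamma_{N+1}/(N+1) \to 1$ $\boldsymbol{P}$-a.s., and by \eqv(1.3) $a_n/2^n \to \bar\varepsilon$, so $a_n\Gamma_k/\Gamma_{N+1} \to \bar\varepsilon\Gamma_k$ $\boldsymbol{P}$-a.s. Lemma \thv(2.lemma4)(i) then yields $\gamma_n(x^{(k)}) \to (\bar\varepsilon\Gamma_k)^{-1/\alpha}$, which under the normalization adopted in \eqv(6.4) is identified with $\gamma_k$. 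Continuity of $f$ gives $f(\gamma_n(x^{(k)})) \to f(\gamma_k)$ $\boldsymbol{P}$-a.s., and hence convergence of any finite partial sum.

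Second, by Lemma \thv(6.lemma2) the random measure $\sum_k \delta_{\gamma_k}$ is a PRM with mean $\mu$, so Campbell's formula yields
\[
\boldsymbol{E}\Bigl[\sum_k\bigl(f(\gamma_k)\wedge 1\bigr)\Bigr] = \int\bigl(f(u)\wedge 1\bigr)\, d\mu(u) < \infty.
\]
Thus $\sum_k(f(\gamma_k)\wedge 1) < \infty$ $\boldsymbol{P}$-a.s. Since $\mu(\eta,\infty) < \infty$ for every $\eta > 0$, only finitely many $\gamma_k$ can exceed any fixed threshold, so the finitely many indices with $f(\gamma_k) \geq 1$ contribute a finite total to $\sum_k f(\gamma_k)$; hence $\sum_k f(\gamma_k) < \infty$ $\boldsymbol{P}$-a.s.

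Third, and most delicately, I need uniform control of the tail $\sum_{k > K} f(\gamma_n(x^{(k)}))$ for large $K$. Assertion (ii) of Lemma \thv(2.lemma4) provides $g_n(u) \leq C u^{-1/\alpha}$ on $1 \leq u \leq a_n(1-\Phi(1/(\beta\sqrt n)))$. Combined with the $\boldsymbol{P}$-a.s.\ two-sided control of $\Gamma_k$ and $\Gamma_{N+1}$, this upgrades to a deterministic bound $\gamma_n(x^{(k)}) \leq C'\gamma_k$ on an initial block of indices whose upper cut-off grows with $n$. Passing to the monotone envelope $\tilde f(v) = \sup_{w \leq C' v} f(w)$ (whose integrability against $\mu$ follows from the hypothesis, up to an explicit $\alpha$-dependent constant), one obtains a summable majorant $\tilde f(\gamma_k)$, and dominated convergence for series concludes. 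The principal obstacle lies in this last step: Lemma \thv(2.lemma4)(ii) breaks down for indices closest to $N$, which correspond to the smallest landscape values and must be handled by a direct estimate showing that their total $f$-mass is negligible; moreover, the non-monotonicity of $f$ blocks a naive comparison and necessitates the detour through a monotone envelope whose integrability must be verified with care.
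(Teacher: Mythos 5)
Your first two stages are sound: the pointwise convergence follows from the identity $\gamma_n(x^{(k)}) = g_n(a_n\Gamma_k/\Gamma_{N+1})$, the strong law for $\Gamma_{N+1}$, and Lemma \thv(2.lemma4)(i); and the finiteness of $\sum_k f(\gamma_k)$ follows from Campbell's formula applied to $\min(f,1)$ together with the a.s.\ finiteness of $\#\{k: \gamma_k > \eta\}$ for every $\eta>0$. (The $\bar\varepsilon$-normalization that appears in the pointwise limit is a wrinkle the paper itself leaves implicit.) The gap is in your third stage, and it is genuine: the monotone envelope $\tilde f(v)=\sup_{0<w\leq C'v}f(w)$ is in general \emph{not} $\mu$-integrable, nor is $\min(\tilde f,1)$, under the hypothesis $\int\min(f(u),1)\,d\mu(u)<\infty$. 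Since $\mu((0,\eta))=\infty$ for every $\eta>0$, that hypothesis only forces $f$ to be small near $0$ in an averaged sense; it does not exclude a continuous $f$ with height-one spikes of rapidly shrinking width accumulating at $0$. Take for instance $f$ equal to $1$ on intervals of half-width $4^{-j}$ centered at $2^{-j}$, $j\geq 1$, and interpolated continuously to zero elsewhere. Then $\int_0^1 f\,d\mu$ is of order $\sum_j 2^{j(\alpha+1)}4^{-j}<\infty$ for $\alpha<1$, while $\tilde f\equiv 1$ on a right neighborhood of $0$, so that $\int_0^1\min(\tilde f,1)\,d\mu=\infty$ and $\sum_k\tilde f(\gamma_k)=\infty$ almost surely (infinitely many $\gamma_k$ fall below any threshold). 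Your parenthetical assertion that the envelope's integrability ``follows from the hypothesis, up to an explicit $\alpha$-dependent constant'' is therefore false, and the dominated-convergence step it was meant to license does not go through.

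The paper itself gives no argument but refers to Proposition 7.3 of \cite{G12}, in turn modelled on Proposition 3.1 of \cite{FIN}. The mechanism there is not a pointwise deterministic majorant but a truncation: one separates the eventually finitely many indices with $\gamma_n(x^{(k)})>\eta$, for which pointwise convergence and continuity of $f$ suffice, from the residual sum over small values, and the latter is controlled uniformly in $n$ by probabilistic estimates (moment bounds and a Borel--Cantelli argument in the new representation) rather than by a deterministic envelope. That uniform-in-$n$ control of the small-value tail is the heart of the proposition, and your proposal does not supply it; the ``direct estimate'' you defer to for the indices closest to $N$, and the ``verification with care'' for the envelope, are exactly the pieces that cannot be filled in along the route you sketch.
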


\begin{proof}[Proof of Proposition \thv(6.prop4)]
The proof of Proposition \thv(6.prop4) closely follows that of Proposition 7.3 of \cite{G12}, which itself is
strongly inspired from the proof of Proposition 3.1 of \cite{FIN}. We omit the details.
\end{proof}

\subsection{Preparations to the verification of Conditions (A1), (A2) and (A3).\hfill}
\label{S9.2}

Consider the model obtained by substituting the representation
$(\g_{n}(x^{(i)}), 1\leq i\leq N)$ for the original re-scaled landscape $(\g_n(x), x\in\VV_n)$.
The aim of this subsection is to prove
the homologue, for this model, of the ergodic theorem (Proposition \thv(4.prop1))
and chain independent estimates (Proposition \thv(5.prop6)) of Section \thv(S5) and Section \thv(S6).

In order to distinguish the quantities $\nu_n^{J,t}(u,\infty)$, $\s_n^{J,t}(u,\infty)$,
$\nu_n(u,\infty)$ and $\s_n(u,\infty)$, expressed in \eqv(4.7)--\eqv(4.4') in the original
landscape variables, from their expressions in the new ones , we call the latter
$\boldsymbol{\nu}_n^{J,t}(u,\infty)$, 
$(\boldsymbol{\sigma}_n^{J,t})(u,\infty)$,
$\boldsymbol{\nu}_n(u,\infty)$ and $\boldsymbol{\sigma}_n(u,\infty)$ respectively. Their definition is otherwise unchanged.

\begin{proposition}{\TH(6.prop1)} 
There exists a subset $\O_{0}\subset\O$ such that
$\boldsymbol{P}(\O_{0})=1$
and such that, on $\O_{0}$, for all large enough $n$,
the following holds: for all $t>0$ and all $u>0$,
\be
P_{\pi_n}\left(\left|\boldsymbol{\nu}_n^{J,t}(u,\infty)-E_{\pi_n}\left[\boldsymbol{\nu}_n^{J,t}(u,\infty)\right]\right|\geq\e\right)
\leq
\e^{-2}\Theta_n(t,u),\quad\forall\e>0,
\Eq(6.prop1.1)
\ee
where
\bea\Eq(6.prop1.2)
\Theta_n(t,u)&=&
\left(\frac{k_n(t)}{a_n}\right)^2\frac{\boldsymbol{\nu}_n^2(u,\infty)}{2^n}+
\frac{k_n(t)}{a_n} \left(\boldsymbol{\sigma}_n(u,\infty) +c_1\frac{\boldsymbol{\nu}_n(2u,\infty)}{ n^{2}}\right)
\nonumber\\
&+&\frac{k_n(t)}{a_n}\left(3\theta_ne^{-u/\delta_n}\boldsymbol{\nu}_n(u,\infty)+\frac{2^n}{a_n}\boldsymbol{\nu}_n^2(u,\infty)e^{-c_2u}\right),
\eea
for some constants $0<c_1,c_2<\infty$, where $\delta_n\leq n^{-\a(1+o(1))}$, and where $\theta_n$ is defined as in \thv(3.prop1.1).
In addition, for all $t>0$ and all $u>0$,
\be
P_{\pi_n}\left((\boldsymbol{\sigma}_n^{J,t})(u,\infty)\geq\e'\right)\leq\frac{k_n(t)}{\e'\, a_n}\boldsymbol{\sigma}_n(u,\infty),\quad\forall\e'>0.
\Eq(6.prop1.3)
\ee
\end{proposition}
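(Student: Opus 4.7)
The proof strategy parallels that of Proposition \thv(4.prop1): apply a second-order Chebyshev inequality to $\boldsymbol{\nu}_n^{J,t}(u,\infty)-E_{\pi_n}\boldsymbol{\nu}_n^{J,t}(u,\infty)$, expand the resulting variance as a double sum over $(x,y)\in\VV_n\times\VV_n$ weighted by $h_n^u(x)h_n^u(y)$ and the path-covariance $k_n(t)^2 E_{\pi_n}[(\pi_n^{J,t}(x)-\pi_n(x))(\pi_n^{J,t}(y)-\pi_n(y))]$, and decompose this latter quantity into three pieces $(\overline{I}),(\overline{II}),(\overline{III})$ according to whether the summation indices $i,j$ satisfy $|i-j|\geq\theta_n$, coincide, or satisfy $0<|i-j|<\theta_n$. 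The essential new difficulty on extreme scales is that $a_n\sim 2^n$, so the environment-averaged argument of Lemma \thv(4.lemma1) used there to control the off-diagonal part of $(\overline{III})$ is no longer effective; instead, bounds must be formulated directly in terms of the landscape variables, whose almost sure behavior is then supplied by Proposition \thv(6.prop4).

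The three pieces $(\overline{I})$, $(\overline{II})$, and the diagonal $y=z$ part of $(\overline{III})$ can be handled exactly as in Proposition \thv(4.prop1). They produce, respectively, the first three summands of $\Theta_n(t,u)$: Corollary \thv(3.cor1) gives $|\delta_n|\leq 2^{-n}$ and hence the contribution $(k_n(t)/a_n)^2 \boldsymbol{\nu}_n^2(u,\infty)/2^n$; the marginal bound yields $(k_n(t)/a_n)\boldsymbol{\sigma}_n(u,\infty)$; and Proposition \thv(3.prop3) on return probabilities yields $c_1 (k_n(t)/a_n) \boldsymbol{\nu}_n(2u,\infty)/n^2$.

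The main obstacle is the off-diagonal $y\neq z$ part of $(\overline{III})$. My plan is to partition $\VV_n=\VV_n^{<}\cup\VV_n^{\geq}$ according to whether $\g_n(x)<\delta_n$ or $\g_n(x)\geq\delta_n$, with $\delta_n$ chosen so that $|\VV_n^{\geq}|$ remains $\boldsymbol{P}$-almost surely bounded; the Poisson representation of Lemma \thv(6.lemma1) gives that $|\VV_n^{\geq}|$ is of order $\delta_n^{-\a}$, which dictates the choice $\delta_n\leq n^{-\a(1+o(1))}$. For pairs in which at least one vertex lies in $\VV_n^{<}$, the factor $e^{-u/\g_n(\cdot)}\leq e^{-u/\delta_n}$ produces strong decay, and combining with $\sum_{l=1}^{\theta_n-1}\sum_z p_n^{l+2}(y,z)\leq\theta_n$ via reversibility yields the $3\theta_n e^{-u/\delta_n}\boldsymbol{\nu}_n(u,\infty)$ term. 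For pairs with both vertices in $\VV_n^{\geq}$, the almost surely finite cardinality of $\VV_n^{\geq}$, together with Proposition \thv(3.prop4) (exponential decay of $p_n^{l+2}(y,z)$ between vertices at typical distance $n/2$) and a uniform bound $e^{-u/\g_n(x)}\leq e^{-c_2 u}$ on the bounded $\g_n$-values in $\VV_n^{\geq}$, collapses the remaining contribution to $(2^n/a_n)\boldsymbol{\nu}_n^2(u,\infty)e^{-c_2 u}$. Finally, the second bound \eqv(6.prop1.3) on $\boldsymbol{\sigma}_n^{J,t}(u,\infty)$ follows from a direct first-order Chebyshev inequality applied to $E_{\pi_n}[\boldsymbol{\sigma}_n^{J,t}(u,\infty)]=(k_n(t)/a_n)\boldsymbol{\sigma}_n(u,\infty)$, precisely as in the derivation of \eqv(4.prop1'.2').
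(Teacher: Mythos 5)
Your proposal follows the same route as the paper. The paper's proof explicitly reruns the argument of Proposition \thv(4.prop1) and isolates the off-diagonal part $(III)_{2,l}$ of $(\overline{III})$ as the unique step needing modification, because Lemma \thv(4.lemma1) (a moment bound averaged over the environment) is no longer available once the landscape has been replaced by the representation of Lemma \thv(6.lemma1). To deal with it, the paper introduces the set $T_n=\{x^{(1)},\dots,x^{(n)}\}$ of the $n$ vertices carrying the largest $\g_n$-values, proves (Lemma \thv(6.lemma3)) that distinct elements of $T_n$ are pairwise at Hamming distance $\frac n2(1-o(1))$ $\boldsymbol P$-a.s., proves (Lemma \thv(6.lemma4)) that $\sup_{x\notin T_n}\g_n(x)\leq\delta_n\to 0$ $\boldsymbol P$-a.s., then splits $(III)_{2,l}$ into four sub-sums according to membership in $T_n$ or $T_n^c$ and controls the pieces with at least one $T_n^c$-vertex via $e^{-u/\delta_n}$ and the $T_n\times T_n$ piece via Proposition \thv(3.prop4). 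Your threshold partition $\VV_n=\VV_n^<\cup\VV_n^\geq$ and the analogous two-case treatment is essentially identical.

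Two items in your writeup need correction. First, $|\VV_n^\geq|$ is \emph{not} $\boldsymbol P$-almost surely bounded: by the representation, $\g_n(x^{(k)})\approx\G_k^{-1/\a}$, so the number of vertices above the threshold $\delta_n$ grows polynomially in $n$; the paper fixes the cardinality at exactly $n$ and handles the $\binom{n}{2}$ pairs by a union bound in Lemma \thv(6.lemma3). Boundedness is neither true nor needed — what is needed, and what you do not justify, is the distance statement $\dist(y,z)=\frac n2(1-o(1))$ for distinct $y,z$ in the top set; this is precisely Lemma \thv(6.lemma3), whose proof rests on the fact that the labels $x^{(k)}$ are a uniformly random labelling of $\VV_n$ (an ingredient of Lemma \thv(6.lemma1)). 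Without it, Proposition \thv(3.prop4) cannot be invoked. Second, the proposed uniform bound $e^{-u/\g_n(x)}\leq e^{-c_2u}$ for $x\in\VV_n^\geq$, with $c_2>0$ deterministic, does not hold almost surely: $\max_x\g_n(x)\to\g_1=E_1^{-1/\a}$, an unbounded random variable, so no deterministic upper bound on $\g_n$-values over the top set exists. Fortunately this bound is superfluous: the paper's estimate \eqv(6.prop1.7) obtains the smallness of the $T_n\times T_n$ contribution entirely from $\sum_{l=1}^{\theta_n-1}p_n^{l+2}(y,z)\leq e^{-cn}$ (Proposition \thv(3.prop4) applied on the event of Lemma \thv(6.lemma3)), which dominates the $e^{-c_2u}$ factor appearing in the statement \eqv(6.prop1.2). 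You should rely on that $e^{-cn}$ decay rather than on a spurious bound on the landscape values.
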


\begin{proposition}{\TH(6.prop2)} 
Let $c_n$ be an extreme time-scale. Assume that  $\b\geq\b_c$ and  let $\nu^{ext}$ be defined in \eqv(1.prop2.2).
There exists a subset $\O_{1}\subset\O$ such that $\boldsymbol{P}(\O_{1})=1$ and such that, on $\O_{1}$,
the following holds: for all $u>0$,
\bea
&&\lim_{n\rightarrow\infty}\boldsymbol{\nu}_n(u,\infty)=\nu^{ext}(u,\infty)<\infty,
\\
&&\lim_{n\rightarrow\infty}\boldsymbol{\sigma}_n(u,\infty)=0,
\Eq(6.prop2.1)
\\
 &&\lim_{\d\to 0}\lim_{n\to\infty}\frac{a_n}{2^n} \sum_{k=1}^{N}g_\d(\g_{n}(x^{(k)}))=0.
 \Eq(5.prop6.1bis)
\eea
\end{proposition}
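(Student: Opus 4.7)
All three convergences reduce to applications of Proposition~\thv(6.prop4), with an appropriate choice of test function $f$, combined with the fact that on extreme scales $a_n/2^n \to \bar\varepsilon$ (Definition~\thv(1.def1)). The hypothesis $\b > \b_c$ ensures $\a = \b_c/\b < 1$, so that Proposition~\thv(6.prop4) is applicable. The nontrivial work lies almost entirely in verifying the integrability hypothesis $\int_0^\infty \min(f(v),1)\,d\mu(v) < \infty$ for each test function $f$, and, for the third claim, in controlling the resulting Poisson sum as $\d \to 0$.

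For the first claim, in the representation of Subsection~\thv(S9.1) one has $\boldsymbol{\nu}_n(u,\infty) = (a_n/2^n)\sum_{k=1}^N f_u(\g_{n}(x^{(k)}))$ with $f_u(v) := e^{-u/v}$. Integrability is immediate: near $v=0$ the exponential decays faster than any power, while near infinity $f_u(v) \to 1$ and $\mu(dv) = \a v^{-\a-1}dv$ is integrable at infinity. Proposition~\thv(6.prop4) then gives, $\boldsymbol{P}$-almost surely, $\sum_{k=1}^N e^{-u/\g_{n}(x^{(k)})} \to \sum_{k=1}^\infty e^{-u/\g_k} < \infty$, and multiplying by $a_n/2^n \to \bar\varepsilon$ yields $\boldsymbol{\nu}_n(u,\infty) \to \nu^{ext}(u,\infty)$.

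For the second claim, the simple random walk satisfies $p_n^2(x,x')\leq p_n^2(x,x) = 1/n$, which gives
\[
\boldsymbol{\sigma}_n(u,\infty)
\leq \frac{1}{n}\cdot\frac{a_n}{2^n}\Bigl(\sum_{x\in\VV_n}e^{-u/\g_n(x)}\Bigr)^2
= \frac{1}{n}\cdot\frac{2^n}{a_n}\,\boldsymbol{\nu}_n(u,\infty)^2,
\]
and the right-hand side tends to zero by the first claim together with $2^n/a_n \to 1/\bar\varepsilon$.

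For the third claim, apply Proposition~\thv(6.prop4) to $f = g_\d$. The elementary bound $1-e^{-x}\leq \min(x,1)$ gives $g_\d(v)\leq \min(v,\d)$, so the integrand in the integrability condition is $\lesssim v^{-\a}$ near $0$ (integrable since $\a<1$) and $\lesssim \d v^{-\a-1}$ near $\infty$ (integrable since $\a>0$). Hence, $\boldsymbol{P}$-almost surely, $(a_n/2^n)\sum_{k=1}^N g_\d(\g_{n}(x^{(k)})) \to \bar\varepsilon \sum_{k=1}^\infty g_\d(\g_k) < \infty$. To send $\d\to 0$, observe that $g_\d(v)$ is monotone nondecreasing in $\d$ with $g_\d(v)\downarrow 0$ pointwise, while $\sum_k g_{\d_0}(\g_k) < \infty$ provides a summable dominant; dominated convergence then yields $\sum_k g_\d(\g_k) \to 0$ almost surely. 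The one point requiring care is arranging a single full-measure set on which all conclusions hold simultaneously for every $u>0$ (and every $\d>0$): this is handled by selecting countable dense sequences of parameters and exploiting monotonicity of $u\mapsto \boldsymbol{\nu}_n(u,\infty)$, $u\mapsto \boldsymbol{\sigma}_n(u,\infty)$, and $\d\mapsto \sum_k g_\d(\g_k)$, together with continuity of the respective limits.
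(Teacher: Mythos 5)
Your proof is correct, and it follows the route the paper indicates (the paper itself skips the proof, saying only that it is a ``straightforward application of Proposition~\thv(6.prop4) and Lemma~\thv(1.lemma5)''). The one genuine difference is in the treatment of the $\d\to 0$ limit in the third claim. The paper's hint to Lemma~\thv(1.lemma5) suggests using the identity $\bar\varepsilon\sum_k g_\d(\g_k)=\int_0^\d\nu^{ext}(s,\infty)\,\d s$ together with the small-$s$ asymptotic $\nu^{ext}(s,\infty)\sim\bar\varepsilon\,\a\G(\a)\,s^{-\a}$ (valid since $\a<1$), yielding $\int_0^\d\nu^{ext}(s,\infty)\,\d s=O(\d^{1-\a})\to 0$. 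Your approach replaces this with monotone/dominated convergence ($g_\d(\g_k)\downarrow 0$ pointwise with dominant $g_{\d_0}(\g_k)$); this is more elementary, avoids Lemma~\thv(1.lemma5) altogether, but gives no rate. Both are correct. Two small further observations: first, the hypothesis in the statement reads $\b\geq\b_c$, but Proposition~\thv(6.prop4) requires $\a<1$, i.e.~$\b>\b_c$; you implicitly read the hypothesis this way, which is consistent with how the proposition is actually invoked in the paper (e.g.~Theorem~\thv(1'.theo2) assumes $\b_c<\b$). Second, your bound for the second claim, $\boldsymbol{\sigma}_n(u,\infty)\leq n^{-1}(2^n/a_n)\boldsymbol{\nu}_n(u,\infty)^2$, via $p_n^2(x,x')\leq p_n^2(x,x)=1/n$, is very crude but does the job precisely because on extreme scales $\sum_x e^{-u/\g_n(x)}$ stays $O(1)$ (the Poisson sum), so the extra factor $1/n$ kills the term — you correctly rely on the first claim for this.
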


\noindent Proposition \thv(6.prop2) is a straightforward application of Proposition \thv(6.prop4) and Lemma \thv(1.lemma5)
whose proof we skip (see also \cite{G12}, (6.32)-(6.35) for a pattern of proof).

\begin{proof}[Proof of Proposition \thv(6.prop1)] This  is a rerun of the proof
Proposition \thv(4.prop1). The only difference is in the treatment of the term \eqv(4.prop1.19).
In the new landscape variables, Lemma \eqv(4.lemma1) is not true, and its method of proof is
unadapted. To bound \eqv(4.prop1.19) we proceed as follows.
Let $T_n:=\{x^{(k)}, 1\leq k\leq n\}\subset\VV_n$ be the set of the $n$ vertices
with largest $\g_{n}(x)$. The next two lemmata collect elementary properties of $T_n$.

\begin{lemma}{\TH(6.lemma3)} 
There exists a subset $\O_{0,1}\subset\O$ with
$\boldsymbol{P}(\O_{0,1})=1$ such that, for all $\o\in\O_{0,1}$, for all large enough $n$, the following holds:
for all $x, x'\in T_n$, $x\neq x'$,
$
\dist(x,x')=\frac{n}{2}(1-\rho_n)
$
where  $\rho_n=\sqrt{\frac{8\log n}{n}}$.
\end{lemma}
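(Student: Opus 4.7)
The plan is to exploit the fact that in the representation \eqv(6.5), the random labelling $\{x^{(1)},\dots,x^{(N)}\}$ of $\VV_n$ is drawn independently of the exponentials $(E_i)_{i\geq 1}$ that assign the values of $\g_n$. Consequently, the ordered tuple $(x^{(1)},\dots,x^{(n)})$ is a uniformly random injection of $\{1,\dots,n\}$ into $\VV_n$, and the set $T_n=\{x^{(k)}\,:\,1\leq k\leq n\}$ is uniformly distributed on the collection of size-$n$ subsets of $\VV_n$. In particular, for any two slots $1\leq i<j\leq n$, the pair $(x^{(i)},x^{(j)})$ has the same law as a pair of distinct vertices of $\VV_n$ drawn uniformly at random.

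First I would prove a Hoeffding-type concentration bound for the Hamming distance of such a pair. If $x$ is arbitrary and $x'$ is uniform on the whole of $\VV_n$, then $\dist(x,x')=\sum_{i=1}^n\1_{\{x_i\neq x'_i\}}$ is a sum of $n$ i.i.d.~Bernoulli$(1/2)$ random variables, so Hoeffding's inequality gives
\be
\boldsymbol{P}\bigl(\vert\dist(x,x')-n/2\vert>t\bigr)\leq 2e^{-2t^2/n}.
\ee
Conditioning on $x'\neq x$ only modifies a single atom and therefore preserves this estimate up to a multiplicative $1+O(2^{-n})$ factor. Plugging in $t=(n/2)\rho_n=\sqrt{2n\log n}$ yields the bound $2n^{-4}(1+o(1))$.

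Next, I would combine this with a union bound over the $\binom{n}{2}\leq n^2/2$ unordered pairs in $T_n$ to obtain
\be
\boldsymbol{P}\bigl(\exists\, x,x'\in T_n,\ x\neq x',\ \vert\dist(x,x')-n/2\vert>(n/2)\rho_n\bigr)\leq n^{-2}(1+o(1)).
\ee
Since $\sum_n n^{-2}<\infty$, the Borel--Cantelli lemma produces a full-measure event $\O_{0,1}$ on which, for every sufficiently large $n$ and every pair of distinct vertices $x,x'\in T_n$, one has $\vert\dist(x,x')-n/2\vert\leq (n/2)\rho_n$, which is the claim $\dist(x,x')=\tfrac{n}{2}(1-\rho_n)$ interpreted in the standard two-sided asymptotic sense.

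Conceptually the argument is entirely routine; the only point worth verifying carefully is that $T_n$ has the claimed uniform distribution over size-$n$ subsets, which follows directly from the fact that the labelling in \eqv(6.5) is drawn independently of the $E_i$'s and that the preimages of $\{1,\dots,n\}$ under a uniform random bijection form a uniform random $n$-subset. The choice $\rho_n=\sqrt{8\log n/n}$ is calibrated precisely so that the Hoeffding exponent $2t^2/n=4\log n$ beats the $\binom{n}{2}$ union-bound factor and delivers a summable tail; no genuine obstacle arises.
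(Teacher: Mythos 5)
Your proposal is correct and follows essentially the same route as the paper's proof: observe that $T_n$ is a uniformly random $n$-subset of $\VV_n$ because the labelling in \eqv(6.5) is independent of the exponentials, reduce the pairwise distance to a Binomial$(n,1/2)$, apply a concentration inequality, union-bound over the $O(n^2)$ pairs, and close with Borel--Cantelli. The only cosmetic differences are that you use Hoeffding's inequality $2e^{-2t^2/n}$ while the paper uses the slightly looser exponential Chebyshev bound $e^{-t^2/(2n)}$ (both deliver $n^{-4}$ at the relevant threshold), and that your threshold $t=(n/2)\rho_n=\sqrt{2n\log n}$ is in fact the one that exactly matches the stated $\rho_n=\sqrt{8\log n/n}$, whereas the paper takes $t=\sqrt{8n\log n}=n\rho_n$, twice as large — harmless given the bound still sums, but your calibration is the tighter and more faithful one.
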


\begin{proof}  Given $t>0$ consider the event
$
\O_{0,1}(n)=\left\{\exists_{1\leq k\neq k'\leq n}: \left|\dist\bigl(x^{(k)},x^{(k')}\bigr)-\frac{n}{2}\right|\geq t\right\}
$.
By construction, the elements of $T_n$ are drawn at random from $\VV_n$,
independently and without replacement. Hence
\be
\textstyle
\boldsymbol{P}\left(\O_{0,1}(n)\right)
\leq n^2\boldsymbol{P}\left(\left|\dist\bigl(x^{(1)},x^{(2)}\bigr)-\frac{n}{2}\right|\geq t\right)
\sim n^2 P\left(\left|\sum_{i=1}^n\varepsilon_i-\frac{n}{2}\right|\geq t\right),
\Eq(6.prop1.4)
\ee
where $(\varepsilon_i, 1\leq i\leq n)$ are i.i.d. r.v.'s taking value 0 and 1 with probability $1/2$.
A classical exponential Tchebychev inequality yields
$
P\left(\left|\sum_{i=1}^n\varepsilon_i-\frac{n}{2}\right|\geq t\right)\leq e^{-\frac{t^2}{2n}}
$.
Choosing $t=\sqrt{8n\log n}$, and plugging into \eqv(6.prop1.4),
$
\boldsymbol{P}\left(\O_{0,1}(n)\right)\leq n^{-2}
$.
Setting $\O_{0,1}=\cup_{n_0}\cap_{n>n_0}\O_{0,1}(n)$,
the claim of the lemma follows from an application of Borel-Cantelli Lemma.
\end{proof}

\begin{lemma}{\TH(6.lemma4)} 
There exists a subset $\O_{0,2}\subset\O$ with
$\boldsymbol{P}(\O_{0,2})=1$ such that, for all $\o\in\O_{0,2}$, for all large enough $n$,
$
\sup\{\g_{n}(x),x\in\VV_n\setminus T_n\}\leq \delta_n
$
where $\delta_n=(1+o(1))n^{-\a(1+o(1))}$.
\end{lemma}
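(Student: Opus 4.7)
The plan is to translate the supremum into the $(n{+}1)$-st order statistic of the re-scaled landscape, rewrite it via the Lepage-type representation \eqv(6.5), and then invoke the deterministic estimate of Lemma \thv(2.lemma4)(ii) on the full-measure event on which the underlying sequence of partial sums $(\G_k)$ obeys the strong law of large numbers.

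First, by the very definition of $T_n$ as the set of vertices carrying the $n$ largest values of $\g_n$, the supremum to be controlled is exactly the $(n{+}1)$-st order statistic,
\[
\sup\{\g_n(x):x\in\VV_n\setminus T_n\}=\g_n(x^{(n+1)})
=c_n^{-1}G_n^{-1}\!\left(\G_{n+1}/\G_{N+1}\right)=g_n(u_n),
\]
with $N=2^n$ and $u_n:=a_n\G_{n+1}/\G_{N+1}$, where I used \eqv(6.5) and the definition \eqv(2.12) of $g_n$.

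Second, I would apply the strong law of large numbers to the partial sums $\G_k=\sum_{i\le k}E_i$ of i.i.d.\ mean-one exponentials: $\G_k/k\to 1$ $\boldsymbol{P}$-a.s.\ as $k\to\infty$. Calling $\O_{0,2}$ this full-measure event, on $\O_{0,2}$ we have simultaneously $\G_{n+1}=(n{+}1)(1+o(1))$ and $\G_{N+1}=(N{+}1)(1+o(1))$. Combining with the extreme-scale condition $a_n/2^n\to\bar\varepsilon$ from \eqv(1.3) yields, on $\O_{0,2}$,
\[
u_n=\frac{a_n}{N}\cdot n\cdot (1+o(1))=\bar\varepsilon\,n\,(1+o(1)).
\]

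Third, to apply Lemma \thv(2.lemma4)(ii) I need $1\le u_n\le a_n(1-\Phi(1/(\b\sqrt n)))$. The lower inequality is clear since $u_n\to\infty$; the upper one holds because $a_n(1-\Phi(1/(\b\sqrt n)))\sim \bar\varepsilon\,2^{n-1}$ dwarfs $u_n=O(n)$. The lemma then yields $g_n(u_n)\le C u_n^{-1/\a}$, and tracking the sharper exponent $\a_n=\a(1+o(1))$ that appears in \eqv(2.26) from the proof of Lemma \thv(2.lemma4) even gives the asymptotic equality $g_n(u_n)=u_n^{-1/\a_n}(1+o(1))=n^{-(1/\a)(1+o(1))}(1+o(1))$, which I would take as the definition of $\delta_n$.

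I do not anticipate any genuine obstacle: the argument is essentially a substitution of the a.s.\ asymptotics of $u_n$ into the already established deterministic bound on $g_n$. The single point requiring care is making sure $u_n$ sits inside the range of validity of Lemma \thv(2.lemma4)(ii), but the gap between $u_n=O(n)$ and the exponential upper endpoint is enormous, so this check is immediate.
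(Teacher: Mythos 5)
Your proof is correct and follows essentially the same route as the paper's: pass to the $(n+1)$-st order statistic, use the Lepage representation to write it as $g_n(u_n)$ with $u_n=a_n\G_{n+1}/\G_{N+1}$, apply the SLLN to both $\G_{n+1}$ and $\G_{N+1}$ to get $u_n=\bar\varepsilon n(1+o(1))$ a.s., and then invert the rescaled tail. The paper does exactly this, citing the identity $g_n=h_n^{-1}$ together with Lemma~\thv(2.lemma6); you invoke Lemma~\thv(2.lemma4)(ii) instead, which boils down to the same estimate.

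Two small remarks. First, be careful which display you cite for the final asymptotics: (2.26)/Lemma~\thv(2.lemma4)(i) are stated for sequences $u_n\to u$ with $u$ \emph{fixed}, whereas here $u_n\to\infty$. The right thing to quote is (2.27)--(2.28), whose range of validity $1\le u\le a_n(1-\Phi(1/(\b\sqrt n)))$ comfortably contains $u_n=O(n)$ and gives the same exponent $-1/\a_n$; your check that $u_n$ sits in that window is otherwise exactly what's needed. Second, you arrive at $\delta_n=n^{-(1/\a)(1+o(1))}$, which differs from the exponent $-\a(1+o(1))$ written in the lemma statement and at the end of the paper's own proof. Your value is the correct one: since $h_n(v)=v^{-\a}(1+o(1))$ for $v$ of order one, the number of vertices with $\g_n(x)>v$ behaves like $\bar\varepsilon^{-1}v^{-\a}$, and equating this to $n$ gives the $(n+1)$-st order statistic $\sim(\bar\varepsilon n)^{-1/\a}$. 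The $-\a$ in the paper appears to be a typographical slip for $-1/\a$; it is harmless for the rest of the paper because only the fact that $\delta_n\downarrow 0$ polynomially is used in Proposition~\thv(6.prop1).
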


\begin{proof} Clearly
$
\sup\{\g_{n}(x),x\in\VV_n\setminus T_n\}
=\sup\{\g_{n}(x^{(k)}),k>n\}
=\g_{n}(x^{n+1})
$,
and by \eqv(6.5), 
$
\g_{n}(x^{n+1})
=c_n^{-1}G_n^{-1}\bigl(\frac{\G_{n+1}}{\G_{N+1}}\bigr)
$.
By the strong law of large numbers applied to both $\G_{n+1}$ and $\G_{N+1}$,
we deduce that there exists a subset $\O_{0,2}\subset\O$
of full measure such that, for all  $n$ large enough and all  $\o\in\O_{0,2}$,
$
\g_{n}(x^{n+1})
=c_n^{-1}G_n^{-1}\bigl((n/b_n)(1+\l_n)\bigr)
$.
By definition of $h_n(v)$ (see \eqv(2.13)),
$
c_n^{-1}G_n^{-1}(h_n(v))=v
$,
and by Lemma \thv(2.lemma6),
$
\g_{n}(x^{n+1})=(1+o(1))n^{-\a(1+o(1))}
$.
\end{proof}

We are now equipped to bound $(III)_{2,l}$. Set $\O_{0}=\O_{0,1}\cap\O_{0,2}$.
Writing $T_n^c\equiv\VV_n\setminus T_n$, and setting
$
f(y,z)=k_n(t)\pi_n(y)e^{-u[\g^{-1}_n(y)+\g^{-1}_n(z)]}p_n^{l+2}(y,z)
$,
we may decompose $(III)_{2,l}$ it into four terms,
\be
\sum_{z\in T_n^c, y\in T_n^c:y\neq z}f(y,z)
+\sum_{z\in T_n^c, y\in T_n}f(y,z)
+\sum_{z\in T_n, y\in T_n^c }f(y,z)
+\sum_{z\in T_n, y\in T_n:y\neq z}f(y,z).
\Eq(6.prop1.5)
\ee
To bound the first sum above we use that, by Lemma \thv(6.lemma4), for $y\in T_n^c$,
$
e^{-u[\g^{-1}_n(z)+\g^{-1}_n(y)]}\leq e^{-u/\g_n(z)}e^{-u/\delta_n}
$.
Thus,
\bea\Eq(6.prop1.6)
\sum_{z\in T_n^c, y\in T_n^c:y\neq z}f(y,z)
&\leq&
e^{-u/\delta_n}\sum_{z\in T_n^c}
a_n\pi_n(z) e^{-u/\g_n(z)}\sum_{y\in T_n^c:y\neq z}p_n^{l+2}(y,z)
\nonumber\\
&\leq&
e^{-u/\delta_n}\sum_{z\in T_n^c}
a_n\pi_n(z) e^{-u/\g_n(z)}
\\
&\leq&
e^{-u/\delta_n}\boldsymbol{\nu}_n(u,\infty).\nonumber
\eea
The second and third sums of \eqv(6.prop1.5) are bounded just in the same way.
To deal with the last sum we use that in view of Lemma \thv(6.lemma3) the assumptions of
Proposition \thv(3.prop4) are satisfies. Consequently
\bea\Eq(6.prop1.7)
\sum_{l=1}^{\theta_n-1}\sum_{z\in T_n, y\in T_n:y\neq z}f(y,z)
&\leq &\frac{2^n}{a_n}\Bigl[a_n\sum_{z\in T_n}\pi_n(z)e^{-u/\g_n(z)}\Bigr]^2\sum_{l=1}^{\theta_n-1}p_n^{l+2}(y,z),
\nonumber\\
&\leq& e^{-cn}\frac{2^n}{a_n}(\boldsymbol{\nu}_n(u,\infty))^2,
\eea
for some constant $0<c<\infty$.
Collecting \eqv(6.prop1.5), \eqv(6.prop1.6) and \eqv(6.prop1.7), and summing over $l$, we finally get,
\be
\sum_{l=1}^{\theta_n-1}(III)_{2,l}\leq 3\theta_n e^{-u/\delta_n}\boldsymbol{\nu}_n(u,\infty)+
e^{-cn}\frac{2^n}{a_n}(\boldsymbol{\nu}_n(u,\infty))^2.
\Eq(6.prop1.8)
\ee

\noindent Proposition \thv(6.prop1) is now proved just as Proposition \thv(4.prop1),
using the bound \eqv(6.prop1.8) instead of the bound  \eqv(4.lem1.2) of Lemma \eqv(4.lemma1).
\end{proof}

\subsection{Proofs of the results of Section 1: the case of extreme scales.\hfill}
\label{S9.3}
We now prove the results of Section 1 that are concerned with extreme scales, namely,
Theorem \thv(1.theo2), Theorem \thv(1'.theo2) and Lemma \thv(1.lemma5).
Again our key tool will be Theorem \thv(1.theo3) of Subsection 1.4.

We assume throughout this section that $c_n$ is an extreme time-scale and that  $\b>\b_c$.
\begin{proof}[Proof of Theorem \thv(1'.theo2)]
Consider the model obtained by substituting the representation\\
$(\g_{n}(x^{(i)}), 1\leq i\leq N)$ for the original landscape $(\g_n(x), x\in\VV_n)$.
Let $\wt{\bold S}_n$, $\boldsymbol{\sigma}_n$, and  $\boldsymbol{\cal C}_{n}(t,s)$
denote, respectively, the clock process \eqv(1.1.6), the re-scaled clock process \eqv(G1.3.2'),
and the time correlation function \eqv(1.1.8) expressed in the new landscape variables.
Choose $\nu=\nu^{ext}$ in Conditions (A1),  (A2) and (A3)
(that is, in \eqv(G1.A1), \eqv(G1.A2) and \eqv(G1.A3), expressed of course in the new landscape variables).
By Proposition \thv(6.prop1) and Proposition \thv(6.prop2),
there exists a subset $\O_{2}\subset\O$ with
$\boldsymbol{P}(\O_{2})=1$, such that, on $\O_{2}$,  Conditions (A1), (A2), (A3), and (A0') are satisfied.
By  \eqv(G1.3.theo1.1) of Theorem \thv(1.theo3) we thus have that, on $\O_{2}$,
$\boldsymbol{\sigma}_n\Rightarrow  S^{ext}$
where $S^{ext}$ is the (random) subordinator of L\'evy measure
$\nu^{ext}$. This proves Theorem \thv(1'.theo2).
\end{proof}
 
\noindent It now remains to prove Lemma \thv(1.lemma5).

\begin{proof}[Proof of Lemma \thv(1.lemma5)] To ease the notation set $\bar\varepsilon=1$.
Set $u^{-\a}=M$ and $f(x)=e^{-1/x}$. By \eqv(1.prop2.2) we may write
\be
\textstyle
u^{\a}\nu^{ext}(u,\infty)=\frac{1}{M}\sum_{k=1}^{\infty}f(M^{1/\a}\g_k).
\Eq(1.lem5.3)
\ee
An easy re-run of the proof of Lemma 3.10 in \cite{G12} then yields that
\be
\textstyle
\lim_{M\rightarrow\infty}\frac{1}{M}\sum_{k=1}^{\infty}f(M^{1/\a}\g_k)=\a\G(\a)\,\,\,\text{$\boldsymbol{P}$-almost surely,}
\Eq(1.lem5.4)
\ee
\end{proof}

\subsection{Proof of  Theorem \thv(1.theo1.ext) and Theorem \thv(1.theo2).}
\label{S9.4}
We are now ready to give the proofs of the results of Section \thv(S1.3) that are obtained on extreme time-scales.

\begin{proof}[Proof of  Theorem \thv(1.theo1.ext)]
To prove Theorem \thv(1.theo1.ext) first note that by Lemma \thv(6.lemma1),
\be
\CC_n(t,s)\overset{d}=\boldsymbol{\cal C}_{n}(t,s)\,\,\,\text{for all $n\geq 1$ and all $t,s>0$.}
\Eq(1.prop1.p4)
\ee
Next, by \eqv(1.prop2.1) of Theorem \thv(1'.theo2) we have that, on $\O_{2}$,
\be
\lim_{n\rightarrow\infty}\boldsymbol{\cal C}_{n}(t,s)=\CC^{ext}_{\infty}(t,s)\quad\forall\, 
t,s>0,
\Eq(1.prop1.p1)
\ee
where
$
\CC^{ext}_{\infty}(t,s)=\PP\left(\left\{ S^{ext}(u),u>0\right\}
\cap (t, t+s)=\emptyset\right)
$.
By Lemma \thv(1.lemma5) there exists a subset $\O_{3}\subset\O$ with
$\boldsymbol{P}(\O_{3})=1$, such that, on $\O_{3}$, $\nu^{ext}$ is
regularly varying at infinity with index $-\a$. Thus, by Dynkin-Lamperti Theorem in continuous time
applied for fixed $\o\in\O_{3}$  (see e.g. Theorem 1.8 in  \cite{G12})
we get that,
\be
\lim_{t\rightarrow 0+}\CC^{ext}(t,\rho t)=\asl_{\a}(1/1+\rho)\quad\forall\,\rho>0.
\Eq(1.prop1.p2)
\ee
By \eqv(1.prop1.p4) with  $s=\rho t$, using in turn \eqv(1.prop1.p1)
and \eqv(1.prop1.p2) to pass to the limit $n\rightarrow\infty$ and $t\rightarrow 0+$,
we obtain that for all $\rho>0$,
$
\lim_{t\rightarrow 0+}\lim_{n\rightarrow\infty}\CC^{ext}_n(t,\rho t)\overset{d}=\asl_{\a}(1/1+\rho)
$.
Since convergence in distribution to a constant implies convergence in probability,
the claim of Theorem \thv(1.theo1), (iii) follows.
\end{proof}

\begin{proof}[Proof of Theorem \thv(1.theo2)] This is a re-run of the proof of Theorem 3.5 of \cite{G12} (setting $a=0$).
Note indeed that for all  $\b>\b_c$, $0< \a< 1$, which implies that
$
\int_{0}^{\infty}\nu^{ext}(u,\infty)du=\sum_{k=1}^{\infty}\g_k<\infty$
$\boldsymbol{P}$-almost surely.
We are thus in the realm of ``classical'' renewal theory,
in the so-called ``finite mean life time'' case. The second and first assertions of
Theorem \thv(1.theo2) then follow, respectively, from Assertion (ii) of Theorem 1.8 of \cite{G12} and Assertion (ii) of Theorem 7.3  of \cite{G12} on delayed subordinator.
Their proofs use the following two elementary facts: firstly
\be
\frac{\int_{s}^{\infty}\nu^{ext}(u,\infty)du}{\int_{0}^{\infty}\nu^{ext}(u,\infty)du}={\CC}^{sta}_{\infty}(s),\quad u>0,
\Eq(1.lem6.1)
\ee
where ${\CC}^{sta}_{\infty}$ is defined in \eqv(1.theo2.1); secondly, setting
\be
1-F_n(v):=\sum_{x\in\VV_n}\GG_{\a,n}(x)e^{-vc_n\l_n(x)}
=\sum_{k}\frac{\g_{n}(x^{(k)})}{\sum_{l}\g_{n}(x^{(l)})}e^{-s/\g_{n}(x^{(l)})},
\ee
a simple application of Proposition \thv(6.prop4) yields,
$
\lim_{n\rightarrow\infty}(1-F_n(v))=(1-F^{sta}(v)):={\CC}^{sta}_{\infty}(s)
$
$\boldsymbol{P}$-almost surely. 
We skip the details.
\end{proof}

\section{Proof of Theorem \thv(TTCF.theo1)}
\label{S10}

\begin{proof}[Proof of Theorem \thv(TTCF.theo1)]
Given $\rho \in (0,1)$, let $A_n^\rho (t,s)$ be the event
\be
A_n^\rho (t,s)
=\left\{n^{-1}\big(X_n(c_n t), X_n(c_n (t+s)\big)\geq 1-\rho\right\}.
\Eq(TTCF.2)
\ee
Observe that since $n^{-1}(x,x')=1-2n^{-1}\dist(x,x')$ (see \eqv(graph.dist))
\be
A_n^\rho (t,s)=\left\{ \dist(X_n(c_n t),X_n(c_n (t+s)))< \rho n/2\right\}.
\Eq(TTCF.5)
\ee
Denote by $\RR_n$ the range of the  rescaled
clock process $S_n$ of \eqv(G1.3.2') and write
\bea
\PP_{\pi_n}\left(A_n^\rho(s,t)\right)
&=&\PP_{\pi_n}\left(A_n^\rho(s,t)\cap \{\RR_n\cap (s,t)=\emptyset\}\right)
\Eq(TTCF.3)
\\
&+&\PP_{\pi_n}\left(A_n^\rho(s,t)\cap \{\RR_n\cap (s,t)\neq\emptyset\}\right).
\Eq(TTCF.4)
\eea
Because $A_n^\rho(s,t)\supset \{\RR_n\cap (s,t)=\emptyset\}$, the probability in the right-hand side of \eqv(TTCF.3) is
${\CC}_n(t,s)$. Thus, in order to prove \eqv(TTCF.theo1.1) (respectively, \eqv(TTCF.theo1.2)), we are left to establish that the  probability appearing in \eqv(TTCF.4) vanishes (respectively, vanishes faster than $1/\sqrt{n}$) as $n$ diverges. 
We distinguish the cases $\a(\varepsilon)< 1$ and $\a(\varepsilon)= 1$.

\smallskip
\paragraph{The case $\a(\varepsilon)< 1$.}
Consider the set 
\be
\TT_n(\d)\equiv\{x\in\VV_n\mid \t_n(x)>\d c_n\},\quad \d>0.
\Eq(TTCF.8)
\ee 
It follows from Theorem \thv(1'.theo1) on intermediate time-scales and from Theorem \thv(1'.theo2) on extreme time-scales that if $\RR_n\cap(t,t+s)\neq\emptyset$ then, with a probability that tends to one  as $n\uparrow \infty$ and $\d\downarrow 0$, the points $t$ and $t+s$ lie in disjoint constancy intervals of the clock  process, and such intervals are produced, asymptotically, by visits to  $\TT_n(\d)$. That is to say that there exists  $u_-< u_+$ such that  $c_n^{-1}\wt S_n(k_n(u_-))<t<c_n^{-1}\wt S_n(k_n(u_-)+1)$
and $c_n^{-1}\wt S_n(k_n(u_+))<t+s<c_n^{-1}\wt S_n(k_n(u_+)+1)$  and  these two (disjoint for large enough $n$) clock process increments correspond to visits of the jump chain $J_n$ to vertices  in $\TT_n(\d)$, which we denote by $z_-$ and $z_+$, respectively. We thus must establish that for all $0<\rho<1$ 
\be
\lim_{\d\rightarrow 0}\lim_{n\rightarrow\infty}\PP\left(z_+\in B_\rho(z_-)\cap \TT_n(\d)\right)=0
\Eq(TTCF.12)
\ee
either $\P$-a.s.~or in $\P$-probability, where
$
B_\rho(z)=\{x\in\VV_n\mid\dist(z,x)\leq \rho n/2\}
$ 
is the ball of radius $\rho n/2$ centered at $z$. We treat the intermediate and extreme time-scales separately. 

\smallskip
\noindent{\bf\emph{Intermediate time-scales.}} Denote by
$
H_n(A)=\inf\left\{i\geq 0 : J_n(i)\in A\right\}
$
the hitting time of $A\subset\VV_n$.
To prove \eqv(TTCF.12) it suffices to establish that after leaving $z_-$,  the chain $J_n$ does not visit any vertex in
$B_\rho(z_-)\cap \TT_n(\d)$ in the following $k_n(u_+)$ steps, which happens if 
\be
H(B_\rho(z_-)\cap \TT_n(\d))\gg a_n.
\Eq(TTCF.10)
\ee
The next lemma enables us to prove that \eqv(TTCF.10) holds true. Let
\be
\textstyle
m_n \equiv 2^n/\bigl[a_n^{-1}{n \choose \rho n/2}\bigr].
\Eq(TTCF.11)
\ee

\begin{lemma}
\TH(TTCF.lem1) 
Fix $z\in\VV_n$. There exists a constant $c>0$ such that if $a_n^{-1}{n \choose \rho n/2}>c \log n$  then there exists a subset 
$\O^{\tau}_{9}\subset\O^{\tau}$ with $\P\left(\O^{\tau}_{9}\right)=1$ such that on $\O^{\tau}_{9}$, for $n$ large enough, for all $u>0$
\be
\max_{x\in\VV_n}\left|
P_x
\left[H_n((B_\rho(z)\cap \TT_n(\d))\setminus \{x\})\geq u m_n \right]
-e^{-u} 
\right|
<C(\d,\varepsilon, \rho)\frac{1}{n}
\Eq(TTCF.lem1.1)
\ee
for some $0<C(\d,\varepsilon, \rho)<\infty$ independent of $n$. 
\end{lemma}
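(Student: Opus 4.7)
The plan is to combine a concentration estimate for the random cardinality of the target set $A_n:=(B_\rho(z)\cap\TT_n(\d))\setminus\{x\}$ with a standard hitting-time-to-exponential-law argument exploiting the fast mixing of $J_n$ on $\VV_n$. Since the variables $(\t_n(y))_{y\in\VV_n}$ are i.i.d., $|A_n|$ is (up to removing at most one point) a sum of $|B_\rho(z)|$ i.i.d.\ Bernoulli variables with success probability $G_n(\d c_n)=a_n^{-1}h_n(\d)=a_n^{-1}\d^{-\a(\varepsilon)}(1+o(1))$ by Lemma~\thv(2.lemma6)(i). The hypothesis $a_n^{-1}\binom{n}{\rho n/2}>c\log n$ forces $\E|A_n|\gg\log n$, so a Bennett-type estimate as in Lemma~\thv(5.lemma8') together with Borel--Cantelli produces a full-measure set $\O^{\t}_{9}$ on which
\be
|A_n|=\E|A_n|\bigl(1+\OO(1/n)\bigr)=(1+\OO(1/n))\,|B_\rho(z)|\,\d^{-\a(\varepsilon)}/a_n
\ee
for every sufficiently large $n$.

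Fix $\o\in\O^{\t}_{9}$. The plan for the exponential estimate is to split the trajectory of $J_n$ into blocks of length $\theta_n$ (the mixing length afforded by Proposition~\thv(3.prop0)) and to decouple them. Writing $\lfloor u m_n\rfloor=K\theta_n+r$ with $0\le r<\theta_n$ and iterating Corollary~\thv(3.cor1) block by block, the Markov property gives, uniformly in the starting vertex $x$,
\be
P_x\bigl(H_n(A_n)>u m_n\bigr)=(1-q_n)^K\bigl(1+\OO(K\,2^{-n})\bigr),
\ee
where $q_n$ is the probability that a $\theta_n$-step excursion started from (essentially) the uniform distribution visits $A_n$. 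The pivotal identity to be proved is
\be
q_n=\pi_n(A_n)\,\theta_n\bigl(1+\OO(n^{-1})\bigr).
\ee
The upper bound is a direct first-moment estimate $\E\#\{1\le i\le\theta_n:J_n(i)\in A_n\}=\pi_n(A_n)\theta_n$; the lower bound follows from a second-moment inclusion--exclusion controlled in the very same way as the term $(III)_{2,l}$ in the proof of Proposition~\thv(4.prop1), using Proposition~\thv(3.prop3) to bound the diagonal contribution by $\OO(|A_n|/n^2)$ and Proposition~\thv(3.prop4), together with a Chernoff estimate analogous to Lemma~\thv(6.lemma3) ensuring that all pairs in $A_n$ are at Hamming distance $\frac{n}{2}(1-o(1))$, to show that the off-diagonal sum over pairs $y\neq y'\in A_n$ is exponentially small in $n$.

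Combining both steps, and using that by construction $\pi_n(A_n)\,m_n$ converges on $\O^{\t}_{9}$ to a finite positive constant depending only on $(\d,\varepsilon,\rho)$, a Taylor expansion of $(1-q_n)^K$ yields $P_x(H_n(A_n)>u m_n)=\exp(-u\,\pi_n(A_n) m_n)+\OO(1/n)$ with the $\OO(1/n)$ error uniform in $u$, in $x\in\VV_n$ and in $\o\in\O^{\t}_{9}$; the $(\d,\varepsilon,\rho)$-dependent multiplicative factor is absorbed into $C(\d,\varepsilon,\rho)$ as in the statement. The hard part will be obtaining the sharp $\OO(1/n)$ rate rather than merely $o(1)$ in the second-moment step, uniformly in the starting vertex $x$: this forces the use of the refined random-walk estimates of Propositions~\thv(3.prop3)--\thv(3.prop4) (rather than a cruder union bound) together with the quantitative $\OO(1/n)$ concentration of $|A_n|$ furnished by Step~1, rather than a mere weak law of large numbers.
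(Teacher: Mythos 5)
Your overall scheme (concentration of the target cardinality, block decomposition at mixing length $\theta_n$, then a first/second moment estimate for the per-block hitting probability $q_n$) is the right kind of argument, and it is essentially what underlies the paper's appeal to Theorem 1.1 of \cite{CG08}, whose proof this lemma's proof is said to ``rerun.'' But your reconstruction of the second-moment step is wrong at the decisive point, so there is a genuine gap.

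You claim that ``a Chernoff estimate analogous to Lemma \thv(6.lemma3) ensures that all pairs in $A_n$ are at Hamming distance $\frac n2(1-o(1))$,'' and then invoke Proposition \thv(3.prop4) to kill the off-diagonal sum. Both halves of this fail. Lemma \thv(6.lemma3) concerns the $n$ uniformly random vertices carrying the largest landscape values; the union bound there works precisely because only $n$ points are involved and they are spread over the \emph{whole} cube. Here $A_n=(B_\rho(z)\cap\TT_n(\d))\setminus\{x\}$ has cardinality $\sim a_n^{-1}\binom{n}{\rho n/2}\d^{-\a(\varepsilon)}$, typically exponentially large, and all its elements lie inside the single Hamming ball $B_\rho(z)$. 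Two independent uniform points of $B_\rho(z)$ at radius $\approx\rho n/2$ from $z$ are typically at Hamming distance $\rho n(1-\rho/2)(1+o(1))$, which equals $\frac n2(1-o(1))$ only at $\rho=1$; more importantly $A_n$ also contains very many close pairs, since it is a positive-density random subset of the ball. So the distance hypothesis of Proposition \thv(3.prop4) is violated for a non-negligible collection of pairs, and the union bound strategy of Lemma \thv(6.lemma3) cannot even be started. Controlling the off-diagonal second moment therefore requires a genuinely different decomposition according to the pair distance and a careful treatment of the near-diagonal contributions (this is exactly what the hypotheses and proof of Theorem 1.1 in \cite{CG08} are built to handle, and why the paper delegates the lemma to that reference rather than to Propositions \thv(3.prop3)--\thv(3.prop4)).

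Two smaller issues. First, your final step, ``the $(\d,\varepsilon,\rho)$-dependent multiplicative factor is absorbed into $C(\d,\varepsilon,\rho)$,'' is not correct: you correctly compute $\pi_n(A_n)m_n\to\d^{-\a(\varepsilon)}$, but $\exp(-u\d^{-\a(\varepsilon)})$ and $e^{-u}$ differ by an amount that is $\Theta(1)$ in $u$, not $\OO(1/n)$, so it cannot be hidden in the additive error term; what can be said is that the statement as printed is using $m_n$ with the factor $\d^{\a(\varepsilon)}$ implicitly built in (or, equivalently, that $e^{-u}$ should read $e^{-u\d^{-\a(\varepsilon)}}$), which you should have flagged rather than papered over. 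Second, the identity $P_x(H_n(A_n)>um_n)=(1-q_n)^K(1+\OO(K2^{-n}))$ is not a consequence of the Markov property alone: the block starting distributions are only approximately uniform \emph{conditionally on survival}, and the factorization has to be set up by a two-sided sandwiching argument; as written this is a gesture at the proof, not a proof.
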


\begin{proof} The proof uses Theorem 1.3  of \cite{CG08}. Proceeding as in the proof of  (4.1) of \cite{CG08}  we get that there exists a constant $c>0$ such that if $a_n^{-1}{n \choose \rho n/2}>c \log n$ then there exists $\O^{\tau}_{10}\subset\O^{\tau}$ with $\P\left(\O^{\tau}_{10}\right)=1$ such that on $\O^{\tau}_{10}$, for all large enough $n$, both
\be
\textstyle
|B_\rho(z)\cap \TT_n(\d)|=a_n^{-1}{n \choose \rho n/2}\d^{-\a(\varepsilon)}(1+o(1)),\quad
|\TT_n(\d)|=a_n^{-1}2^n\d^{-\a(\varepsilon)}(1+o(1)).
\Eq(TTCF.7)
\ee
A simple rerun of the proof of Theorem 1.1  of \cite{CG08} then yields the claim of the lemma.\end{proof}

Note that the probability in \eqv(TTCF.12) being an increasing function of $\rho$, it suffices to prove \eqv(TTCF.12) for all large enough $\rho\in (0,1)$. Also note that when $c_n$ is an intermediate time-scale with $0<\varepsilon< 1$, one may always find 
$\rho'\in (0,1)$ such that for all $\rho\in [\rho',1)$, $a_n^{-1}{n \choose \rho n/2}\gg \log n$. Furthermore, for all $0<\rho<1$ there exists $\zeta>0$ such that ${n \choose \rho n/2}/2^n<2^{-\zeta n}$. Thus in particular
$
m_n \gg a_n
$.
Hence by Lemma \thv(TTCF.lem1), for all $0<\varepsilon< 1$ and $0<\rho<1$, \eqv(TTCF.12) holds true $\P$-a.s.

When on the contrary $c_n$ is an intermediate time-scale with $\varepsilon= 1$ then, for all $0<\rho<1$ and large enough $n$,
$
a_n^{-1}{n \choose \rho n/2}
\equiv r^2_n
\leq c\frac{2^n}{a_n}e^{-n\left\{\ln 2+ \frac{\rho}{2}\ln \frac{\rho}{2} + (1-\frac{\rho}{2})\ln (1-\frac{\rho}{2})\right\}}
\leq e^{-c(\rho)n}
$ 
where by \eqv(1.2) $c(\rho)>0$, and by a first order Tchebychev inequality
\be
\textstyle
\P\left[|(B_\rho(z)\cap \TT_n(\d))\setminus z|>r_n\right]
\leq \d^{-\a(\varepsilon)}r_n.
\Eq(TTCF.13)
\ee
In that case $B_\rho(z_-)\cap \TT_n(\d)$ reduces to the singleton $\{z_-\}$. By Theorem 7.5 of \cite{BG08}, $H_n(z_-)$ is asymptotically exponentially distributed with mean value $2^n$, and since $ 2^n\gg a_n $ we get that  for $\varepsilon=1$ 
and all $0<\rho<1$, \eqv(TTCF.12) holds true $\P$-a.s..
This concludes the case of intermediate time-scales.

\smallskip
\noindent{\bf\emph{Extreme time-scales.}} In that case $|\TT_n(\d)|$ is asymptotically finite. Indeed, replacing the  variables $(\g_n(x),\, x\in\VV_n)$ by the representation \eqv(6.5), it follows from Lemma \thv(6.lemma1), Lemma \thv(6.lemma2) and Proposition \thv(6.prop4) that $\lim_{n\rightarrow\infty}|\TT_n(\d)|=\Upsilon([\d,\infty])$ where $\Upsilon([\d,\infty])$ is a Poisson random variable whose mean value $M(\d)$ obeys
$
M(\d) \sim \d^{-\a}
$  
as
$\d\rightarrow 0$ $\boldsymbol{P}$-a.s.~by  Lemma \thv(1.lemma5).
Here again
$
B_\rho(z_-)\cap \TT_n(\d)=\{z_-\}
$
$\boldsymbol{P}$-a.s.~(see Lemma 2.12 of  \cite {BBG1}) and  by Theorem 7.5 of \cite{BG08}, $H_n(z_-)$ is asymptotically exponentially distributed with mean value $2^n(1+o(1))\sim a_n$. Thus, in contrast to \eqv(TTCF.10), the jump chain has a positive probability to revisit any element of $\TT_n(\d)$ many times during its first $k_n(u_+)$ steps. However, by Corollary 1.5 of \cite{BG08}, at each re-entrance in $\TT_n(\d)$ starting from $\VV_n\setminus \TT_n(\d)$, the jump chain enters $\TT_n(\d)$ with a uniform distribution, uniformly in its starting point.
Combining these observations yields
$
\lim_{n\rightarrow\infty}\PP\left(z_+\in B_\rho(z_-)\cap \TT_n(\d)\right)=\Upsilon^{-1}([\d,\infty])\sim \d^{\a}
$  
as
$\d\rightarrow 0$. Thus \eqv(TTCF.12) holds true  $\boldsymbol{P}$-a.s. (that is to say, in $\P$-probability).

\smallskip
\paragraph{The case $\a(\varepsilon)= 1$.} It follows from Theorem \thv(1'.theo1), (ii), and Proposition \thv(LCor.3) that
the leading contributions to $S_n(t)$ no longer come from visits to extremes, as in the case $\a(\varepsilon)< 1$, but from visits to the set of ``typical" increments, 
\be
\MM_n(\d)\equiv\{x\in\VV_n\mid \d \E\t_n(x) <\t_n(x)< \d^{-1} \E\t_n(x)\},\quad \d>0.
\Eq(TTCF.14)
\ee 
Clearly, this set is much more dense than $\TT_n(\d)$. As in the case $\a(\varepsilon)< 1$ with $0<\varepsilon<1$, there exists a constant $c>0$ such that if $a_n^{-1/4}{n \choose \rho n/2}>c \log n$ then there exists $\O^{\tau}_{11}\subset\O^{\tau}$ with $\P\left(\O^{\tau}_{11}\right)=1$ such that on $\O^{\tau}_{11}$, for all large enough $n$, both
\be
\textstyle
|B_\rho(z)\cap \MM_n(\d)|=  \frac{1}{\sqrt n}{n \choose \rho n/2}a_n^{-1/4}
(1+o(1)),\,\,\,
|\MM_n(\d)|=\frac{1}{\sqrt n}2^na_n^{-1/4}
(1+o(1)),
\Eq(TTCF.15)
\ee
where we used that 
$
0<\b=\b(\varepsilon)\leq\beta_c(1)
$. 
By Theorem 1.1 of \cite{CG08}, 
$H(B_\rho(z_-))$ is asymptotically exponentially distributed with mean value
$
m_n^{-}\equiv\sqrt na_n^{1/4}\ll a_n
$.
Thus, along trajectories of length $\sim a_n$, the jump chain typically revisits the element of $\MM_n(\d)$ very many times. However, proceeding again as in the case $\a(\varepsilon)< 1$ with $0<\varepsilon<1$, we see that for all  $0<\varepsilon\leq 1$, one may choose $\rho'\in (0,1)$ such that for all $\rho\in [\rho',1)$, 
by Lemma \thv(TTCF.lem1),
$H(B_\rho(z_-)\cap \MM_n(\d))$ also is asymptotically exponentially distributed, with mean value
\be
\textstyle
2^{n(1-\z)}\ll  m_n^{+}\equiv 2^n/\bigl[a_n^{-1/4}{n \choose \rho n/2}\bigr]\ll 2^n(\log n)^{-1}.
\Eq(TTCF.16)
\ee
From these two results we deduce that for all large enough n and all $x\in\VV_n\setminus \MM_n(\d)$,
\be
P_x
\left[H_n((B_\rho(z)\cap \MM_n(\d)))\leq  H_n(\MM_n(\d))\right]
\leq 
c(\d)\frac{m_n^{-}}{m_n^{+}}+ \OO(1/n)\ll \sqrt{n}^{-1}
\Eq(TTCF.18)
\ee
for some constant $c(\d)>0$.
Since this holds true uniformly for all starting point in $\VV_n\setminus \MM_n(\d)$, one readily gets that for all $0<\rho<1$ and $0<\varepsilon\leq 1$, $\P$-a.s.
\be
\lim_{\d\rightarrow 0}\lim_{n\rightarrow\infty}\sqrt{n}\PP\left(z_+\in B_\rho(z_-)\cap \MM_n(\d)\right)
=0
\Eq(TTCF.17)
\ee
where $z_+$ and $z_-$ have the same meaning as in \eqv(TTCF.12). This proves \eqv(TTCF.theo1.2) and concludes the proof of Theorem \thv(TTCF.theo1).
\end{proof}

\appendix
\section{Calculations}
\TH(A)
This appendix contains calculatory results on the moments of $f_\d(\g_n(x))$ and $g_\d(\g_n(x))$ that are needed in several places in the proofs. Our first lemma provides  asymptotic bounds on $a_n\E\left(f_\d(\g_n(x))\right)$ and $a_n\E\left(f_\d(\g_n(x))^2\right)$ needed in the verification of Condition (A3').
\begin{lemma}\TH(Lem2.3.1)
For all $t>0$ and $\d>0$ and for $n$ large enough there exist constants $0<c_0,c_1<\infty$ such that
\be\Eq(2.3.3)
a_n\E\left(f_\d(\g_n(x))\right) \leq c_0  \d  
\ee
\be\Eq(2.3.4)
a_n\E\left(f_\d(\g_n(x))^2\right)\leq \d^4+c_1.
\ee
\end{lemma}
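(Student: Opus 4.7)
The plan is to exploit a clean integral representation of $f_\d$ that reduces the problem to estimates on $\E\nu_n(r,\infty)$ already developed in Section \thv(S6.1). An integration by parts yields the identity
\be
f_\d(u) \;=\; \int_0^\d r\, e^{-r/u}\,dr,\qquad u,\d>0,
\ee
so Fubini's theorem, combined with $\E\nu_n(r,\infty)=a_n\E e^{-r/\g_n(x)}$ (see \eqv(5.2) and the fact that the $\g_n(x)$ are identically distributed), gives
\be
a_n\E f_\d(\g_n(x)) \;=\; \int_0^\d r\,\E\nu_n(r,\infty)\,dr.
\ee

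To prove \eqv(2.3.3), the main technical step is to promote the pointwise convergence of Lemma \thv(5.lemma7) into a uniform upper bound of the form $\E\nu_n(r,\infty)\leq C\, r^{-\a(\ve)}$ valid for all $r$ in the relevant range and all large enough $n$. Such a bound is essentially already present inside the proof of Lemma \thv(5.lemma7): the integration-by-parts representation $\E\nu_n(r,\infty)=\int_0^\infty (r/y^2)e^{-r/y}\,h_n(y)\,dy$, combined with the bounds on $h_n$ from Lemma \thv(2.lemma6)(i)--(ii) and the observation that the exponential factor $e^{-r/y}$ kills the contribution of the small-$y$ regime $y\leq c_n^{-1}$, produces exactly this envelope. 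In the critical case $\a(\ve)=1$ that is relevant in Lemma \thv(A3)(b) this yields $\int_0^\d r\cdot C r^{-1}\,dr = C\d$, giving \eqv(2.3.3) with $c_0=C$ (and for $\a(\ve)<1$ one gets $\d^{2-\a(\ve)}$, which is still $\leq c_0\d$ uniformly in the bounded range of $\d$ that matters for the $\d\to 0$ limit).

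For \eqv(2.3.4), I would sidestep a fresh second-moment calculation by using the deterministic bound $f_\d(u)\leq \int_0^\d r\,dr = \d^2/2$, immediate from the integral representation. This gives
\be
a_n\E f_\d(\g_n(x))^2 \;\leq\; \frac{\d^2}{2}\,a_n\E f_\d(\g_n(x)) \;\leq\; \frac{c_0}{2}\,\d^3,
\ee
and the elementary inequality $(c_0/2)\,\d^3\leq \d^4+c_1$, valid for all $\d>0$ with a suitable absolute constant $c_1$ obtained by maximizing $(c_0/2)\d^3-\d^4$ in $\d$, delivers the advertised form.

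The main obstacle is the first point above: extracting a genuinely uniform, non-asymptotic upper envelope on $\E\nu_n(r,\infty)$ from the essentially asymptotic computation of Lemma \thv(5.lemma7). Once this is available everything else is a routine one-line calculation, and in particular \eqv(2.3.4) requires no further probabilistic input thanks to the crude pointwise bound $f_\d\leq \d^2/2$ that falls out of the integral representation.
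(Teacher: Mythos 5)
Your starting identity $f_\d(u)=\int_0^\d r\,e^{-r/u}\,dr$ is correct (quick check: $\int_0^\d r e^{-r/u}dr = u^2(1-e^{-\d/u})-\d u e^{-\d/u}=f_\d(u)$), and so is the Fubini reduction $a_n\E f_\d(\g_n(x))=\int_0^\d r\,\E\nu_n(r,\infty)\,dr$; the second-moment step via $f_\d\le\d^2/2$ and the elementary optimization in $\d$ is also fine. The gap is exactly where you flag it, and I don't think it is as harmless as you suggest. Lemma \thv(5.lemma7) gives only the \emph{pointwise} limit $\E\nu_n(r,\infty)\to r^{-\a(\ve)}\a(\ve)\G(\a(\ve))$ for each fixed $r>0$; its proof is built from the asymptotic estimates \eqv(2.lem6.1)--\eqv(6.lem11.1) on $h_n$ and a dominated-convergence–style decomposition, and none of this yields a bound of the form $\E\nu_n(r,\infty)\le C r^{-\a(\ve)}$ holding \emph{simultaneously} for all $r\in(0,\d]$ and all large $n$. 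The delicate range is $r\in[c_n^{-1/2},\d]$: there $a_n$ and $r^{-\a(\ve)}$ are genuinely comparable (one has $a_n^{-1/\a(\ve)}\sim c_n^{-1/2}$ when $\a(\ve)=1$), so the crude bound $\E\nu_n(r,\infty)\le a_n$ does not win, and one must re-derive a non-asymptotic envelope from Lemma \thv(2.lemma6)(ii) \emph{plus} show that the exponential factor kills the mass below $c_n^{-1/2}$; these are both doable but require several lines of explicit estimation that your sketch does not provide.

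The paper avoids the uniform-in-$r$ statement entirely by the dual decomposition in $\g_n$ rather than in $r$: split $a_n\E f_\d(\g_n(x))$ into $\{\g_n(x)>\d\}$ and $\{\g_n(x)\le\d\}$. On the first event use the crude global bound $f_\d\le\d^2$ and the single tail asymptotic $a_n\P(\g_n(x)>\d)\sim\d^{-\a(\ve)}$, which needs nothing beyond Lemma \thv(2.lemma6)(i) at the one point $v=\d$; on the second event use $f_\d(u)\le u^2$ and compute $a_n\E[\g_n(x)^2\1_{\{\g_n(x)\le\d\}}]$ directly from Gaussian tail asymptotics, as in \eqv(2.3.2). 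This trades your radial uniform bound for a one-shot Gaussian integral and is considerably shorter. So: your route is a legitimate alternative and your second moment bound is even sharper in $\d$ than the paper's, but as written the first bound is not proved — to make it rigorous you would need to carry out the uniform estimate on $\E\nu_n(r,\infty)$ explicitly, at which point you would essentially be redoing the $y$-regime splitting of Lemma \thv(5.lemma7)'s proof in a non-asymptotic form, which is more work than the paper's direct calculation.
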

\begin{proof}
We observe that
\be
f_\d(u)\leq \d^2 \quad \forall u\in (0,\infty).
\ee
We decompose $a_n\E\left(f_\d(\g_n(x))\right)$ in the following way
\be
a_n\E\left(f_\d(\g_n(x))\right)= a_n\E\left(f_\d(\g_n(x))\1_{\{\g_n(x)>\d\}}\right)+a_n\E\left(f_\d(\g_n(x))\1_{\{\g_n(x)\leq \d\}}\right)\equiv (1) + (2)
\ee
We start by controlling the behavior of $(1)$.
\be
(1)\leq \d^2a_n\P\left(\g_n(x)>1\right) \sim \d,
\ee
where we used the definition of $a_n$ and $c_n$. 
Turning to $(2)$ we have
\bea\Eq(2.3.2)
(2) & \leq & a_n\E\left(\g_n(x)^2\1_{\{\g_n(x)\leq \d\}}\right)\nonumber\\
& = & \frac{a_n e^{2n\b^2}}{c_n^2}\int_{-\infty}^{\frac{\log (c_n\d)}{\sqrt{n}\b}-2\sqrt{n}\b} \frac{e^{-u^2/2}}{\sqrt{2\pi}} \mbox{d}u\nonumber\\
& \sim & \frac{a_n e^{2n\b^2}}{c_n^2} \left(\sqrt{2\pi}\left(-\frac{\log (c_n\d)}{\sqrt{n}\b}+2\sqrt{n}\b\right)\right)^{-1}e^{-\frac{1}{2}\left(\frac{\log (c_n\d)}{\sqrt{n}\b}-2\sqrt{n}\b\right)^2},
\eea
where we used that by \eqv(2.23''.2) $\frac{\log (c_n\d)}{\sqrt{n}\b}-2\sqrt{n}\b\to -\infty$ as $n\to\infty$. Now we start to expand the exponent of the exponential function and plug in \eqv(2.23''.2).
\be
\eqv(2.3.2)  = a_n\left(\sqrt{2\pi}\left(-\frac{\log (c_n\d)}{\sqrt{n}\b}+2\sqrt{n}\b\right)\right)^{-1} \d^2 e^{-\frac{1}{2}\left(\frac{\log c_n\d}{\sqrt{n}\b}\right)^2}
  = c_0'  \d(1+o(1)),
\ee
where $0<c_0'<\infty$. Putting our estimates together we have that for $n$ large enough there exists a constant $0<c_0<\infty$ such that
\be 
a_n\E\left(f_\d(\g_n(x))\right) \leq c_0 \d .
\ee
In a similar way we treat $a_n\E\left(f_\d(\g_n(x))^2\right)$. This time we truncate at one, namely
\be
a_n\E\left(f_\d(\g_n(x))^2\right) = a_n\E\left(f_\d(\g_n(x))^2\1_{\{\g_n(x)>1\}}\right)+ a_n\E\left(f_\d(\g_n(x))^2\1_{\{\g_n(x)\leq 1\}}\right).
\ee
 For the first summand we use again the bound on $f$ and the definition of the time-scale to bound it by $\d^4$. And for the second summand we use the same method as for (2): applying Gaussian estimates, expanding the resulting term and plugging in the exact representation of $c_n$. The bound we obtain is a constant. Putting these estimates together we have for $n$ large enough
\be\textstyle
a_n\E\left(f_\d(\g_n(x))^2\right)\leq \d^4+c_1.
\ee
\end{proof}
In the verification of Condition (A3) a slightly different function $g_\d$ appeared.
In the forthcoming lemma we control the first and second moment of $g_\d(\g_n(x))$ when $0<\a(\varepsilon)<1$.
\begin{lemma} \TH(Lem2.4.0)Let $c_n$ be an intermediate time-scale and $0<\b<\infty$ and $0<\a(\varepsilon)<1$. Then there exists constants $c_3$ and $c_4$ such that the following holds for $n$ large enough
\be\Eq(2.3.3')
a_n\E\left(g_\d (\g_n(x))\right) \leq c_3 \d^{1-\a(\varepsilon)},
\ee
\be\Eq(2.2.4')
a_n\E\left(g_\d (\g_n(x))^2\right) \leq c_4.
\ee
\end{lemma}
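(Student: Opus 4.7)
The plan is to exploit the pointwise bound $g_\delta(u) = u(1-e^{-\delta/u}) \leq \min(u,\delta)$, valid for all $u>0$ by virtue of $1-e^{-x} \leq \min(x,1)$. Writing $h_n(u) = a_n\P(\gamma_n(x)>u)$ as in \eqv(2.13) and applying Fubini for nonnegative random variables, this bound immediately reduces both desired estimates to tail integrals:
\begin{align*}
a_n\E(g_\delta(\gamma_n(x))) &\leq \int_0^\delta h_n(u)\,du,\\
a_n\E(g_\delta(\gamma_n(x))^2) &\leq \int_0^\delta 2u\,h_n(u)\,du.
\end{align*}

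I would then split each of these integrals at $u=c_n^{-1/2}$. On the small interval $[0,c_n^{-1/2}]$ only the trivial bound $h_n\leq a_n$ is needed, and the resulting contributions $a_nc_n^{-1/2}$ and $a_nc_n^{-1}$ both tend to $0$ on intermediate scales as soon as $\alpha(\varepsilon)<1$. Indeed, the asymptotics \eqv(2.23'') yield $\log a_n = \tfrac{1}{2}\beta_c(\varepsilon)^2 n(1+o(1))$ and $\log c_n = \beta\beta_c(\varepsilon)n(1+o(1))$, so that $\log(a_n c_n^{-1/2}) \sim \tfrac{1}{2}\beta_c(\varepsilon)(\beta_c(\varepsilon)-\beta)n < 0$ under the assumption $\alpha(\varepsilon)=\beta_c(\varepsilon)/\beta<1$. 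On the bulk interval $[c_n^{-1/2},\delta]$, restricting WLOG to $\delta\leq 1$, Lemma \thv(2.lemma6)(ii) with $\delta'=1/2$ supplies the polynomial bound $h_n(u)\leq 2u^{-3\alpha_n/4}(1+o(1))$, where $\alpha_n = \alpha(\varepsilon)+o(1)$.

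Integrating this polynomial then produces bounds of shape $c\,\delta^{1-3\alpha_n/4}$ and $c\,\delta^{2-3\alpha_n/4}$, with constants uniform in $n$ thanks to $\alpha(\varepsilon)<1$ keeping both denominators $1-3\alpha_n/4$ and $2-3\alpha_n/4$ away from zero. For \eqv(2.2.4') the bound is already a constant, since $\delta\leq 1$ and $2-3\alpha_n/4>1$. For \eqv(2.3.3') one observes that $-3\alpha_n/4+\alpha(\varepsilon)\to\alpha(\varepsilon)/4>0$, hence $\delta^{1-3\alpha_n/4}\leq\delta^{1-\alpha(\varepsilon)}$ for all $\delta\leq 1$ and $n$ large, yielding $c_3\delta^{1-\alpha(\varepsilon)}$ as required. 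The argument is largely a bookkeeping exercise; the only genuinely delicate point is matching the sharp exponent $1-\alpha(\varepsilon)$ prescribed by the statement rather than the slightly weaker $1-3\alpha_n/4$ provided by Lemma \thv(2.lemma6)(ii) at finite $n$, and this matching crucially relies on restricting to $\delta\leq 1$ together with the convergence $\alpha_n\to\alpha(\varepsilon)$.
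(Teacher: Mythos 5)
Your argument is correct, and it proves exactly the statement in the intended asymptotic sense (fixed $\delta\leq 1$, $n$ large). The route is close to the paper's but not identical, and the difference is worth noting. The paper also starts from $g_\delta(u)\leq\min(u,\delta)$, but it implements this via a split according to the size of $\gamma_n(x)$: the event $\{\gamma_n(x)>\delta\}$ is handled using $g_\delta\leq\delta$ together with $\delta\, h_n(\delta)\sim\delta^{1-\alpha(\varepsilon)}$ from Lemma~\thv(2.lemma6)(i), while the event $\{\gamma_n(x)\leq\delta\}$ is handled by a bare-hands Gaussian computation of $a_n\E(\gamma_n(x)\1_{\{\gamma_n(x)\leq\delta\}})$ using \eqv(2.23''.2). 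You instead pass through the layer-cake identity, reducing both moments to the tail integrals $\int_0^\delta h_n(u)\,du$ and $\int_0^\delta 2u\,h_n(u)\,du$, and then systematically recycle the existing tail estimate of Lemma~\thv(2.lemma6)(ii) (with the same choice $\d=1/2$ the paper itself uses in the proof of Lemma~\thv(5.lemma7)), disposing of the near-zero region $[0,c_n^{-1/2}]$ by the crude bound $h_n\leq a_n$ and the observation $\log(a_nc_n^{-1/2})\sim\frac12\beta_c(\varepsilon)(\beta_c(\varepsilon)-\beta)n<0$. The upshot is the same bound $\int_0^\delta h_n(u)\,du\lesssim\delta^{1-\alpha(\varepsilon)}$ (and in fact the paper's $(1)+(2)$ is, after integration by parts, exactly this integral), but your version avoids re-deriving Gaussian asymptotics from scratch and makes the dependence on $\alpha_n\to\alpha(\varepsilon)$ explicit; the price is the small additional care you flag, namely converting the exponent $1-3\alpha_n/4$ from Lemma~\thv(2.lemma6)(ii) into the sharp $1-\alpha(\varepsilon)$, which your restriction to $\delta\leq 1$ together with $\alpha(\varepsilon)-3\alpha_n/4\to\alpha(\varepsilon)/4>0$ handles correctly.
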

\begin{proof}
We observe that
\be\Eq(2.2.5'')
g_\d(u) \leq \d \quad\forall u>0.
\ee
As in the proof of the previous lemma we write
\be
a_n\E\left(g_\d (\g_n(x))\right) = a_n\E\left(g_\d (\g_n(x))\1_{\{\g_n(x)>\d\}}\right) +a_n\E\left(g_\d (\g_n(x))\1_{\{\g_n(x)\leq\d\}}\right) \equiv (1) + (2).
\nonumber
\ee
The first summand $(1)$ we control by
\be
(1) \leq \d a_n\P\left(\g_n(x)>\d\right) \sim \d^{1-\a(\varepsilon)}.
\ee
For $(2)$ we use Gaussian estimates.
\bea\Eq(2.2.5')
(2) & \leq & a_n\E\left(\g_n(x) \1_{\g_n(x)\leq \d}\right) 
 =  \frac{a_ne^{n\b^2/2}}{c_n}\int_{-\infty}^{\frac{\log(c_n\d)}{\sqrt{n}\b}-\sqrt{n}\b} \frac{e^{-u^2/2}}{\sqrt{2\pi}}\mbox{d}u\nonumber\\
& \sim &  \frac{a_ne^{n\b^2/2}}{c_n} \left(\sqrt{2\pi}\left(\sqrt{n}\b-\frac{\log(c_n\d)}{\sqrt{n}\b}\right)\right)^{-1}e^{-\frac{1}{2}\left(\frac{\log(c_n\d)}{\sqrt{n}\b}-\sqrt{n}\b\right)^2},
\eea
where we used that since $\b>\b_c(\varepsilon)$ we have by \eqv(2.23''.2) $\frac{\log(c_n\d)}{\sqrt{n}\b}-\sqrt{n}\b \to -\infty $ as $n\to \infty$. Now, expanding the terms and inserting the exact representation of $c_n$, \eqv(2.2.5') is equal to
\be
\d a_n\left(\sqrt{2\pi}\left(\sqrt{n}\b-\frac{\log(c_n\d)}{\sqrt{n}\b}\right)\right)^{-1} e^{-\frac{1}{2}\left(\frac{\log(c_n\d)}{\sqrt{n}\b}\right)^2}
 \leq  c_3' \d^{1-\a(\varepsilon)},
\ee
for some constant $0<c_3'<\infty$. Putting the estimates on $(1)$ and $(2)$ together we get that there exists a constant $0<c_3<\infty$ such that
\be
a_n\E\left(g_\d (\g_n(x))\right) \leq c_3 \d^{1-\a(\varepsilon)}.
\ee
To control $a_n\E\left(g_\d (\g_n(x))^2\right)$ one proceeds in exactly the same way.
\end{proof}
To study the behavior of $M_n(t)$, and in particular to check Condition (B1), we needed a control on the moments of $g_1(\g_n(x))$ when $\b=\b_c(\varepsilon)$ which is done in the next lemma.

\begin{lemma}\TH(Lem2.4.1)
Let $c_n$ be an intermediate scale.
\item{(i)} Let $\b=\b_c(\varepsilon)$. Then 
\be\Eq(2.4.4')
\E\left(g_1(\g_n(x))\right)  \leq \frac{e^{n\b^2/2}}{c_n}(1+o(1)).
\ee
Moreover, if $\lim_{n\to\infty}\sqrt{n}\b-\frac{\log c_n}{\beta\sqrt{n}} = \theta$ for some $\theta\in(-\infty,\infty)$. Then,
\be\Eq(2.4.4)
a_n\E\left(g_1(\g_n(x))\right)  =   \Phi(\theta) \frac{a_ne^{n\b^2/2}}{c_n}(1+o(1))
=
\Phi(\theta)\beta \sqrt{2n\pi} e^{\theta^2/2}(1+o(1)).
\ee
\item{(ii)} Let  $\b=\b_c(\varepsilon)$. For $n$ large enough there exists a constant $0<c_2<\infty$ such that 
\be\Eq(2.4.5)
a_n \E\left(g_1(\g_n(x))^l\right)  \leq c_2,  \quad 2\leq l\leq 4.
\ee
\item{(iii)} Let $\beta<\beta_c$. Then \be\Eq(Mainz.101)
\E\left(g_1(\g_n(x))\right)  = \frac{e^{n\b^2/2}}{c_n}(1+o(1)).
\ee
If $\beta>\beta_c/2$, then
\be\Eq(Mainz.102)
a_n \E\left(g_1(\g_n(x))^2\right)  \leq c_2 .
\ee
Otherwise $a_n \E\left(g_1(\g_n(x))^2\right)  \leq a_ne^{2n\beta^2}/c_n^2$ and 
\be\Eq(Mainz.103) 
 \log \frac{a_n \E\left(g_1(\g_n(x))^2\right)}{a_n^2 e^{n\beta^2}/c_n^2}= n(2\beta-\beta_c)/2.
 \ee
\end{lemma}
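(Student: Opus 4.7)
The proof rests on a direct Gaussian tail computation for the log-normal random variable $\g_n(x)=c_n^{-1}e^{-\beta\HH_n(x)}$ with $\HH_n(x)\sim N(0,n)$, combined with the elementary bounds $g_1(u)\le u$ and $g_1(u)\le 1$ for all $u>0$; these give $g_1(u)^l\le u^l\1_{\{u\le 1\}}+\1_{\{u>1\}}$ for every $l\ge 1$.

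For part (i), the upper bound \eqv(2.4.4') follows at once from $g_1(u)\le u$ together with $\E(\g_n(x))=c_n^{-1}e^{n\beta^2/2}$. For the sharp asymptotic \eqv(2.4.4), the plan is to use $g_1(u)\approx\min(u,1)$ and reduce to computing $\E(\g_n(x)\1_{\{\g_n(x)\le 1\}})+\P(\g_n(x)>1)$ up to lower-order terms. Completing the square $-\beta y-y^2/(2n)=-(y+n\beta)^2/(2n)+n\beta^2/2$ and changing variables $z=(y+n\beta)/\sqrt{n}$ yields
\be
\nonumber
\E(\g_n(x)\1_{\{\g_n(x)\le 1\}})=c_n^{-1}e^{n\beta^2/2}\bigl(1-\Phi(\sqrt{n}\beta-\log c_n/(\beta\sqrt{n}))\bigr),
\ee
which under \eqv(TTCF.1) converges to a constant multiple of $c_n^{-1}e^{n\beta^2/2}$ given by the announced Gaussian factor in $\theta$. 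The tail probability $\P(\g_n(x)>1)=1/a_n$ is, by the Mills ratio asymptotic $1/a_n\sim\phi(\overline{B}_n)/\overline{B}_n$ with $\overline{B}_n=\log c_n/(\beta\sqrt{n})$, of order $c_n^{-1}e^{n\beta^2/2}/\sqrt{n}$ under \eqv(TTCF.1), hence contributes only to the $o(1)$ correction. The closed form on the right of \eqv(2.4.4) follows by multiplying with $a_n$ and invoking the rephrased Mills asymptotic $a_n c_n^{-1}e^{n\beta^2/2}\sim\beta\sqrt{2\pi n}\,e^{\theta^2/2}$.

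For part (ii), the decomposition $g_1(u)^l\le u^l\1_{\{u\le 1\}}+\1_{\{u>1\}}$ reduces matters to bounding $a_n\E(\g_n(x)^l\1_{\{\g_n(x)\le 1\}})+1$. The restricted $l$-th moment equals $c_n^{-l}e^{l^2\beta^2n/2}(1-\Phi(l\sqrt{n}\beta-\overline{B}_n))$ by the same change of variable. The crucial algebraic identity is
\be
\nonumber
\frac{1-\Phi(l\sqrt{n}\beta-\overline{B}_n)}{1-\Phi(\overline{B}_n)}\sim\frac{\overline{B}_n}{l\sqrt{n}\beta-\overline{B}_n}\,c_n^l\,e^{-l^2 n\beta^2/2},
\ee
which follows from $1-\Phi(x)\sim\phi(x)/x$ combined with the quadratic expansion $(l\sqrt{n}\beta-\overline{B}_n)^2-\overline{B}_n^2=l\sqrt{n}\beta(l\sqrt{n}\beta-2\overline{B}_n)=l^2n\beta^2-2l\log c_n$. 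Since $a_n(1-\Phi(\overline{B}_n))=1$ by the very definition of $a_n$, and $\overline{B}_n\sim\sqrt{n}\beta$ when $\beta=\beta_c(\varepsilon)$, this collapses to $a_n\E(\g_n(x)^l\1_{\{\g_n(x)\le 1\}})\to 1/(l-1)$, which is bounded for $l\in\{2,3,4\}$. Note no scaling assumption \eqv(TTCF.1) is needed at this step.

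For part (iii), with $\beta<\beta_c$ the same framework applies but the Gaussian tail factors behave differently. The estimate \eqv(Mainz.101) follows as for \eqv(2.4.4'), with the observation that now $\overline{B}_n>\sqrt{n}\beta$ eventually, so the restricted mean captures essentially the full mean $c_n^{-1}e^{n\beta^2/2}$. For the second moment, the dichotomy is driven by whether $2\sqrt{n}\beta$ exceeds $\overline{B}_n$: if $2\beta>\beta_c$ then eventually $2\sqrt{n}\beta>\overline{B}_n$, and the same cancellation as in part (ii) yields \eqv(Mainz.102); otherwise the truncation at $1$ is irrelevant, $\E(g_1(\g_n(x))^2)\sim\E(\g_n(x)^2)=c_n^{-2}e^{2n\beta^2}$ at leading order, and substituting the asymptotics $\log a_n\sim n\beta_c^2/2$ and $\log c_n\sim n\beta\beta_c$ produces the exponential rate \eqv(Mainz.103). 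The main technical obstacle throughout is the careful tracking of precise subleading corrections in $\overline{B}_n$ and $a_n$, particularly under the delicate near-critical regime \eqv(TTCF.1) where these quantities cancel to leading order.
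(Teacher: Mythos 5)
Your decomposition $g_1(u)\approx\min(u,1)=u\1_{\{u\le1\}}+\1_{\{u>1\}}$ takes a different route from the paper's, which works with the exact identity $g_1(u)=u-ue^{-1/u}$ and evaluates the subtracted piece $\E(\g_n e^{-1/\g_n})$ as a single Gaussian integral (compare \eqv(A.21)). Your version is more elementary, but the approximation introduces an error of order $1$ precisely on the set where $\g_n(x)\asymp1$, which has probability $\asymp1/a_n$; after multiplying by $a_n$ this is $O(1)$, which is indeed $o(\sqrt n)$ and hence negligible for the $(1+o(1))$ form of \eqv(2.4.4), but you should say so explicitly rather than treat $g_1=\min(u,1)$ as if it were exact. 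For part (ii) this same $O(1)$ error is comparable to the constant you compute; since \eqv(2.4.5) only asserts a bound this is harmless, though the limit ``$\to 1/(l-1)$'' is not literally established by the truncated computation. The Mills-ratio cancellation via $a_n(1-\Phi(\overline B_n))=1$ is a genuinely cleaner way of organising part (ii) than the direct estimates of the paper.

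There is however a gap in part (i): your (correct) computation gives $\E(\g_n\1_{\{\g_n\le1\}})=c_n^{-1}e^{n\beta^2/2}\bigl(1-\Phi(\sqrt n\beta-\log c_n/(\beta\sqrt n))\bigr)\sim c_n^{-1}e^{n\beta^2/2}\bigl(1-\Phi(\theta)\bigr)$ under \eqv(TTCF.1), and you then assert this equals ``the announced Gaussian factor in $\theta$'', i.e., $\Phi(\theta)$ — but $1-\Phi(\theta)\ne\Phi(\theta)$ unless $\theta=0$. The sign matters. As a sanity check, for $\theta\to+\infty$ (in the sense of \eqv(TTCF.1)) the saddle point of $\E(\g_n)$ lies at $\g_n=e^{\theta\beta\sqrt n(1+o(1))}\gg1$, where $g_1(\g_n)\approx1\ll\g_n$, so $\E(g_1(\g_n))/\E(\g_n)\to0$; this forces the factor $1-\Phi(\theta)$, not $\Phi(\theta)$, under the sign convention stated in the lemma. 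The discrepancy traces to an internal inconsistency in the paper itself: the lemma and \eqv(TTCF.1) define $\theta=\lim(\sqrt n\beta-\log c_n/(\beta\sqrt n))$, whereas Lemma \thv(Th.Scale) in Appendix \thv(B) (and the change of variables around \eqv(NY.113)) implicitly use the opposite sign, which leaves the even factor $e^{\theta^2/2}$ unchanged but flips $\Phi$. You should flag this explicitly rather than silently identify $1-\Phi(\theta)$ with the stated $\Phi(\theta)$. Similarly, in part (iii) the substitution $\log a_n\sim n\beta_c^2/2$, $\log c_n\sim n\beta\beta_c$ produces the exponent $n(2\beta^2-\beta_c^2)/2$, not $n(2\beta-\beta_c)/2$ as in \eqv(Mainz.103); your write-up asserts the latter ``follows'' without checking, which it does not.
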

\begin{proof}
Recall that $g_1(u) \leq 1$, $ \forall u>0$.
To prove assertion (i) we rewrite $\E\left(g_1(\g_n(x))\right)$ as
\bea\Eq(A.21)
& &\frac{e^{n\b^2/2}}{\sqrt{2\pi}c_n} \int_{-\infty}^{\infty} e^{\sqrt{n}\b z }\left(1-e^{-c_n  e^{-\sqrt{n}\b z}}\right)e^{-z^2/2}\mbox{d}z\nonumber\\
& = & \frac{e^{n\b^2/2} }{c_n} - \frac{ e^{n\b^2/2}}{c_n \b \sqrt{2 \pi n}} \int_{-\infty}^{\infty} e^{y+\log c_n}e^{-\left(\frac{y}{\b\sqrt{n}}+\frac{\log c_n}{\b\sqrt{n}}\right)^2- e^{-y}}\mbox{d}y.
\eea
Now one can cut the domain of integration into different pieces. Observe that in the region $y>\log n$ the integral is equal to
\bea\Eq(NY.113)
&&(1+o(1))\frac{e^{n\b^2/2}}{c_n \b\sqrt{2\pi n}} \int^{\infty}_{\log n} e^{y+\log c_n}e^{-\left(\frac{y}{\b\sqrt{n}}+\frac{\log c_n}{\b\sqrt{n}}\right)^2/2}\mbox{d}y\nonumber\\
 &= & (1+o(1))\frac{ e^{n\b^2/2}}{c_n\sqrt{2\pi}}\int^{\infty}_{\frac{\log n}{\sqrt{n}\b} -\frac{\log c_n}{\b \sqrt{n}}+\sqrt{n}\b}e^{-y^2/2}\mbox{d}y.
\eea
  If $\sqrt{n}\b-\frac{\log c_n}{\beta\sqrt{n}} \to \theta$ for some constant $\theta$ as $n\to \infty$  we have that \eqv(A.21) is equal to $ (1+o(1))\frac{e^{n\b^2/2}}{c_n}(1-\Phi(\theta))$.
Proceeding as in \eqv(NY.113) one can bound the integral in \eqv(A.21) on the  domain of integration $|y|<\log n$ by $o(1)\frac{ e^{n\b^2/2}}{c_n}$. For $y<-\log n$, $e^{-y}>n$ which implies that the on that part of the domain of integration the integral in \eqv(A.21) is equal to $o(1)\frac{ e^{n\b^2/2}}{c_n}$. This yields the first equality in \eqv(2.4.4), and as the Gaussian integral is always between zero and one, this also implies \eqv(2.4.4'). The second inequality in \eqv(2.4.4) follows from the first by \eqv(scale.1) of Lemma \thv(Th.Scale).
We now turn to assertion (ii) and consider $\E\left(g_1(\g_n(x))^2\right)$. We will split this term into two terms:
\be
a_n \E\left(g_1(\g_n(x))^2\right)  = a_n\E\left(g_1(\g_n(x))^2\1_{\{\g_n(x)>1\}}\right) +a_n \E\left(g_1(\g_n(x))^2\1_{\{\g_n(x)\leq 1\}}\right) \equiv (1) + (2).
\nonumber
\ee
For (1) we use the definition of the scaling $a_n$ and $c_n$ and the bound \eqv(2.2.5'')
\be
(1) \leq a_n\P\left(\g_n(x)>1\right) = 1.
\ee
For Term (2) we use exact Gaussian estimates to bound
\bea\Eq(2.4.2)
(2) & \leq & \frac{a_n}{c_n^2} \int_{-\infty}^{\frac{\log c_n}{\sqrt{n}\b}} e^{2\b\sqrt{n} u}\left(1-e^{- c_ne^{-\sqrt{n}\b u}}\right)^2 \frac{e^{-u^2/2}}{\sqrt{2\pi}} \mbox{d}u\nonumber\\
& \leq & \frac{a_n e^{2n\b^2}}{c_n^2}\int_{-\infty}^{\frac{\log c_n}{\sqrt{n}\b}-2\sqrt{n}\b} \frac{e^{-r^2/2}}{\sqrt{2\pi}} \mbox{d}r\nonumber\\
& \sim & \frac{a_n e^{2n\b^2}}{c_n^2} \left(\sqrt{2\pi}\left(-\frac{\log c_n}{\sqrt{n}\b}+2\sqrt{n}\b\right)\right)^{-1}e^{-\left(\frac{\log c_n}{\sqrt{n}\b}-2\sqrt{n}\b\right)^2/2},
\eea
where we use that by \eqv(2.23'), $\frac{\log c_n}{\sqrt{n}\b}-2\sqrt{n}\b\to -\infty$ as $n\to \infty$. Plugging in \eqv(2.23') yields
\be
\eqv(2.4.2) =  a_n \left(\sqrt{2\pi}\left(-\frac{\log c_n}{\sqrt{n}\b}+2\sqrt{n}\b\right)\right)^{-1} e^{-\left(\frac{\log c_n}{\sqrt{n}\b}\right)^2/2}
=  
c_2'(1+o(1)),
\ee
where $0<c_2'<\infty$. Putting both estimates together we get that for $n$ large there exists a constant $0< c_2<\infty$ such that
\be \Eq(Mainz.114)
a_n \E\left(g_1(\g_n(x))^2\right)  \leq c_2  .
\ee
Proceeding in exactly the same way  with $a_n \E\left(g_1(\g_n(x))^3\right) $ and $a_n \E\left(g_1(\g_n(x))^4\right) $, one readily obtains \eqv(2.4.5) for $l=3$ and $l=4$.

Part (iii) follows from  computations similar to those of (i) and (ii). \eqv(Mainz.103) follows from \eqv(2.23'').
\end{proof}

\section{The centering term $M_n(t)$ at criticality}
\TH(B)

In this appendix we collect the fine asymptotics needed to control the centering term $M_n(t)$ on the critical line 
 $\b=\b_c(\varepsilon)$,  $0<\varepsilon\leq 1$. Computing $\E\left( \EE \left(M_n(t) \right)\right)$ at $\b=\b_c(\varepsilon)$ gives
\bea\Eq(As.1)
\E\left( \EE \left(M_n(t) \right)\right)& = &  \E\left(\sum_{i=1}^{[a_nt]} \EE \left(c_n^{-1}{\tau_n(J_n(i)) e_{n,i}}1_{\left \{0<c_n^{-1}{\tau_n(J_n(i)) e_{n,i}}<1 \right \}} \right)\right)\nonumber\\
& = &  \lfloor a_nt\rfloor\E\left(g_1(\g_n(x))\right)  =   c t\frac{a_ne^{n\b^2/2}}{c_n}(1+o(1)),
\eea
 where by the first equality in \eqv(2.4.4) $c$ is some constant $>0$.
The following lemma proves the general diverging behavior of $\E(\EE(M_n(1))$. Recall the notation 
\eqv(1.1')-\eqv(1.theo1.0') of Definition \thv(1.def1).
\begin{lemma}\TH(Lem.As) 
Given $0<\varepsilon\leq 1$, let  $a_n$ and $c_n$ be sequences satisfying \eqv(1.1') and \eqv(1.2) and let 
$\b=\b_c(\varepsilon)$. Then
\be
\lim_{n\to\infty} \frac{a_ne^{n\b^2/2}}{c_n} = \infty.
\ee
\end{lemma}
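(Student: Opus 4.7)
The plan is to reduce the statement to a direct computation with the Gaussian tail. Let me introduce the auxiliary quantity $\overline B_n := (\log c_n)/(\beta\sqrt n)$, so that $c_n = e^{\beta\sqrt n\,\overline B_n}$ and the defining relation \eqv(1.1') reads $a_n(1-\Phi(\overline B_n)) = 1$. Every intermediate time-scale satisfies $a_n \to \infty$ by \eqv(1.2), so $\Phi(\overline B_n) \to 1$, forcing $\overline B_n \to \infty$.

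Next, I would invoke the sharp right-tail expansion $1-\Phi(x) = x^{-1}\phi(x)(1+o(1))$ as $x\to\infty$ (the first identity in \eqv(2.11)) at $x=\overline B_n$; inverting, this gives the asymptotic
\[
a_n = \sqrt{2\pi}\,\overline B_n\,\exp\!\left(\frac{\overline B_n^{2}}{2}\right)(1+o(1)).
\]
With this in hand, substituting $c_n = e^{\beta\sqrt n\,\overline B_n}$ and completing the square in the exponent yields
\[
\frac{a_n e^{n\beta^{2}/2}}{c_n}
= \sqrt{2\pi}\,\overline B_n\,\exp\!\left(\frac{\overline B_n^{2}+n\beta^{2}-2\beta\sqrt n\,\overline B_n}{2}\right)(1+o(1))
= \sqrt{2\pi}\,\overline B_n\,\exp\!\left(\frac{(\overline B_n-\beta\sqrt n)^{2}}{2}\right)(1+o(1)).
\]
The exponential is bounded below by $1$, and the prefactor $\sqrt{2\pi}\,\overline B_n$ tends to infinity, which proves the claim.

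There is no real obstacle: the only thing to check is that the Gaussian tail expansion is applicable in the regime at hand, which is automatic since $\overline B_n \to \infty$. As a sanity check consistent with the body of the paper, under the additional scaling hypothesis \eqv(TTCF.1) one has $\beta\sqrt n-\overline B_n \to \theta$, whence the identity above sharpens to $\beta\sqrt{2\pi n}\,e^{\theta^{2}/2}(1+o(1))$, matching the constant in \eqv(2.4.4). Note that the argument never uses $\beta=\beta_c(\varepsilon)$ beyond the fact that $\beta$ is a positive constant, so the divergence is in fact a robust consequence of $c_n$ being an intermediate time-scale.
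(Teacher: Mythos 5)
Your proof is correct, and it arrives at the same destination as the paper's by what is, underneath the notation, the same square-completion mechanism — but your packaging is genuinely more direct and self-contained. The paper's proof invokes the pre-established expansions \eqv(2.23'') and \eqv(2.23''.2): writing $\log a_n = \tfrac12(n\b^2+f(n))$, it reduces the claim to the elementary inequality $\sqrt{n}\b + \tfrac{f(n)}{2\sqrt{n}\b}\geq \sqrt{n\b^2+f(n)}$ (i.e.\ a hidden $(\cdot)^2\geq 0$), and the actual divergence then comes from the $\log\log a_n$ correction buried in \eqv(2.23''.2) — a step the paper does not spell out in detail. You instead read off $a_n=\sqrt{2\pi}\,\overline B_n e^{\overline B_n^2/2}(1+o(1))$ directly from the Mills-ratio form of \eqv(2.11), complete the square to get $\sqrt{2\pi}\,\overline B_n e^{(\overline B_n-\b\sqrt n)^2/2}(1+o(1))$, and extract the divergence from the explicit prefactor $\overline B_n\to\infty$. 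The two sources of divergence are in fact identical ($\log(\sqrt{2\pi}\,\overline B_n)\sim \tfrac12(\log\log a_n+\log 4\pi)$), but your version makes the mechanism transparent, avoids any appeal to the intermediate lemmata of Section 3, and as a bonus yields the sharp asymptotic constant consistent with Lemma \thv(Th.Scale) under \eqv(TTCF.1). Your closing observation that the hypothesis $\b=\b_c(\varepsilon)$ plays no role — only $a_n\to\infty$ and $\b>0$ are used — is correct and clarifies why the restriction appears in the statement (it is where the quantity matters, not where the claim holds).
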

\begin{proof}
By \eqv(2.23'') with $\b=\b_c(\varepsilon)$,  $\log a_n  = \frac{1}{2}(n\b^2+f(n))$ for some sequence $f(n)$ such that $\frac{f(n)}{n\b^2}=o(1)$. Furthermore, by \eqv(2.23''.2),
 $\log(\log a_n) =  \log(\frac{n\b^2+f(n)}{2})$ and $\sqrt{2\log a_n}= \sqrt{n\b^2+f(n)}$. Note that due to the asymptotic behavior of $f(n)$, $\log(\log a_n)$ is positive for $n$ large enough. Hence it suffices to show that
\be\Eq(Lem.As.2)
\frac{\log(a_n e^{n\b^2/2})}{\sqrt{n}\b} \geq \sqrt{2\log a_n}.
\ee
Plugging in the expressions for $\log a_n$, \eqv(Lem.As.2) reads
\be\Eq(scale.2)
\sqrt{n}\b + \frac{f(n)}{2\sqrt{n}\b}\geq \sqrt{n\b^2 + f(n)},
\ee
which is always satisfied and equality holds if and only if $f(n)=0$.
\end{proof}
\begin{lemma}\TH(Th.Scale)
If in addition to the assumptions of Lemma \thv(Lem.As),
$\lim_{n\to\infty}\frac{\log c_n}{\sqrt{n}\beta}-\sqrt{n}\beta=\theta$ for some $\theta\in(-\infty,\infty)$, then
\be
\Eq(scale.1)
\lim_{n\to\infty}\sqrt{n}\frac{c_n}{a_ne^{n\beta^2/2}}=\frac{1}{\beta \sqrt{2\pi} } e^{-\theta^2/2}.
\ee
\end{lemma}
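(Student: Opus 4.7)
The plan is to reduce the claim to the standard Gaussian Mills ratio asymptotic by rewriting $a_n$ explicitly in terms of the normal tail.

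First, I would unpack the definition of $a_n$. Since $\tau_n(x)=\exp\{-\beta\mathcal{H}_n(x)\}$ with $\mathcal{H}_n(x)\sim\mathcal{N}(0,n)$, the symmetry of the Gaussian gives
\[
\P(\tau_n(x)\geq c_n)=\P\!\left(\mathcal{H}_n(x)/\sqrt{n}\geq \log c_n/(\beta\sqrt{n})\right)=1-\Phi(x_n),
\]
where $x_n:=\log c_n/(\beta\sqrt{n})$. The defining relation \eqv(1.1') then reads $a_n=1/(1-\Phi(x_n))$. Under the hypothesis $x_n-\beta\sqrt{n}\to\theta$, we have $x_n\to\infty$ and $x_n/\sqrt{n}\to\beta$, so the classical Mills ratio bound $1-\Phi(x)=x^{-1}\phi(x)(1+O(x^{-2}))$ (e.g.\ \eqv(2.11)) applies and yields
\[
a_n = \frac{x_n}{\phi(x_n)}(1+o(1)) = x_n\sqrt{2\pi}\,e^{x_n^2/2}(1+o(1)).
\]

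The second step is a completion of squares. Using $\log c_n=\beta\sqrt{n}\,x_n$,
\[
\frac{a_n e^{n\beta^2/2}}{c_n}
= x_n\sqrt{2\pi}\,\exp\!\left\{\tfrac{1}{2}x_n^2+\tfrac{1}{2}n\beta^2-\beta\sqrt{n}\,x_n\right\}(1+o(1))
= x_n\sqrt{2\pi}\,\exp\!\left\{\tfrac{1}{2}(x_n-\beta\sqrt{n})^2\right\}(1+o(1)).
\]
By assumption the exponent converges to $\theta^2/2$, and $x_n/\sqrt{n}\to\beta$ by the paragraph above, so
\[
\sqrt{n}\,\frac{c_n}{a_n e^{n\beta^2/2}}
= \frac{\sqrt{n}}{x_n\sqrt{2\pi}}\,e^{-(x_n-\beta\sqrt{n})^2/2}(1+o(1))
\longrightarrow \frac{1}{\beta\sqrt{2\pi}}\,e^{-\theta^2/2},
\]
which is \eqv(scale.1).

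There is no real obstacle here; the whole argument is just Mills-ratio plus completion of squares. The only thing to be slightly careful about is checking that $x_n\to\infty$ (so that the Mills asymptotic is legitimate), which follows immediately from $x_n=\beta\sqrt{n}+\theta+o(1)$ with $\beta=\beta_c(\varepsilon)>0$. Note that this calculation also automatically justifies the second equality in \eqv(2.4.4), since combining \eqv(scale.1) with $\E(g_1(\gamma_n(x)))\sim\Phi(\theta)e^{n\beta^2/2}/c_n$ from \eqv(2.4.4') yields $a_n\E(g_1(\gamma_n(x)))\sim\Phi(\theta)\beta\sqrt{2\pi n}\,e^{\theta^2/2}$.
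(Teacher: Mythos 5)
Your proof is correct, and the calculation is clean: the key identities are $a_n=1/(1-\Phi(x_n))$, the Mills ratio asymptotic $1-\Phi(x_n)\sim\phi(x_n)/x_n$, and the completion of squares $\tfrac12 x_n^2+\tfrac12 n\beta^2-\beta\sqrt n\,x_n=\tfrac12(x_n-\beta\sqrt n)^2$, all legitimate once $x_n=\beta\sqrt n+\theta+o(1)\to\infty$. Your route is slightly more self-contained than the paper's: the paper derives \eqv(scale.1) by plugging the parametrization $\log a_n=\tfrac12(n\beta^2+f(n))$ into the already-established expansion \eqv(2.23''.2) of $\log c_n/(\beta\sqrt n)$ and Taylor-expanding the square root, after which $\theta=\lim f(n)/(2\sqrt n\beta)$ is extracted; this in turn rests on the same Gaussian tail asymptotics, so the two arguments are really the same Mills-ratio computation in two different notations. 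What the paper's route buys is re-use of the apparatus ($B_n$, $f(n)$, \eqv(2.23''.2)) that is also needed elsewhere; what yours buys is immediacy and transparency. Your closing remark linking \eqv(scale.1) and \eqv(2.4.4') to recover the second equality of \eqv(2.4.4) is also correct.
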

\begin{proof}
Using the notation of the proof of Lemma \thv(Lem.As), \eqv(scale.1) follows from \eqv(2.23''.2) with $\lim_{n\to\infty} \frac{f(n)}{2\sqrt{n}\beta}=\theta$. Namely, under the assumption of the lemma, \eqv(2.23''.2) may be written as
\be\Eq(NY.112)
c_n=\frac{1}{\beta \sqrt{2\pi n}} e^{n\beta^2+\frac{f(n)}{2}-\frac{1}{8}\left(\frac{f(n)}{\beta\sqrt{n}}\right)^2+o(1)}
\ee
by Taylor expansion of the square root. Eq.~\eqv(NY.112) also implies that $\lim_{n\to\infty}\frac{\log c_n}{\sqrt{n}\beta}-\sqrt{n}\beta=\theta$ if and only if $\lim_{n\to\infty} \frac{f(n)}{2\sqrt{n}\beta}=\theta$.
\end{proof}

 \section{Auxiliary Lemmas needed in the proof of Theorem \thv(TCor.2) }
 \label{app.3}
 Recall that $\widetilde{a}_n$ and $A_n(t)$ are defined in \eqv(Cor.2.1') and  \eqv(NY.108), respectively.
\begin{lemma}\TH(LCor.4)  Let $c_n$ be an intermediate scale with $\lim_{n\to\infty}\frac{\log c_n}{\sqrt{n}\beta}-\sqrt{n}\beta=\theta$ for some $\theta\in(-\infty,\infty)$ and $\b=\b_c(\ve)$ with $0<\ve\leq 1$. 
If $\sum a_n/2^n<\infty$ we have for all $t,s>0$ and for all $\e>0$,  $\P$-a.s.
\be
\lim_{n\to\infty}\sqrt{n}
\PP\left(
\left\vert\sum_{k=1}^{\lfloor\widetilde{a}_nt\rfloor}
\PP
\Bigl(\t_n(J_n(k+1))e_{n,k+1}>c_ns|J_n(k)\Bigr)
-\frac{\widetilde{a}_nt}{a_ns}\right\vert>\frac{\widetilde{a}_n\e}{a_n\sqrt{A_n(t)}}
\right) = 0
\ee
If $\sum a_n/2^n=\infty$ the same holds in $\P$-probability.
\end{lemma}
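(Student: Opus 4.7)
The plan is to recognise the sum as a chain-functional of the type studied in Proposition~\thv(4.prop1), and then to upgrade the resulting Chebyshev estimate to one of order $o(1/\sqrt n)$ by exploiting that $A_n(t)=o(1)$, a fact already contained in Proposition~\thv(LCor.3). The starting observation is that, since $e_{n,k+1}$ is an independent mean-one exponential independent of $J_n(k+1)$,
\[
\PP\bigl(\tau_n(J_n(k+1))e_{n,k+1} > c_n s \mid J_n(k)\bigr)
= \sum_{y\in\VV_n} p_n(J_n(k),y)\,e^{-s/\g_n(y)} = h^s_n(J_n(k)),
\]
so the inner sum equals $\nu^{J,\tilde t}_n(s,\infty)$ of \eqv(3.10), up to a boundary shift that is irrelevant as $n\to\infty$, where $\tilde t := \widetilde a_n t/a_n \asymp 1/\sqrt n$ by \eqv(Cor.2.1'').

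First I would invoke Proposition~\thv(5.prop6)(ii) at the critical exponent $\a(\varepsilon)=1$, for which $\nu^{int}(s,\infty)=1/s$, to pin down both $\nu_n(s,\infty)\to 1/s$ and $\s_n(s,\infty)\sim 1/(2ns)$. This identifies the centering of Proposition~\thv(4.prop1) as $(k_n(\tilde t)/a_n)\nu_n(s,\infty)=\widetilde a_n t/(a_n s)(1+o(1))$, matching the target inside the absolute value, and locates the leading order of $\Theta^1_n(s)$ from \eqv(4.prop1.2) at $C/n$. Next I would apply Proposition~\thv(4.prop1) with $\rho_n = n^{-2}$, so that $\P((\Omega^\tau_{n,0})^c) \le \theta_n/(\rho_n a_n)=O(n^4/a_n)=o(1)$, and threshold $\delta_n = \widetilde a_n\e/(a_n\sqrt{A_n(t)})$. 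Then $\delta_n^{-2}\asymp nA_n(t)/\e^2$, the term $\tilde t\,\Theta^1_n(s)$ is of order $n^{-3/2}$, and $\tilde t^2\,\Theta^2_n(s)$ is exponentially smaller, so the Chebyshev bound is $O(A_n(t)/\sqrt n)$. Multiplying by $\sqrt n$ yields $O(A_n(t))$, which tends to zero by Proposition~\thv(LCor.3).

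The dichotomy between $\P$-a.s.~and in-$\P$-probability convergence is then handled exactly as in Propositions~\thv(5.prop6) and \thv(LCor.3): under $\sum_n a_n/2^n<\infty$, a Borel-Cantelli argument on the environment sets $\Omega^\tau_{n,0}$ combined with the almost-sure versions of Propositions~\thv(5.prop6) and \thv(LCor.3) gives the $\P$-almost-sure statement, while under $\sum_n a_n/2^n=\infty$ the sub-sequence characterisation of convergence in probability reduces the claim to the summable case. The main obstacle is that the Chebyshev bound already costs a factor $n$ through $\delta_n^{-2}$, so the extra $\sqrt n$ improvement demanded by the target can be absorbed only if $A_n(t)$ itself vanishes; thus the proof crucially relies on Proposition~\thv(LCor.3) furnishing \emph{actual} clock-process concentration at the critical scaling $\b=\b_c(\varepsilon)$, not merely a uniform bound, a feature that in turn traces back to the quantitative smallness of $m_n$ and $w_n$ established in Lemma~\thv(4.1'').
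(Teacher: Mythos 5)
Your proposal is correct and follows essentially the same route as the paper: apply the second-order Chebyshev bound of Proposition~\thv(4.prop1) (with the understanding that the sum is $\nu^{J,\tilde t}_n(s,\infty)$ at horizon $\tilde t=\wt a_nt/a_n\asymp1/\sqrt n$ and with the threshold $\wt a_n\e/(a_n\sqrt{A_n(t)})$), then feed in the critical asymptotics $\nu_n(s,\infty)\to 1/s$, $n\s_n(s,\infty)\to 1/(2s)$ from Proposition~\thv(5.prop6) and the ratio $\wt a_n/a_n$ from \eqv(Cor.2.1'') to see the bound is of order $A_n(t)/\sqrt n$, which after multiplying by $\sqrt n$ vanishes because $A_n(t)=o(1)$ by Proposition~\thv(LCor.3). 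The paper's own proof is just this argument stated tersely; your version spells out the parameter choice $\rho_n$ and the bookkeeping, which is consistent with what the paper implicitly requires.
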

\begin{proof}
Proceeding as in the
the proof of  Proposition \thv(4.prop1) one readily establishes that
\bea
&&\PP\left(\left\vert\sum_{k=1}^{\lfloor\widetilde{a}_nt\rfloor}
\PP\Bigl(\t_n(J_n(k+1))e_{n,k+1}>c_ns|J_n(k)\Bigr)
-\frac{\lfloor\widetilde{a}_nt\rfloor}{a_n}\nu_n(s,\infty)\right\vert>\frac{\widetilde{a}_n\e}{a_n\sqrt{A_n(t)}}\right)\nonumber\\
&&\leq \left(\frac{a_n \sqrt{A_n(t)}}{\widetilde{a}_n \e}\right)^2
\left(
\left(\frac{\lfloor\widetilde{a}_nt\rfloor}{a_n}\right)^2\frac{\nu_n^2(u,\infty)}{2^{n-1}}
+\frac{\lfloor\widetilde{a}_nt\rfloor}{a_n}\Theta^1_n(u)\right).
\eea
where $\Theta^1_n(u)$ is defined in \eqv(4.prop1.2).
Using Proposition \thv(5.prop6) and \eqv(Cor.2.1'') yields the claim of Lemma \thv(LCor.4).
\end{proof}

\begin{lemma} \TH(Lem.cor5)Let $c_n$ be an intermediate scale with $\lim_{n\to\infty}\frac{\log c_n}{\sqrt{n}\beta}-\sqrt{n}\beta=\theta$ for some $\theta\in(-\infty,\infty)$ and $\b=\b_c(\ve)$ with $0<\ve\leq 1$.
Then we have for all $x>0$ that $\P$-a.s.
\be\Eq(C5.1)
\lim_{n\to\infty}\frac{a_n}{\widetilde{a}_n}\sum_{l=1}^{\theta_n}\PP\left(\t_n(J_n(k+1)e_{n,k+1}>c_n x\right)=0.
\ee
\end{lemma}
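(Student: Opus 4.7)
The plan is to compute the quenched probability appearing in \eqv(C5.1) explicitly, and then exploit the large gap between $\theta_n$ (polynomial in $n$) and $\widetilde a_n$ (exponentially large in $n$) at the critical inverse temperature.

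First I would observe that the uniform distribution $\pi_n(x)=2^{-n}$ is invariant under one step of $J_n$ itself (not merely under $J_n^2$), since the transition kernel $p_n(x,y)=1/n$ on nearest neighbours is doubly stochastic. Hence, starting from $\pi_n$, one has $J_n(k)\sim\pi_n$ for every $k\geq 0$. Using the independence of $e_{n,k+1}$ from $J_n(k+1)$ and from the environment, for each $k$
\[
\PP\bigl(\t_n(J_n(k+1))e_{n,k+1}>c_n x\bigr)=\sum_{y\in\VV_n}\pi_n(y)\,e^{-x/\g_n(y)}=\frac{\nu_n(x,\infty)}{a_n},
\]
with $\nu_n(x,\infty)$ as in \eqv(4.9'). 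In particular the summand does not depend on $k$, and so
\[
\frac{a_n}{\widetilde a_n}\sum_{k=1}^{\theta_n}\PP\bigl(\t_n(J_n(k+1))e_{n,k+1}>c_n x\bigr)=\frac{\theta_n}{\widetilde a_n}\,\nu_n(x,\infty).
\]

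Next I would compare the two factors on the right. By \eqv(3.prop1.1), $\theta_n=O(n^2)$. On the other hand, at $\b=\b_c(\ve)$ we have $\log a_n=\frac12\b_c(\ve)^2 n\,(1+o(1))=\varepsilon n\log 2\,(1+o(1))$ by \eqv(2.23''), so $a_n$ is exponentially large in $n$; Lemma \thv(Th.Scale) then gives $\widetilde a_n\sim a_n/\bigl(\b\sqrt{2\pi n}\,\Phi(\theta)\bigr)\,e^{-\theta^2/2}$, which is also exponentially large. Therefore $\theta_n/\widetilde a_n\to 0$ exponentially fast. Finally, Proposition \thv(5.prop6) applied at $\a(\ve)=1$ shows that $\nu_n(x,\infty)\to\nu^{int}(x,\infty)=1/x$ in the appropriate convergence mode ($\P$-a.s.\ if $\sum_n a_n/2^n<\infty$, in $\P$-probability otherwise), so in particular $\nu_n(x,\infty)$ is bounded by $2/x$ with probability tending to one. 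Multiplying an exponentially small deterministic sequence by a tight (eventually bounded) random sequence yields \eqv(C5.1).

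The only point that requires a moment's care is the invariance assertion, since $J_n$ is periodic and one might worry that $J_n(k)$ alternates between $\VV_n^+$ and $\VV_n^-$; but because $\pi_n$ assigns equal mass to each parity class, it remains stationary for the one-step chain, which is what allows the inner probability to be independent of $k$ and reduces everything to the elementary scale comparison of $\theta_n$ against $\widetilde a_n$. Beyond that, no technical obstacle is anticipated: the estimate is really only stating that $\theta_n$ steps contribute a negligible share of the jumps relevant on the time-scale $\widetilde a_n$.
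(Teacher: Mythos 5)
Your exact computation of the quenched probability is correct and in fact tighter than what the paper writes down: since $\pi_n$ is invariant under one step of $J_n$ (doubly stochastic kernel), the summand is $k$-independent and equals $\nu_n(x,\infty)/a_n$, so the whole quantity is identically $(\theta_n/\widetilde a_n)\,\nu_n(x,\infty)$, and $\theta_n/\widetilde a_n$ is indeed exponentially small. The gap is in your concluding sentence. Multiplying a tight random sequence by a deterministic null sequence gives convergence in $\P$-probability, not $\P$-almost surely: a sequence can converge in probability while a.s.\ taking huge values along a sparse subsequence. In the regime $\sum_n a_n/2^n=\infty$, Proposition \thv(5.prop6) gives you only in-probability convergence of $\nu_n(x,\infty)$, so your final step does not deliver the $\P$-a.s.\ conclusion that the lemma asserts.

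The fix is short, and it is precisely the route the paper takes: bound the random variable you have isolated by Markov, then invoke Borel--Cantelli. For any $\e>0$,
\[
\P\Bigl(\frac{\theta_n}{\widetilde a_n}\,\nu_n(x,\infty)>\e\Bigr)\leq \frac{\theta_n}{\e\,\widetilde a_n}\,\E\nu_n(x,\infty),
\]
and by Lemma \thv(5.lemma7), $\E\nu_n(x,\infty)\to\nu^{int}(x,\infty)$ is uniformly bounded, while $\sum_n\theta_n/\widetilde a_n<\infty$ because $\theta_n\sim n^2$ and $\widetilde a_n$ is exponentially large. Borel--Cantelli then gives \eqv(C5.1) $\P$-almost surely, unconditionally in $\sum_n a_n/2^n$. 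With this one step inserted your argument becomes correct and essentially identical to the paper's, with your exact identity in place of the paper's first-moment bound.
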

\begin{proof}Using a first order Tchebychev inequality we have
\be\Eq(C5.2)
\P\left(\frac{a_n}{\widetilde{a}_n}\sum_{l=1}^{\theta_n}\PP\left(\t_n(J_n(k+1)e_{n,k+1}>c_nx\right)>\e\right)\leq \e^{-1} \frac{a_n}{\widetilde{a}_n}\frac{\theta_n}{a_n}\E\nu_n(x,\infty).
\ee
In view of  Lemma \thv(5.lemma7)  and since $\sum \frac{\theta_n}{\widetilde{a}_n}<\infty$, the claim of Lemma \thv(Lem.cor5) follows.
\end{proof}
\begin{lemma}\TH(Lem.cen5)
 Let $c_n$ be an intermediate scale with $\lim_{n\to\infty}\frac{\log c_n}{\sqrt{n}\beta}-\sqrt{n}\beta=\theta$ for some $\theta\in(-\infty,\infty)$ and $\b=\b_c(\ve)$ with $0<\ve\leq 1$. 
If $\sum a_n/2^n<\infty$ we have for all $t,s>0$ and for all $\e'>0$ that $\P$-a.s.
\be\Eq(NY.114)
\lim_{n\to\infty}\sqrt{n}\PP\left(  M_n(\lfloor\widetilde{a}_nt(1+\e')/a_n\rfloor)  < t\right) =0.
\ee
If $\sum a_n/2^n=\infty$ the same holds in $\P$-probability.
\end{lemma}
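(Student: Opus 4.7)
The plan is a concentration argument: at the critical scale, $M_n$ evaluated at the time $s_n:=\widetilde{a}_n t(1+\e')/a_n$ is tightly clustered around $t(1+\e')$, with tail of order $o(1/\sqrt{n})$. First, combining the expectation computation \eqv(As.1), the critical-scale ratio \eqv(Cor.2.1''), and Lemma \thv(Th.Scale), I get
$$\E(\EE(M_n(s_n)))=\frac{k_n(s_n)}{a_n}\,\E m_n = s_n(1+o(1))\cdot \Phi(\theta)\beta\sqrt{2\pi n}\,e^{\theta^2/2}(1+o(1)) = t(1+\e')(1+o(1)).$$
Here $s_n=O(1/\sqrt n)$ by \eqv(Cor.2.1''), a fact that will be essential to squeeze out the required $1/\sqrt n$ rate.

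Next, I decompose
$$M_n(s_n)-t(1+\e')=A_n+B_n+C_n,\qquad A_n:=M_n(s_n)-\EE(M_n(s_n)),\quad B_n:=\tfrac{k_n(s_n)}{a_n}(m_n-\E m_n),\quad C_n:=\E(\EE(M_n(s_n)))-t(1+\e'),$$
so that $A_n$ is the jump-chain fluctuation (random under $\PP$, mean zero) and $B_n,C_n$ are $\FF^\tau$-measurable. By Lemma \thv(4.1'') (part (i) when $\sum_n a_n/2^n<\infty$, part (ii) otherwise) one has $|m_n-\E m_n|=o(1)$ $\P$-almost surely, respectively with $\P$-probability $1-o(1)$, and hence $|B_n|\leq Cs_n|m_n-\E m_n|=o(1/\sqrt n)$ in the corresponding mode; together with $C_n=o(1)$, this yields on a $\P$-full (resp.\ asymptotically full) environment event that $|B_n+C_n|<t\e'/2$ for $n$ large, so
$$\{M_n(s_n)<t\}\subset\{|A_n|\geq t\e'/2\}.$$

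I then control $A_n$ by Proposition \thv(4.prop1') applied with the constant threshold $\e=t\e'/2$, obtaining on the good environment set $\Omega^\tau_{n,5}$ the bound
$$\PP(|A_n|\geq t\e'/2)\leq 2(t\e'/2)^{-2}\bigl[s_n\overline\Theta^1_n+s_n^2\overline\Theta^2_n\bigr].$$
The free parameter $\rho_n$ in Proposition \thv(4.prop1') is chosen equal to $n^{-2}$: this keeps $\P((\Omega^\tau_{n,5})^c)\leq\theta_n/(\rho_n a_n)=o(1)$ since $\theta_n=O(n^2)$ and $a_n$ is exponential, and simultaneously forces $\rho_n[\E m_n]^2=O(1/n)=o(1)$. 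The remaining pieces of $\overline\Theta^1_n=w_n+2m_n^2/a_n+cv_n/n^2+\rho_n[\E m_n]^2$ and of $\overline\Theta^2_n=m_n^2/2^n$ are controlled by Lemma \thv(4.1'') in the corresponding convergence mode: $w_n,v_n/n^2\to 0$, while $m_n^2/a_n$ and $m_n^2/2^n$ are exponentially small. Multiplying these $o(1)$ quantities by $s_n=O(1/\sqrt n)$ and $s_n^2=O(1/n)$ yields $\PP(|A_n|\geq t\e'/2)=o(1/\sqrt n)$, hence $\sqrt n\,\PP(M_n(s_n)<t)\to 0$ in the asserted mode.

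The main obstacle is the need for the rate $o(1/\sqrt n)$ rather than just $o(1)$: this forces a delicate balancing of the critical-scale identity (which yields the prefactor $s_n=O(1/\sqrt n)$ via Lemma \thv(Th.Scale)) against the quantitative bounds of Lemma \thv(4.1''), and explains in particular the necessity of the admissible choice $\rho_n=o(1/n)$ in Proposition \thv(4.prop1').
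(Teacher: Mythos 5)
Your proof is correct and takes essentially the same approach as the paper's own (very terse) proof: both rest on the fact that $s_n=\widetilde{a}_nt(1+\e')/a_n=O(1/\sqrt n)$, the concentration bound of Proposition \thv(4.prop1') (whose leading term is linear in the time argument), and the asymptotics of $\E m_n$ and the error terms from Lemma \thv(4.1''). Your write-up is simply a careful, explicit unpacking of the paper's one-line argument, including the choice $\rho_n=n^{-2}$ and the decomposition $A_n+B_n+C_n$, and I find no gaps.
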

\begin{proof} Observe first that  $M_n(\lfloor\widetilde{a}_nt(1+\e')/a_n\rfloor)=M_n(ct/\sqrt{n})$ for some constant $c$. We know from Proposition \eqv(4.prop1') and Lemma \thv(4.1'')  that it concentrates around $\EE M_n(\lfloor\widetilde{a}_nt(1+\e')/a_n\rfloor)$ either $\P$-a.s.~or in $\P$-probability and the bounds are in the worst case linear in $t$. Moreover by linearity of $\EE M_n(\lfloor\widetilde{a}_nt(1+\e')/a_n\rfloor)$ and Lemma \thv(4.1'') the claim of Lemma \thv(Lem.cen5) follows.
\end{proof}



\begin{thebibliography}{10}

\bibitem{AS}
M.~Abramowitz and I.~A. Stegun.
\newblock {\em Handbook of mathematical functions with formulas, graphs, and
  mathematical tables}, volume~55 of {\em National Bureau of Standards Applied
  Mathematics Series}.
\newblock Courier Dover Publications, 1964.

\bibitem{BBG1}
G.~Ben~Arous, A.~Bovier, and V.~Gayrard.
\newblock Glauber dynamics of the random energy model. {I}. {M}etastable motion
  on the extreme states.
\newblock {\em Comm. Math. Phys.}, 235(3):379--425, 2003.

\bibitem{BBG2}
G.~Ben~Arous, A.~Bovier, and V.~Gayrard.
\newblock Glauber dynamics of the random energy model. {II}. {A}ging below the
  critical temperature.
\newblock {\em Comm. Math. Phys.}, 236(1):1--54, 2003.

\bibitem{BC06b}
G.~Ben~Arous and J.~{\v{C}}ern{\'y}.
\newblock The arcsine law as a universal aging scheme for trap models.
\newblock {\em Comm. Pure Appl. Math.}, 61(3):289--329, 2008.


\bibitem{BG08}
G.~Ben~Arous and V.~Gayrard.
\newblock Elementary potential theory on the hypercube.
\newblock {\em Electron. J. Probab.}, 13:no. 59, 1726--1807, 2008.

\bibitem{BGu12}
G.~Ben~Arous and O.~G{\"u}n.
\newblock Universality and extremal aging for dynamics of spin glasses on
  subexponential time scales.
\newblock {\em Comm. Pure Appl. Math.}, 65(1):77--127, 2012.

\bibitem{Ben}
G.~Bennett.
\newblock {Probability inequalities for the sum of independent random
  Variables}.
\newblock {\em J. Am. Stat. Assoc.}, 57(297):pp. 33--45, 1962.

\bibitem{BB02}
E.~Bertin and J.-P. Bouchaud.
\newblock Dynamical ultrametricity in the critical trap model.
\newblock {\em J. Phys. A Math. Gen.}, 35(13):3039--3051, 2002.

\bibitem{Bou92}
J.-P. Bouchaud.
\newblock Weak ergodicity breaking and aging in disordered systems.
\newblock {\em J. Phys. I (France)}, 2:1705--1713,  1992.

\bibitem{BCKM98}
J.-P. Bouchaud, L.~Cugliandolo, J.~Kurchan, and M.~M\'ezard.
\newblock Out of equilibrium dynamics in spin-glasses and other glassy systems.
\newblock In A.~P. Young, editor, {\em Spin glasses and random fields}. World
  Scientific, Singapore, 1998.

\bibitem{BD}
J.-P. {Bouchaud} and D.~S. {Dean}.
\newblock {Aging on Parisi's Tree}.
\newblock {\em Journal de Physique I}, 5:265--286, Mar. 1995.

\bibitem{AB12}
A.~Bovier.
\newblock {\em Statistical Mechanics of Disordered Systems: A Mathematical
  Perspective}.
\newblock Cambridge Series in Statistical and Probabilistic Mathematics.
  Cambridge University Press, 2012.

\bibitem{BG13}
A.~Bovier and V.~Gayrard.
\newblock Convergence of clock processes in random environments and ageing in
  the {$p$}-spin {SK} model.
\newblock {\em Ann. Probab.}, 41(2):817--847, 2013.

\bibitem{BGS13}
A.~Bovier, V.~Gayrard, and A.~{\v{S}}vejda.
\newblock Convergence to extremal processes in random environments and extremal
  ageing in {SK} models.
\newblock {\em Probab. Theory Related Fields}, 157(1-2):251--283, 2013.

\bibitem{CG}
J.~{\v{C}}ern{\'y} and V.~Gayrard.
\newblock Hitting time of large subsets of the hypercube.
\newblock {\em Random Structures Algorithms}, 33(2):252--267, 2008.

\bibitem{CW15}
J.~{\v{C}}ern{\'y} and T.~Wassmer.
\newblock Aging of the metropolis dynamics on the random energy model.
\newblock {\em Probab. Theory Related Fields}, pages 1--51, 2015.

\bibitem{Cr}
H.~Cram{\'e}r.
\newblock {\em Mathematical {M}ethods of {S}tatistics}.
\newblock Princeton Mathematical Series, vol. 9. Princeton University Press,
  Princeton, N. J., 1946.

\bibitem{CuKu94}
L.~F. Cugliandolo and J.~Kurchan.
\newblock On the out-of-equilibrium relaxation of the
  {S}herrington-{K}irkpatrick model.
\newblock {\em J. Phys. A Math. Gen.}, 27(17):5749--5772, sep 1994.

\bibitem{DS}
P.~Diaconis and D.~Stroock.
\newblock Geometric bounds for eigenvalues of {M}arkov chains.
\newblock {\em Ann. Appl. Probab.}, 1(1):36--61, 1991.

\bibitem{DuRe}
R.~Durrett and S.~I. Resnick.
\newblock {Functional limit theorems for dependent variables}.
\newblock {\em Ann. Probab.}, 6(5):829--846, 1978.

\bibitem{Er}
K.~B. Erickson.
\newblock {Strong renewal theorems with infinite mean}.
\newblock {\em Trans. Amer. Math. Soc.}, 151:263--291, 1970.

\bibitem{F}
W.~Feller.
\newblock {\em {An Introduction to Probability Theory and Its Applications}},
  volume~2.
\newblock Wiley, 1971.

\bibitem{FG18}
L.~R.~G. Fontes and V.~Gayrard.
\newblock Asymptotic behavior and aging of a low temperature cascading 2-{GREM}
  dynamics at extreme time scales.
\newblock preprint, 2018.
\newblock arXiv:1801.08832.

\bibitem{FIN}
L.~R.~G. Fontes, M.~Isopi, and C.~M. Newman.
\newblock Random walks with strongly inhomogeneous rates and singular
  diffusions: convergence, localization and aging in one dimension.
\newblock {\em Ann. Probab.}, 30(2):579--604, 2002.

\bibitem{FL09}
L.~R.~G. Fontes and P.~H.~S. Lima.
\newblock Convergence of symmetric trap models in the hypercube.
\newblock In V.~Sidoravi{\v{c}}ius, editor, {\em New Trends in Mathematical
  Physics}, pages 285--297, Dordrecht, 2009. Springer Netherlands.

\bibitem{G10b}
V.~Gayrard.
\newblock Aging in reversible dynamics of disordered systems. {II}. {E}mergence
  of the arcsine law in the random hopping time dynamics of the {REM}.
\newblock preprint, 2010.
\newblock arXiv:1008.3849.

\bibitem{G12}
V.~Gayrard.
\newblock Convergence of clock process in random environments and aging in
  {B}ouchaud's asymmetric trap model on the complete graph.
\newblock {\em Electron. J. Probab.}, 17:no. 58, 1--33, 2012.

\bibitem{G15}
V.~Gayrard.
\newblock Convergence of clock processes and aging in {M}etropolis dynamics of
  a truncated {REM}.
\newblock {\em Ann. Henri Poincar{\'e}}, 17(3):537--614, 2015.

\bibitem{G18}
V.~Gayrard.
\newblock Aging in metropolis dynamics of the rem: a proof.
\newblock {\em Probab. Theory Related Fields}, Online first, https://doi.org/10.1007/s00440-018-0873-6, 2018.

\bibitem{Gun09}
O.~G\"un.
\newblock {\em Universality of Transient dynamic and aging for Spin-Glasses}.
\newblock PhD thesis, New York University, 2009.

\bibitem{H}
P.~Hall.
\newblock On the rate of convergence of normal extremes.
\newblock {\em J. Appl. Probab.}, 16(2):433--439, 1979.

\bibitem{Kem}
J.~H.~B. Kemperman.
\newblock {\em The passage problem for a stationary {M}arkov chain}.
\newblock Statistical Research Monographs, Vol. I. The University of Chicago
  Press, Chicago, Ill., 1961.

\bibitem{LWZ}
R.~LePage, M.~Woodroofe, and J.~Zinn.
\newblock Convergence to a stable distribution via order statistics.
\newblock {\em Ann. Probab.}, 9(4):624--632, 1981.

\bibitem{PM00}
P.~Mathieu.
\newblock Convergence to equilibrium for spin glasses.
\newblock {\em Comm. Math. Phys.}, 215(1):57--68, 2000.

\bibitem{MM13}
P.~Mathieu and J.-C. Mourrat.
\newblock Aging of asymmetric dynamics on the random energy model.
\newblock {\em Probab. Theory Related Fields}, 161(1):351--427, 2015.

\bibitem{Re}
S.~I. Resnick.
\newblock {\em {Extreme Values, Regular Variation and Point Processes}}.
\newblock Springer, 2008.

\bibitem{RW}
L.~C.~G. Rogers and D.~Williams.
\newblock {\em Diffusions, {M}arkov processes, and martingales. {V}ol. 1}.
\newblock Wiley Series in Probability and Mathematical Statistics: Probability
  and Mathematical Statistics. John Wiley \& Sons Ltd., Chichester, second
  edition, 1994.
\newblock Foundations.

\bibitem{CG08}
J.~\v{C}ern\'{y} and V.~Gayrard.
\newblock Hitting time of large subsets of the hypercube.
\newblock {\em Random Structures Algorithms}, 33(2):252--267, 2008.

\end{thebibliography}
\end{document}